\DeclareFontFamily{OT1}{ptm}{}
\DeclareFontShape{OT1}{ptm}{m}{n} { <-> ptmr}{}
\DeclareFontShape{OT1}{ptm}{m}{it}{ <-> ptmri}{}
\DeclareFontShape{OT1}{ptm}{m}{sl}{ <->ptmro}{}
\DeclareFontShape{OT1}{ptm}{m}{sc}{ <-> ptmrc}{}
\DeclareFontShape{OT1}{ptm}{b}{n} { <-> ptmb}{}
\DeclareFontShape{OT1}{ptm}{b}{it}{ <-> ptmbi}{}
\DeclareFontShape{OT1}{ptm}{bx}{n} {<->ssub * ptm/b/n}{}
\DeclareFontShape{OT1}{ptm}{bx}{it}{<->ssub * ptm/b/it}{}
\DeclareSymbolFont{bold}{OT1}{ptm}{b}{n}
\DeclareMathAlphabet{\mathbf}{OT1}{ptm}{b}{n}
\DeclareMathAlphabet{\mathrm}{OT1}{ptm}{m}{n}
\DeclareFontFamily{OT1}{psy}{}
\DeclareFontShape{OT1}{psy}{m}{n}{ <-> s * [0.9] psyr}{}
\DeclareFontFamily{OMS}{ptm}{}
\DeclareFontShape{OMS}{ptm}{m}{n}{ <8> <9> <10> gen * cmsy }{}
\DeclareFontFamily{OMS}{cmtt}{}
\DeclareFontShape{OMS}{cmtt}{m}{n}{ <8> <9> <10> gen * cmsy }{}
\DeclareSymbolFont{emsy}{OT1}{ptm}{m}{it}
\DeclareSymbolFont{emsr}{OT1}{ptm}{m}{n}
\DeclareSymbolFont{emcmr}{OT1}{cmr}{m}{n}
\DeclareSymbolFont{emsymb}{OT1}{psy}{m}{n}
\DeclareMathSymbol a{\mathalpha}{emsy}{"61}
\DeclareMathSymbol b{\mathalpha}{emsy}{"62}
\DeclareMathSymbol c{\mathalpha}{emsy}{"63}
\DeclareMathSymbol d{\mathalpha}{emsy}{"64}
\DeclareMathSymbol e{\mathalpha}{emsy}{"65}
\DeclareMathSymbol f{\mathalpha}{emsy}{"66}
\DeclareMathSymbol g{\mathalpha}{emsy}{"67}
\DeclareMathSymbol h{\mathalpha}{emsy}{"68}
\DeclareMathSymbol i{\mathalpha}{emsy}{"69}
\DeclareMathSymbol j{\mathalpha}{emsy}{"6A}
\DeclareMathSymbol k{\mathalpha}{emsy}{"6B}
\DeclareMathSymbol l{\mathalpha}{emsy}{"6C}
\DeclareMathSymbol m{\mathalpha}{emsy}{"6D}
\DeclareMathSymbol n{\mathalpha}{emsy}{"6E}
\DeclareMathSymbol o{\mathalpha}{emsy}{"6F}
\DeclareMathSymbol p{\mathalpha}{emsy}{"70}
\DeclareMathSymbol q{\mathalpha}{emsy}{"71}
\DeclareMathSymbol r{\mathalpha}{emsy}{"72}
\DeclareMathSymbol s{\mathalpha}{emsy}{"73}
\DeclareMathSymbol t{\mathalpha}{emsy}{"74}
\DeclareMathSymbol u{\mathalpha}{emsy}{"75}
\DeclareMathSymbol v{\mathalpha}{emsy}{"76}
\DeclareMathSymbol w{\mathalpha}{emsy}{"77}
\DeclareMathSymbol x{\mathalpha}{emsy}{"78}
\DeclareMathSymbol y{\mathalpha}{emsy}{"79}
\DeclareMathSymbol z{\mathalpha}{emsy}{"7A}
\DeclareMathSymbol A{\mathalpha}{emsy}{"41}
\DeclareMathSymbol B{\mathalpha}{emsy}{"42}
\DeclareMathSymbol C{\mathalpha}{emsy}{"43}
\DeclareMathSymbol D{\mathalpha}{emsy}{"44}
\DeclareMathSymbol E{\mathalpha}{emsy}{"45}
\DeclareMathSymbol F{\mathalpha}{emsy}{"46}
\DeclareMathSymbol G{\mathalpha}{emsy}{"47}
\DeclareMathSymbol H{\mathalpha}{emsy}{"48}
\DeclareMathSymbol I{\mathalpha}{emsy}{"49}
\DeclareMathSymbol J{\mathalpha}{emsy}{"4A}
\DeclareMathSymbol K{\mathalpha}{emsy}{"4B}
\DeclareMathSymbol L{\mathalpha}{emsy}{"4C}
\DeclareMathSymbol M{\mathalpha}{emsy}{"4D}
\DeclareMathSymbol N{\mathalpha}{emsy}{"4E}
\DeclareMathSymbol O{\mathalpha}{emsy}{"4F}
\DeclareMathSymbol P{\mathalpha}{emsy}{"50}
\DeclareMathSymbol Q{\mathalpha}{emsy}{"51}
\DeclareMathSymbol R{\mathalpha}{emsy}{"52}
\DeclareMathSymbol S{\mathalpha}{emsy}{"53}
\DeclareMathSymbol T{\mathalpha}{emsy}{"54}
\DeclareMathSymbol U{\mathalpha}{emsy}{"55}
\DeclareMathSymbol V{\mathalpha}{emsy}{"56}
\DeclareMathSymbol W{\mathalpha}{emsy}{"57}
\DeclareMathSymbol X{\mathalpha}{emsy}{"58}
\DeclareMathSymbol Y{\mathalpha}{emsy}{"59}
\DeclareMathSymbol Z{\mathalpha}{emsy}{"5A}
\DeclareMathSymbol{\bullet}{\mathalpha}{emsymb}{"B7}
\DeclareMathSymbol{\regis}{\mathalpha}{emsymb}{"D2}
\def\Bullet{\leavevmode\unkern{$\m@th\bullet$}\kern.32em\ignorespaces}
\def\Regis{\leavevmode\raise.5ex\hbox{$\m@th\regis$}}
\DeclareMathSymbol +{\mathbin}{emcmr}{`+}
\DeclareMathSymbol ={\mathrel}{emcmr}{`=}
\DeclareMathSymbol{\Gamma}{\mathalpha}{emcmr}{"00}
\DeclareMathSymbol{\Delta}{\mathalpha}{emcmr}{"01}
\DeclareMathSymbol{\Theta}{\mathalpha}{emcmr}{"02}
\DeclareMathSymbol{\Lambda}{\mathalpha}{emcmr}{"03}
\DeclareMathSymbol{\Xi}{\mathalpha}{emcmr}{"04}
\DeclareMathSymbol{\Pi}{\mathalpha}{emcmr}{"05}
\DeclareMathSymbol{\Sigma}{\mathalpha}{emcmr}{"06}
\DeclareMathSymbol{\Upsilon}{\mathalpha}{emcmr}{"07}
\DeclareMathSymbol{\Phi}{\mathalpha}{emcmr}{"08}
\DeclareMathSymbol{\Psi}{\mathalpha}{emcmr}{"09}
\DeclareMathSymbol{\Omega}{\mathalpha}{emcmr}{"0A}
\def\`#1{{\accent"12 #1}}            % grave accent
\chardef\J="11
\chardef\AA="C8                      % Angstrom
\chardef\gbp="A3                     % pound sign           
\chardef\TIL="81                     % tilde on line
\chardef\endash="B1
\chardef\emdash="D0
\chardef\pourmille="BD               % %sign
\chardef\aoben="E3                   % ^a
\chardef\ooben="EB                   % ^o
\def\S{\leavevmode\unkern{\char"A7}\kern.1em\ignorespaces}
\DeclareMathAccent{\dot}{\mathalpha}{operators}{"C7} % ditto math mode
\newtheorem{de}{Definition}[section]
\newtheorem{te}[de]{Theorem}
\newtheorem{Pro}[de]{Proposition}
\newtheorem{co}[de]{Corollary}
\newtheorem{lem}[de]{Lemma}
\newtheorem{re}[de]{Remark}
\newtheorem{ex}[de]{Example}
\newtheorem{q}[de]{Question}
\title{\textbf{Rational homology cobordisms of plumbed 3-manifolds\footnote{MSC2010:primary 57M27}}}
\author{Paolo Aceto}
\date{}
\begin{document}
%\newtheorem*{q*}[de]{Question}
%\setmargrb{25mm}{20mm}{25mm}{20mm}
%\linespread{1.3}
\maketitle
\begin{abstract}
We investigate rational homology cobordisms of 3-manifolds with non-zero first Betti number. 
This is motivated by the natural generalization of the \emph{slice-ribbon conjecture} to multicomponent links.
In particular we consider the problem
of which rational homology $S^1\times S^2$'s bound rational homology $S^1\times D^3$'s. We give a simple procedure to construct
rational homology cobordisms between plumbed 3-manifold. We introduce a family $\mathcal{F}$ of plumbed 3-manifolds
with $b_1=1$. By adapting an obstruction based on Donaldson's diagonalization theorem
we characterize all manifolds in $\mathcal{F}$ that bound rational homology $S^1\times D^3$'s. For all these manifolds a rational
homology cobordism to $S^1\times S^2$ can be constructed via our procedure.
The family $\mathcal{F}$ is large enough to include all Seifert fibered spaces over the 2-sphere with vanishing Euler invariant.
In a subsequent paper we describe applications to arborescent link concordance.
\end{abstract}
\newpage

\tableofcontents
\begin{section}{Introduction}
 The study of concordance properties of classical knots and links in the 3-sphere is a highly active field 
of research in low dimensional topology. Problems in this area involve a wide range of techniques, from the use
of sophisticated combinatorial invariants derived from knot homology theories to the interplay with 3 and 4-manifold
topology. \bigskip

One of the most famous unsolved problems in this field is the so called \emph{slice-ribbon conjecture}.
A knot $K\subset S^3$ is \emph{smoothly slice} if it bounds a properly embedded smooth disk in the 4-ball. A smoothly slice knot
is \emph{ribbon} if the spanning disk $D^2\subset D^4$ can be choosen so that there are no local maxima of the radial function
$\rho:D^4\rightarrow [0,1]$ restricted to the image of $D^2$. The slice ribbon cojecture states that every slice knot is ribbon.
Since it was first formulated by Fox in 1962 (as a question rather than a conjecture) there have been many efforts
towards understanding slice and ribbon knots. One stimulating aspect of this topic is that it naturally leads
to several related questions on 3-manifold topology.

In \cite{Lisca:1} Lisca proved that the slice ribbon conjecture holds true for 2-bridge knots. He used an obstruction
based on Donaldson's diagonalization theorem to determine which lens spaces bound rational homology balls. This technique has been
used by Lecuona in \cite{Lecuona:1} to prove that the slice ribbon conjecture holds true for an infinite family of Montesinos knots. 
In \cite{Donald:1} Donald refined the obstruction used by Lisca to determine which connected sums of lens spaces embed smoothly in $S^4$.
The starting point of this work is an adaption of these ideas to the study of slice links with more than one component.\bigskip

The basic idea of \cite{Lisca:1} can be described as follows. If a knot $K$ is slice its branched double cover $\Sigma(K)$ is a rational homology sphere
that bounds a rational homology ball $W$. If $K$ is a 2-bridge knot then $\Sigma(K)$ is a lens space, say $L(p,q)$. Each lens space is the boundary 
of a canonical plumbed 4-manifold $X(p,q)$ with negative definite intersection form. 
By taking the union $X'=X(p,q)\cup -W$ we obtain a smooth closed oriented 4-manifold
with unimodular, negative definite intersection form, and by Donaldson's diagonalization theorem this intersection form is diagonalizable over the integers. The inclusion
$X(p,q)\hookrightarrow X'$ induces an embedding of intersection lattices $(H_2(X(p,q);\mathbb{Z}),Q_{X(p,q)})\hookrightarrow (\mathbb{Z}^N,-I_N)$. This
fact turns out to be a powerful obstruction which eventually leads to a complete list of lens spaces that bound rational homology balls.

A link $L\subset S^3$ is (smoothly) slice if it bounds a disjoint union of properly embedded disks in the 4-ball, one for each component of $L$.
Let $L$ be a slice link with n components (n>1). The first observation is that $\Sigma(L)$ is a 3-manifold with $b_1=n-1$ which
bounds a smooth 4 manifold $W$ with the rational homology of a boundary connected sum of $n-1$ copies of $S^1\times D^3$ (see Proposition \ref{motivation}). 
Motivated by this fact 
and focusing on the case $n=2$ we are led to the following general problem:
\begin{q}\label{q1}
 Which rational homology $S^1\times S^2$'s bound rational homology $S^1\times D^3$'s?
\end{q}
In Section \ref{2-2} we introduce a general procedure which allows one to construct rational homology cobordisms between plumbed 3-manifolds. For any
plumbed 3-manifold $Y$ our procedure gives infinitely many plumbed 3-manifolds which are rational homology cobordant to $Y$. We then introduce
a family $\mathcal{F}$ of plumbed 3-manifolds with $b_1=1$. This family includes, up to orientation reversal, all Seifert fibered spaces
over the 2-sphere with vanishing Euler invariant. We prove that if a given $Y\in\mathcal{F}$ bounds a rational homology $S^1\times D^3$ then $Y$
can be constructed with our procedure (see Theorem \ref{main}). 
This gives us a complete list of the 3-manifolds in $\mathcal{F}$ that bound a rational $S^1\times D^3$.
By specializing Theorem \ref{main} to star-shaped plumbing graphs, we obtain the following characterization for the Seifert fibered 
spaces over the 2-sphere which bound rational homology $S^1\times D^3$'s.

\begin{te}
 A Seifert fibered manifold 
 $Y=(0;b;(\alpha_1,\beta_1),\dots,(\alpha_h,\beta_h))$ 
 bounds a $\mathbb{Q}H-S^1\times D^3$ if and only if the Seifert invariants occur in complementary pairs
 and $e(Y)=0$.
\end{te}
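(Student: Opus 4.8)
The plan is to derive this from Theorem \ref{main} together with the construction of Section \ref{2-2}, after translating both hypotheses into the language of star-shaped plumbing graphs. Set up notation first: $Y=(0;b;(\alpha_1,\beta_1),\dots,(\alpha_h,\beta_h))$ is the oriented boundary of the star-shaped plumbing whose graph $\Gamma$ has a central vertex of weight $-b$ and $h$ legs, the $i$-th leg being the linear chain given by the negative continued fraction expansion of $\alpha_i/\beta_i$. Up to sign conventions $e(Y)=-b+\sum_i\beta_i/\alpha_i$, and $e(Y)=0$ is exactly the condition that $b_1(Y)=1$, i.e. that $Y$ is a $\mathbb{Q}H$-$S^1\times S^2$. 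In particular $e(Y)=0$ is forced by the hypothesis that $Y$ bounds a $\mathbb{Q}H$-$S^1\times D^3$: applying Lefschetz duality and the half-lives--half-dies principle to the pair $(W,Y)$ shows that a manifold bounding a $\mathbb{Q}H$-$S^1\times D^3$ must itself be a $\mathbb{Q}H$-$S^1\times S^2$. Since, up to orientation reversal, every Seifert fibered space over $S^2$ with vanishing Euler invariant lies in $\mathcal{F}$, the statement falls within the scope of Theorem \ref{main}.

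For the ``only if'' direction I would argue as follows. Assume $Y$ bounds a $\mathbb{Q}H$-$S^1\times D^3$. Then $e(Y)=0$, so $Y\in\mathcal{F}$, and Theorem \ref{main} guarantees that $Y$ is one of the plumbed 3-manifolds produced by the procedure of Section \ref{2-2}. It then remains to identify which outputs of the procedure are orientation-preservingly homeomorphic to Seifert fibered spaces over $S^2$, that is, admit a star-shaped plumbing graph. Tracking the effect of the procedure's elementary move on the shape of the graph, and normalizing with Neumann's plumbing calculus, one sees that a star-shaped output can only be obtained by growing the legs of the star in mirror-image pairs: whenever a pair of legs is created, the two legs carry continued fraction expansions that are reverses of one another. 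Since the continued fraction of $\alpha/(\alpha-\beta)$ is the reverse of that of $\alpha/\beta$, this says precisely that the Seifert invariants of $Y$ occur in complementary pairs $(\alpha,\beta),(\alpha,\alpha-\beta)$, and the relation $e(Y)=0$ pins down the central weight $b$.

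For the ``if'' direction I would run the procedure explicitly. Group the Seifert pairs into the $h/2$ complementary pairs $(\alpha,\beta),(\alpha,\alpha-\beta)$. Starting from a standard plumbing presentation of $S^1\times S^2$ and applying the basic move of Section \ref{2-2} once per pair --- each application attaching a leg together with its mirror and adjusting the central weight --- one produces, after plumbing calculus, exactly the star-shaped graph of $Y$; here $e(Y)=0$ makes the arithmetic of the central weight close up. Since every manifold obtained by the procedure is rational homology cobordant to $S^1\times S^2$, gluing such a cobordism onto $S^1\times D^3$ and applying Mayer--Vietoris shows that $Y$ bounds a $\mathbb{Q}H$-$S^1\times D^3$.

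The step I expect to be the main obstacle is the bookkeeping in the ``only if'' direction: proving that \emph{every} star-shaped manifold in the image of the procedure has its legs organized into mirror pairs, and that no ``accidental'' star-shaped graph arises from an asymmetric sequence of moves through a plumbing-calculus equivalence. This is where the combinatorial description of the procedure must be combined with Neumann's calculus and a careful continued-fraction analysis in the spirit of Lisca's treatment of lens spaces; the remaining steps are essentially formal or routine.
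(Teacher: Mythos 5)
Your overall strategy coincides with the paper's: the ``if'' direction is exactly the construction of Example \ref{seifert} via Proposition \ref{construction}, and the ``only if'' direction is meant to follow from Theorem \ref{main}. The ``if'' half is fine. The ``only if'' half has a genuine gap: you never verify the hypothesis \eqref{crucialhyp} of Theorem \ref{main}. For a star-shaped graph $\Gamma$ with $h$ legs (all legs are final legs and there are no nodes), that inequality reads $I(\widetilde{\Gamma})\leq -h$, and this fails for many star-shaped graphs with $e(Y)=0$; the assertion that ``$Y$ lies in $\mathcal{F}$ up to orientation reversal, so the statement falls within the scope of Theorem \ref{main}'' is precisely the point that has to be proved, not quoted from the introduction. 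The paper's fix is Proposition \ref{I}: each pair consisting of a leg and its dual contributes $-2$, so $I(\widetilde{\Gamma})+I(\widetilde{\Gamma}^*)=-2h$ and at least one of $\Gamma$, $\Gamma^*$ satisfies \eqref{crucialhyp}; one then replaces $Y$ by $-Y$ if necessary, which is legitimate because bounding a $\mathbb{Q}H$-$S^1\times D^3$ and having complementary Seifert invariants are both insensitive to orientation reversal. Without this step Theorem \ref{main} simply does not apply.

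Separately, the step you flag as ``the main obstacle'' --- ruling out accidental star-shaped outputs arising from asymmetric sequences of moves through plumbing-calculus equivalences --- is both left unresolved in your write-up and, as it happens, not actually needed. Theorem \ref{main} concludes that the normal-form graph $\Gamma$ \emph{itself} (not merely some plumbing representative of its boundary) is obtained by joining building blocks along $-1$-vertices. A star-shaped join can only involve building blocks of the first type, i.e.\ a $-1$-vertex with two complementary legs attached; hence the legs of $\Gamma$ pair off as complementary immediately, and $e(Y)=cf(\Gamma)=0$ follows from Propositions \ref{adcf} and \ref{compeq}. So once \eqref{crucialhyp} is established the combinatorial identification is immediate; as written, your argument omits the one step that genuinely requires work and spends its effort on one that does not.
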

Two pairs of  Seifert invariants $(\alpha_i,\beta_i)$ and $(\alpha_j,\beta_j)$ are \emph{complementary} if
they can be chosen so that $\frac{\beta_i}{\alpha_i}+\frac{\beta_j}{\alpha_j}=-1$ (see Section \ref{lensseifert} for precise definitions).

This result (as well as Theorem \ref{main}) is obtained by using an obstruction based on Donaldson's theorem. Roughly speaking we proceed as follows. 
Each $Y$ in $\mathcal{F}$ bounds a negative \emph{semidefinite} plumbed 4-manifold $X$. If $Y$ bounds a rational homology $S^1\times D^3$, say $W$,
we can form the closed 4-manifold $X'=X\cup -W$. The intersection form $Q_{X'}$ will again be negative definite and this fact provides the costraints
we need for our analysis. 

In a subsequent paper \cite{Aceto} we will describe the applications of our work on arborescent link concordance. 
To each $Y\in \mathcal{F}$ we can associate the family $L(Y)$ of arborescent links whose branched double cover
is $Y$. In general, the family $L(Y)$ contains many non isotopic links. However, these links are all related to each other by Conway mutation.
In \cite{Aceto} we will prove the following 
\begin{te}
Let $L$ be a link in $L(Y)$ for some $Y\in\mathcal{F}$ (e.g. any Montesinos link). 
The following conditions are equivalent:
\begin{itemize}
 \item $Y$ bounds a rational homology $S^1\times D^3$;
 \item there exists $L'\in L(Y)$ that bounds a properly embedded smooth surface $S$ in $D^4$ with $\chi(S)=2$ without local maxima.
\end{itemize}
In particular every 2-component slice link $L\in L(\mathcal{F})$ has a ribbon mutant. 
\end{te}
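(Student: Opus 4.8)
The plan is to reduce to the $2$-component case, extract the easy implication from Proposition \ref{motivation}, and prove the hard implication by translating the plumbing-graph construction of Section \ref{2-2} into a ribbon concordance at the level of links, using the combinatorial characterisation of Theorem \ref{main} together with the invariance of the double branched cover under Conway mutation. As a preliminary observation, since $b_1(Y)=1$ every $L'\in L(Y)$ has two components, and a properly embedded surface $S\subset D^4$ with $\partial S=L'$, $\chi(S)=2$ and no local maxima of the radial function is forced, by a short Euler-characteristic count that rules out positive genus and leaves exactly two disk components, to be a pair of disjointly embedded ribbon disks. Thus condition (2) is equivalent to the assertion that some $L'\in L(Y)$ is a ribbon link.

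The implication (2) $\Rightarrow$ (1) is then immediate: a ribbon link is slice, so by Proposition \ref{motivation} (with $n=2$) its double branched cover, which is $Y$, bounds a $\mathbb{Q}H-S^1\times D^3$.

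For (1) $\Rightarrow$ (2), suppose $Y$ bounds a $\mathbb{Q}H-S^1\times D^3$. By Theorem \ref{main}, $Y$ is one of the plumbed manifolds produced by the procedure of Section \ref{2-2}; in particular its plumbing tree splits into complementary pairs of subtrees with vanishing Euler number. I would then choose as $L'$ the arborescent link in $L(Y)$ whose weighted tree is the one produced by that procedure, and show that the rational homology cobordism from $Y$ to $S^1\times S^2$ constructed in Section \ref{2-2} is the double branched cover of a ribbon concordance in $S^3\times[0,1]$ from $L'$ to the $2$-component unlink. Concretely, a complementary pair of subtrees corresponds to a pair of ``inverse'' rational tangles, and the handle operations that collapse such a pair upstairs should be visible downstairs as band moves realised by embedded bands in $S^3$ --- equivalently, one runs the $4$-dimensional construction as a Kirby-calculus movie and reads off the induced movie of branch-locus diagrams. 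Capping off the unlink end of this concordance with the standard pair of ribbon disks in $D^4$ then produces the surface $S$ required by (2).

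Finally, for a general $L\in L(Y)$ the members of $L(Y)$ are pairwise related by Conway mutations, so the ribbon link $L'$ produced above is a mutant of $L$; and when $L$ is in addition a $2$-component slice link, Proposition \ref{motivation} shows that $Y=\Sigma(L)$ bounds a $\mathbb{Q}H-S^1\times D^3$, so the equivalence applies and yields the asserted ribbon mutant. I expect the main obstacle to be the middle step: faithfully turning the abstract cobordism of Section \ref{2-2} into an explicit ribbon movie for the correctly chosen representative in the mutation class $L(Y)$, in particular checking that the cancelling bands can be made simultaneously disjoint and unknotted, and that the mirror/orientation bookkeeping on $S^3$ matches the orientation conventions used for $Y$ and for the family $\mathcal{F}$.
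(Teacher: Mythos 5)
You should first be aware that this paper does not prove the statement at all: it is announced as a result of the companion paper \cite{Aceto} (``In \cite{Aceto} we will prove the following''), so there is no in-paper argument to compare yours against, and your sketch has to stand on its own. The parts of it that can be checked against this paper are fine. Your reduction of condition (2) to ``some $L'\in L(Y)$ is a ribbon link'' is correct: $b_1(Y)=1$ forces $|L'|=2$, and the Euler characteristic count does force $S$ to be two disjoint disks. The implication (2) $\Rightarrow$ (1) then follows exactly as you say from Proposition \ref{motivation} (or Corollary \ref{slice}), and the ``in particular'' clause is a formal consequence of the equivalence together with the fact that $L(Y)$ is a single mutation class.

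The gap is in (1) $\Rightarrow$ (2), and it is exactly the step you flag yourself: you assert, but do not prove, that the $\mathbb{Q}H$-cobordism of Proposition \ref{construction} (one $1$-handle and one $2$-handle) is the double branched cover of a movie on the branch locus consisting of one birth and one fusion band. This is the entire content of the theorem, and it is not automatic. To close it you would need to (a) fix an explicit arborescent diagram for the weighted tree produced by Theorem \ref{main}, in which each vertex circle of the surgery presentation is the lift of a rational-tangle region, so that the chosen representative really lies in $L(Y)$; (b) show that the attaching circle $\alpha$ of the $2$-handle in Figure \ref{movie1} can be isotoped to be equivariant for the covering involution, so that the $1$- and $2$-handles descend to a trivial birth and an embedded band in $S^3\times I$; and (c) check that the resulting radial movie for $L'$ has only saddles and minima after capping the unlink, which is what ``no local maxima'' requires. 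You should also note that the hypothesis \eqref{crucialhyp} of Theorem \ref{main} may hold only for $\Gamma^*$ rather than $\Gamma$ (this is how it is used in the proof of Theorem \ref{mainbis}), so the construction may produce a ribbon mutant of the mirror of $L$, and the orientation bookkeeping you defer to the end is a genuine part of the argument rather than a formality. As it stands the proposal is a correct and natural plan --- it is the evident generalization of the Lisca and Lecuona arguments --- but the decisive geometric step is only gestured at.
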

This  paper is organized as follows. In Section \ref{plumbedmanifolds} we provide an introduction to plumbed manifolds
following \cite{Neumann:1}, \cite{Neumann:2} and \cite{Neumann:3}. We also introduce some new terminology that will be useful 
later on. In Section \ref{motiv} we give some motivation for our work relating rational homology cobordism of 3-manifolds and link concordance. 
We also state our lattice theoretical obstruction. In Section \ref{2-2} we introduce a method that allows one to construct rational 
homology cobordisms between plumbed 3-manifolds. In Section \ref{mainchapter} we state our main theorem (Theorem \ref{main})
and give a proof modulo a technical result (Theorem \ref{technical}). Sections \ref{3basic}- \ref{3conclusion} are dedicated
to the technical analysis needed to prove Theorem \ref{technical}.

\section*{Acknowledgements}
I would like to thank my supervisor Paolo Lisca for his support and for suggesting this topic, Giulia Cervia for her constant
encouragement and her help in drawing pictures.
\end{section}

\begin{section}{Plumbed manifolds}\label{plumbedmanifolds}
In this section, following \cite{Neumann:1},\cite{Neumann:2} and \cite{Neumann:3}, we review the basic definitions and properties of plumbed 
3-manifolds. We recall Neumann's normal form of a plumbing graph, and the
generalized continued fraction associated to a plumbing graph. 
We show how these data behave with respect to orientation reversal.
We briefly recall the definitions of lens spaces and Seifert manifolds
viewed as special plumbed 3-manifolds. 

Almost everything in this section is well known. 
The main purpose here is to fix notations and conventions as well as
putting our main result, Theorem \ref{main}, into the right context.  

\begin{de}\label{plumbinggraph}
 A \emph{plumbing graph} $\Gamma$ is a finite tree where every vertex has an integral 
 weight assigned to it.
\end{de}
To every plumbing graph $\Gamma$ we can associate a smooth oriented 4-manifold 
$P\Gamma$ with boundary $\partial P\Gamma$ in the following way. 
For each vertex take a disc
bundle over the 2-sphere with Euler number prescribed by the weight of 
the vertex. Whenever two vertices are connected by an edge we identify 
the trivial bundles over two small discs (one in each sphere) by exchanging 
the role of the fiber and the base coordinates. We call $P\Gamma$ 
(resp. $\partial P\Gamma$) a \emph{plumbed 4-manifold} 
(resp. \emph{plumbed 3-manifold}).

This definition can be extended to reducible 3-manifolds; 
if the graph is a finite forest (i.e. a disjoint union of 
 trees) we take the boundary 
connected sum of the plumbed 4-manifolds associated to each connected 
component of $\Gamma$. Unless otherwise stated, by a plumbing graph we will always mean
a connected one, as in Definition \ref{plumbinggraph}.

Every plumbed 4-manifold has a nice surgery description which can be
obtained directly from the plumbing graph. To every vertex we associate
an unknotted circle framed according to the weight of the vertex. Whenever
two vertices are connected by an edge the corresponding circles are linked
in the simplest possible way, i.e. like the Hopf link. The framed link 
obtained in this way also gives an integral surgery presentation for the corresponding 
plumbed 3-manifold. The group $H_2(P(\Gamma);\mathbb{Z})$ is a free 
abelian group generated by the zero sections of the sphere bundles (i.e. by
vertices of the graph). Moreover,with respect to this basis,
the intersection form of $P(\Gamma)$,
which we indicate by $Q_{\Gamma}$, is described by the matrix 
$M_{\Gamma}$ whose entries $(a_{ij})$ are defined as follows:
\begin{itemize}
 \item $a_{i,i}$ equals the Euler number of the corresponding disc bundle
 \item $a_{i,j}=1$ if the corresponding vertices are connected
 \item $a_{i,j}=0$ otherwise.
\end{itemize}
Finally note that $M_{\Gamma}$ is also a presentation matrix for the group
$H_1(\partial P\Gamma;\mathbb{Z})$.
 
\subsection{The normal form of a plumbing graph}

We will be mainly interested in plumbed 3-manifolds. There are some elementary
operations on the plumbing graph which alter the 4-manifold but not its boundary.
Following \cite{Neumann:1} we will state a theorem which establishes the existence
of a unique \emph{normal form} for the graph of a plumbed 3-manifold. In \cite{Neumann:1}
these results are stated in a more general context, here we extrapolate only what we 
need in order to deal with plumbed manifolds.

First consider the \emph{blow-down} operation. It can be performed in any of 
three situations depicted below.\\
\begin{enumerate}
 \item We can add or remove an isolated vertex with weight 
$\varepsilon\in\{\pm 1\}$ from any plumbing graph.
    \[
  \begin{tikzpicture}[xscale=0.7,yscale=-0.5]
    \node (A0_2) at (2, 0) {$\varepsilon$};
    \node (A1_0) at (0, 1) {$\Gamma$};
    \node (A1_1) at (1, 1) {$\sqcup$};
    \node (A1_2) at (2, 1) {$\bullet$};
    \node (A1_4) at (4, 1) {$\longleftrightarrow$};
    \node (A1_6) at (6, 1) {$\Gamma$};
  \end{tikzpicture}
  \]
  \item A vertex with weight $\varepsilon\in\{\pm 1\}$ linked to a single vertex of a 
plumbing graph can be removed as shown below. From now on we use three edges 
coming out of a vertex to indicate that any number of edges may be linked to that vertex.
      \[
  \begin{tikzpicture}[xscale=1.5,yscale=-0.5]
    \node (A0_0) at (0, 0) {};
    \node (A0_5) at (5, 0) {};
    \node (A1_1) at (1, 1) {$a$};
    \node (A1_2) at (2, 1) {$\varepsilon$};
    \node (A1_6) at (6, 1) {$a-\varepsilon$};
    \node (A2_0) at (0, 2) {};
    \node (A2_1) at (1, 2) {$\bullet$};
    \node (A2_2) at (2, 2) {$\bullet$};
    \node (A2_4) at (4, 2) {$\longleftrightarrow$};
    \node (A2_5) at (5, 2) {};
    \node (A2_6) at (6, 2) {$\bullet$};
    \node (A4_0) at (0, 4) {};
    \node (A4_5) at (5, 4) {};
    \path (A4_5) edge [-] node [auto] {$\scriptstyle{}$} (A2_6);
    \path (A0_5) edge [-] node [auto] {$\scriptstyle{}$} (A2_6);
    \path (A2_0) edge [-] node [auto] {$\scriptstyle{}$} (A2_1);
    \path (A4_0) edge [-] node [auto] {$\scriptstyle{}$} (A2_1);
    \path (A2_5) edge [-] node [auto] {$\scriptstyle{}$} (A2_6);
    \path (A2_1) edge [-] node [auto] {$\scriptstyle{}$} (A2_2);
    \path (A0_0) edge [-] node [auto] {$\scriptstyle{}$} (A2_1);
  \end{tikzpicture}
  \]
  \item Finally, if a $\pm 1$-weighted vertex is linked to exactly two vertices it can be removed as shown below.
    \[
  \begin{tikzpicture}[xscale=1.5,yscale=-0.5]
    \node (A0_0) at (0, 0) {};
    \node (A0_4) at (4, 0) {};
    \node (A0_6) at (6, 0) {};
    \node (A0_9) at (9, 0) {};
    \node (A1_1) at (1, 1) {$a$};
    \node (A1_2) at (2, 1) {$\varepsilon$};
    \node (A1_3) at (3, 1) {$b$};
    \node (A1_7) at (7, 1) {$a-\varepsilon$};
    \node (A1_8) at (8, 1) {$b-\varepsilon$};
    \node (A2_0) at (0, 2) {};
    \node (A2_1) at (1, 2) {$\bullet$};
    \node (A2_2) at (2, 2) {$\bullet$};
    \node (A2_3) at (3, 2) {$\bullet$};
    \node (A2_4) at (4, 2) {};
    \node (A2_5) at (5, 2) {$\longleftrightarrow$};
    \node (A2_6) at (6, 2) {};
    \node (A2_7) at (7, 2) {$\bullet$};
    \node (A2_8) at (8, 2) {$\bullet$};
    \node (A2_9) at (9, 2) {};
    \node (A4_0) at (0, 4) {};
    \node (A4_4) at (4, 4) {};
    \node (A4_6) at (6, 4) {};
    \node (A4_9) at (9, 4) {};
    \path (A0_9) edge [-] node [auto] {$\scriptstyle{}$} (A2_8);
    \path (A0_4) edge [-] node [auto] {$\scriptstyle{}$} (A2_3);
    \path (A2_9) edge [-] node [auto] {$\scriptstyle{}$} (A2_8);
    \path (A4_9) edge [-] node [auto] {$\scriptstyle{}$} (A2_8);
    \path (A2_6) edge [-] node [auto] {$\scriptstyle{}$} (A2_7);
    \path (A2_1) edge [-] node [auto] {$\scriptstyle{}$} (A2_2);
    \path (A4_0) edge [-] node [auto] {$\scriptstyle{}$} (A2_1);
    \path (A2_2) edge [-] node [auto] {$\scriptstyle{}$} (A2_3);
    \path (A4_4) edge [-] node [auto] {$\scriptstyle{}$} (A2_3);
    \path (A4_6) edge [-] node [auto] {$\scriptstyle{}$} (A2_7);
    \path (A0_6) edge [-] node [auto] {$\scriptstyle{}$} (A2_7);
    \path (A2_7) edge [-] node [auto] {$\scriptstyle{}$} (A2_8);
    \path (A2_0) edge [-] node [auto] {$\scriptstyle{}$} (A2_1);
    \path (A2_4) edge [-] node [auto] {$\scriptstyle{}$} (A2_3);
    \path (A0_0) edge [-] node [auto] {$\scriptstyle{}$} (A2_1);
  \end{tikzpicture}
  \] 
\end{enumerate}
Next we have the \emph{0-chain absorption} move. A $0$-weighted vertex linked to two vertices 
can be removed and the plumbing graph changes as shown.
   \[
  \begin{tikzpicture}[xscale=1.5,yscale=-0.5]
    \node (A0_0) at (0, 0) {};
    \node (A0_4) at (4, 0) {};
    \node (A0_6) at (6, 0) {};
    \node (A0_8) at (8, 0) {};
    \node (A1_1) at (1, 1) {$a$};
    \node (A1_2) at (2, 1) {$0$};
    \node (A1_3) at (3, 1) {$b$};
    \node (A1_7) at (7, 1) {$a+b$};
    \node (A2_0) at (0, 2) {};
    \node (A2_1) at (1, 2) {$\bullet$};
    \node (A2_2) at (2, 2) {$\bullet$};
    \node (A2_3) at (3, 2) {$\bullet$};
    \node (A2_4) at (4, 2) {};
    \node (A2_5) at (5, 2) {$\longleftrightarrow$};
    \node (A2_6) at (6, 2) {};
    \node (A2_7) at (7, 2) {$\bullet$};
    \node (A2_8) at (8, 2) {};
    \node (A4_0) at (0, 4) {};
    \node (A4_4) at (4, 4) {};
    \node (A4_6) at (6, 4) {};
    \node (A4_8) at (8, 4) {};
    \path (A0_8) edge [-] node [auto] {$\scriptstyle{}$} (A2_7);
    \path (A2_2) edge [-] node [auto] {$\scriptstyle{}$} (A2_3);
    \path (A2_8) edge [-] node [auto] {$\scriptstyle{}$} (A2_7);
    \path (A4_8) edge [-] node [auto] {$\scriptstyle{}$} (A2_7);
    \path (A2_6) edge [-] node [auto] {$\scriptstyle{}$} (A2_7);
    \path (A2_0) edge [-] node [auto] {$\scriptstyle{}$} (A2_1);
    \path (A4_0) edge [-] node [auto] {$\scriptstyle{}$} (A2_1);
    \path (A0_4) edge [-] node [auto] {$\scriptstyle{}$} (A2_3);
    \path (A4_4) edge [-] node [auto] {$\scriptstyle{}$} (A2_3);
    \path (A4_6) edge [-] node [auto] {$\scriptstyle{}$} (A2_7);
    \path (A0_6) edge [-] node [auto] {$\scriptstyle{}$} (A2_7);
    \path (A2_1) edge [-] node [auto] {$\scriptstyle{}$} (A2_2);
    \path (A2_4) edge [-] node [auto] {$\scriptstyle{}$} (A2_3);
    \path (A0_0) edge [-] node [auto] {$\scriptstyle{}$} (A2_1);
  \end{tikzpicture}
  \]
The \emph{splitting} move can be applied in the following situation. Given a plumbing graph with a 
$0$-weighted vertex which is linked to a single vertex $v$, we may remove both vertices
(and all the corresponding edges) obtaining a disjoint union of plumbing trees. We may
depict this move as follows
  \[
  \begin{tikzpicture}[xscale=1.2,yscale=-0.5]
    \node (A0_2) at (2, 0) {$\Gamma_1$};
    \node (A1_2) at (2, 1) {$\cdot$};
    \node (A2_0) at (0, 2) {$0$};
    \node (A2_1) at (1, 2) {$a$};
    \node (A2_2) at (2, 2) {$\cdot$};
    \node (A3_0) at (0, 3) {$\bullet$};
    \node (A3_1) at (1, 3) {$\bullet$};
    \node (A3_2) at (2, 3) {$\cdot$};
    \node (A3_3) at (3, 3) {$\longleftrightarrow$};
    \node (A3_5) at (5, 3) {$\Gamma_1\sqcup\dots\sqcup\Gamma_k$};
    \node (A4_2) at (2, 4) {$\cdot$};
    \node (A5_2) at (2, 5) {$\cdot$};
    \node (A6_2) at (2, 6) {$\Gamma_k$};
    \path (A3_0) edge [-] node [auto] {$\scriptstyle{}$} (A3_1);
    \path (A3_1) edge [-] node [auto] {$\scriptstyle{}$} (A6_2);
    \path (A3_1) edge [-] node [auto] {$\scriptstyle{}$} (A0_2);
  \end{tikzpicture}
  \]
\begin{Pro}{\rm{\cite{Neumann:1}}}
Applying any of the above operations to a plumbing graph does not
change the oriented diffeomorphism type of the corresponding plumbed 3-manifold.
\end{Pro}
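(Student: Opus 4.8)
The plan is to realize each of the five operations by a sequence of moves in Kirby's surgery calculus on the associated framed link. Recall from the discussion above that a plumbing graph $\Gamma$ determines an integral surgery presentation of $\partial P\Gamma$: each vertex contributes an unknot framed by its weight, adjacent vertices contribute a Hopf link, and non-adjacent vertices contribute split unknots. Two framed links in $S^{3}$ present orientation-preservingly diffeomorphic closed $3$-manifolds if and only if they are related by a finite sequence of handle slides and $(\pm 1)$-blow-ups and blow-downs (Kirby's theorem); the slam-dunk is a standard move of this surgery calculus. So it suffices to exhibit, for each graph operation, such a sequence and to check the bookkeeping of framings, linking numbers and unknottedness; the careful verification appears in \cite{Neumann:1}.

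I would begin with the three blow-down moves, which are literally $(\pm1)$-blow-downs. Blowing down an $\varepsilon$-framed unknot $C$ with $\varepsilon=\pm1$ introduces a $(-\varepsilon)$-framed full twist on the strands threading $C$: the framing of a component $K$ that threads $C$ algebraically $\ell$ times changes by $-\varepsilon\ell^{2}$, and the linking of two such components $K,K'$ changes by $-\varepsilon\ell\ell'$. In situation $(1)$ nothing threads $C$ and the diagram just loses a split $\pm1$-framed unknot. In situation $(2)$ only the single neighbour, of weight $a$, threads $C$ with $\ell=1$, so its framing becomes $a-\varepsilon$ while the remaining neighbours, having $\ell'=0$, are untouched. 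In situation $(3)$ the two neighbours thread $C$ with $\ell=1$ each, so their framings become $a-\varepsilon$ and $b-\varepsilon$ and they acquire mutual linking $-\varepsilon$; reversing, if necessary, the orientations of all vertices on one side of the new edge --- possible consistently because $\Gamma$ is a tree, and altering neither framings nor the unoriented link --- makes this a positive Hopf link and produces exactly the pictured graph.

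Then I would handle $0$-chain absorption and splitting by handle slides. For $0$-chain absorption, let $v$ be the $0$-framed vertex with neighbours $u_{1}$ of weight $a$ and $u_{2}$ of weight $b$. Slide $u_{2}$ over $u_{1}$, orienting the parallel copy of $u_{1}$ so as to cancel the linking of $u_{2}$ with $v$; since $u_{1}$ and $u_{2}$ were unlinked, the new component has framing $a+b$, is unlinked from $v$, and is linked once with every former neighbour of $u_{1}$ and of $u_{2}$. Choosing the slide band to avoid $v$, the circle $v$ is still a $0$-framed meridian of $u_{1}$, so a slam-dunk eliminates $v$ and replaces $u_{1}$ by a split unknot of framing $-1/a$ (or $\infty$ when $a=0$), thereby eliminating $u_{1}$ as well. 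What remains is a single vertex of weight $a+b$ carrying all edges that were incident to $u_{1}$ or $u_{2}$; a consistent reorientation normalizes the edge signs, yielding the asserted graph. For the splitting move $v$ is already a $0$-framed meridian of its unique neighbour $u$, so one slam-dunk eliminates both $v$ and $u$ and the diagram decomposes as $\Gamma_{1}\sqcup\dots\sqcup\Gamma_{k}$. All the moves used --- blow-ups and blow-downs, handle slides, slam-dunks --- induce orientation-preserving diffeomorphisms, so the oriented diffeomorphism type is preserved.

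I expect the only real work to be the bookkeeping in the two handle-slide moves: verifying that the merged vertex has weight exactly $a+b$, that the component resulting from the slide is again an unknot and that $v$ is genuinely a meridian afterwards (so the slam-dunk applies), and that no spurious components or linkings survive. The blow-down cases are immediate from the standard blow-down formula, and in every case the final normalization of edge signs is routine since the plumbing graph is a tree.
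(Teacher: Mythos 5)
The paper gives no proof of this proposition: it is quoted from \cite{Neumann:1}, where the moves are justified by explicit modifications of the plumbing construction itself (blowing down embedded $\pm1$-spheres in $P\Gamma$, and so on). Your Kirby-calculus argument on the boundary surgery presentations is the standard alternative route, and its overall structure is correct: the three blow-downs are literal $(\pm1)$-blow-downs with the usual framing and linking bookkeeping, the tree hypothesis lets you normalize all edge signs to $+1$ by propagating orientation reversals, and absorption and splitting reduce to a handle slide followed by a slam-dunk.

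The one step that is wrong as written is the slam-dunk in the $0$-chain absorption. After sliding $u_2$ over $u_1$, the $0$-framed circle $v$ is a meridian of $u_1$ linking nothing else, while $u_1$ still links its remaining neighbours (and, with multiplicity $a$, the new component $u_2'$). The slam-dunk deletes the component that links nothing but its partner --- here $v$, with coefficient $r=0$ --- and changes the partner's integer coefficient $n=a$ to $n-1/r=a-1/0=\infty$; the $\infty$-framed $u_1$ can then be erased because $\infty$-surgery is no surgery at all, irrespective of what $u_1$ links. Your version, ``eliminates $v$ and replaces $u_1$ by a split unknot of framing $-1/a$,'' applies the formula with the roles of the two components reversed (computing $0-1/a$), which would require $u_1$ to link only $v$ --- false whenever $u_1$ has other neighbours --- and in any case $u_1$ is not split. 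The same correction is needed in your treatment of the splitting move, where again $u$ links the subgraphs $\Gamma_1,\dots,\Gamma_k$. With that fixed, and with the small remark that ``unlinked from $v$'' after the slide means algebraic linking zero, upgraded to a genuine meridian position by isotoping $u_2'$ off the disk bounded by $v$, the argument goes through and yields exactly the pictured graphs.
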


Before discussing the normal form of a plumbing graph we need some terminology. 
A \emph{linear chain} of a plumbing graph is a portion of the graph consisting 
of some vertices $v_1,\dots,v_k$ ($k\geq 1$) such that:
\begin{itemize}
 \item each $v_i$ with $1<i<k$ is linked only to $v_{i-1}$ and $v_{i+1}$ 
 \item $v_1$ and $v_k$ are linked to at most two vertices.
\end{itemize}
A linear chain is \emph{maximal} if it is not contained in any larger linear chain.
A vertex of a plumbing graph is said to be:
\begin{enumerate}
 \item \emph{isolated} if it is not linked to any other vertex
 \item \emph{final} if it is linked exactly to one vertex
 \item \emph{internal} otherwise.
\end{enumerate}
Note that isolated and final vertices always belong to some linear chain,
while an internal vertex belongs to some linear chain if and only if it is 
linked to exactly two vertices.
\begin{de}
 A plumbing graph $\Gamma$ is said to be in \emph{normal form} if one of the following holds
 \begin{enumerate}
  \item  
    $\begin{tikzpicture}[xscale=0.7,yscale=-0.5]
    \node (A0_4) at (4, 0) {$0$};
    \node (A1_0) at (0, 1) {$\Gamma=$};
    \node (A1_1) at (1, 1) {$\varnothing$};
    \node (A1_2) at (2, 1) {$or$};
    \node (A1_3) at (3, 1) {$\Gamma=$};
    \node (A1_4) at (4, 1) {$\bullet$};
  \end{tikzpicture}$
 \item every vertex of a linear chain has weight less than or equal to -2.  
 \end{enumerate}
\end{de}

\begin{te}{\rm{\cite{Neumann:1}}}\label{exnormalform} Every plumbing graph can be reduced to a unique normal form
via a sequence of blow-downs, 0-chain absorptions and splittings. Moreover two oriented plumbed
3-manifolds are diffeomorphic (preserving the orientation) if and only if their plumbing graphs
have the same normal form. 
\end{te}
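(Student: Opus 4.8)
Let me indicate the shape of a proof of this theorem of Neumann \cite{Neumann:1}. The statement breaks into two parts: (a) the three moves, used in either direction (each is drawn as an equivalence), carry an arbitrary plumbing graph to \emph{some} graph in normal form; and (b) the resulting normal form is unique, equivalently the assignment $N\mapsto\partial PN$ from graphs in normal form to oriented $3$-manifolds is injective. Granting (b), the displayed ``if and only if'' follows at once: reversibility of the moves makes the reverse implication obvious, while for the forward one, if $\partial P\Gamma_1\cong\partial P\Gamma_2$ preserving orientation, reduce each $\Gamma_i$ to a normal form $N_i$ by (a); since the moves preserve the oriented boundary, $\partial PN_1\cong\partial PN_2$, so $N_1=N_2$ by injectivity.

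\emph{Existence.} I would first apply only the complexity-reducing moves --- blowing down a $\pm1$-weighted vertex lying in a linear chain, absorbing a $0$-chain, splitting off a $0$-weighted final vertex --- until none is available; since each strictly decreases the number of vertices, this halts. A short case analysis on a terminal graph $\Gamma$ shows that if $\Gamma$ is neither $\varnothing$ nor a single $0$-vertex and is not already in normal form, then every vertex in every maximal linear chain has weight $\le -2$ or $\ge 2$, and the sole obstruction is the occurrence of a weight $\ge 2$ (a weight in $\{-1,0,1\}$ inside a linear chain would still permit one of the reducing moves). Each offending maximal linear chain is then normalized, by the classical continued-fraction reduction of a linear chain (exactly as in the lens-space case), to the form in which every weight is $\le -2$; the only care needed is at the one or two branch vertices the chain abuts, whose weights may change --- which is harmless, since branch vertices lie in no linear chain. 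An auxiliary blow-up is sometimes required so that a chain does not collapse entirely (compare: a single $(+n)$-weighted vertex normalizes to a chain of $n-1$ vertices of weight $-2$), and deleting a leg can drop a branch vertex's valence to two and merge two chains, so these normalizations must be interleaved with further reductions; that the whole procedure still terminates requires a more refined complexity than the vertex count, which I would take from \cite{Neumann:1}.

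\emph{Uniqueness.} The substantive point is that a graph in normal form is recoverable \emph{canonically} from the oriented $3$-manifold $\partial P\Gamma$. As $\partial P\Gamma$ is a graph manifold, I would invoke the uniqueness of its torus (JSJ) decomposition: the Seifert pieces correspond to the branch vertices of the normal form together with the linear-chain legs hanging off them (the unnormalized Seifert invariants of a piece recording the branch weight and the continued fractions of its legs), and the decomposing tori correspond to the maximal linear chains between two branch vertices (the gluing matrices recording those chains' continued fractions). The normal-form requirement that every weight in a linear chain be $\le -2$ is exactly what makes all of these continued fractions --- hence the reconstructed graph --- unique; the separate clause for $\varnothing$ and the single $0$-vertex accounts for $S^3$ and $S^1\times S^2$, and lens-space boundaries are pinned down by the classical uniqueness of the continued-fraction expansion with all entries $\le -2$. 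This yields injectivity of $N\mapsto\partial PN$, hence (b).

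The real obstacle is the rigidity underlying (b): that a plumbed $3$-manifold presented by a normal-form graph determines that graph. This rests on Waldhausen's classification of graph manifolds (in particular the essential uniqueness of Seifert fibrations) and on the Jaco--Shalen--Johannson uniqueness of the torus decomposition; I would cite these from \cite{Neumann:1}. Everything else is the continued-fraction bookkeeping of the existence step, whose only delicate ingredient is the termination of the interleaved normalizations.
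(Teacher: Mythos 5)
The paper does not prove this result: it is quoted verbatim from Neumann \cite{Neumann:1}, so there is no in-paper argument to compare against. Your sketch correctly reproduces the structure of Neumann's actual proof — existence by exhausting the vertex-reducing moves and then normalizing the remaining linear chains by continued-fraction manipulations (interleaved with further reductions), and uniqueness by reconstructing the normal form from the Waldhausen/torus decomposition of the resulting graph manifold — and it honestly defers the two genuinely delicate points, namely termination of the interleaved normalization and the exceptional manifolds with non-unique Seifert fibrations or degenerate torus decompositions, to the reference. That is the right shape of argument; just note that the bulk of Neumann's work lies precisely in those exceptional cases, of which $S^3$, $S^1\times S^2$ and the lens spaces are only the first instances.
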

\begin{re}
 We point out that using this theorem one can specify a certain class of plumbed 3-manifolds
 simply by describing the shape of the plumbing graph in its normal form. In particular we will
 see at the end of this section that lens spaces and some Seifert manifolds admit such a description.
\end{re}

\subsection{The continued fraction of a plumbing graph}
In this section, following \cite{Neumann:2} we introduce some additional data associated to a
plumbing graph. As we have seen to any plumbing graph $\Gamma$ we can associate an integral
symmetric bilinear form $Q_{\Gamma}$. All the usual invariants of $Q_{\Gamma}$ will 
be denoted referring only to the graph. In particular rank, signature and determinant
will be denoted respectively by $\textrm{rk}\Gamma$, 
$(b_+\Gamma,b_-\Gamma,b_0\Gamma)$ and $det\Gamma$.

Let $(\Gamma,v)$ be a connected \emph{rooted plumbing graph}, i.e. 
a plumbing graph together with the choice of a particular vertex.
If we remove from $\Gamma$ the vertex $v$ and all the corresponding edges we obtain a plumbing 
graph $\Gamma_v$ which is the disjoint union of some trees $\Gamma_1,\dots,\Gamma_k$ 
($k$ is the valency of $v$). Every such tree has a distinguished vertex $v_j$ which is the one 
adjacent to $v$.
\begin{de}
 With the notation above we define the \emph{continued fraction} of $\Gamma$ as
 $$
 cf(\Gamma):=\frac{det\Gamma}{det\Gamma_v}\in\mathbb{Q}\cup\{\infty\}
 $$
\end{de}
We put $\alpha/0=\infty$ for each $\alpha\in\mathbb{Q}$.
\begin{re}
 Note that $cf(\Gamma)$ depends on the \emph{rooted} plumbing graph $(\Gamma,v)$. By abusing notation we do
 not indicate this dependence explicitely. In the sequel, it will always be clear from the context which vertex has been chosen.
\end{re}
\begin{Pro}\label{cf}\cite{Neumann:2}
 If the weight of the distinguished vertex is $b\in\mathbb{Z}$ then
 $$
 det\Gamma=b\cdot det\Gamma_v-\sum_{i=1}^{k} \left(det\Gamma_{v_i}\prod_{j\neq i}det\Gamma_j\right)
 $$
 and
 $$
 cf (\Gamma)=b-\sum_{i=1}^{k}\frac{1}{cf(\Gamma_i)}.
 $$
\end{Pro}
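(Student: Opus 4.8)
The plan is to prove the determinant formula by a single \emph{bordered determinant} expansion along the root vertex, and then to deduce the continued-fraction formula by dividing through. To set up, I would fix an ordering of the vertices of $\Gamma$ in which the root $v$ comes first, followed by all vertices of $\Gamma_1$, then those of $\Gamma_2$, and so on, and within each $\Gamma_i$ list its distinguished vertex $v_i$ (the one adjacent to $v$) first. Since $\Gamma$ is a tree, removing $v$ disconnects $\Gamma_1,\dots,\Gamma_k$, so in this basis
$$
M_\Gamma=\begin{pmatrix} b & w^{T}\\ w & B\end{pmatrix},\qquad B=M_{\Gamma_1}\oplus\cdots\oplus M_{\Gamma_k},
$$
where $B$ is block diagonal and $w$ is the concatenation of vectors $e^{(1)},\dots,e^{(k)}$, with $e^{(i)}$ the first standard basis vector of the coordinate block corresponding to $\Gamma_i$; this just records that $v$ is joined to each $v_i$ by a single edge and to nothing else.

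Next I would invoke the elementary identity $\det\begin{pmatrix} b & w^{T}\\ w & B\end{pmatrix}=b\det B-w^{T}(\operatorname{adj}B)\,w$, valid for every square matrix $B$ and vector $w$: it is the Schur complement formula $\det=\det B\,(b-w^{T}B^{-1}w)$ when $B$ is invertible, and persists in general because both sides are polynomials in the entries (alternatively it is a direct cofactor expansion along the first row and column). Here $\det B=\prod_i\det\Gamma_i=\det\Gamma_v$. For the quadratic term, the adjugate of a block-diagonal matrix is block diagonal, with $(i,i)$-block $\bigl(\prod_{j\neq i}\det M_{\Gamma_j}\bigr)\operatorname{adj}(M_{\Gamma_i})$; hence
$$
w^{T}(\operatorname{adj}B)\,w=\sum_{i=1}^{k}\Bigl(\prod_{j\neq i}\det\Gamma_j\Bigr)\bigl(\operatorname{adj}M_{\Gamma_i}\bigr)_{11}.
$$
Because $v_i$ was placed first, $\bigl(\operatorname{adj}M_{\Gamma_i}\bigr)_{11}$ equals the $(1,1)$ cofactor of $M_{\Gamma_i}$, i.e. the determinant of $M_{\Gamma_i}$ with the row and column of $v_i$ removed, which is $\det\Gamma_{v_i}$. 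Substituting gives the first displayed formula.

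For the continued-fraction identity, suppose first that $\det\Gamma_i\neq 0$ for all $i$. Dividing the determinant formula by $\det\Gamma_v=\prod_i\det\Gamma_i$ and using $cf(\Gamma)=\det\Gamma/\det\Gamma_v$ and $cf(\Gamma_i)=\det\Gamma_i/\det\Gamma_{v_i}$ gives
$$
cf(\Gamma)=b-\sum_{i=1}^{k}\frac{\det\Gamma_{v_i}}{\det\Gamma_i}=b-\sum_{i=1}^{k}\frac{1}{cf(\Gamma_i)}.
$$
If instead $\det\Gamma_i=0$ for some $i$, then $\det\Gamma_v=0$ and so $cf(\Gamma)=\infty$; using the conventions $\alpha/0=\infty$ and $b-\infty=\infty$ one checks directly that the right-hand side is then also $\infty$, so the identity holds throughout $\mathbb{Q}\cup\{\infty\}$.

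I expect the main obstacle to be purely bookkeeping: choosing the vertex ordering so that the off-diagonal entries in the root row are $1$'s sitting precisely in the columns of $v_1,\dots,v_k$ (with no stray signs), and keeping track of the adjugate of a direct sum. The one point that genuinely needs care is the degenerate case where some sub-plumbing has vanishing determinant, where one must verify that the $\infty$-conventions absorb every potential $0/0$ rather than introducing ambiguities; I would handle that by treating each vanishing factor separately.
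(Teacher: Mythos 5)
The paper offers no proof of this proposition --- it is quoted from Neumann's \emph{On bilinear forms represented by trees} --- so there is no internal argument to compare against; your write-up supplies the missing proof, and it is essentially the standard one. The determinant identity is handled correctly: with the root listed first and each distinguished vertex $v_i$ at the head of its block, $M_\Gamma$ has the bordered form you describe, the identity $\det M_\Gamma=b\det B-w^{T}\operatorname{adj}(B)\,w$ is valid (by Schur complement plus polynomial continuity, or by a double cofactor expansion), the adjugate of a direct sum is block diagonal with the blocks you state, and the $(1,1)$ entry of $\operatorname{adj}(M_{\Gamma_i})$ is indeed $+\det\Gamma_{v_i}$ since the relevant cofactor sign is $(-1)^{1+1}$. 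All signs and the $+1$ off-diagonal entries check out, and the formula agrees with direct computation on small examples.

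The one place where your argument overstates what it proves is the degenerate case of the continued-fraction identity. You assert that if some $\det\Gamma_i=0$ then the right-hand side is ``also $\infty$,'' but under the paper's convention $\alpha/0=\infty$ (which includes $0/0=\infty$) this can fail: if additionally $\det\Gamma_{v_i}=0$, then $cf(\Gamma_i)=0/0=\infty$, hence $1/cf(\Gamma_i)=0$, and the right-hand side can be a finite rational while the left-hand side is $\infty$. This situation is realizable --- take $\Gamma_i$ to be a path with weights $(0,c,0)$ rooted at its middle vertex, so that both $\det\Gamma_i$ and $\det\Gamma_{v_i}$ vanish. This is arguably a defect of the conventions in the statement rather than of your decomposition, and it never arises in the paper's applications (there each $\Gamma_i$ is negative definite, so $\det\Gamma_i\neq 0$), but the phrase ``one checks directly'' conceals a case in which the check does not go through; the continued-fraction identity should be asserted only when no $\Gamma_i$ is simultaneously degenerate in both senses, or the conventions at $\infty$ must be spelled out more carefully.
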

\subsection{Reversing the orientation}
Let $\Gamma$ be a plumbing graph in normal form. In this section, following \cite{Neumann:1},
we explain how to compute the normal form for the plumbed 3-manifold 
$-\partial P\Gamma$, i.e. $\partial P\Gamma$ with reversed orientation. We call this plumbing
graph the \emph{dual graph} of $\Gamma$ and we denote it with $\Gamma^*$.

 For a vertex $v$ of a plumbing graph which is not on a linear chain we define the quantity
$c(v)$ to be the number of linear chains adjacent to $v$, i.e. the number of vertices 
belonging to a linear chain that are linked to $v$. For instance in the graph

  \[
  \begin{tikzpicture}[xscale=1.0,yscale=-0.7]
    \node (A0_0) at (0, 0) {$\bullet$};
    \node (A0_3) at (3, 0) {$\bullet$};
    \node (A1_1) at (1, 1) {$\bullet$};
    \node (A1_2) at (2, 1) {$\bullet$};
    \node (A2_0) at (0, 2) {$\bullet$};
    \node (A2_3) at (3, 2) {$\bullet$};
    \path (A2_0) edge [-] node [auto] {$\scriptstyle{}$} (A1_1);
    \path (A1_2) edge [-] node [auto] {$\scriptstyle{}$} (A2_3);
    \path (A1_2) edge [-] node [auto] {$\scriptstyle{}$} (A0_3);
    \path (A1_1) edge [-] node [auto] {$\scriptstyle{}$} (A1_2);
    \path (A0_0) edge [-] node [auto] {$\scriptstyle{}$} (A1_1);
  \end{tikzpicture}
  \]
both the trivalent vertices have $c=2$.
We indicate with $(\dots,-2^{[a]},\dots)$ a portion of a string with a $-2$-chain of length $a>0$. 
\begin{te}\label{normalform}\cite{Neumann:1}
Let $\Gamma$ be a plumbing graph in normal form. Its dual graph $\Gamma^*$ can be obtained as 
follows. 
The weight $w(v)$ of every vertex which is not on a linear chain is replaced with
$-w(v)-c(v)$. Every maximal linear chain of the form
  \[
  \begin{tikzpicture}[xscale=1.5,yscale=-0.5]
    \node (A1_1) at (1, 1) {$a_1$};
    \node (A1_2) at (2, 1) {$a_2$};
    \node (A1_4) at (4, 1) {$a_n$};
    \node (A2_0) at (0, 2) {$\dots$};
    \node (A2_1) at (1, 2) {$\bullet$};
    \node (A2_2) at (2, 2) {$\bullet$};
    \node (A2_3) at (3, 2) {$\dots$};
    \node (A2_4) at (4, 2) {$\bullet$};
    \node (A2_5) at (5, 2) {$\dots$};
    \path (A2_2) edge [-] node [auto] {$\scriptstyle{}$} (A2_3);
    \path (A2_1) edge [-] node [auto] {$\scriptstyle{}$} (A2_2);
    \path (A2_3) edge [-] node [auto] {$\scriptstyle{}$} (A2_4);
    \path (A2_0) edge [-] node [auto] {$\scriptstyle{}$} (A2_1);
    \path (A2_4) edge [-] node [auto] {$\scriptstyle{}$} (A2_5);
  \end{tikzpicture}
  \]
is replaced with
  \[
  \begin{tikzpicture}[xscale=1.5,yscale=-0.5]
    \node (A1_1) at (1, 1) {$b_1$};
    \node (A1_2) at (2, 1) {$b_2$};
    \node (A1_4) at (4, 1) {$b_m$};
    \node (A2_0) at (0, 2) {$\dots$};
    \node (A2_1) at (1, 2) {$\bullet$};
    \node (A2_2) at (2, 2) {$\bullet$};
    \node (A2_3) at (3, 2) {$\dots$};
    \node (A2_4) at (4, 2) {$\bullet$};
    \node (A2_5) at (5, 2) {$\dots$};
    \path (A2_2) edge [-] node [auto] {$\scriptstyle{}$} (A2_3);
    \path (A2_1) edge [-] node [auto] {$\scriptstyle{}$} (A2_2);
    \path (A2_3) edge [-] node [auto] {$\scriptstyle{}$} (A2_4);
    \path (A2_0) edge [-] node [auto] {$\scriptstyle{}$} (A2_1);
    \path (A2_4) edge [-] node [auto] {$\scriptstyle{}$} (A2_5);
  \end{tikzpicture}
  \]
where the weights are determined as follows. If
$$
(a_1,\dots,a_n)=(-2^{[n_0]},-m_1-3,-2^{[n_1]},-m_2-3,\dots,-m_s-3,-2^{[n_s]})
$$
with $n_i\geq 0$ and $m_i\geq0$. Then
$$
(b_1,\dots,b_m)=(-n_0 -2,-2^{m_1},-n_1-3,\dots,-n_{s-1}-3,-2^{m_s},-n_s-2).
$$
\end{te}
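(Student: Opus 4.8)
The statement to be proved is Neumann's description of the dual (orientation-reversed) normal form, Theorem \ref{normalform}. Since the excerpt tells me I may assume everything stated earlier — in particular the existence and uniqueness of the normal form (Theorem \ref{exnormalform}), the three blow-down moves, 0-chain absorption, splitting, and the continued fraction formulas of Proposition \ref{cf} — the natural strategy is to build an explicit 4-manifold cobordism realizing $-\partial P\Gamma$ from the surgery picture and then reduce the resulting graph to normal form by the allowed moves.

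Here is the plan in order. First I would recall that a plumbed 3-manifold $\partial P\Gamma$ has a surgery presentation on a link of unknots with linking pattern given by $\Gamma$, and that reversing the orientation of the 3-manifold is effected on the surgery diagram by taking the mirror image, which negates all the framings and replaces $\Gamma$ by $-\Gamma$ (all weights negated). So $-\partial P\Gamma = \partial P(-\Gamma)$. The graph $-\Gamma$ is almost never in normal form — its linear-chain vertices now have weights $\geq 2$ — so the real content is to reduce $-\Gamma$ to normal form using blow-downs, 0-chain absorptions, and splittings, and to check that the output is exactly the graph described in the statement. Second, I would treat the two regimes separately: (a) vertices not on a linear chain, and (b) maximal linear chains. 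For (a): a non-chain vertex $v$ of $\Gamma$ with weight $w(v)$ becomes weight $-w(v)$ in $-\Gamma$; I introduce $c(v)$ auxiliary $(-1)$-vertices (one for each linear chain hanging off $v$), or rather I show that after the chain-reduction of part (b) produces a $(+1)$-vertex adjacent to $v$ at the near end of each reduced chain, blowing those down shifts $w$ down by $1$ exactly $c(v)$ times, giving $-w(v)-c(v)$. This is where the quantity $c(v)$ enters naturally. Third, and this is the heart of the argument, I would analyze a single maximal linear chain. After mirroring, a chain $(a_1,\dots,a_n)$ becomes $(-a_1,\dots,-a_n)$ with all entries $\geq 1$ (using that in normal form $a_i\le -2$). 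I then run the Riemenschneider-style "point-diagram" / continued-fraction duality: writing $-(a_1,\dots,a_n)$ as a string, one repeatedly blows down the interior $+1$'s (move 3) and the end $+1$'s (move 2), which is exactly the classical algorithm turning the continued fraction $[a_1,\dots,a_n]^-$ into its "dual" $[b_1,\dots,b_m]^-$. Concretely, I would verify by a bookkeeping induction on $s$ (the number of blocks of the form $-m_i-3$) that the normal-form pattern $(-2^{[n_0]},-m_1-3,-2^{[n_1]},\dots,-m_s-3,-2^{[n_s]})$ mirrors-and-reduces precisely to $(-n_0-2,-2^{m_1},-n_1-3,\dots,-n_{s-1}-3,-2^{m_s},-n_s-2)$, and separately handle the special cases where the chain is a single vertex or where $n=0$-length $(-2)$-runs sit at the ends (these govern whether a $(+1)$ survives at the chain's end to be pushed into the adjacent non-chain vertex, contributing to $c(v)$, versus the splitting move being triggered). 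Finally I would invoke uniqueness of the normal form (Theorem \ref{exnormalform}) to conclude that the reduced graph, being in normal form and diffeomorphic to $-\partial P\Gamma$, is \emph{the} normal form of $-\partial P\Gamma$; this also handles case (1) of the normal-form definition, where $\Gamma=\varnothing$ dualizes to the $0$-vertex ($S^1\times S^2$) and vice versa, matching $-w-c = -0-0$.

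The main obstacle I anticipate is the combinatorial bookkeeping in step three: keeping track, through a long sequence of interior and terminal blow-downs, of exactly how a block $-m_i-3$ flanked by $(-2)$-runs of lengths $n_{i-1}$ and $n_i$ transforms — in particular confirming that the "$-3$" separators and the "$-2$"-run lengths swap roles with the run lengths and block sizes in precisely the indicated way, and that the boundary blocks become $-n_0-2$ and $-n_s-2$ (not $-3$) because one blow-down at each end is "absorbed" differently. A clean way to control this is to phrase the whole reduction as an identity about negative continued fractions: $[\,\overline{a}\,]^- \cdot [\,\overline{b}\,]^- $-type duality, prove the one-block base case by hand, and then peel off one block at a time, at each stage checking the continued-fraction identity of Proposition \ref{cf} is preserved; uniqueness of normal form then forces the graph-level statement without my having to track every intermediate diagram. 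The interaction with the non-chain vertices — i.e. making sure the terminal $(+1)$'s created by chain reduction are exactly $c(v)$ in number at each $v$ and get blown down into $v$ — is a second, milder source of care, but it follows once the per-chain analysis pins down when a terminal $+1$ survives.
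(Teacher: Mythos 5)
The paper does not prove this statement at all: Theorem \ref{normalform} is quoted verbatim from \cite{Neumann:1} and used as a black box (the only related remark in the text is that Proposition \ref{I} ``can also be proved directly using Theorem \ref{normalform}''). So there is no in-paper argument to compare yours against; what you have written is a reconstruction of Neumann's own proof strategy, and in outline it is the right one: mirror the surgery diagram so that $-\partial P\Gamma=\partial P(-\Gamma)$ (for a tree the sign changes on the linking numbers can be absorbed by reorienting components, a point worth saying explicitly), reduce $-\Gamma$ to normal form with the calculus moves, and invoke uniqueness of the normal form.

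One step of your sketch is not literally executable as written. After negation, every vertex of a maximal linear chain has weight $\ge 2$ (since $a_i\le -2$ in normal form), so there are no $\pm1$-weighted vertices anywhere and nothing to ``blow down'': the reduction must begin with blow-\emph{ups} (the moves are stated as two-sided arrows, so this is legitimate), e.g.\ repeatedly inserting interior $(-1)$-vertices via $(a,b)\leftrightarrow(a-1,-1,b-1)$ to drive the positive weights down, or equivalently verifying the complementarity criterion of Proposition \ref{compeq}, namely that $(a_n,\dots,a_1,-1,b_1,\dots,b_m)$ reduces to the empty graph, which is the cleanest way to certify the Riemenschneider-dual pattern $(b_1,\dots,b_m)$ without tracking a long move sequence. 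Relatedly, your closing suggestion that a continued-fraction identity plus uniqueness of normal form ``forces the graph-level statement'' is too quick for non-linear graphs: continued fractions determine lens spaces but not general plumbing trees, so you still need the explicit move sequence (or the complementarity certificate chain by chain, together with the count of surviving terminal $(+1)$'s that produce the $-w(v)-c(v)$ correction at each node). With those two points repaired the plan is a faithful version of the standard proof.
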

The reason why we are interested in this construction of the dual graph of a plumbing 
graph in normal form will be clear in Section \ref{mainchapter}. Essentialy we are trying
to detect nullcobordant 3-manifolds using obstructions based on Donaldson's diagonalization theorem.
Since the property we want to detect does not depend on the orientation of a given 3-manifold
it is natural to examine both a plumbing graph $\Gamma$ and its dual $\Gamma^*$. Moreover,
the normal form is specifically defined to give a plumbing graph that minimizes the quantity
$b_+(\Gamma)$ among all plumbing graphs representing $\partial P\Gamma$ (see \cite{Neumann:2}
theorem 1.2).

We now introduce a quantity that will play an important role in the analysis developed from
Section \ref{3basic} till the end of the paper. 
\begin{de}
 Let $\Gamma$ be a plumbing graph in normal form, and let $v_1,\dots,v_n$ be its vertices.
 We define 
 $$
 I(\Gamma):=\sum_{i=1}^{n}-3-w(v_i).
 $$
\end{de}
The following Proposition is proved in \cite{Lisca:1}. It can also be proved directly using
Theorem \ref{normalform}.
\begin{Pro}\label{I}
 Let $\Gamma$ be a linear plumbing graph in normal form. We have
 $$
 I(\Gamma)+I(\Gamma^*)=-2.
 $$
\end{Pro}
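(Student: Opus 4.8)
The plan is to prove the identity by an explicit block-by-block computation, using the description of the dual graph provided by Theorem~\ref{normalform}. Since $\Gamma$ is linear and in normal form, it consists of a single maximal linear chain $(a_1,\dots,a_n)$ with every weight $a_i\le -2$. First I would dispose of the exceptional case in which all the $a_i$ equal $-2$: then $\Gamma^*$ is the single vertex of weight $-n-1$ (for $n=1$ this just says $\mathbb{RP}^3$ is amphichiral; for $n\ge 2$ it follows from the instance $(-(n+1))\mapsto (-2^{[n]})$ of Theorem~\ref{normalform} together with the fact that passing to the dual is an involution on normal forms, by Theorem~\ref{exnormalform}). In this case $I(\Gamma)=n\bigl(-3-(-2)\bigr)=-n$ and $I(\Gamma^*)=-3-(-n-1)=n-2$, so the two add to $-2$. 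From now on assume at least one weight is $\le -3$, so that $\Gamma$ genuinely matches the pattern of Theorem~\ref{normalform}.

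In that case write
$$(a_1,\dots,a_n)=(-2^{[n_0]},-m_1-3,-2^{[n_1]},-m_2-3,\dots,-m_s-3,-2^{[n_s]})$$
with $s\ge 1$, all $n_i\ge 0$ and all $m_j\ge 0$. Computing $I(\Gamma)$ blockwise, each of the $\sum_{i=0}^{s}n_i$ vertices of weight $-2$ contributes $-3-(-2)=-1$, and each of the $s$ vertices of weight $-m_j-3$ contributes $-3-(-m_j-3)=m_j$, so
$$I(\Gamma)=-\sum_{i=0}^{s}n_i+\sum_{j=1}^{s}m_j.$$
By Theorem~\ref{normalform} the dual graph is
$$\Gamma^*=(-n_0-2,-2^{[m_1]},-n_1-3,-2^{[m_2]},\dots,-n_{s-1}-3,-2^{[m_s]},-n_s-2),$$
again a linear chain all of whose weights are $\le -2$, hence already in normal form, so the definition of $I$ applies to it directly. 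Evaluating it the same way, the two terminal vertices of weights $-n_0-2$ and $-n_s-2$ contribute $(n_0-1)+(n_s-1)$, the $s-1$ internal vertices of weights $-n_i-3$ (for $1\le i\le s-1$) contribute $\sum_{i=1}^{s-1}n_i$, and the $\sum_{j=1}^{s}m_j$ vertices of weight $-2$ contribute $-\sum_{j=1}^{s}m_j$, so
$$I(\Gamma^*)=\Big(\sum_{i=0}^{s}n_i\Big)-2-\sum_{j=1}^{s}m_j.$$
Adding the two displayed expressions for $I(\Gamma)$ and $I(\Gamma^*)$ immediately yields $I(\Gamma)+I(\Gamma^*)=-2$.

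The computation is routine bookkeeping; the part that requires care is the degenerate configurations. When some $n_i=0$ two non-$(-2)$ vertices of $\Gamma$ become adjacent, when some $m_j=0$ the same happens in $\Gamma^*$, and when $n_0=0$ or $n_s=0$ an extremal run of $(-2)$'s is empty; one must check that none of these coincidences changes the block-counts used above, which is the case. The genuinely separate point is the all-$(-2)$ chain, which does not fit the generic shape of Theorem~\ref{normalform} and has to be handled first, as done in the first paragraph. It is also worth recording explicitly that the graph $\Gamma^*$ produced by Theorem~\ref{normalform} is itself in normal form, since that is what allows one to apply the definition of $I$ to it without any further reduction.
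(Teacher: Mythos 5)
Your computation is correct, and it is exactly the argument the paper gestures at: the paper itself gives no proof, citing \cite{Lisca:1} (where the identity for complementary strings is established by induction on expansions) and remarking only that "it can also be proved directly using Theorem \ref{normalform}." Your blockwise bookkeeping carries out that direct route: $I(\Gamma)=-\sum_i n_i+\sum_j m_j$ and $I(\Gamma^*)=\sum_i n_i-2-\sum_j m_j$ are both right, and the degenerate configurations ($n_i=0$, $m_j=0$, short chains) indeed do not disturb the count. You are also right to isolate the all-$(-2)$ chain: it does not fit the pattern of Theorem \ref{normalform} as written (formally taking $s=0$ there would give the wrong dual $-n-2$ instead of $-n-1$), and your treatment of it via the involutivity of duality together with the instance $(-(n+1))\mapsto(-2^{[n]})$ is the clean way to dispose of it. The only thing either approach buys over the other is a matter of taste: Lisca's induction avoids invoking the full strength of Theorem \ref{normalform}, while your computation is self-contained once that theorem is granted.
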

\subsection{Lens spaces and Seifert manifolds}\label{lensseifert}
We briefly recall the plumbing description for lens spaces and Seifert manifolds.

In this context it is convenient to define a lens space as a closed 3-manifold whose
Heegaard genus is $\leq1$, the difference with the usual definition is that
we are including $S^3$ and $S^1\times S^2$. It is well known that every lens space
has a plumbing graph which is either empty ($S^3$) or a linear plumbing graph and that
every linear plumbing graph represents a lens space. It follows from Theorem \ref{exnormalform}
that the normal form of a plumbing graph representing a lens space other than $S^3$ or 
$S^1\times S^2$ is a linear plumbing graph 
   \[
  \begin{tikzpicture}[xscale=1.5,yscale=-0.7]
    \node (A0_0) at (0, 0) {$a_1$};
    \node (A0_1) at (1, 0) {$a_2$};
    \node (A0_3) at (3, 0) {$a_n$};
    \node (A1_0) at (0, 1) {$\bullet$};
    \node (A1_1) at (1, 1) {$\bullet$};
    \node (A1_2) at (2, 1) {$\dots$};
    \node (A1_3) at (3, 1) {$\bullet$};
    \path (A1_0) edge [-] node [auto] {$\scriptstyle{}$} (A1_1);
    \path (A1_2) edge [-] node [auto] {$\scriptstyle{}$} (A1_3);
    \path (A1_1) edge [-] node [auto] {$\scriptstyle{}$} (A1_2);
  \end{tikzpicture}
  \]

where $a_i\leq-2$ for each $i$.
It is easy to check that given a linear plumbing graph as above we have
$$
cf(\Gamma)=a_1-\frac{1}{a_2-\frac{1}{a_3+\dots}}=:[a_1,\dots,a_n]^-
$$
This fact justifies the name continued fraction. Note that $cf(\Gamma)<-1$.
The usual notation for a lens space $L(p,q)$, defined as $-\frac{p}{q}$-surgery on the unknot,
can recovered from the continued fraction as follows. Write $cf(\Gamma)=\frac{p}{-q}$, so that
$p>q\geq 1$ and $(p,q)=1$. Then $\partial P\Gamma=L(p,q)$.

A closed Seifert fibered manifold (see \cite{Neumann:3} and the references therein) 
can be described by its \emph{unnormalized Seifert invariants}
$$
(g;b;(\alpha_1,\beta_1,),\dots,(\alpha_k,\beta_k))
$$
where $g\geq0$ is the genus of the base surface, $b\in\mathbb{Z}$, $\alpha_i> 1$ and $(\alpha_i,\beta_i)=1$. This data, (which is not unique),
uniquely determines the manifold. When $g=0$ a surgery description for such a manifold is depicted in Figure \ref{seifertsurgery}.
\begin{figure}[H]  
\centering
  {\includegraphics[scale=1.0]{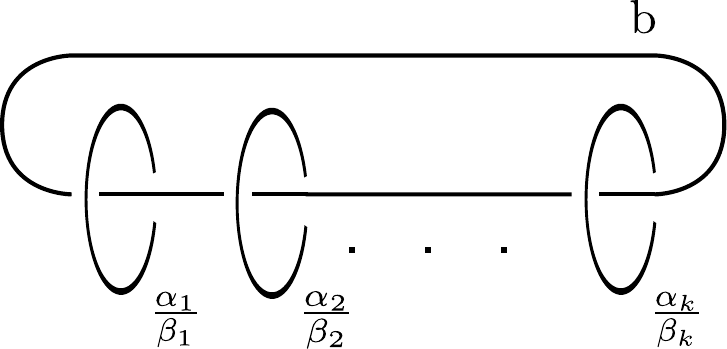}}
  \caption{A surgery description for the the Seifert fibered manifolds $(0;b;(\alpha_1,\beta_1,),\dots,(\alpha_k,\beta_k))$.}
  \label{seifertsurgery}
\end{figure}
The following theorem is proved in \cite{Neumann:3}.
\begin{te}\label{seiferttheorem}
 Let $\Gamma$ be the following starshaped plumbing graph in normal form.
   \[
  \begin{tikzpicture}[xscale=1.5,yscale=-0.5]
    \node (A0_1) at (1, 0) {$a^1_1$};
    \node (A0_3) at (3, 0) {$a^1_{n_1}$};
    \node (A1_1) at (1, 1) {$\bullet$};
    \node (A1_2) at (2, 1) {$\dots$};
    \node (A1_3) at (3, 1) {$\bullet$};
    \node (A2_0) at (0, 2) {$b$};
    \node (A2_1) at (1, 2) {$a^2_1$};
    \node (A2_3) at (3, 2) {$a^2_{n_2}$};
    \node (A3_0) at (0, 3) {$\bullet$};
    \node (A3_1) at (1, 3) {$\bullet$};
    \node (A3_2) at (2, 3) {$\dots$};
    \node (A3_3) at (3, 3) {$\bullet$};
    \node (A5_2) at (2, 5) {$\vdots$};
    \node (A6_1) at (1, 6) {$a^k_1$};
    \node (A6_3) at (3, 6) {$a^k_{n_k}$};
    \node (A7_1) at (1, 7) {$\bullet$};
    \node (A7_2) at (2, 7) {$\dots$};
    \node (A7_3) at (3, 7) {$\bullet$};
    \path (A7_2) edge [-] node [auto] {$\scriptstyle{}$} (A7_3);
    \path (A3_0) edge [-] node [auto] {$\scriptstyle{}$} (A3_1);
    \path (A1_1) edge [-] node [auto] {$\scriptstyle{}$} (A1_2);
    \path (A7_1) edge [-] node [auto] {$\scriptstyle{}$} (A7_2);
    \path (A3_2) edge [-] node [auto] {$\scriptstyle{}$} (A3_3);
    \path (A3_0) edge [-] node [auto] {$\scriptstyle{}$} (A1_1);
    \path (A3_0) edge [-] node [auto] {$\scriptstyle{}$} (A7_1);
    \path (A1_2) edge [-] node [auto] {$\scriptstyle{}$} (A1_3);
    \path (A3_1) edge [-] node [auto] {$\scriptstyle{}$} (A3_2);
  \end{tikzpicture}
  \]
Then $\partial P\Gamma$ is a Seifert manifold with unnormalized Seifert invariants
$$
(0,b;(\alpha_1,\beta_1),\dots,(\alpha_k,\beta_k))
$$
where
$$
\frac{\alpha_i}{\beta_i}=[a^i_1,\dots,a^i_{n_i}]^-.
$$
\end{te}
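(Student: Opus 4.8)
The plan is to prove the theorem by Kirby calculus, transforming the canonical surgery presentation of $\partial P\Gamma$ into the standard surgery presentation of the Seifert fibered space recorded in Figure \ref{seifertsurgery}. Recall from Section \ref{plumbedmanifolds} that the graph $\Gamma$ gives a framed link in $S^3$ on which surgery produces $\partial P\Gamma$: the central vertex contributes an unknot $U$ with integral framing $b$, and the $i$-th arm contributes a chain of unknots $K^i_1,\dots,K^i_{n_i}$, linked consecutively as Hopf links, with framings $a^i_1,\dots,a^i_{n_i}$, and with $K^i_1$ a meridian of $U$.

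First I would collapse each arm to a single curve, by induction on its length $n_i$. An arm of length one is already a meridian of $U$ with framing $a^i_1=[a^i_1]^-$. For the inductive step, view the sub-chain $K^i_2,\dots,K^i_{n_i}$ as a Hopf chain attached to $K^i_1$; by the inductive hypothesis it collapses to a single meridian of $K^i_1$ with rational framing $[a^i_2,\dots,a^i_{n_i}]^-$, the rest of the diagram being untouched. A single slam-dunk move then erases $K^i_1$ (which still carries the integral framing $a^i_1$), turns the surviving curve into a meridian of $U$, and changes its framing to
$$a^i_1-\frac{1}{[a^i_2,\dots,a^i_{n_i}]^-}=[a^i_1,\dots,a^i_{n_i}]^-=\frac{\alpha_i}{\beta_i},$$
the last equality being the definition of $\alpha_i/\beta_i$; this elementary step is precisely the continued-fraction recursion underlying Proposition \ref{cf}. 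Carrying this out for every arm turns the whole framed link into $U$ with framing $b$ together with $k$ disjoint meridians $\mu_1,\dots,\mu_k$ of $U$ with surgery coefficients $\alpha_i/\beta_i$. This is exactly the diagram of Figure \ref{seifertsurgery}, and since a framed link determines the result of surgery on it, $\partial P\Gamma$ is the Seifert fibered manifold $(0,b;(\alpha_1,\beta_1),\dots,(\alpha_k,\beta_k))$. The normal-form hypothesis enters only to guarantee that these are legitimate unnormalized Seifert invariants: every linear-chain weight is $\le-2$, hence each $[a^i_1,\dots,a^i_{n_i}]^-$ is a rational number $<-1$, which written in lowest terms has the form $\alpha_i/\beta_i$ with $\alpha_i>1$ and $\gcd(\alpha_i,\beta_i)=1$.

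The part I expect to demand the most care is not the Kirby calculus but the matching of background conventions. The orientations of the Hopf links, the sign convention for framings, and the choice of which vertex of each arm is taken adjacent to the central vertex all affect the outcome, because reversing a continued fraction replaces $\beta_i$ by its multiplicative inverse modulo $\alpha_i$ and so alters the Seifert invariant. One must pin these conventions down so that the coefficient produced on $\mu_i$ is $[a^i_1,\dots,a^i_{n_i}]^-=\alpha_i/\beta_i$ for the same $\beta_i$ that appears in Figure \ref{seifertsurgery}, and check that this is consistent with the definition of unnormalized Seifert invariants adopted in \cite{Neumann:3}; this is exactly the bookkeeping carried out there.
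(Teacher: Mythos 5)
The paper offers no proof of this theorem at all — it is quoted verbatim from \cite{Neumann:3} — so there is no internal argument to compare against; your slam-dunk induction is the standard proof of this fact and it is correct. Two small remarks. First, in the usual formulation of the slam-dunk it is the rationally framed meridian that gets absorbed, while the integrally framed component $K^i_1$ survives with new coefficient $a^i_1-1/[a^i_2,\dots,a^i_{n_i}]^-$; you describe the roles the other way around, but the resulting diagram (one meridian of $U$ with coefficient $[a^i_1,\dots,a^i_{n_i}]^-$) is the same, so this is cosmetic. Second, your closing caveat is well placed: the only genuinely delicate point is matching sign and orientation conventions so that the coefficient $[a^i_1,\dots,a^i_{n_i}]^-=\alpha_i/\beta_i$ produced on the $i$-th meridian agrees with the $(\alpha_i,\beta_i)$ appearing in Figure \ref{seifertsurgery}, and the normal-form hypothesis ($a^i_j\leq -2$) is indeed what guarantees both that every slam-dunk coefficient is nonzero and that $\alpha_i>1$.
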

The quantity
$$
e(Y):=b-\sum_{i=1}^{k}\frac{\beta_i}{\alpha_i}
$$
is called the \emph{Euler number} of $Y$. It is easy to check that 
\begin{equation}\label{eulerfraction}
 e(Y)=cf(\Gamma)
\end{equation}
where $\Gamma$ is the plumbing graph in normal form associated to $Y$.
\begin{de}\label{complementary}
 Let $\Gamma_1$ and $\Gamma_2$ be two linear plumbing graphs in normal form.
   \[
  \begin{tikzpicture}[xscale=1.0,yscale=-0.5]
    \node (A0_1) at (1, 0) {$a_1$};
    \node (A0_2) at (2, 0) {$a_2$};
    \node (A0_4) at (4, 0) {$a_n$};
    \node (A1_0) at (0, 1) {$\Gamma_1:=$};
    \node (A1_1) at (1, 1) {$\bullet$};
    \node (A1_2) at (2, 1) {$\bullet$};
    \node (A1_3) at (3, 1) {$\dots$};
    \node (A1_4) at (4, 1) {$\bullet$};
    \path (A1_2) edge [-] node [auto] {$\scriptstyle{}$} (A1_3);
    \path (A1_3) edge [-] node [auto] {$\scriptstyle{}$} (A1_4);
    \path (A1_1) edge [-] node [auto] {$\scriptstyle{}$} (A1_2);
  \end{tikzpicture}
  \]
  \[
  \begin{tikzpicture}[xscale=1.0,yscale=-0.5]
    \node (A0_1) at (1, 0) {$b_1$};
    \node (A0_2) at (2, 0) {$b_2$};
    \node (A0_4) at (4, 0) {$b_m$};
    \node (A1_0) at (0, 1) {$\Gamma_2:=$};
    \node (A1_1) at (1, 1) {$\bullet$};
    \node (A1_2) at (2, 1) {$\bullet$};
    \node (A1_3) at (3, 1) {$\dots$};
    \node (A1_4) at (4, 1) {$\bullet$};
    \path (A1_2) edge [-] node [auto] {$\scriptstyle{}$} (A1_3);
    \path (A1_3) edge [-] node [auto] {$\scriptstyle{}$} (A1_4);
    \path (A1_1) edge [-] node [auto] {$\scriptstyle{}$} (A1_2);
  \end{tikzpicture}
  \]
$\Gamma_1$ and $\Gamma_2$ are said to be \emph{complementary} if $\Gamma_2=\Gamma_1^*$.
\end{de}
The following proposition has an elementary proof that we leave to the reader.
\begin{Pro}\label{compeq}
 With the notation of Definition \ref{complementary} the following conditions are equivalent
 \begin{enumerate}
  \item $\Gamma_1$ and $\Gamma_2$ are complementary
  \item $\partial P(  
  \begin{tikzpicture}[xscale=1.0,yscale=-0.5]
    \node (A0_0) at (0, 0) {$b_m$};
    \node (A0_2) at (2, 0) {$b_1$};
    \node (A0_3) at (3, 0) {$-1$};
    \node (A0_4) at (4, 0) {$a_1$};
    \node (A0_6) at (6, 0) {$a_n$};
    \node (A1_0) at (0, 1) {$\bullet$};
    \node (A1_1) at (1, 1) {$\dots$};
    \node (A1_2) at (2, 1) {$\bullet$};
    \node (A1_3) at (3, 1) {$\bullet$};
    \node (A1_4) at (4, 1) {$\bullet$};
    \node (A1_5) at (5, 1) {$\dots$};
    \node (A1_6) at (6, 1) {$\bullet$};
    \path (A1_4) edge [-] node [auto] {$\scriptstyle{}$} (A1_5);
    \path (A1_0) edge [-] node [auto] {$\scriptstyle{}$} (A1_1);
    \path (A1_5) edge [-] node [auto] {$\scriptstyle{}$} (A1_6);
    \path (A1_1) edge [-] node [auto] {$\scriptstyle{}$} (A1_2);
    \path (A1_2) edge [-] node [auto] {$\scriptstyle{}$} (A1_3);
    \path (A1_3) edge [-] node [auto] {$\scriptstyle{}$} (A1_4);
  \end{tikzpicture}  
)=S^1\times S^2$
\item $\frac{1}{cf(\Gamma_1)}+\frac{1}{cf(\Gamma_2)}=-1$
 \end{enumerate}
\end{Pro}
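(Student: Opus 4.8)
The plan is to prove the two equivalences $(2)\Leftrightarrow(3)$ and $(1)\Leftrightarrow(3)$, which together give the proposition. Throughout, note that since $\Gamma_1,\Gamma_2$ are linear plumbing graphs in normal form, every weight of $\Gamma_i$ is $\le-2$, so $Q_{\Gamma_i}$ and the intersection forms of all of its sub-chains are negative definite; in particular $det\Gamma_i\neq0$ and $cf(\Gamma_i)$ (computed with the root at an end of the chain, as in the definition of $[a_1,\dots,a_n]^-$) is a well-defined rational number $<-1$. Write $\Gamma_1'$, resp.\ $\Gamma_2'$, for the linear chain obtained from $\Gamma_1$, resp.\ $\Gamma_2$, by deleting the end vertex that is adjacent to the central $(-1)$ in the graph $G$ of condition $(2)$; with this choice of root one has $cf(\Gamma_i)=det\Gamma_i/det\Gamma_i'$.

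For $(2)\Leftrightarrow(3)$: call $G$ the linear plumbing graph of condition $(2)$, i.e.\ the chain $b_m,\dots,b_1,-1,a_1,\dots,a_n$. The boundary of any linear plumbing is a lens space in the sense of Section \ref{lensseifert} (Heegaard genus $\le1$), and such a manifold is $S^1\times S^2$ exactly when its first homology is infinite; since $M_G$ is a presentation matrix for $H_1(\partial PG;\mathbb{Z})$, condition $(2)$ is equivalent to $det\,G=0$. Now apply Proposition \ref{cf} to $G$ rooted at the central $(-1)$-vertex: it has valency two, its two branches are isomorphic to $\Gamma_1$ and $\Gamma_2$ (reversing a linear chain changes none of the relevant determinants), and the vertices adjacent to the root are the $a_1$- and $b_1$-vertices, whose deletion leaves $\Gamma_1'$ and $\Gamma_2'$. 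Hence Proposition \ref{cf} gives
\[
det\,G=-det\Gamma_1\cdot det\Gamma_2-det\Gamma_1\cdot det\Gamma_2'-det\Gamma_2\cdot det\Gamma_1'.
\]
Dividing by $det\Gamma_1\cdot det\Gamma_2\neq0$, the equation $det\,G=0$ becomes $1+\tfrac{1}{cf(\Gamma_2)}+\tfrac{1}{cf(\Gamma_1)}=0$, which is precisely condition $(3)$.

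For $(1)\Leftrightarrow(3)$: by the construction of the dual graph (Theorem \ref{normalform}), $\Gamma_1^*$ is the normal form of $-\partial P\Gamma_1$. Write $cf(\Gamma_1)=\tfrac{p}{-q}$ with $p>q\ge1$ and $(p,q)=1$, so that $\partial P\Gamma_1=L(p,q)$ and hence $-\partial P\Gamma_1=L(p,p-q)$; by the Hirzebruch--Jung / Riemenschneider duality of continued fractions — equivalently a direct computation from Theorem \ref{normalform} compatible with the conventions of Section \ref{lensseifert}, just as in Proposition \ref{I} — the normal form of $L(p,p-q)$ has continued fraction $\tfrac{p}{-(p-q)}$, so $cf(\Gamma_1^*)=\tfrac{p}{-(p-q)}$ and
\[
\frac{1}{cf(\Gamma_1)}+\frac{1}{cf(\Gamma_1^*)}=\frac{-q}{p}+\frac{-(p-q)}{p}=-1,
\]
giving $(1)\Rightarrow(3)$. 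Conversely, condition $(3)$ forces $cf(\Gamma_2)=\tfrac{p}{-(p-q)}$; this continued fraction recovers the pair $(p,p-q)$, hence the oriented lens space $L(p,p-q)$, and by uniqueness of the normal form (Theorem \ref{exnormalform}) the only linear plumbing graph in normal form with this continued fraction is $\Gamma_1^*$, so $\Gamma_2=\Gamma_1^*$, i.e.\ $(3)\Rightarrow(1)$.

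The only genuinely non-formal input is the computation of $det\,G$ via Proposition \ref{cf}; everything else is either a standard fact about linear plumbings (their boundaries are lens spaces, distinguished from $S^1\times S^2$ by the determinant) or the classical continued-fraction duality already invoked for Proposition \ref{I}. I expect the only real subtlety to be the bookkeeping of which end of each chain serves as the root of its continued fraction, so that the sign in $(3)$ and the direction of the chain $\Gamma_2=\Gamma_1^*$ in Definition \ref{complementary} match up correctly.
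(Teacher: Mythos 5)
Your proposal is correct. The paper explicitly leaves this proposition to the reader, so there is no proof to compare against; your argument (reduce $(2)$ to $\det G=0$, expand $\det G$ via Proposition \ref{cf} rooted at the $-1$-vertex, and handle $(1)\Leftrightarrow(3)$ by the Riemenschneider duality of negative continued fractions) is exactly the intended elementary route. The only loose stitch is the final appeal to Theorem \ref{exnormalform}: uniqueness of the normal form identifies $\Gamma_2$ and $\Gamma_1^*$ only as undirected strings, whereas Definition \ref{complementary} requires matching directions; the cleaner justification is that a rational number $<-1$ has a unique expansion $[b_1,\dots,b_m]^-$ with all $b_i\leq -2$, which pins down the directed string from $cf(\Gamma_2)=\frac{p}{-(p-q)}$ and closes the argument as you intend.
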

\begin{re}
 Note that, strictly speaking, the definition of complementary linear graphs should involve 
 an extra bit of data. In Definition \ref{complementary} we implicitly fixed
 an initial vertex and a final one on each graph (as suggested by the indexing of the weights). 
 Only in this way the condition $\Gamma_2=\Gamma_1^*$ makes sense.
\end{re}
It is useful to extend in the obvious way the notion of complementary linear graphs to that of 
\emph{complementary legs} in a starshaped plumbing graph. It follows by Theorem \ref{seifert}
and Proposition \ref{compeq} that pairs of complementary legs correspond to pairs
of Seifert invariants $(\alpha_i,\beta_i)$ and $(\alpha_j,\beta_j)$ that satisfy
$$
\frac{\beta_i}{\alpha_i}+\frac{\beta_j}{\alpha_j}=-1
$$
Note that, in general, this formula does not hold if we do not compute the Seifert invariants from
the weights of a star-shaped plumbing graph in normal form as in Theorem \ref{seiferttheorem}. 
We say that a pair of Seifert invariants
are complementary if they correspond to complementary legs in the associated star-shaped plumbing graph in normal form.
\subsection{The linear complexity of a tree}

Let $\Gamma$ be a plumbing graph in normal form. Let $lc(\Gamma)$ be the cardinality of the
smallest subset of vertices we need to remove from $\Gamma$ in order to obtain a linear graph.
We call $lc(\Gamma)$ the \emph{linear complexity} of $\Gamma$ and we set $lc(\varnothing)=-1$. We stress the fact that because of the
uniqueness of the normal form of a plumbing graph it makes sense to talk about the linear
complexity of a plumbed 3-manifold. Note that:
\begin{itemize}
 \item $lc(\Gamma)=0$ if and only if $\partial P\Gamma$ is a connected sum of lens spaces
 \item if $\partial(P\Gamma)$ is a Seifert manifold then $lc(\Gamma)=1$
 \item $lc(\Gamma_1\sqcup\Gamma_2)=lc(\Gamma_1)+lc(\Gamma_2)$.
\end{itemize}

\begin{Pro}\label{cfdet}
 Let $\Gamma$ be a plumbing graph in normal form such that $lc(\Gamma)=1$ 
 and for at least one choice
 of a vector $v\in\Gamma$ the graph $\Gamma_v$ is linear and negative definite.
 Then
 $$
 det\Gamma=0\Longleftrightarrow cf\Gamma=0.
 $$
\end{Pro}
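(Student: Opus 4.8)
The plan is to read off the equivalence directly from the definition $cf(\Gamma)=det\Gamma/det\Gamma_v$, once we know that the denominator does not vanish; the role of the hypotheses is precisely to guarantee this.

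First I would fix a vertex $v$ as in the statement, so that $\Gamma_v$ is linear and negative definite. Such a $v$ exists: the condition $lc(\Gamma)=1$ already produces a vertex whose removal leaves a linear graph, and negative definiteness is then an extra hypothesis on that graph. Decompose $\Gamma_v=\Gamma_1\sqcup\cdots\sqcup\Gamma_k$ into its connected components, where $k$ is the valency of $v$; each $\Gamma_i$ is a nonempty linear chain (it contains the vertex of $\Gamma$ adjacent to $v$) whose intersection form $Q_{\Gamma_i}$ is negative definite.

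The one real point is that $det\Gamma_v\neq 0$. If $det\Gamma_i=0$ for some $i$, then the matrix $M_{\Gamma_i}$ would have a nonzero vector $x$ in its kernel, whence $Q_{\Gamma_i}(x,x)=0$ with $x\neq 0$, contradicting negative definiteness; so each $det\Gamma_i\neq 0$. Since $M_{\Gamma_v}$ is block diagonal with blocks $M_{\Gamma_1},\dots,M_{\Gamma_k}$, we get $det\Gamma_v=\prod_{i=1}^{k}det\Gamma_i\neq 0$. Consequently $cf(\Gamma)=det\Gamma/det\Gamma_v$ is a genuine rational number (not $\infty$), and dividing by the fixed nonzero integer $det\Gamma_v$ gives $det\Gamma=0\Longleftrightarrow cf(\Gamma)=0$, as claimed.

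I do not expect any serious obstacle here. If anything, the point worth stressing is why the hypotheses cannot be dropped: without negative definiteness one could have $det\Gamma_v=0$, in which case $cf(\Gamma)=\infty$ by the convention $\alpha/0=\infty$ no matter whether $det\Gamma$ vanishes, and the stated equivalence would fail. So the content of the Proposition is really the observation that ``$\Gamma_v$ negative definite'' forces the denominator in the generalized continued fraction to be nonzero.
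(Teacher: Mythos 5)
Your proof is correct, and it is in fact a streamlined version of what the paper does. The paper routes the argument through Proposition \ref{cf}: it writes $det\Gamma=b\cdot det\Gamma_v-\sum_i\bigl(det\Gamma_{v_i}\prod_{j\neq i}det\Gamma_j\bigr)$, divides by $det\Gamma_v$ to get $cf(\Gamma)=b-\sum_i 1/cf(\Gamma_i)$, and justifies the division by noting that each $\Gamma_i$ is linear and negative definite, hence has nonvanishing continued fraction. You bypass the recursion entirely and read the equivalence straight off the definition $cf(\Gamma)=det\Gamma/det\Gamma_v$, observing that $M_{\Gamma_v}$ is block diagonal with negative definite (hence nonsingular) blocks, so $det\Gamma_v\neq 0$. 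Both arguments rest on exactly the same point --- negative definiteness forces the denominator to be nonzero --- but yours makes that the only step, which is cleaner; your closing remark about why the hypothesis cannot be dropped (otherwise $cf(\Gamma)=\infty$ by the paper's convention) is also a useful clarification that the paper leaves implicit.
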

\begin{proof}
 Assume that $\det\Gamma=0$. Let $v\in\Gamma$ be a vertex such that $\Gamma_v$ is 
 linear and negative definite. 
 By Proposition \ref{cf} we have
 \begin{equation}\label{zerocf}
  b\cdot det\Gamma_v-\sum_{i=1}^{k} \left(det\Gamma_{v_i}\prod_{j\neq i}det\Gamma_j\right)=0.
 \end{equation} 
 To obtain an expression for $cf(\Gamma)$ we divide both terms of the above equation by $det\Gamma_v$
 and we get
 $$
 cf(\Gamma)=b-\sum_{i=1}^{k}\frac{1}{cf(\Gamma_i)}=0
 $$
 this last equality holds because every $\Gamma_i$ is a linear negative definite graph
 and therefore its continued fraction is non vanishing.
 The converse is completely analogous.
\end{proof}
In Section \ref{mainchapter} we will deal mainly with plumbed 3-manifolds with $lc(\Gamma)=1$.
A generic plumbing graph $\Gamma$ with $lc(\Gamma)=1$ looks like the one shown below.
    \[
  \begin{tikzpicture}[xscale=1.5,yscale=-0.6]
    \node (A0_0) at (0, 0) {$\bullet$};
    \node (A0_1) at (1, 0) {$\dots$};
    \node (A0_2) at (2, 0) {$\bullet$};
    \node (A0_5) at (5, 0) {$\bullet$};
    \node (A0_6) at (6, 0) {$\dots$};
    \node (A0_7) at (7, 0) {$\bullet$};
    \node (A1_4) at (4, 1) {$\bullet$};
    \node (A2_0) at (0, 2) {$\bullet$};
    \node (A2_1) at (1, 2) {$\dots$};
    \node (A2_2) at (2, 2) {$\bullet$};
    \node (A2_5) at (5, 2) {$\bullet$};
    \node (A2_6) at (6, 2) {$\dots$};
    \node (A2_7) at (7, 2) {$\bullet$};
    \node (A3_2) at (2, 3) {$\cdot$};
    \node (A3_3) at (3, 3) {$\bullet$};
    \node (A3_4) at (4, 3) {$\cdot$};
    \node (A4_2) at (2, 4) {$\cdot$};
    \node (A4_4) at (4, 4) {$\cdot$};
    \node (A5_2) at (2, 5) {$\cdot$};
    \node (A5_4) at (4, 5) {$\cdot$};
    \node (A6_2) at (2, 6) {$\cdot$};
    \node (A6_5) at (5, 6) {$\bullet$};
    \node (A6_6) at (6, 6) {$\dots$};
    \node (A6_7) at (7, 6) {$\bullet$};
    \node (A7_0) at (0, 7) {$\bullet$};
    \node (A7_1) at (1, 7) {$\dots$};
    \node (A7_2) at (2, 7) {$\bullet$};
    \node (A7_4) at (4, 7) {$\bullet$};
    \node (A8_5) at (5, 8) {$\bullet$};
    \node (A8_6) at (6, 8) {$\dots$};
    \node (A8_7) at (7, 8) {$\bullet$};
    \path (A2_1) edge [-] node [auto] {$\scriptstyle{}$} (A2_2);
    \path (A7_2) edge [-] node [auto] {$\scriptstyle{}$} (A3_3);
    \path (A0_6) edge [-] node [auto] {$\scriptstyle{}$} (A0_7);
    \path (A7_0) edge [-] node [auto] {$\scriptstyle{}$} (A7_1);
    \path (A8_6) edge [-] node [auto] {$\scriptstyle{}$} (A8_7);
    \path (A7_1) edge [-] node [auto] {$\scriptstyle{}$} (A7_2);
    \path (A6_5) edge [-] node [auto] {$\scriptstyle{}$} (A6_6);
    \path (A0_5) edge [-] node [auto] {$\scriptstyle{}$} (A0_6);
    \path (A1_4) edge [-] node [auto] {$\scriptstyle{}$} (A3_3);
    \path (A6_6) edge [-] node [auto] {$\scriptstyle{}$} (A6_7);
    \path (A1_4) edge [-] node [auto] {$\scriptstyle{}$} (A2_5);
    \path (A1_4) edge [-] node [auto] {$\scriptstyle{}$} (A0_5);
    \path (A8_5) edge [-] node [auto] {$\scriptstyle{}$} (A8_6);
    \path (A2_2) edge [-] node [auto] {$\scriptstyle{}$} (A3_3);
    \path (A7_4) edge [-] node [auto] {$\scriptstyle{}$} (A3_3);
    \path (A0_0) edge [-] node [auto] {$\scriptstyle{}$} (A0_1);
    \path (A7_4) edge [-] node [auto] {$\scriptstyle{}$} (A6_5);
    \path (A2_5) edge [-] node [auto] {$\scriptstyle{}$} (A2_6);
    \path (A7_4) edge [-] node [auto] {$\scriptstyle{}$} (A8_5);
    \path (A2_6) edge [-] node [auto] {$\scriptstyle{}$} (A2_7);
    \path (A2_0) edge [-] node [auto] {$\scriptstyle{}$} (A2_1);
    \path (A0_1) edge [-] node [auto] {$\scriptstyle{}$} (A0_2);
    \path (A0_2) edge [-] node [auto] {$\scriptstyle{}$} (A3_3);
  \end{tikzpicture}
  \]
Such a graph is made of a distinguished vertex $v$ and several linear components.
These linear components are joined to $v$ via a final vertex (on the left-hand side of the picture above)
or via an internal vertex (right-hand side). 
\end{section}
\section{Motivations and obstructions}\label{motiv}
In this section we start dealing with rational homology cobordisms. As a motivation, 
we first explain in Proposition \ref{motivation} how rational homology cobordisms of 3-manifolds are relevant
for link concordance problems. Then, in Proposition \ref{mainobstruction} we state our lattice theoretical obstruction which will be used
in the proof of Theorem \ref{main}.

Two closed, oriented 3-manifolds $Y_1$,$Y_2$ are \emph{rational homology 
cobordant} ( or $\mathbb{Q}$H-cobordant ) if there exists a smooth compact 
4-manifold $W$ such that:
\begin{itemize}
\item $\partial W=Y_1\cup - Y_2$
\item both inclusions $Y_i\rightarrow W$ induce isomorphisms
$H_*(Y_i;\mathbb{Q})\longleftrightarrow H_*(W;\mathbb{Q})$.
\end{itemize}
The set of oriented rational homology spheres up to rational homology 
cobordism is an abelian group with the operation induced by connected sum. 
We denote this group by $\Theta_{\mathbb{Q}}^3$, the zero element is given by 
(the equivalence class of) $S^3$. Note that $Y$ is $\mathbb{Q}$H- cobordant to
$S^3$ if and only if it bounds a smooth rational homology ball.

It is well known that if a rational homology sphere is obtained as the branched double cover along a slice knot then it bounds a rational homology ball. 
In the next proposition we make an analogous observation concerning
branched double covers along slice links with more than one component.
%\begin{lem}\label{kauf}
 %Let $L\subset S^3$ be a link. Suppose that $L$ bounds a properly embedded smooth surface $S\in D^4$ such that $\chi(S)>0$.
 %Let $Y$ be the double cover of $S^3$ branched along $L$. Then $b_1(M)\geq\chi(S)$. 
%\end{lem}
%\begin{proof}
 %It is shown in \cite{kauf} that $b_1(M)$ is a concordance invariant. 
 %The surface $S$ contains at least $\chi(S)$ discs.
 %Let 
%\end{proof}
\begin{Pro} \label{motivation}
Let $L\subset S^3$ be a link. Let $S\subset D^4$ be a properly embedded smooth surface without closed components
such that $\partial S=L$. Let $W$ be the double cover of $D^4$ branched along $S$. Assume that
$$
b_1(\partial W)\leq \chi(S)-1.
$$
Then $b_1(W)=\chi(S)-1$ and $b_2(W)=b_3(W)=0$. In particular, if $b_1(\partial W)>0$  we have an isomorphism
$$
H_*(W;\mathbb{Q})=H_*(\natural_{i=1}^{\chi(S)-1} S^1\times D^3;\mathbb{Q}).
$$
\end{Pro}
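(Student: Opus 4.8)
The plan is to compute the rational homology of $W$ directly from the branched-cover structure, using a transfer/Smith-type argument and the fact that $\chi(W)$ is determined by $\chi(D^4)$, $\chi(S)$ and the branching. First I would record the topological constraints: $W$ is a compact oriented $4$-manifold with $\partial W$ a connected (or possibly disconnected, but here we care about $b_1$) $3$-manifold, and $W$ is obtained as a double branched cover $p\colon W\to D^4$ with branch locus $S$. Since $D^4$ is contractible and $S$ has no closed components, the transfer argument for the $\mathbb{Z}/2$-action on $W$ (with quotient $D^4$ and fixed set $S$) gives that $H_*(W;\mathbb{Q})$ splits into the $(+1)$-eigenspace, which is $H_*(D^4;\mathbb{Q})=H_*(\mathrm{pt};\mathbb{Q})$, and a $(-1)$-eigenspace. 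The Euler characteristic computation $\chi(W)=2\chi(D^4)-\chi(S)=2-\chi(S)$ then pins down the alternating sum of the Betti numbers of $W$: since $b_0(W)=1$, we get $b_1(W)-b_2(W)+b_3(W)-b_4(W)=1-\chi(S)$, and as $W$ has nonempty boundary $b_4(W)=0$, so $b_1(W)-b_2(W)+b_3(W)=1-\chi(S)$, i.e. $\chi(S)-1=b_2(W)-b_1(W)-b_3(W)$.

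Next I would bring in the hypothesis $b_1(\partial W)\le\chi(S)-1$ together with Poincaré–Lefschetz duality and the half-lives-half-dies principle. Duality gives $b_3(W)=b_1(W,\partial W)$, and the long exact sequence of the pair together with $\dim\mathrm{im}\big(H_1(\partial W;\mathbb{Q})\to H_1(W;\mathbb{Q})\big)=\tfrac12 b_1(\partial W)$ controls how $b_1(W)$, $b_3(W)$ and $b_1(\partial W)$ interact. The aim is to show $b_2(W)=0$ and $b_3(W)=0$, which forces $b_1(W)=\chi(S)-1$. Concretely, from the Euler characteristic identity, $b_2(W)\ge\chi(S)-1\ge b_1(\partial W)\ge 0$; I then want to argue that $b_2(W)$ cannot exceed this, using that the intersection form on $H_2(W;\mathbb{Q})/(\text{torsion part from the boundary})$ must be nondegenerate on a subspace of dimension $b_2(W)-\tfrac12 b_1(\partial W)$ (again half-lives-half-dies for $H_2$), combined with the eigenspace decomposition: the $(-1)$-eigenspace constraint, plus the vanishing of $H_2(D^4)$, squeezes $b_2(W)$ and $b_3(W)$ down to exactly what the Euler characteristic and boundary data allow. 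Carrying this bookkeeping through should yield $b_2(W)=b_3(W)=0$ and hence $b_1(W)=\chi(S)-1$.

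Finally, for the last sentence, once we know $b_0(W)=1$, $b_1(W)=\chi(S)-1$, $b_2(W)=b_3(W)=b_4(W)=0$, and that $H_*(W;\mathbb{Q})$ has no torsion issues over $\mathbb{Q}$, the graded $\mathbb{Q}$-vector space $H_*(W;\mathbb{Q})$ agrees degreewise with that of $\natural_{i=1}^{\chi(S)-1}S^1\times D^3$ (which has $b_0=1$, $b_1=\chi(S)-1$, and everything else zero); when $b_1(\partial W)>0$ we have $\chi(S)-1\ge b_1(\partial W)>0$, so this boundary connected sum is nonempty and the stated isomorphism of graded $\mathbb{Q}$-modules holds.

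I expect the main obstacle to be the middle step: cleanly extracting the bound $b_2(W)\le\chi(S)-1$ (equivalently ruling out "extra" $H_2$ and $H_3$) purely from $b_1(\partial W)\le\chi(S)-1$. The Euler characteristic gives one linear relation among $b_1,b_2,b_3$, and duality plus half-lives-half-dies give further constraints, but one must be careful that these genuinely force $b_2=b_3=0$ rather than merely bounding a combination; the eigenspace decomposition under the deck transformation, which kills the "symmetric" part in degrees $2$ and $3$ (since $H_2(D^4;\mathbb{Q})=H_3(D^4;\mathbb{Q})=0$), is what I expect to be the crucial extra input that closes the gap.
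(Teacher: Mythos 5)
There is a genuine gap, and it sits exactly where you predicted. Your transfer argument only controls the $(+1)$-eigenspace: it shows the $\mathbb{Z}/2$-invariant part of $H_i(W;\mathbb{Q})$ vanishes for $i>0$, but it says nothing about the anti-invariant part in degrees $2$ and $3$, and the remaining ingredients you list (Poincar\'e--Lefschetz duality, half-lives-half-dies, the relation $\chi(W)=2-\chi(S)$, and $b_1(\partial W)\le\chi(S)-1$) only produce linear relations and inequalities among $b_1(W),b_2(W),b_3(W)$; they do not by themselves force $b_2(W)=b_3(W)=0$. A symptom of this is that your argument never uses the hypothesis that $S$ has no closed components, which is essential (it is precisely what makes $S\cup S^3$ connected, and the statement can fail without it). There is also a sign slip in your bookkeeping: from $1-b_1+b_2-b_3=2-\chi(S)$ one gets $b_1-b_2+b_3=\chi(S)-1$, not $b_2-b_1-b_3=\chi(S)-1$, so the inequality $b_2(W)\ge\chi(S)-1$ you extract is not what the Euler characteristic gives; the correct relation gives a lower bound on $b_1+b_3$, which points the argument in the opposite direction (one must first bound $b_1(W)$ from above and kill $b_3(W)$, and only then does $\chi$ kill $b_2(W)$).

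The missing input, which is the paper's key step, is a statement that controls the anti-invariant (relative) homology of the branched cover: the Lee--Weintraub long exact sequence relating $H_i(D^4,S\cup S^3)$, $H_i(W,\partial W)$ and $H_i(D^4,S^3)$. It yields $H_1(W,\partial W)\cong H_1(D^4,S\cup S^3)=0$, where the vanishing uses that $S$ has no closed components so that $S\cup S^3$ is connected. By Lefschetz duality this gives $b_3(W)=0$, and the long exact sequence of the pair then shows $H_1(\partial W;\mathbb{Q})\to H_1(W;\mathbb{Q})$ is surjective, hence $b_1(W)\le b_1(\partial W)\le\chi(S)-1$; finally $\chi(W)=2-\chi(S)$ forces $b_2(W)=0$ and $b_1(W)=\chi(S)-1$. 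To repair your proposal you would need to import some equivalent of this (for instance, an identification of the anti-invariant part of $H_*(W,\partial W;\mathbb{Q})$ with suitable twisted or relative homology of $(D^4,S\cup S^3)$); the eigenspace decomposition and numerical constraints alone cannot close the gap.
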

\begin{proof}
As shown in \cite{Lee:1} we have a long exact sequence
$$
\dots\rightarrow H_i(D^4,S\cup S^3)\rightarrow H_i(W,\partial W)\rightarrow H_i(D^4,S^3)
\rightarrow H_{i-1}(D^4,S\cup S^3)\rightarrow\dots
$$
from which we obtain an isomorphism $H_1(D^4,S\cup S^3)=H_1(W,\partial W)$. It follows
from the exact sequence of the pair that $H_1(D^4,S\cup S^3)=0$. We conclude that 
$0=H_1(W,\partial W)=H_3(W)$.
From the exact sequence of the pair $(W,\partial W)$ with rational 
coefficients we get
$$
\dots\rightarrow H_1(\partial W)\rightarrow H_1(W)\rightarrow 0.
$$
We obtain 
$$
b_1(W)\leq b_1(\partial W)\leq\chi(S)-1.$$
Since
$$
\chi (W)=2\chi(B^4)-\chi(S)=2-\chi(S) \Rightarrow 1-b_1(W)+b_2(W)=2-\chi(S)
$$
we see that $b_1(W)=\chi(S)-1$ and $b_2(W)=0$. 
\end{proof}
%\begin{re}\label{motivationbis}
%The same conclusion of Proposition \ref{motivation} holds (with the same proof) 
%by assuming that $L$ bounds a properly embedded smooth surface $S\hookrightarrow D^4$ 
%with positive Euler characteristic. The number of slicing discs in the statement
%must be replaced with $\chi(S)$.
%\end{re}
\begin{co}\label{slice}
Let $L$ be a slice link with $n$ components ($n>1$). 
Let $W$ be the branched double cover of the four-ball branched along a 
collection of slicing discs for $L$. We have an isomorphism
$$
H_*(W;\mathbb{Q})=H_*(\natural_{i=1}^{n-1} S^1\times D^3;\mathbb{Q})
$$
\end{co}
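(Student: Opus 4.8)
The plan is to deduce this corollary directly from Proposition \ref{motivation} by specializing to the case in which the surface $S\subset D^4$ is a collection of slicing discs. First I would record that if $L$ has $n$ components and $S$ is a disjoint union of $n$ slicing discs, then $S$ has no closed components and $\chi(S)=n$, since each disc contributes $1$ to the Euler characteristic. The branched double cover $W$ of $D^4$ along $S$ is then exactly the object appearing in the proposition, so it remains only to check the hypothesis $b_1(\partial W)\le\chi(S)-1=n-1$.

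The key step is therefore to identify $\partial W$ and compute its first Betti number. Since $S$ is properly embedded with $\partial S=L$, the boundary $\partial W$ is the double cover of $S^3=\partial D^4$ branched along $L$, i.e. $\partial W=\Sigma(L)$, the branched double cover of the $n$-component link $L$. A standard fact (for instance from the half-lives–half-dies principle, or directly from the Goeritz/linking form description of $H_1(\Sigma(L))$) is that $b_1(\Sigma(L))=n-1$ for an $n$-component link $L$ whose branched double cover is constructed this way; more conceptually, one can see $b_1(\Sigma(L))=n-1$ from the fact that $H_1(\Sigma(L);\mathbb{Q})$ is computed from a presentation whose deficiency reflects the number of components, or simply note that $\Sigma(L)$ fibers rationally in the appropriate sense. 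In any case $b_1(\partial W)=n-1=\chi(S)-1$, so the hypothesis of Proposition \ref{motivation} holds (with equality).

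Having verified the hypothesis, I would invoke Proposition \ref{motivation} directly: it gives $b_1(W)=\chi(S)-1=n-1$ and $b_2(W)=b_3(W)=0$, and since $n>1$ we have $b_1(\partial W)=n-1>0$, so the "in particular" clause of the proposition yields the isomorphism
$$
H_*(W;\mathbb{Q})=H_*(\natural_{i=1}^{n-1} S^1\times D^3;\mathbb{Q}),
$$
which is precisely the assertion of the corollary.

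The only genuine content beyond quoting Proposition \ref{motivation} is the computation $b_1(\Sigma(L))=n-1$, so the main obstacle — really a minor bookkeeping point rather than a difficulty — is to state this cleanly and cite or prove it. I expect it suffices to observe that $H_1(\Sigma(L);\mathbb{Z})$ has a presentation matrix given by (a symmetrization of) a Seifert or Goeritz form whose rank drops by exactly one per extra link component, equivalently that the $\mathbb{Z}/2$-homology rank forces $b_1=n-1$ over $\mathbb{Q}$; alternatively one can note that $W$ itself, being a branched cover of a ball along $n$ disjoint discs, deformation retracts onto something with the rational homology of $\natural_{n-1}S^1\times D^3$, but routing the argument through Proposition \ref{motivation} keeps it uniform with the rest of the section.
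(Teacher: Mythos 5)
Your overall route is the same as the paper's: verify the Betti--number hypothesis of Proposition \ref{motivation} and then apply it. The gap is in the key supporting claim. You assert that $b_1(\Sigma(L))=n-1$ holds for any $n$-component link whose double branched cover arises this way, and the justifications you offer (a Goeritz/Seifert presentation whose ``deficiency reflects the number of components'', or ``the $\mathbb{Z}/2$-homology rank forces $b_1=n-1$ over $\mathbb{Q}$'') do not prove it; in fact the statement is false in general. For the Hopf link one has $\Sigma(L)=\mathbb{RP}^3$, so $b_1=0\neq n-1=1$. What is true for every $n$-component link is only that $\dim_{\mathbb{Z}/2}H_1(\Sigma(L);\mathbb{Z}/2)=n-1$, which by universal coefficients yields the inequality $b_1(\Sigma(L))\leq n-1$; equality is equivalent to the nullity of $L$ being maximal, and this is exactly where sliceness must enter --- it is the Kauffman--Taylor result \cite{Kauffman:2} that the paper quotes. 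As written, your argument never uses that $L$ is slice at this point, so the step establishing the equality fails.

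The repair is minor, and you essentially have the pieces already. The hypothesis of Proposition \ref{motivation} only requires $b_1(\partial W)\leq\chi(S)-1=n-1$, which the mod $2$ count does give for an arbitrary link; and the positivity $b_1(\partial W)>0$ needed for the ``in particular'' clause then follows from the surjection $H_1(\partial W;\mathbb{Q})\rightarrow H_1(W;\mathbb{Q})$ appearing in the proof of the proposition together with the conclusion $b_1(W)=n-1\geq 1$. Alternatively, do what the paper does and simply cite \cite{Kauffman:2} for the equality $b_1(\partial W)=|L|-1$, valid because $L$ is slice. With either fix, the deduction of the corollary from Proposition \ref{motivation} goes through exactly as you describe.
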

\begin{proof}
It is well known that $b_1(\partial W)=|L|-1$ (see for instance \cite{Kauffman:2}).
Then, we may apply Proposition \ref{motivation}.
\end{proof}

Motivated by Proposition \ref{motivation} we investigate $\mathbb{Q}H$-cobordisms of plumbed 
3-manifolds with $b_1\geq 1$. Note that if a 3-manifold $Y$ bounds a 
$\mathbb{Q}H-\natural_n S^1\times D^3$ then $b_1(Y)$ equals the number of $S^1\times D^3$ summands. 
\begin{Pro}\label{mainobstruction}
 Let $Y$ be a connected 3-manifold with $b_1(Y)=n$. Suppose that $Y$ bounds smooth 4-manifolds $X$ and $W$
 with the following properties:
 \begin{itemize}
  \item $X$ is simply connected, negative semidefinite and $rkQ_X=b_2(X)-n$
  \item $H_*(W,\mathbb{Q})=H_*(\natural_{i=1}^{n} S^1\times D^3;\mathbb{Q})$
 \end{itemize}
Then there exists a surjective morphism of integral lattices
$$
((H_2(X);\mathbb{Z}),Q_X)\longrightarrow (\mathbb{Z}^{b_2(X)-n},-Id).
$$
In particular for every definite sublattice $(G;Q_G)\subset (H_2(X);\mathbb{Z})$ whose rank is 
${b_2(X)-n}$ we obtain an embedding of integral lattices
$$
(G,Q_G)\longrightarrow (\mathbb{Z}^{b_2(X)-n},-Id)
$$
\end{Pro}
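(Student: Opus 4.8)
The plan is to form the closed-up $4$-manifold and extract the lattice map from its diagonalizable intersection form. First I would glue $X$ and $-W$ along their common boundary $Y$ to obtain a smooth closed oriented $4$-manifold $X' = X \cup_Y (-W)$. A Mayer--Vietoris computation with rational coefficients shows that the intersection form of $X'$ is, up to the rational-homology part, just $Q_X$: since $H_*(W;\mathbb{Q}) = H_*(\natural_{i=1}^n S^1\times D^3;\mathbb{Q})$, the manifold $W$ contributes only $b_1 = n$ and $b_3 = n$ (dually) to the rational homology and contributes nothing to $H_2$; meanwhile the hypothesis $\mathrm{rk}\,Q_X = b_2(X) - n$ means the radical of $Q_X$ has rank exactly $n$. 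The classes spanning the radical of $Q_X$ map into $H_2(Y;\mathbb{Q})$ (they are exactly the ones that die when we compute self-intersections inside $X'$), and they are killed off against the $n$-dimensional $H_1(W;\mathbb{Q})$ in the gluing. The upshot is that $Q_{X'}$ is a unimodular, negative definite form of rank $b_2(X) - n$, and by Donaldson's diagonalization theorem $(H_2(X';\mathbb{Z})/\mathrm{tors}, Q_{X'}) \cong (\mathbb{Z}^{b_2(X)-n}, -I)$.

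**Getting the surjection.** Next I would analyze the inclusion-induced map $j_* \colon H_2(X;\mathbb{Z}) \to H_2(X';\mathbb{Z})$. Composing with the identification $H_2(X';\mathbb{Z})/\mathrm{tors} \cong (\mathbb{Z}^{b_2(X)-n}, -I)$ gives the candidate morphism of integral lattices; it is a lattice morphism because inclusion preserves intersection numbers. It remains to check that the composite is surjective. Since $X$ is simply connected, $H_2(X;\mathbb{Z})$ is free of rank $b_2(X)$, and the long exact sequence of the pair $(X', X)$ together with excision ($H_*(X', X) \cong H_*(-W, Y)$) controls the cokernel: one shows $H_2(X', X;\mathbb{Q}) \cong H_2(-W, Y;\mathbb{Q})$ vanishes (by Poincar\'e--Lefschetz duality $H_2(W,Y;\mathbb{Q}) \cong H^2(W;\mathbb{Q}) \cong H_2(W;\mathbb{Q}) = 0$), so $j_*$ is rationally surjective, hence surjective after quotienting by torsion. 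Then the map $(H_2(X;\mathbb{Z}), Q_X) \to (\mathbb{Z}^{b_2(X)-n}, -I)$ is the desired surjective morphism of integral lattices.

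**The final sentence.** For the last claim, let $(G, Q_G) \subset (H_2(X;\mathbb{Z}), Q_X)$ be a definite sublattice of rank $b_2(X)-n$. Restricting the surjection above to $G$ gives a lattice morphism $G \to (\mathbb{Z}^{b_2(X)-n}, -I)$; I would argue it is injective by a rank count — the kernel of the full surjection has rank $n$ (it is supported on the radical of $Q_X$, since a surjection of free modules $\mathbb{Z}^{b_2(X)} \to \mathbb{Z}^{b_2(X)-n}$ has kernel of rank $n$, and that kernel must lie in the radical because the target is nondegenerate), while $G$ being definite meets the radical trivially; since $\mathrm{rk}\,G = b_2(X)-n$ equals the rank of the target, an injective morphism of free modules of equal rank is exactly an embedding of integral lattices.

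**Main obstacle.** The delicate point is the homological bookkeeping in the gluing: one must verify carefully that the $n$-dimensional radical of $Q_X$ is precisely cancelled against $H_1(W;\mathbb{Q})$ so that $Q_{X'}$ is genuinely unimodular of rank $b_2(X)-n$, rather than merely definite of some smaller rank. This requires tracking the Mayer--Vietoris sequence $H_2(Y;\mathbb{Q}) \to H_2(X;\mathbb{Q}) \oplus H_2(W;\mathbb{Q}) \to H_2(X';\mathbb{Q}) \to H_1(Y;\mathbb{Q}) \to H_1(X;\mathbb{Q}) \oplus H_1(W;\mathbb{Q})$ and using simple connectivity of $X$, together with the fact that $b_1(Y) = n = b_1(W)$ forces the last map to be an isomorphism, so that $H_2(X';\mathbb{Q})$ is exactly the image of $H_2(X;\mathbb{Q})$, i.e. $H_2(X;\mathbb{Q})$ modulo the radical. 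Everything else is routine once this is in place.
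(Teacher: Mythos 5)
Your proposal is correct in substance and follows essentially the same route as the paper's proof: form the closed manifold $X'=X\cup_Y(-W)$, use Mayer--Vietoris together with $H_2(W,Y;\mathbb{Q})\cong H^2(W;\mathbb{Q})=0$ to see that $H_2(X;\mathbb{Q})$ surjects onto $H_2(X';\mathbb{Q})$ with kernel exactly the radical of $Q_X$, so that $Q_{X'}$ is negative definite of rank $b_2(X)-n$, then apply Donaldson's diagonalization theorem and restrict the resulting lattice morphism to the definite sublattice $G$, where definiteness forces injectivity. The only loose points --- the aside that $b_3(W)=n$ (in fact $b_3(W)=0$), attributing the isomorphism $H_1(Y;\mathbb{Q})\to H_1(W;\mathbb{Q})$ to equality of Betti numbers alone rather than to the duality fact you invoke elsewhere, and inferring surjectivity onto the torsion-free quotient of $H_2(X';\mathbb{Z})$ from rational surjectivity --- are either harmless or treated at the same level of detail as in the paper's own argument, and the surjectivity clause is not needed for the ``in particular'' statement or its later applications.
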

\begin{proof}
 Consider the smooth 4-manifold $X':=X\cup_Y -W$. 
 The Mayer-Vietoris exact sequence with integral coefficients reads
 $$
  \dots\rightarrow H_2(Y)\rightarrow H_2(X)\oplus H_2(W)\rightarrow H_2(X')\rightarrow H_1(Y)
  \rightarrow H_1(W)\rightarrow H_1(X')\rightarrow 0
  $$ 
Note that $b_1(Y)=b_1(W)$, moreover the map 
$H_1(Y;\mathbb{Q})\rightarrow H_1(W;\mathbb{Q})$ is an isomorphism. 
It follows that $b_1(X')=0$ and the kernel of the map $H_1(Y)\rightarrow H_1(W)$ 
is contained in the
the torsion subgroup of $H_1(Y)$. Call this kernel $T$. The group $H_2(W)$ is finite, 
let us call it $T'$. We obtain an exact sequence
$$
  \dots\rightarrow H_2(Y)\rightarrow H_2(X)\oplus T'\rightarrow H_2(X')\rightarrow T\rightarrow 0
$$
This yields another exact sequence
$$
  \dots\rightarrow H_2(Y)\rightarrow H_2(X)\rightarrow F(H_2(X'))\rightarrow 0
$$
where $F(H_2(X'))$ is the free summand of $H_2(X')$.
Since $b_3(X')=0$ we see that the free summand of $H_2(Y)$ injects into $H_2(X)$. Therefore we get the exact sequence
$$
  0\rightarrow F(H_2(Y))\rightarrow H_2(X)\rightarrow F(H_2(X'))\rightarrow 0
$$
therefore $b_2(X')=b_2(X)-b_2(Y)=b_2(X)-n$. Now note that $\sigma(X')=\sigma(X)$. This shows that
$X'$ is a smooth, closed negative definite 4-manifold, by Donaldson's diagonalization theorem
its intersection form is equivalent to the standard negative definite form on 
$\mathbb{Z}^{b_2(X')}$. The inclusion $X\rightarrow X'$ 
induces the desired morphism of integral lattices.
\end{proof}
\section{Constructing $\mathbb{Q}H$-cobordisms}\label{2-2}
In this section we introduce a procedure for constructing rational homology cobordisms between
plumbed 3-manifolds, our method is explained in Proposition \ref{construction}. We then introduce
some \emph{elementary building blocks} which are sufficient to produce all manifolds satisfying
the hypotheses of Theorem \ref{main} which bound rational homology $S^1\times D^3$'s.

Recall that a \emph{rooted} plumbing graph $(\Gamma,v)$ is a plumbing graph with
a distinguished vertex. In particular, a rooted plumbing graph is necessarly nonempty.  
\begin{de}
Let $(\Gamma_1,v_1)$ and $(\Gamma_2,v_2)$ be two rooted plumbing graphs. 
Let $\Gamma$ be the plumbing graph obtained from $\Gamma_1\sqcup\Gamma_2$
by identifing the two distinguished vertices and taking the sum of the corresponding weights.
We say that $\Gamma$ is obtained by \emph{joining} together $\Gamma_1$ and $\Gamma_2$ along
$v_1$ and $v_2$ and we write
$$
\Gamma:= \Gamma_1\bigvee_{v_1,v_2}\Gamma_2
$$
\end{de}
The following proposition follows immediately from Proposition \ref{cf}
\begin{Pro}\label{adcf}
 With the above notation we have
 $$
 cf(\Gamma_1\bigvee_{v_1,v_2}\Gamma_2)=cf(\Gamma_1)+cf(\Gamma_2)
 $$
 provided that the continued fractions on the right are computed with respect to the vertices 
 $v_1$ and $v_2$, and the continued fraction on the left is computed with respect to the vertex 
 resulting from joining $v_1$ and $v_2$.
\end{Pro}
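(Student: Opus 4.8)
The plan is to apply the recursive formula of Proposition \ref{cf} to each of the three rooted graphs in sight. Write $b_i$ for the weight of the distinguished vertex $v_i$ in $\Gamma_i$, let $k_i$ be its valency, and let $\Gamma_i^{(1)},\dots,\Gamma_i^{(k_i)}$ be the connected components of $(\Gamma_i)_{v_i}$, each rooted at the vertex adjacent to $v_i$. By Proposition \ref{cf},
$$
cf(\Gamma_i)=b_i-\sum_{j=1}^{k_i}\frac{1}{cf(\Gamma_i^{(j)})},\qquad i=1,2 .
$$

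First I would unwind the combinatorics of $\Gamma:=\Gamma_1\bigvee_{v_1,v_2}\Gamma_2$. By construction the merged vertex $v$ carries weight $b_1+b_2$, and since the edges incident to $v_1$ and to $v_2$ are carried over untouched, $v$ has valency $k_1+k_2$; moreover deleting $v$ from $\Gamma$ yields exactly $(\Gamma_1)_{v_1}\sqcup(\Gamma_2)_{v_2}$, so the rooted trees hanging from $v$ are precisely $\Gamma_1^{(1)},\dots,\Gamma_1^{(k_1)},\Gamma_2^{(1)},\dots,\Gamma_2^{(k_2)}$, with the same roots as before (the roots, being distinct from $v_1$ and $v_2$, are unaffected by the merging). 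Feeding this data into Proposition \ref{cf} applied to the rooted graph $(\Gamma,v)$ gives
$$
cf(\Gamma)=(b_1+b_2)-\sum_{j=1}^{k_1}\frac{1}{cf(\Gamma_1^{(j)})}-\sum_{j=1}^{k_2}\frac{1}{cf(\Gamma_2^{(j)})},
$$
and regrouping the right-hand side into the part indexed by the legs of $v_1$ and the part indexed by the legs of $v_2$ produces $cf(\Gamma_1)+cf(\Gamma_2)$.

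The only point where I expect to spend any care is the bookkeeping of the value $\infty$ and the convention $\alpha/0=\infty$. If some $cf(\Gamma_i^{(j)})=0$, the corresponding summand is $0$ by convention and the computation goes through verbatim; if instead $cf(\Gamma_i)=\infty$ for some $i$, i.e. $\det(\Gamma_i)_{v_i}=0$, then $\det\Gamma_v=\det(\Gamma_1)_{v_1}\cdot\det(\Gamma_2)_{v_2}=0$ as well (the determinant is multiplicative over disjoint unions), so $cf(\Gamma)=\infty=cf(\Gamma_1)+cf(\Gamma_2)$ and the identity still holds in $\mathbb{Q}\cup\{\infty\}$. Apart from this routine edge-case verification, the statement is an immediate substitution into Proposition \ref{cf}.
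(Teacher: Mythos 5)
Your proof is correct and is essentially the paper's argument: the paper offers no separate proof of this proposition, stating only that it follows immediately from Proposition \ref{cf}, which is exactly the substitution you carry out (merged weight $b_1+b_2$, legs of the new root equal to the legs of $v_1$ together with those of $v_2$). One small slip in your edge-case bookkeeping: under the paper's convention $\alpha/0=\infty$, a leg with $cf(\Gamma_i^{(j)})=0$ contributes a summand $1/cf(\Gamma_i^{(j)})=\infty$ rather than $0$; but in that case $\det(\Gamma_i)_{v_i}=0$, hence both $cf(\Gamma_i)$ and $cf(\Gamma_1\bigvee_{v_1,v_2}\Gamma_2)$ equal $\infty$ and the identity still holds, by the same multiplicativity-of-determinants argument you already use for the case $cf(\Gamma_i)=\infty$.
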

\begin{lem}\label{canc}
 Let $W$ be a connected 4-dimensional handlebody without 3-handles. If 
 $H_*(\partial W;\mathbb{Q})=H_*(S^3;\mathbb{Q})$ then $H_1(W;\mathbb{Q})=0$.
 
 In particular, if $W$ is built using a single 1-handle $h^1$, and a single 2-handle $h^2$
 then the algebraic intersection of these handles does not vanish.
 \end{lem}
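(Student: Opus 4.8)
\emph{Plan.} The crucial point is that a $4$-dimensional handlebody built only from handles of index $0$, $1$ and $2$ is homotopy equivalent to a CW complex of dimension $\le 2$; feeding this into Lefschetz duality and the long exact sequence of the pair $(W,\partial W)$ kills $H_1(W;\mathbb{Q})$ in one line. First I would record that $W$, being connected, compact, oriented and equipped with a handle decomposition using only handles of index $\le 2$, deformation retracts onto a CW complex $K$ with $\dim K\le 2$, so that
$$
H^k(W;\mathbb{Q})\cong H^k(K;\mathbb{Q})=0\qquad\text{for all }k\ge 3 .
$$
By Lefschetz duality for the compact oriented $4$-manifold $W$ this gives
$$
H_1(W,\partial W;\mathbb{Q})\;\cong\;H^{3}(W;\mathbb{Q})\;=\;0 .
$$
Now the relevant segment of the long exact sequence of $(W,\partial W)$ with rational coefficients,
$$
H_1(\partial W;\mathbb{Q})\longrightarrow H_1(W;\mathbb{Q})\longrightarrow H_1(W,\partial W;\mathbb{Q}),
$$
reads $0\to H_1(W;\mathbb{Q})\to 0$, using the hypothesis $H_1(\partial W;\mathbb{Q})=0$. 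Hence $H_1(W;\mathbb{Q})=0$.

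For the final assertion I would compute $H_1(W;\mathbb{Q})$ directly from the handle chain complex in the case $W=D^4\cup h^1\cup h^2$. With respect to the evident generators, the (cellular, equivalently Morse) chain complex of $W$ takes the form
$$
0\longrightarrow \mathbb{Z}\xrightarrow{\ \partial_2\ }\mathbb{Z}\xrightarrow{\ \partial_1\ }\mathbb{Z}\longrightarrow 0 ,
$$
with the rightmost $\mathbb{Z}$ generated by the $0$-handle, the middle one by $h^1$, and the left one by $h^2$. Since $W$ is connected with a single $0$-handle, $\partial_1=0$; and $\partial_2$ is multiplication by the integer $a$ which, by definition, is the algebraic intersection number of the attaching circle of $h^2$ with the belt sphere of $h^1$, i.e.\ the algebraic intersection of the two handles. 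Therefore $H_1(W;\mathbb{Q})\cong \mathbb{Q}/a\mathbb{Q}$, which vanishes if and only if $a\neq 0$. Combining this with the first part of the lemma forces $a\neq 0$, as claimed.

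\emph{Main obstacle.} There is essentially no difficulty; the only points needing a word of care are that $W$ is orientable (standard for the $4$-dimensional handlebodies occurring here, so that Lefschetz duality applies over $\mathbb{Q}$) and the identification of the differential $\partial_2$ with the algebraic intersection number, which is precisely the definition of the latter. If one prefers to avoid invoking Lefschetz duality explicitly, one can instead turn the handle decomposition upside down: viewed from $\partial W$, the manifold $W$ is obtained from a collar of $\partial W$ by attaching handles of index $2$, $3$ and $4$ only, so $H_0(W,\partial W;\mathbb{Q})=H_1(W,\partial W;\mathbb{Q})=0$ for purely cellular reasons, and the same exact-sequence argument concludes the proof.
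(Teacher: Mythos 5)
Your argument is correct and is essentially the paper's proof: the same combination of Lefschetz duality ($H_1(W,\partial W;\mathbb{Q})\cong H^3(W;\mathbb{Q})=0$, using the absence of $3$- and $4$-handles) with the long exact sequence of $(W,\partial W)$ and the hypothesis $H_1(\partial W;\mathbb{Q})=0$. Your handle chain complex computation for the final assertion is just a more explicit version of the paper's remark that a vanishing algebraic intersection would make $h^1$ a nontrivial class in $H_1(W;\mathbb{Q})$.
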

\begin{proof}
 The homology exact sequence of the pair $(W,\partial W)$ with rational coefficients reads
 $$
 \dots\rightarrow H_1(\partial W)\rightarrow H_1(W)\rightarrow H_1(W,\partial W)\rightarrow 0
 $$
 Since $H_1(\partial W)=0$ and by Lefschetz duality $H_1(W,\partial W)=H^3(W)=0$ 
 the conclusion follows. If there are only two handles $h^1$ and $h^2$, the attaching sphere
 of $h^2$ must have nonzero intersection number with the belt sphere of $h^1$, otherwise
 $h^1$ would represent a non trivial element in $H_1(W)$.
\end{proof}
The following lemma is an immediate consequence of the splitting move.
\begin{lem}\label{zerojoin}
 Let $(a_1,\dots,a_n)$ and $(b_1,\dots,b_m)$ be strings (where each coefficient
 is $\leq -2$). The 3-manifold described by the plumbing graph
     \[
  \begin{tikzpicture}[xscale=1.2,yscale=-0.5]
    \node (A0_0) at (0, 0) {$b_m$};
    \node (A0_2) at (2, 0) {$b_1$};
    \node (A1_0) at (0, 1) {$\bullet$};
    \node (A1_1) at (1, 1) {$\dots$};
    \node (A1_2) at (2, 1) {$\bullet$};
    \node (A2_3) at (3, 2) {$-1$};
    \node (A2_4) at (4, 2) {$0$};
    \node (A3_3) at (3, 3) {$\bullet$};
    \node (A3_4) at (4, 3) {$\bullet$};
    \node (A4_0) at (0, 4) {$a_1$};
    \node (A4_2) at (2, 4) {$a_n$};
    \node (A5_0) at (0, 5) {$\bullet$};
    \node (A5_1) at (1, 5) {$\dots$};
    \node (A5_2) at (2, 5) {$\bullet$};
    \path (A5_2) edge [-] node [auto] {$\scriptstyle{}$} (A3_3);
    \path (A5_0) edge [-] node [auto] {$\scriptstyle{}$} (A5_1);
    \path (A1_0) edge [-] node [auto] {$\scriptstyle{}$} (A1_1);
    \path (A1_1) edge [-] node [auto] {$\scriptstyle{}$} (A1_2);
    \path (A1_2) edge [-] node [auto] {$\scriptstyle{}$} (A3_3);
    \path (A3_3) edge [-] node [auto] {$\scriptstyle{}$} (A3_4);
    \path (A5_1) edge [-] node [auto] {$\scriptstyle{}$} (A5_2);
  \end{tikzpicture}
  \]
is a connected sum of two lens spaces.
\end{lem}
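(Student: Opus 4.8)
The plan is to invoke the \emph{splitting move} from Section \ref{plumbedmanifolds} directly, since the hypothesis of that move is visibly satisfied here: in the displayed graph the $0$-weighted vertex is linked to a single vertex, namely the one carrying weight $-1$. So the first step is to observe that the splitting move applies, deleting both the $0$-vertex and the $-1$-vertex together with all of their incident edges; by Neumann's proposition on the invariance of blow-downs, $0$-chain absorptions and splittings, the plumbed $3$-manifold obtained after this deletion is orientation-preservingly diffeomorphic to $\partial P\Gamma$.

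Next I would read off the graph that remains. Removing the $-1$-vertex (which has valency exactly three — it meets the $b$-chain, the $a$-chain, and the $0$-vertex) disconnects what is left into precisely two pieces: the linear chain on the vertices weighted $b_m,\dots,b_1$ and the linear chain on the vertices weighted $a_1,\dots,a_n$. Thus $\partial P\Gamma$ is the plumbed $3$-manifold associated to the disjoint union of these two linear plumbing graphs. Since a disjoint union of plumbing graphs corresponds by definition to the boundary connected sum of the associated plumbed $4$-manifolds, its boundary is the connected sum of the two boundaries; and the plumbed $3$-manifold of a linear plumbing graph is a lens space (Section \ref{lensseifert}). Hence $\partial P\Gamma = L_a \mathbin{\#} L_b$, where $L_a$ (resp.\ $L_b$) is the lens space with continued fraction $[a_1,\dots,a_n]^-$ (resp.\ $[b_1,\dots,b_m]^-$) — exactly the assertion. (The hypothesis that all coefficients are $\leq -2$ is not actually used for this conclusion, but it is harmless and puts the two chains in normal form.)

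The argument is entirely formal once one notices that the splitting move is applicable, so there is no real obstacle; the only point deserving a second's care is verifying that after removing the $-1$- and $0$-vertices nothing connects the $a$-chain to the $b$-chain and no stray vertex is left over — which is immediate from the shape of the graph and the fact that the $-1$-vertex has valency three.
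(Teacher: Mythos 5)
Your proof is correct and is exactly the paper's argument: the author states the lemma as an immediate consequence of the splitting move, which removes the $0$- and $-1$-vertices and leaves the disjoint union of the two linear chains, hence a connected sum of two lens spaces. Your parenthetical that the weight hypothesis is not needed is fine under the paper's convention that lens spaces include $S^3$ and $S^1\times S^2$, but note the paper's subsequent remark stresses that the hypothesis is what guarantees (for the later application in Proposition \ref{construction}) that removing the $-1$-vertex yields a rational homology sphere.
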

\begin{re}\label{zerojoinremark}
 If in the previous lemma we choose two complementary strings
 the plumbing graph depicted above, with the $0$-weighted vertex removed, represents $S^1\times S^2$. 
 However, not every linear plumbing graph that represents $S^1\times S^2$ has this form. 
 Apart from some obvious examples like
   \[
  \begin{tikzpicture}[xscale=1.1,yscale=-0.5]
    \node (A0_0) at (0, 0) {$0$};
    \node (A0_2) at (2, 0) {$-1$};
    \node (A0_3) at (3, 0) {$-1$};
    \node (A0_5) at (5, 0) {$-1$};
    \node (A0_6) at (6, 0) {$-2$};
    \node (A0_7) at (7, 0) {$-1$};
    \node (A1_0) at (0, 1) {$\bullet$};
    \node (A1_1) at (1, 1) {$;$};
    \node (A1_2) at (2, 1) {$\bullet$};
    \node (A1_3) at (3, 1) {$\bullet$};
    \node (A1_4) at (4, 1) {$;$};
    \node (A1_5) at (5, 1) {$\bullet$};
    \node (A1_6) at (6, 1) {$\bullet$};
    \node (A1_7) at (7, 1) {$\bullet$};
    \path (A1_2) edge [-] node [auto] {$\scriptstyle{}$} (A1_3);
    \path (A1_6) edge [-] node [auto] {$\scriptstyle{}$} (A1_7);
    \path (A1_5) edge [-] node [auto] {$\scriptstyle{}$} (A1_6);
  \end{tikzpicture}
  \]
there are also examples where there is a $-1$-weighted internal vertex. For instance
   \[
  \begin{tikzpicture}[xscale=1.1,yscale=-0.5]
    \node (A0_0) at (0, 0) {$0$};
    \node (A0_1) at (1, 0) {$-1$};
    \node (A0_2) at (2, 0) {$0$};
    \node (A0_4) at (4, 0) {$-1$};
    \node (A0_5) at (5, 0) {$-1$};
    \node (A0_6) at (6, 0) {$-1$};
    \node (A0_7) at (7, 0) {$-1$};
    \node (A0_8) at (8, 0) {$-1$};
    \node (A1_0) at (0, 1) {$\bullet$};
    \node (A1_1) at (1, 1) {$\bullet$};
    \node (A1_2) at (2, 1) {$\bullet$};
    \node (A1_3) at (3, 1) {$;$};
    \node (A1_4) at (4, 1) {$\bullet$};
    \node (A1_5) at (5, 1) {$\bullet$};
    \node (A1_6) at (6, 1) {$\bullet$};
    \node (A1_7) at (7, 1) {$\bullet$};
    \node (A1_8) at (8, 1) {$\bullet$};
    \path (A1_6) edge [-] node [auto] {$\scriptstyle{}$} (A1_7);
    \path (A1_0) edge [-] node [auto] {$\scriptstyle{}$} (A1_1);
    \path (A1_4) edge [-] node [auto] {$\scriptstyle{}$} (A1_5);
    \path (A1_1) edge [-] node [auto] {$\scriptstyle{}$} (A1_2);
    \path (A1_5) edge [-] node [auto] {$\scriptstyle{}$} (A1_6);
    \path (A1_7) edge [-] node [auto] {$\scriptstyle{}$} (A1_8);
  \end{tikzpicture}
  \]
These examples show that the assumption on the weights in Lemma \ref{zerojoin} is necessary. 
This follows from the fact that removing the central vertex in the two graphs above one obtains 
a plumbing graph that represents $S^1\times S^2\sharp S^1\times S^2$, 
instead of a rational homology sphere.

\end{re}

\begin{Pro}\label{construction}
 Let $(\Gamma,v)$ be a rooted plumbing graph such that $\partial P(\Gamma)=S^1\times S^2$ 
 and $\partial P(\Gamma\setminus\{v\})$ is a rational homology sphere. 
 Let $(\Gamma',v')$ be any rooted plumbing graph.  
 
 Then $b_1(\partial P(\Gamma'))=b_1(\partial P(\Gamma'\bigvee_{v',v}\Gamma))$ and these manifolds
 are $\mathbb{Q}H$-cobordant. 
 
\end{Pro}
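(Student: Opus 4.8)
The plan is to reduce the statement to producing a rational homology cobordism $W$ with $\partial W=\partial P(\Gamma')\sqcup-\partial P(\Gamma'\bigvee_{v',v}\Gamma)$; the equality of first Betti numbers is then automatic, but it is cleanest to check it separately by a determinant computation. Order the vertices of $\Gamma$ with $v$ first, so that
\[
M_\Gamma=\begin{pmatrix} w(v) & r^{\mathsf T}\\ r & M_{\Gamma\setminus\{v\}}\end{pmatrix},
\]
$r$ being the incidence vector of $v$. Since $\partial P(\Gamma\setminus\{v\})$ is a rational homology sphere, $M_{\Gamma\setminus\{v\}}$ is invertible over $\mathbb Q$; since $\partial P(\Gamma)=S^1\times S^2$, $\det M_\Gamma=0$. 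Hence the Schur complement of the lower block gives $w(v)=r^{\mathsf T}M_{\Gamma\setminus\{v\}}^{-1}r$. Writing $M_{\Gamma'\bigvee_{v',v}\Gamma}$ with the merged vertex first produces a block matrix with diagonal entry $w(v')+w(v)$, incidence vectors $s$ (towards $\Gamma'\setminus\{v'\}$) and $r$ (towards $\Gamma\setminus\{v\}$), and no coupling between the two wings; taking the Schur complement of its $M_{\Gamma\setminus\{v\}}$ block turns the merged diagonal entry into $w(v')+w(v)-r^{\mathsf T}M_{\Gamma\setminus\{v\}}^{-1}r=w(v')$ and leaves $M_{\Gamma'}\oplus M_{\Gamma\setminus\{v\}}$ up to a rational congruence. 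As the second summand is nonsingular, the nullity of $M_{\Gamma'\bigvee_{v',v}\Gamma}$ equals that of $M_{\Gamma'}$, i.e. $b_1(\partial P(\Gamma'\bigvee_{v',v}\Gamma))=b_1(\partial P(\Gamma'))$.

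For the cobordism I would use the handle (equivalently surgery) descriptions. Write $P(\Gamma)=P(\Gamma\setminus\{v\})\cup h_v$, where $h_v$ is a $2$-handle attached to $\partial P(\Gamma\setminus\{v\})$ along the unknot $K_v$ linking the roots of $\Gamma\setminus\{v\}$, framed $w(v)$; similarly $P(\Gamma')=P(\Gamma'\setminus\{v'\})\cup h_{v'}$. From the definition of the join one reads off
\[
P(\Gamma'\bigvee\nolimits_{v',v}\Gamma)=\bigl(P(\Gamma'\setminus\{v'\})\,\natural\,P(\Gamma\setminus\{v\})\bigr)\cup h_{\bar v},
\]
where $h_{\bar v}$ is attached along the band sum of $K_{v'}$ and $K_v$ inside the boundary connected sum, framed $w(v')+w(v)$. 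So, on surgery diagrams, $\partial P(\Gamma'\bigvee_{v',v}\Gamma)$ is obtained from $\partial P(\Gamma')$ by ``grafting $\Gamma$ along $K_{v'}$'': form the connected sum with the surgery diagram of $\Gamma$, band sum $K_{v'}$ to the $v$-component of $\Gamma$, and delete that component. Here the hypothesis $\partial P(\Gamma)=S^1\times S^2$ enters decisively: it is what makes the diagram of $\Gamma$ a bona fide surgery diagram for $S^1\times S^2$, so that the grafting is realized by a cobordism $W=(\partial P(\Gamma')\times[0,1])\cup h^1\cup h^2$, with $h^1$ a $1$-handle performing the ambient connected sum and the band, and $h^2$ the single $2$-handle dual to $h_v$, attached along the belt sphere $\kappa_v$ of $h_v$, which now lies in the new $S^1\times S^2$ summand. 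Note $\chi(W)=-1+1=0$, as a rational homology cobordism requires. One must check, by tracking the boundary through the $1$-handle and the ensuing handle slide, that the upper end of $W$ is exactly $\partial P(\Gamma'\bigvee_{v',v}\Gamma)$; I would first settle the case $\Gamma'=\{$a single vertex$\}$ directly from the surgery pictures and then reduce the general case to it by excision, since the join is localized near $K_{v'}$.

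It remains to see that $W$ is a rational homology cobordism, for which (by Poincaré--Lefschetz duality for $W$) it suffices to show $H_*(W,\partial P(\Gamma')\times\{0\};\mathbb Q)=0$. This relative homology is computed by the handle chain complex $\mathbb Z\xrightarrow{\ d\ }\mathbb Z$ (one generator from $h^2$, one from $h^1$), whose differential $d$ is the algebraic intersection of the attaching circle of $h^2$ with the belt sphere of $h^1$, i.e. the class $[\kappa_v]\in H_1(S^1\times S^2;\mathbb Z)\cong\mathbb Z$. This is nonzero exactly because $\partial P(\Gamma\setminus\{v\})$ --- which is the result of surgery on $\kappa_v$ --- is a rational homology sphere: surgery along a null-homologous knot in $S^1\times S^2$ cannot kill the first Betti number, the same phenomenon underlying Lemma \ref{canc}. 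Hence $d$ is a $\mathbb Q$-isomorphism, $H_*(W,\partial P(\Gamma')\times\{0\};\mathbb Q)=0$, and both inclusions $\partial P(\Gamma')\hookrightarrow W$ and $\partial P(\Gamma'\bigvee_{v',v}\Gamma)\hookrightarrow W$ are $\mathbb Q$-homology isomorphisms. The step I expect to be the real obstacle is the boundary bookkeeping of the previous paragraph --- verifying in Kirby calculus that the grafting cobordism has $\partial P(\Gamma'\bigvee_{v',v}\Gamma)$ as its other boundary and nothing more; the rest is linear algebra and the exact sequence of a pair.
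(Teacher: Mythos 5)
Your overall strategy is the same as the paper's: the cobordism is $\partial P(\Gamma')\times[0,1]$ with a single $1$-handle and a single $2$-handle attached, and the rational homology condition reduces to the nonvanishing of the algebraic intersection of the $2$-handle's attaching circle with the belt sphere of the $1$-handle, which you extract from the hypothesis that $\partial P(\Gamma\setminus\{v\})$ is a rational homology sphere --- exactly the paper's use of Lemma \ref{canc}. Your Schur-complement computation of $b_1$ is correct and is a nice supplement; the paper instead obtains the equality of Betti numbers as a byproduct of the cobordism.

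There is, however, a concrete error at precisely the step you flagged as the ``real obstacle.'' If the $2$-handle is attached along the belt sphere $\kappa_v$ of $h_v$, lying entirely inside the new $S^1\times S^2$ summand, then canceling $K_v$ against its $0$-framed meridian shows that the upper boundary of $W$ is $\partial P(\Gamma')\,\sharp\,\partial P(\Gamma\setminus\{v\})$, not $\partial P(\Gamma'\bigvee_{v',v}\Gamma)$ (for $\Gamma'$ a single $-5$-vertex and $\Gamma=(-2,-1,-2)$ these are $L(5,1)\sharp L(2,1)\sharp L(2,1)$ and the irreducible $L(20,11)$). A $4$-dimensional $1$-handle cannot by itself perform the band sum of $K_{v'}$ and $K_v$; that has to be done by the $2$-handle. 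The correct attaching circle is a $0$-framed unknot running once through the $1$-handle and linking each of $K_{v'}$ and $K_v$ once --- equivalently, $\kappa_{v'}$ band-summed to $\kappa_v$ across the connected-sum neck. In the plumbing picture this adds a $0$-weighted vertex joined to $v'$ and $v$, and the $0$-chain absorption move then identifies the upper boundary with $\partial P(\Gamma'\bigvee_{v',v}\Gamma)$. This correction does not disturb your homological argument: the repaired attaching circle is homologous to $\kappa_{v'}+\kappa_v$, only the $\kappa_v$ summand pairs with the belt sphere of the $1$-handle, and its class in $H_1(S^1\times S^2;\mathbb{Q})$ is nonzero for exactly the reason you give. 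With this one repair your proof coincides with the paper's.
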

\begin{proof}
 In Figure \ref{movie1}(a)
 we have a surgery description for $\partial P\Gamma'$. First we attach a 4-dimensional 1-handle to  
 $\partial P\Gamma'\times I$ as shown in Figure \ref{movie1}(b). In Figure \ref{movie1}(c) we draw the boundary of the four manifold
 obtained after the 1-handle attachmnet. This is just $\partial P\Gamma'\sharp S^1\times S^2$.
 In Figure \ref{movie1}(d) we draw the same manifold replacing the $0$-framed circle with the surgery diagram associated to the graph
 $\Gamma$. 
 Now we attach a 4-dimensional 2-handle as shown in Figure In Figure \ref{movie1}(e). 
 Via a zero-absorption move the result of this 2-handle attachment is a 4-manifold whose bottom boundary is  
 $\partial P(\Gamma'\bigvee_{v',v}\Gamma)$. This is shown in Figure \ref{movie1}(f).
  \begin{figure}[H]  
  {\includegraphics[scale=0.07]{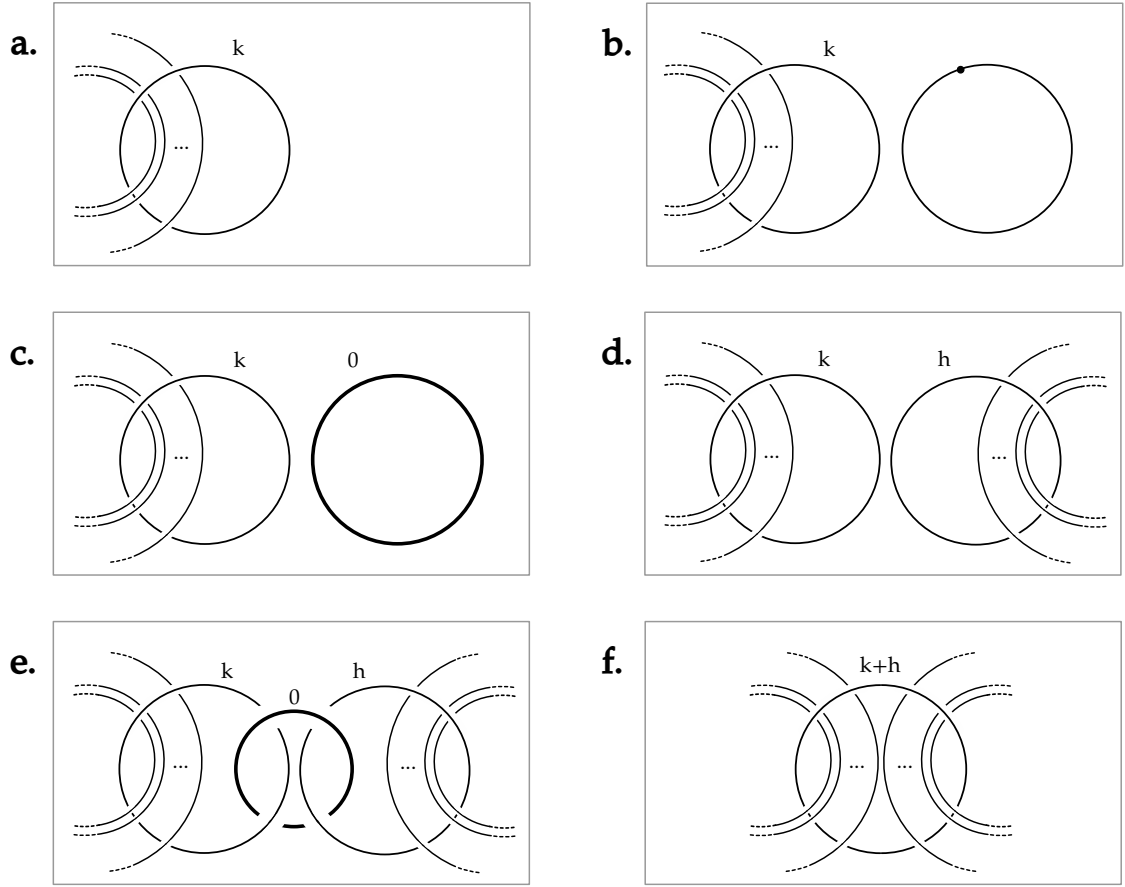}}
  \centering
  \caption{A rational homology cobordism between $\partial P\Gamma'$ and $\partial P(\Gamma'\bigvee_{v',v}\Gamma)$.}
  \label{movie1}
\end{figure}
We have constructed
a cobordism $W$ between $\partial P\Gamma'$ and $\partial P(\Gamma'\bigvee_{v',v}\Gamma)$ which
consists of one $1$-handle and one $2$-handle. 
In order to prove that $W$ is in fact a 
$\mathbb{Q}H$-cobordism it suffices to check that the algebraic intersection between the attaching
sphere of the 2-handle and the belt sphere of the 1-handle does not vanish.\\ 
Let us write $\alpha$ for the attaching sphere of the 2-handle.
The first homology group of $\partial P\Gamma'\sharp S^1\times S^2$ is 
$\mathbb{Q}^{b_1(\partial P\Gamma')}\oplus \mathbb{Q}$. 
Our algebraic intersection number is non zero if and only if
$\alpha$ represents a non trivial element when projected into
$H_1(S^1\times S^2)$. 
Note that in $H_1(\partial P\Gamma'\sharp S^1\times S^2)$
the curve $\alpha$ is homologous to the pair of curves $\alpha_1$ and $\alpha_2$ shown in Figure
\ref{cerchietti}.
\begin{figure}[H]  
  {\includegraphics[scale=0.06]{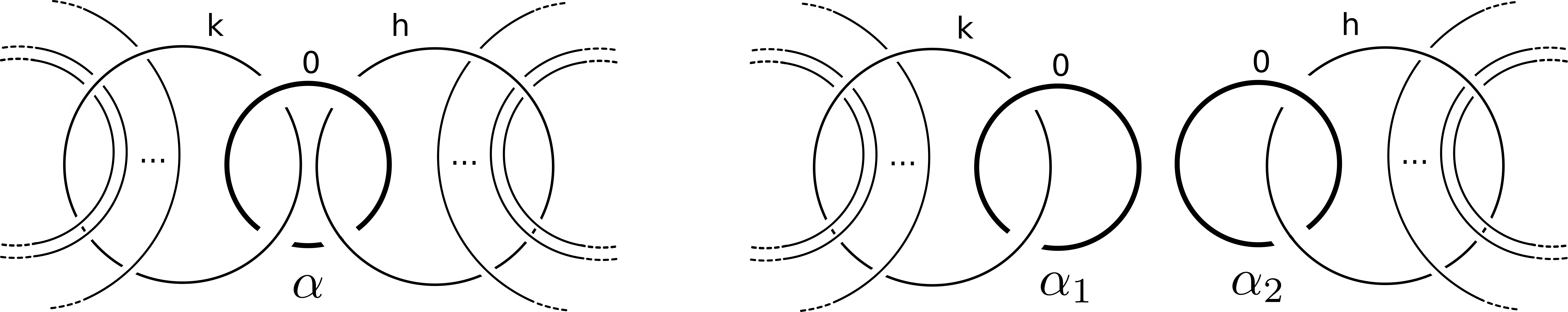}}
  \centering
  \caption{The thick curve on the leftmost diagram is homologous to the sum of the two thick curves on the rightmost diagram.}
  \label{cerchietti}
\end{figure}
This means that
the projection of $\alpha$ in $H_1(S^1\times S^2)$ is equivalent to $\alpha_2$.
The fact that $\alpha_2$ is a nontrivial element in $H_1(S^1\times S^2)$ follows immediately
from our hypotheses on $(\Gamma,v)$. 
To see this, let $\widetilde{L}$ be the link that gives a surgery description for $S^1\times S^2$ in 
Figure \ref{movie1}(d). 
Applying the splitting move on the link $\alpha_2\cup \widetilde{L}$ we see that the 3-manifold described by this link
is precisely $\partial P(\Gamma\setminus\{v\})$, which by our assumption is a rational homology sphere. 
This fact ensures that $\alpha_2$ represents a non trivial element
in $H_1(S^1\times S^2;\mathbb{Q})$. \\
It follows that 
$b_1(\partial P\Gamma')=b_1(\partial P(\Gamma'\bigvee_{v',v}\Gamma))$ and that $W$ is a 
$\mathbb{Q}H$-cobordism.
\end{proof}
\begin{re}
 The simplest way to use the above Proposition si to choose $(\Gamma,v)$ as any graph
 like the ones in Remark \ref{zerojoinremark}, the vertex $v$ being the one whose weight is $-1$.
\end{re}

\begin{re}
 The 2-handle
attachement used in Proposition \ref{construction} can also be described in terms of plumbing graphs as follows. 
We start with $\partial P(\Gamma'\sqcup\Gamma)$ which has the following description
  \[
  \begin{tikzpicture}[xscale=1.2,yscale=-0.5]
    \node (A0_6) at (6, 0) {$a_1$};
    \node (A0_8) at (8, 0) {$a_n$};
    \node (A1_0) at (0, 1) {};
    \node (A1_6) at (6, 1) {$\bullet$};
    \node (A1_7) at (7, 1) {$\dots$};
    \node (A1_8) at (8, 1) {$\bullet$};
    \node (A2_1) at (1, 2) {$w(v')$};
    \node (A2_5) at (5, 2) {$-1$};
    \node (A3_0) at (0, 3) {};
    \node (A3_1) at (1, 3) {$\bullet$};
    \node (A3_5) at (5, 3) {$\bullet$};
    \node (A4_6) at (6, 4) {$b_1$};
    \node (A4_8) at (8, 4) {$b_m$};
    \node (A5_0) at (0, 5) {};
    \node (A5_6) at (6, 5) {$\bullet$};
    \node (A5_7) at (7, 5) {$\dots$};
    \node (A5_8) at (8, 5) {$\bullet$};
    \path (A1_6) edge [-] node [auto] {$\scriptstyle{}$} (A1_7);
    \path (A3_5) edge [-] node [auto] {$\scriptstyle{}$} (A5_6);
    \path (A5_7) edge [-] node [auto] {$\scriptstyle{}$} (A5_8);
    \path (A3_0) edge [-] node [auto] {$\scriptstyle{}$} (A3_1);
    \path (A3_5) edge [-] node [auto] {$\scriptstyle{}$} (A1_6);
    \path (A1_0) edge [-] node [auto] {$\scriptstyle{}$} (A3_1);
    \path (A5_0) edge [-] node [auto] {$\scriptstyle{}$} (A3_1);
    \path (A5_6) edge [-] node [auto] {$\scriptstyle{}$} (A5_7);
    \path (A1_7) edge [-] node [auto] {$\scriptstyle{}$} (A1_8);
  \end{tikzpicture}
  \]
  Where, for simplicity we have choosen $\Gamma$ as in Lemma \ref{zerojoin}.
  The 2-handle then appears as an additional vertex as shown below.
   \[
  \begin{tikzpicture}[xscale=1.2,yscale=-0.5]
    \node (A0_4) at (4, 0) {$a_1$};
    \node (A0_6) at (6, 0) {$a_n$};
    \node (A1_0) at (0, 1) {};
    \node (A1_4) at (4, 1) {$\bullet$};
    \node (A1_5) at (5, 1) {$\dots$};
    \node (A1_6) at (6, 1) {$\bullet$};
    \node (A2_1) at (1, 2) {$w(v')$};
    \node (A2_2) at (2, 2) {$0$};
    \node (A2_3) at (3, 2) {$-1$};
    \node (A3_0) at (0, 3) {};
    \node (A3_1) at (1, 3) {$\bullet$};
    \node (A3_2) at (2, 3) {$\bullet$};
    \node (A3_3) at (3, 3) {$\bullet$};
    \node (A4_4) at (4, 4) {$b_1$};
    \node (A4_6) at (6, 4) {$b_m$};
    \node (A5_0) at (0, 5) {};
    \node (A5_4) at (4, 5) {$\bullet$};
    \node (A5_5) at (5, 5) {$\dots$};
    \node (A5_6) at (6, 5) {$\bullet$};
    \path (A1_4) edge [-] node [auto] {$\scriptstyle{}$} (A1_5);
    \path (A3_0) edge [-] node [auto] {$\scriptstyle{}$} (A3_1);
    \path (A1_5) edge [-] node [auto] {$\scriptstyle{}$} (A1_6);
    \path (A3_3) edge [-] node [auto] {$\scriptstyle{}$} (A1_4);
    \path (A1_0) edge [-] node [auto] {$\scriptstyle{}$} (A3_1);
    \path (A5_0) edge [-] node [auto] {$\scriptstyle{}$} (A3_1);
    \path (A5_4) edge [-] node [auto] {$\scriptstyle{}$} (A5_5);
    \path (A3_3) edge [-] node [auto] {$\scriptstyle{}$} (A5_4);
    \path (A3_2) edge [-] node [auto] {$\scriptstyle{}$} (A3_3);
    \path (A5_5) edge [-] node [auto] {$\scriptstyle{}$} (A5_6);
    \path (A3_1) edge [-] node [auto] {$\scriptstyle{}$} (A3_2);
  \end{tikzpicture}
  \]
  This last level of the cobordism can be described also by the following plumbing graph,
  using the 0-chain absorption move.
  \[
  \begin{tikzpicture}[xscale=2.2,yscale=-0.5]
    \node (A0_2) at (2, 0) {$a_1$};
    \node (A0_4) at (4, 0) {$a_n$};
    \node (A1_0) at (0, 1) {};
    \node (A1_2) at (2, 1) {$\bullet$};
    \node (A1_3) at (3, 1) {$\dots$};
    \node (A1_4) at (4, 1) {$\bullet$};
    \node (A2_1) at (1, 2) {$w(v')-1$};
    \node (A3_0) at (0, 3) {};
    \node (A3_1) at (1, 3) {$\bullet$};
    \node (A4_2) at (2, 4) {$b_1$};
    \node (A4_4) at (4, 4) {$b_m$};
    \node (A5_0) at (0, 5) {};
    \node (A5_2) at (2, 5) {$\bullet$};
    \node (A5_3) at (3, 5) {$\dots$};
    \node (A5_4) at (4, 5) {$\bullet$};
    \path (A5_2) edge [-] node [auto] {$\scriptstyle{}$} (A5_3);
    \path (A3_0) edge [-] node [auto] {$\scriptstyle{}$} (A3_1);
    \path (A3_1) edge [-] node [auto] {$\scriptstyle{}$} (A5_2);
    \path (A3_1) edge [-] node [auto] {$\scriptstyle{}$} (A1_2);
    \path (A1_0) edge [-] node [auto] {$\scriptstyle{}$} (A3_1);
    \path (A5_3) edge [-] node [auto] {$\scriptstyle{}$} (A5_4);
    \path (A5_0) edge [-] node [auto] {$\scriptstyle{}$} (A3_1);
    \path (A1_2) edge [-] node [auto] {$\scriptstyle{}$} (A1_3);
    \path (A1_3) edge [-] node [auto] {$\scriptstyle{}$} (A1_4);
  \end{tikzpicture}
  \]
\end{re}

\begin{ex}
 Let $(a_1,\dots,a_n)$ and $(b_1,\dots,b_m)$ be two complementary strings. The plumbing graph
 associated to the string $(a_1,\dots,a_n,-1,b_1,\dots,b_m)$ represents $S^1\times S^2$. 
 By the previous proposition all lens spaces associated to strings of the form
 $$
 (a_1,\dots,a_n,-2,b_1,\dots,b_m)
 $$
 are $\mathbb{Q}H$-cobordant to $S^3$. In fact, the correspnding plumbing graph is obtained by joining together
 a $-1$-weighted vertex and a graph as in Lemma \ref{zerojoin}.
\end{ex}
\begin{ex}\label{seifert}
 Choose strings $(a^i_{n_i},\dots,a^i_1,-1,b^i_1,\dots,b^i_{m_{i}})$, where $i=1,\dots,k$,
 as in the previous example. Consider the plumbed 3-manifold described by the following
 star-shaped plumbing graph
    \[
  \begin{tikzpicture}[xscale=2.5,yscale=-0.5]
    \node (A0_1) at (1, 0) {$a^1_1$};
    \node (A0_3) at (3, 0) {$a^1_{n_1}$};
    \node (A1_1) at (1, 1) {$\bullet$};
    \node (A1_2) at (2, 1) {$\dots$};
    \node (A1_3) at (3, 1) {$\bullet$};
    \node (A2_1) at (1, 2) {$b^1_1$};
    \node (A2_3) at (3, 2) {$b^1_{m_1}$};
    \node (A3_1) at (1, 3) {$\bullet$};
    \node (A3_2) at (2, 3) {$\dots$};
    \node (A3_3) at (3, 3) {$\bullet$};
    \node (A4_0) at (0, 4) {$-k$};
    \node (A4_2) at (2, 4) {$\vdots$};
    \node (A5_0) at (0, 5) {$\bullet$};
    \node (A5_2) at (2, 5) {$\vdots$};
    \node (A6_1) at (1, 6) {$a^k_1$};
    \node (A6_3) at (3, 6) {$a^k_{n_k}$};
    \node (A7_1) at (1, 7) {$\bullet$};
    \node (A7_2) at (2, 7) {$\dots$};
    \node (A7_3) at (3, 7) {$\bullet$};
    \node (A8_1) at (1, 8) {$b^k_1$};
    \node (A8_3) at (3, 8) {$b^k_{m_k}$};
    \node (A9_1) at (1, 9) {$\bullet$};
    \node (A9_2) at (2, 9) {$\dots$};
    \node (A9_3) at (3, 9) {$\bullet$};
    \path (A9_2) edge [-] node [auto] {$\scriptstyle{}$} (A9_3);
    \path (A7_2) edge [-] node [auto] {$\scriptstyle{}$} (A7_3);
    \path (A3_1) edge [-] node [auto] {$\scriptstyle{}$} (A3_2);
    \path (A1_1) edge [-] node [auto] {$\scriptstyle{}$} (A1_2);
    \path (A7_1) edge [-] node [auto] {$\scriptstyle{}$} (A7_2);
    \path (A5_0) edge [-] node [auto] {$\scriptstyle{}$} (A7_1);
    \path (A5_0) edge [-] node [auto] {$\scriptstyle{}$} (A9_1);
    \path (A5_0) edge [-] node [auto] {$\scriptstyle{}$} (A1_1);
    \path (A5_0) edge [-] node [auto] {$\scriptstyle{}$} (A3_1);
    \path (A1_2) edge [-] node [auto] {$\scriptstyle{}$} (A1_3);
    \path (A3_2) edge [-] node [auto] {$\scriptstyle{}$} (A3_3);
    \path (A9_1) edge [-] node [auto] {$\scriptstyle{}$} (A9_2);
  \end{tikzpicture}
  \]
By Proposition \ref{construction} such a manifold is $\mathbb{Q}H$-cobordant to $S^1\times S^2$
and thus it bounds a $\mathbb{Q}H-S^1\times D^3$. In Section \ref{mainchapter} we will see that these
are the only Seifert manifolds over the 2-sphere with this property.
\end{ex}
\subsection{Elementary building blocks}\label{lc1}
 In the previous example we have used the graph
  \[ \Gamma_1:=
  \begin{tikzpicture}[xscale=1.2,yscale=-0.5]
    \node (A0_0) at (0, 0) {$a_n$};
    \node (A0_2) at (2, 0) {$a_1$};
    \node (A0_3) at (3, 0) {$-1$};
    \node (A0_4) at (4, 0) {$b_1$};
    \node (A0_6) at (6, 0) {$b_m$};
    \node (A1_0) at (0, 1) {$\bullet$};
    \node (A1_1) at (1, 1) {$\dots$};
    \node (A1_2) at (2, 1) {$\bullet$};
    \node (A1_3) at (3, 1) {$\bullet$};
    \node (A1_4) at (4, 1) {$\bullet$};
    \node (A1_5) at (5, 1) {$\dots$};
    \node (A1_6) at (6, 1) {$\bullet$};
    \path (A1_4) edge [-] node [auto] {$\scriptstyle{}$} (A1_5);
    \path (A1_0) edge [-] node [auto] {$\scriptstyle{}$} (A1_1);
    \path (A1_1) edge [-] node [auto] {$\scriptstyle{}$} (A1_2);
    \path (A1_5) edge [-] node [auto] {$\scriptstyle{}$} (A1_6);
    \path (A1_2) edge [-] node [auto] {$\scriptstyle{}$} (A1_3);
    \path (A1_3) edge [-] node [auto] {$\scriptstyle{}$} (A1_4);
  \end{tikzpicture}
  \]
  as a building block for constructing rational homology cobordisms of  3-manifolds. 
  This is somehow the simplest
  way to use Proposition \ref{construction}. The process can be iterated by constructing 
  more complicated pieces to be used as building blocks. 
  
  Keeping in mind that we are interested in plumbed 3-manifolds with $lc=1$ we may introduce
  three more building blocks. The graph $\Gamma_1$ can be slightly modified obtaining
    \[
  \begin{tikzpicture}[xscale=1.1,yscale=-0.7]
    \node (A0_1) at (1, 0) {$a_n$};
    \node (A0_3) at (3, 0) {$a_1$};
    \node (A0_4) at (4, 0) {$-2$};
    \node (A0_5) at (5, 0) {$b_1$};
    \node (A0_7) at (7, 0) {$b_m$};
    \node (A1_1) at (1, 1) {$\bullet$};
    \node (A1_2) at (2, 1) {$\dots$};
    \node (A1_3) at (3, 1) {$\bullet$};
    \node (A1_4) at (4, 1) {$\bullet$};
    \node (A1_5) at (5, 1) {$\bullet$};
    \node (A1_6) at (6, 1) {$\dots$};
    \node (A1_7) at (7, 1) {$\bullet$};
    \node (A2_0) at (0, 2) {$\Gamma_2:=$};
    \node (A2_4) at (4, 2) {$\bullet$};
    \node (A3_4) at (4, 3) {$-1$};
    \path (A1_6) edge [-] node [auto] {$\scriptstyle{}$} (A1_7);
    \path (A1_4) edge [-] node [auto] {$\scriptstyle{}$} (A1_5);
    \path (A1_1) edge [-] node [auto] {$\scriptstyle{}$} (A1_2);
    \path (A1_5) edge [-] node [auto] {$\scriptstyle{}$} (A1_6);
    \path (A2_4) edge [-] node [auto] {$\scriptstyle{}$} (A1_4);
    \path (A1_2) edge [-] node [auto] {$\scriptstyle{}$} (A1_3);
    \path (A1_3) edge [-] node [auto] {$\scriptstyle{}$} (A1_4);
  \end{tikzpicture}
  \]
  Another building block can be obtained starting with 
    \[
  \begin{tikzpicture}[xscale=1.2,yscale=-0.5]
    \node (A0_0) at (0, 0) {$-2$};
    \node (A0_1) at (1, 0) {$-2$};
    \node (A0_3) at (3, 0) {$-2$};
    \node (A0_4) at (4, 0) {$-1$};
    \node (A0_5) at (5, 0) {$-n$};
    \node (A1_0) at (0, 1) {$\bullet$};
    \node (A1_1) at (1, 1) {$\bullet$};
    \node (A1_2) at (2, 1) {$\dots$};
    \node (A1_3) at (3, 1) {$\bullet$};
    \node (A1_4) at (4, 1) {$\bullet$};
    \node (A1_5) at (5, 1) {$\bullet$};
    \path (A1_3) edge [-] node [auto] {$\scriptstyle{}$} (A1_4);
    \path (A1_0) edge [-] node [auto] {$\scriptstyle{}$} (A1_1);
    \path (A1_4) edge [-] node [auto] {$\scriptstyle{}$} (A1_5);
    \path (A1_2) edge [-] node [auto] {$\scriptstyle{}$} (A1_3);
    \path (A1_1) edge [-] node [auto] {$\scriptstyle{}$} (A1_2);
  \end{tikzpicture}
  \]
where $n-1$ is the length of the $-2$-chain. This is just a special case of the previous 
building block. Now we join this graph with $\Gamma_1$ along the vertices of weight $-n$ and $-1$.
We obtain our third building block
   \[
  \begin{tikzpicture}[xscale=1.4,yscale=-0.5]
    \node (A0_6) at (6, 0) {$a_1$};
    \node (A0_8) at (8, 0) {$a_n$};
    \node (A1_6) at (6, 1) {$\bullet$};
    \node (A1_7) at (7, 1) {$\dots$};
    \node (A1_8) at (8, 1) {$\bullet$};
    \node (A2_1) at (1, 2) {$-2$};
    \node (A2_3) at (3, 2) {$-2$};
    \node (A2_4) at (4, 2) {$-1$};
    \node (A2_5) at (5, 2) {$-n-1$};
    \node (A3_0) at (0, 3) {$\Gamma_3:=$};
    \node (A3_1) at (1, 3) {$\bullet$};
    \node (A3_2) at (2, 3) {$\dots$};
    \node (A3_3) at (3, 3) {$\bullet$};
    \node (A3_4) at (4, 3) {$\bullet$};
    \node (A3_5) at (5, 3) {$\bullet$};
    \node (A4_6) at (6, 4) {$b_1$};
    \node (A4_8) at (8, 4) {$b_m$};
    \node (A5_6) at (6, 5) {$\bullet$};
    \node (A5_7) at (7, 5) {$\dots$};
    \node (A5_8) at (8, 5) {$\bullet$};
    \path (A1_6) edge [-] node [auto] {$\scriptstyle{}$} (A1_7);
    \path (A3_5) edge [-] node [auto] {$\scriptstyle{}$} (A5_6);
    \path (A5_7) edge [-] node [auto] {$\scriptstyle{}$} (A5_8);
    \path (A3_1) edge [-] node [auto] {$\scriptstyle{}$} (A3_2);
    \path (A3_4) edge [-] node [auto] {$\scriptstyle{}$} (A3_5);
    \path (A3_2) edge [-] node [auto] {$\scriptstyle{}$} (A3_3);
    \path (A3_5) edge [-] node [auto] {$\scriptstyle{}$} (A1_6);
    \path (A3_3) edge [-] node [auto] {$\scriptstyle{}$} (A3_4);
    \path (A5_6) edge [-] node [auto] {$\scriptstyle{}$} (A5_7);
    \path (A1_7) edge [-] node [auto] {$\scriptstyle{}$} (A1_8);
  \end{tikzpicture}
  \]
Note that $\partial P\Gamma_3=S^1\times S^2$. A fourth building block can be constructed as follows.
We start with 
  \[
  \begin{tikzpicture}[xscale=1.1,yscale=-0.5]
    \node (A0_0) at (0, 0) {$-2$};
    \node (A0_1) at (1, 0) {$-1$};
    \node (A0_2) at (2, 0) {$-2$};
    \node (A1_0) at (0, 1) {$\bullet$};
    \node (A1_1) at (1, 1) {$\bullet$};
    \node (A1_2) at (2, 1) {$\bullet$};
    \path (A1_0) edge [-] node [auto] {$\scriptstyle{}$} (A1_1);
    \path (A1_1) edge [-] node [auto] {$\scriptstyle{}$} (A1_2);
  \end{tikzpicture}
  \]
and then we attach to the final vertices of this graph two linear graphs like $\Gamma_1$.
We obtain
   \[
  \begin{tikzpicture}[xscale=1.2,yscale=-0.5]
    \node (A0_1) at (1, 0) {$a'_{n'}$};
    \node (A0_3) at (3, 0) {$a'_1$};
    \node (A0_7) at (7, 0) {$a_1$};
    \node (A0_9) at (9, 0) {$a_n$};
    \node (A1_1) at (1, 1) {$\bullet$};
    \node (A1_2) at (2, 1) {$\dots$};
    \node (A1_3) at (3, 1) {$\bullet$};
    \node (A1_7) at (7, 1) {$\bullet$};
    \node (A1_8) at (8, 1) {$\dots$};
    \node (A1_9) at (9, 1) {$\bullet$};
    \node (A2_4) at (4, 2) {$-3$};
    \node (A2_5) at (5, 2) {$-1$};
    \node (A2_6) at (6, 2) {$-3$};
    \node (A3_0) at (0, 3) {$\Gamma_4:=$};
    \node (A3_4) at (4, 3) {$\bullet$};
    \node (A3_5) at (5, 3) {$\bullet$};
    \node (A3_6) at (6, 3) {$\bullet$};
    \node (A4_1) at (1, 4) {$b'_{m'}$};
    \node (A4_3) at (3, 4) {$b'_1$};
    \node (A4_7) at (7, 4) {$b_1$};
    \node (A4_9) at (9, 4) {$b_m$};
    \node (A5_1) at (1, 5) {$\bullet$};
    \node (A5_2) at (2, 5) {$\dots$};
    \node (A5_3) at (3, 5) {$\bullet$};
    \node (A5_7) at (7, 5) {$\bullet$};
    \node (A5_8) at (8, 5) {$\dots$};
    \node (A5_9) at (9, 5) {$\bullet$};
    \path (A5_3) edge [-] node [auto] {$\scriptstyle{}$} (A3_4);
    \path (A5_8) edge [-] node [auto] {$\scriptstyle{}$} (A5_9);
    \path (A1_2) edge [-] node [auto] {$\scriptstyle{}$} (A1_3);
    \path (A3_6) edge [-] node [auto] {$\scriptstyle{}$} (A1_7);
    \path (A5_2) edge [-] node [auto] {$\scriptstyle{}$} (A5_3);
    \path (A5_7) edge [-] node [auto] {$\scriptstyle{}$} (A5_8);
    \path (A1_8) edge [-] node [auto] {$\scriptstyle{}$} (A1_9);
    \path (A3_4) edge [-] node [auto] {$\scriptstyle{}$} (A3_5);
    \path (A1_1) edge [-] node [auto] {$\scriptstyle{}$} (A1_2);
    \path (A3_5) edge [-] node [auto] {$\scriptstyle{}$} (A3_6);
    \path (A1_3) edge [-] node [auto] {$\scriptstyle{}$} (A3_4);
    \path (A5_1) edge [-] node [auto] {$\scriptstyle{}$} (A5_2);
    \path (A3_6) edge [-] node [auto] {$\scriptstyle{}$} (A5_7);
    \path (A1_7) edge [-] node [auto] {$\scriptstyle{}$} (A1_8);
  \end{tikzpicture}
  \]
Note that this last graph does not represent $S^1\times S^2$ since its normal form can be obtained
by blowing down the $-1$-vertex. Each of the four building blocks we have introduced have a 
distinguished $-1$-weighted vertex. From now on we will implicitly consider each of these graphs as 
a rooted plumbing graph where the prefered vertex is the one whose weight is $-1$.
\begin{de}\label{buildblock}	
The four families of rooted plumbing graphs introduced above will be called \emph{building blocks}
of the first, second, third and fourth type, respectively.
\end{de}
Of course, using Proposition \ref{construction}, one can construct many examples of plumbed
3-manifolds with arbitrarly high linear complexity that are $\mathbb{Q}H$-cobordant to $S^1\times S^2$.
A simple example is given by the following graph
    \[
  \begin{tikzpicture}[xscale=1.1,yscale=-0.5]
    \node (A0_0) at (0, 0) {$-2$};
    \node (A0_3) at (3, 0) {$-2$};
    \node (A1_0) at (0, 1) {$\bullet$};
    \node (A1_3) at (3, 1) {$\bullet$};
    \node (A2_1) at (1, 2) {$-2$};
    \node (A2_2) at (2, 2) {$-2$};
    \node (A2_5) at (5, 2) {$-2$};
    \node (A3_1) at (1, 3) {$\bullet$};
    \node (A3_2) at (2, 3) {$\bullet$};
    \node (A3_5) at (5, 3) {$\bullet$};
    \node (A4_0) at (0, 4) {$-2$};
    \node (A4_3) at (3, 4) {$-3$};
    \node (A4_4) at (4, 4) {$-2$};
    \node (A5_0) at (0, 5) {$\bullet$};
    \node (A5_3) at (3, 5) {$\bullet$};
    \node (A5_4) at (4, 5) {$\bullet$};
    \node (A6_5) at (5, 6) {$-2$};
    \node (A7_5) at (5, 7) {$\bullet$};
    \path (A5_4) edge [-] node [auto] {$\scriptstyle{}$} (A7_5);
    \path (A3_2) edge [-] node [auto] {$\scriptstyle{}$} (A5_3);
    \path (A3_1) edge [-] node [auto] {$\scriptstyle{}$} (A3_2);
    \path (A5_4) edge [-] node [auto] {$\scriptstyle{}$} (A3_5);
    \path (A3_2) edge [-] node [auto] {$\scriptstyle{}$} (A1_3);
    \path (A5_3) edge [-] node [auto] {$\scriptstyle{}$} (A5_4);
    \path (A5_0) edge [-] node [auto] {$\scriptstyle{}$} (A3_1);
    \path (A1_0) edge [-] node [auto] {$\scriptstyle{}$} (A3_1);
  \end{tikzpicture}
  \]
whose linear complexity is 2.

The following proposition is an immediate consequence of Proposition \ref{construction}.
\begin{Pro}\label{constructionbis}
 Let $\Gamma$ be a plumbing graph obtained by joining together two or more building blocks of any
 type along their $-1$-vertices. Then
 \begin{enumerate}
  \item $\Gamma$ is in normal form
  \item $lc(\Gamma)=1$
  \item $\partial P\Gamma$ bounds a $\mathbb{Q}H-S^1\times D^3$.
 \end{enumerate}
\end{Pro}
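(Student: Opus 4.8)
All three assertions follow from a direct inspection of the four families of graphs; only part~(3) uses Proposition~\ref{construction}. For~(1), note first that $\Gamma$, being obtained by identifying single vertices of trees, is again a tree, and that it is clearly neither empty nor a single $0$-weighted vertex; so being in normal form means precisely that every vertex lying on a linear chain has weight $\leq -2$. I will in fact check the stronger statement that \emph{every} vertex of $\Gamma$ has weight $\leq -2$. In each of the four building blocks the only vertex of weight $-1$ is the distinguished root, while every other vertex --- the entries $a_i,b_j$ of the attached strings together with the $-2$'s, the $-3$'s, and the vertex of weight $-n-1$ occurring in their interiors --- already has weight $\leq -2$. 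Joining $k\geq 2$ blocks along their roots identifies those $k$ roots into a single vertex of weight $-k\leq -2$. Hence no vertex of $\Gamma$ has weight exceeding $-2$, and~(1) follows.

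For~(2), deleting the central vertex $v$ from $\Gamma$ leaves the disjoint union of the graphs $B_i\setminus\{v_i\}$, and a direct look at the pictures shows that each $B_i\setminus\{v_i\}$ is a disjoint union of linear graphs (two components for the first, third and fourth types, one for the second); hence $\Gamma\setminus\{v\}$ is linear and $lc(\Gamma)\leq 1$. For the opposite inequality I observe that $\Gamma$ is connected and argue it is not linear by exhibiting a vertex of valency $\geq 3$: if $k\geq 3$ the central vertex has valency $\geq k\geq 3$; if $k=2$ then either the central vertex already has valency $\geq 3$ --- which fails only when both blocks are of the second type, whose root has valency $1$ --- or both blocks are of the second type, in which case each of them still carries the $-2$-vertex joined to $v$ by a pendant edge, a vertex of valency $3$ in $\Gamma$. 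In every case $\Gamma$ is non-linear, so $lc(\Gamma)\geq 1$, and~(2) follows.

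For~(3) the plan is to iterate Proposition~\ref{construction}. One first checks that any building block $B$ of the first, second or third type, rooted at its distinguished $-1$-vertex $v$, satisfies the hypotheses of that proposition: $\partial PB=S^1\times S^2$ (noted above for the first and third types, and for the second type obtained by blowing down its pendant $-1$), while $B\setminus\{v\}$ is a disjoint union of nonempty normal-form linear plumbing graphs (all weights $\leq -2$), so that $\partial P(B\setminus\{v\})$ is a connected sum of lens spaces, in particular a rational homology sphere. A building block $B_4$ of the fourth type does \emph{not} bound $S^1\times S^2$; but by construction it is obtained from the graph $(-2,-1,-2)$ --- itself a first-type block with both strings equal to $(-2)$, representing $S^1\times S^2$ --- by joining a first-type graph, rooted at its $-1$, onto each of its two final $-2$-vertices, so two applications of Proposition~\ref{construction} show that $\partial PB_4$ is $\mathbb{Q}H$-cobordant to $S^1\times S^2$ and therefore bounds a $\mathbb{Q}H-S^1\times D^3$.

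To handle a general $\Gamma=B_1\bigvee\cdots\bigvee B_k$ I first replace every fourth-type summand by its core $(-2,-1,-2)$, obtaining a graph $\Gamma^{\mathrm{core}}$ that is a join of first-, second- and third-type blocks along their roots; since for each former fourth-type block the two pending first-type graphs are attached at $-2$-vertices disjoint from the central vertex and from one another, $\Gamma$ is recovered from $\Gamma^{\mathrm{core}}$ by a finite sequence of such attachments. Now a single first-, second- or third-type block has boundary $S^1\times S^2$, and if $\Gamma^{\mathrm{core}}=\Gamma'\bigvee_{v',v}B$ with $B$ of one of these three types then Proposition~\ref{construction} gives that $\partial P\Gamma^{\mathrm{core}}$ is $\mathbb{Q}H$-cobordant to $\partial P\Gamma'$, which bounds a $\mathbb{Q}H-S^1\times D^3$ by induction on the number of blocks; one further batch of applications of Proposition~\ref{construction} --- one per attachment at a $-2$-vertex --- then shows that $\partial P\Gamma$ is $\mathbb{Q}H$-cobordant to $S^1\times S^2$ and hence bounds a $\mathbb{Q}H-S^1\times D^3$. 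The one delicate point is the bookkeeping in this last step: Proposition~\ref{construction} requires the attached piece to genuinely have boundary $S^1\times S^2$, not merely to be cobordant to it, so a fourth-type block must never be fed into it directly --- which is exactly why the cores are split off at the outset --- and one should also check that reordering the various joins does not change the resulting plumbing graph. Everything else is a routine inspection of the four families of graphs.
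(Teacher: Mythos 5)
Your proof is correct and follows the route the paper intends: the paper offers no written argument here, merely asserting that the result is an immediate consequence of Proposition~\ref{construction}, and your parts (1) and (2) are the routine weight and valency checks that assertion presupposes. The one genuinely non-immediate point is the fourth-type block, whose boundary is \emph{not} $S^1\times S^2$ and so cannot be fed directly into Proposition~\ref{construction}; your fix --- splitting it into a $(-2,-1,-2)$ core with two first-type blocks joined on at the $-2$-vertices, and noting that these joins occur at distinct vertices and hence commute --- is exactly the right way to close that gap.
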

Our mani result, Theorem \ref{main}, should be thought of as a converse of this last proposition.
\section{Main results}\label{mainchapter}
In this section we state our main result, Theorem \ref{main}. We give a proof modulo a technical result,
Theorem \ref{technical} whose statement and proof are postponed to the next sections. We explain 
how to specialize our result to Seifert fibered spaces over the 2-sphere in Theorem \ref{mainbis}.

Before we state our main result we introduce some terminology. Let $\Gamma$
be a plumbing graph in normal form such that $lc(\Gamma)=1$. Choose $v\in\Gamma$
such that $\widetilde{\Gamma}:=\Gamma\setminus\{v\}$ is linear. The linear graph
$\widetilde{\Gamma}$ is a disjoint union of connected linear graphs 
$\Gamma_1,\dots,\Gamma_k$. We call $\Gamma_i$ a \emph{final leg} or an
\emph{internal leg} according to wether $v$ is linked to a final vector 
of $\Gamma_i$ or an internal one. 
We indicate with $i(\Gamma,v)$ and $f(\Gamma,v)$ the number of internal (resp. final)
legs of $\Gamma$. Finally each internal leg of $\Gamma$ has a distinguished vertex
which is 3-valent in $\Gamma$. We call these vertices the \emph{nodes} of 
$\Gamma$, and we indicate with $N(\Gamma)$ the set of all the nodes.
Note that, in some cases, these definitions 
depend on the choice of the vector $v$. This is the case for
three legged starshaped plumbing graphs (there are four choices for the
vector $v$) and plumbing graphs like
   \[
  \begin{tikzpicture}[xscale=1.0,yscale=-0.7]
    \node (A0_0) at (0, 0) {$\dots$};
    \node (A0_1) at (1, 0) {$\bullet$};
    \node (A0_4) at (4, 0) {$\bullet$};
    \node (A0_5) at (5, 0) {$\dots$};
    \node (A1_2) at (2, 1) {$\bullet$};
    \node (A1_3) at (3, 1) {$\bullet$};
    \node (A2_0) at (0, 2) {$\dots$};
    \node (A2_1) at (1, 2) {$\bullet$};
    \node (A2_4) at (4, 2) {$\bullet$};
    \node (A2_5) at (5, 2) {$\dots$};
    \path (A0_1) edge [-] node [auto] {$\scriptstyle{}$} (A1_2);
    \path (A2_4) edge [-] node [auto] {$\scriptstyle{}$} (A2_5);
    \path (A0_0) edge [-] node [auto] {$\scriptstyle{}$} (A0_1);
    \path (A1_3) edge [-] node [auto] {$\scriptstyle{}$} (A0_4);
    \path (A0_4) edge [-] node [auto] {$\scriptstyle{}$} (A0_5);
    \path (A2_1) edge [-] node [auto] {$\scriptstyle{}$} (A1_2);
    \path (A1_3) edge [-] node [auto] {$\scriptstyle{}$} (A2_4);
    \path (A1_2) edge [-] node [auto] {$\scriptstyle{}$} (A1_3);
    \path (A2_0) edge [-] node [auto] {$\scriptstyle{}$} (A2_1);
  \end{tikzpicture}
  \]
where there are two possible choices for the vector $v$.

\begin{te}\label{main}
 Let $\Gamma$ be a plumbing graph in normal form with $lc(\Gamma)=1$.
  Choose a vector $v\in\Gamma$ such that $\widetilde{\Gamma}:=\Gamma\setminus \{v\}$
  is linear. Suppose that each node of $\Gamma$ has weight less or equal to 
  $-2$ and that
  \begin{equation}\label{crucialhyp}
   I(\widetilde{\Gamma})\leq -f(\Gamma,v)-2i(\Gamma,v)-
   \sum_{u\in N(\Gamma)} max\{0,u\cdot u+3\}.
  \end{equation}
  The following conditions are equivalent:
  \begin{itemize}
   \item the 3-manifold $\partial P\Gamma$ bounds a $\mathbb{Q}H-S^1\times D^3$;
   \item equality holds in \eqref{crucialhyp} and
   $\Gamma$ is obtained by joining together building blocks along $-1$-vertices.
  \end{itemize}
\end{te}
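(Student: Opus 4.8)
The plan is to prove the two implications separately. The reverse implication is essentially a restatement of the constructions of Section \ref{2-2}, while the forward implication combines the lattice-theoretic obstruction coming from Donaldson's theorem (Proposition \ref{mainobstruction}) with a combinatorial analysis of lattice embeddings which is isolated as Theorem \ref{technical} and whose proof occupies Sections \ref{3basic}--\ref{3conclusion}.

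For the reverse implication, suppose equality holds in \eqref{crucialhyp} and $\Gamma$ is obtained by joining building blocks along their $-1$-vertices. Then $\partial P\Gamma$ bounds a $\mathbb{Q}H-S^1\times D^3$ by Proposition \ref{constructionbis}, and there is nothing further to prove. For internal coherence of the statement one should also check the converse bookkeeping: each of the four building blocks satisfies, on its own, the identity obtained by comparing its contribution to $I(\widetilde{\Gamma})$ with its contribution to the right-hand side of \eqref{crucialhyp} --- for a first-type block this is exactly Proposition \ref{I} applied to its complementary pair of legs, and for the other three it is the same computation with the single extra node vertex taken into account --- and, since all four quantities entering \eqref{crucialhyp} are additive over the connected components of $\widetilde{\Gamma}$, every join of building blocks automatically lies in the class to which the theorem applies and realizes equality in \eqref{crucialhyp}.

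For the forward implication, assume $\partial P\Gamma$ bounds a $\mathbb{Q}H-S^1\times D^3$, say $W$. Then $b_1(\partial P\Gamma)=1$, and since $Q_\Gamma$ is a presentation matrix of $H_1(\partial P\Gamma;\mathbb{Z})$ this forces $\det\Gamma=0$. I would first observe that $X:=P\Gamma$ is negative semidefinite with $\mathrm{rk}\,Q_\Gamma=b_2(X)-1$: every vertex of $\widetilde{\Gamma}$ has weight $\le -2$ (for a node this is a hypothesis of the theorem; for any other vertex of $\widetilde{\Gamma}$ it follows from the definition of normal form, since such a vertex has valence at most two in $\Gamma$ and hence lies on a linear chain of $\Gamma$), so each component of $\widetilde{\Gamma}$ is a negative definite linear chain; hence $Q_\Gamma$ is negative definite on the codimension-one sublattice spanned by the vertices of $\widetilde{\Gamma}$, and, as $\det\Gamma=0$, the one remaining eigenvalue must vanish, so $Q_\Gamma$ is negative semidefinite with one-dimensional radical. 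Since $P\Gamma$ is simply connected and $H_*(W;\mathbb{Q})\cong H_*(S^1\times D^3;\mathbb{Q})$, Proposition \ref{mainobstruction} applies with $n=1$ and produces a surjective morphism of integral lattices
$$
\varphi\colon (H_2(P\Gamma),Q_\Gamma)\longrightarrow(\mathbb{Z}^{\,b_2(P\Gamma)-1},-Id).
$$
Restricting $\varphi$ to the negative definite sublattice spanned by the vertices of $\widetilde{\Gamma}$, which has the same rank as the target, gives a full-rank embedding of $Q_{\widetilde{\Gamma}}$ into a diagonal negative definite lattice, while the image $\varphi(v)$ encodes how the embeddings of the distinct legs are forced to overlap.

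The hard part --- and the content of Theorem \ref{technical} --- is to show that the existence of such a morphism $\varphi$ is incompatible with inequality \eqref{crucialhyp} unless equality holds there and $\Gamma$ is a join of building blocks along $-1$-vertices; granting this, the forward implication follows at once. I expect essentially all of the work to sit here: one has to track, leg by leg, how the standard basis vectors of $-\mathbb{Z}^{\,b_2(P\Gamma)-1}$ are shared among the images of the chain vertices, use the node weights and the vector $\varphi(v)$ to show that these overlaps are extremely rigid, and then read off from the rigidity both the numerical equality in \eqref{crucialhyp} and the decomposition into building blocks; the invariant $I$ and its behaviour under duality (Proposition \ref{I}) provide the bookkeeping that makes the induction close. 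Finally, specializing Theorem \ref{main} to a star-shaped graph $\Gamma$ with $v$ the central vertex --- so $i(\Gamma,v)=0$, $f(\Gamma,v)=k$, $N(\Gamma)=\varnothing$ and \eqref{crucialhyp} reduces to $I(\widetilde{\Gamma})\le -k$ --- and rephrasing ``join of first-type building blocks with equality'' in terms of Seifert invariants via Theorem \ref{seiferttheorem}, equation \eqref{eulerfraction} and Proposition \ref{compeq}, yields Theorem \ref{mainbis}: such a $Y$ bounds a $\mathbb{Q}H-S^1\times D^3$ if and only if its Seifert invariants occur in complementary pairs and $e(Y)=0$.
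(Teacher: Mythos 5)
Your proof follows essentially the same route as the paper: the reverse implication is Proposition \ref{constructionbis}, and the forward implication establishes that $Q_\Gamma$ is negative semidefinite of rank $N-1$ (via the negative definite sublattice $\mathbb{Z}\widetilde{\Gamma}$), invokes Proposition \ref{mainobstruction} to get the lattice morphism, and defers all the combinatorial rigidity to Theorem \ref{technical}, exactly as the paper does. Your added remark that equality in \eqref{crucialhyp} should be verified directly for joins of building blocks is a detail the paper leaves implicit, but it does not alter the argument.
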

\begin{proof}
If $\partial P\Gamma$ is obtained by joining together building blocks along
 $-1$-vectors then the conclusion follows from Proposition \ref{constructionbis}.
 
 Let $\Gamma$ be a plumbing graph in normal form satisfying the hypotheses of the theorem and let
 $W$ be a $\mathbb{Q}H-S^{1}\times D^3$ such that $\partial W=\partial P\Gamma$. Let $N$ be the number
 of vertices of $\Gamma$. Note that $b_0(\Gamma)=b_1(\partial P\Gamma)=1$, moreover 
 $H_2(P\Gamma;\mathbb{Z})$ contains a free subgroup of rank $N-1$ on which $Q_{\Gamma}$ 
 is negative definite (it is the subgroup $\mathbb{Z}\widetilde{\Gamma}$ spanned by all 
 vertices in $\widetilde{\Gamma}$). It
 follows that $Q_{\Gamma}$ is negative semidefinite, more precisely
 $$
 (b_0(\Gamma),b_-(\Gamma),b_+(\Gamma))=(1,N-1,0).
 $$
 Therefore we are in the situation described in Proposition \ref{mainobstruction}. There exists
 a morphism of integral lattices
 $$
 \Phi:(H_2(X(\Gamma);\mathbb{Z}),Q_{\Gamma})\longrightarrow (\mathbb{Z}^{N-1},-Id).
 $$
 Precomposing this map with the inclusion 
 $(\mathbb{Z}\widetilde{\Gamma},Q_{\widetilde{\Gamma}})
 \hookrightarrow(H_2(X(\Gamma);\mathbb{Z}),Q_{\Gamma})$ we obtain an embedding 
 of integral lattices
 $$
 \widetilde{\Phi}:(\mathbb{Z}\widetilde{\Gamma},Q_{\widetilde{\Gamma}})\longrightarrow (\mathbb{Z}^{N-1},-Id).
 $$
 Let us write $\{v_1,\dots,v_{N-1}\}$ for the set of vertices of $\widetilde{\Gamma}$. Now
 consider the subset $S:=\{\Phi(v_1),\dots,\Phi(v_{N-1})\}\subset\mathbb{Z}^{N-1}$. 
 The extra vector $\Phi(v)$
 is linked once to each connected component of $\widetilde{\Gamma}$ and is orthogonal to every other
 vector. The subset $S$ satisfies all the hypotheses of Theorem \ref{technical} and the
 conclusion follows. 
\end{proof}
Even though the class of plumbed 3-manifolds that satisfy the hypotheses of 
Theorem \ref{main} is quite large (it includes, up to orientation reversal all Seifert fibered spaces
over the 2-sphere with vanishing Euler invariant) some of the assumptions on the plumbing graph are rather technical and unnatural.
The need for these hypotheses can be explained as follows.

The fact that every vertex in $\widetilde{\Gamma}$ has weight less or equal to -2 allows us to avoid 
indefinite plumbing graphs. Consider, for instance, the following plumbing graph
     \[
  \begin{tikzpicture}[xscale=1.2,yscale=-0.5]
    \node (A0_1) at (1, 0) {$-2$};
    \node (A0_4) at (4, 0) {$-2$};
    \node (A1_1) at (1, 1) {$\bullet$};
    \node (A1_3) at (3, 1) {$-1$};
    \node (A1_4) at (4, 1) {$\bullet$};
    \node (A2_2) at (2, 2) {$-1$};
    \node (A2_3) at (3, 2) {$\bullet$};
    \node (A2_4) at (4, 2) {$-2$};
    \node (A3_0) at (0, 3) {$\Gamma:=$};
    \node (A3_2) at (2, 3) {$\bullet$};
    \node (A3_4) at (4, 3) {$\bullet$};
    \node (A4_1) at (1, 4) {$-2$};
    \node (A4_4) at (4, 4) {$-2$};
    \node (A5_1) at (1, 5) {$\bullet$};
    \node (A5_3) at (3, 5) {$-1$};
    \node (A5_4) at (4, 5) {$\bullet$};
    \node (A6_3) at (3, 6) {$\bullet$};
    \node (A6_4) at (4, 6) {$-2$};
    \node (A7_4) at (4, 7) {$\bullet$};
    \path (A3_2) edge [-] node [auto] {$\scriptstyle{}$} (A6_3);
    \path (A6_3) edge [-] node [auto] {$\scriptstyle{}$} (A7_4);
    \path (A3_2) edge [-] node [auto] {$\scriptstyle{}$} (A2_3);
    \path (A6_3) edge [-] node [auto] {$\scriptstyle{}$} (A5_4);
    \path (A2_3) edge [-] node [auto] {$\scriptstyle{}$} (A3_4);
    \path (A1_1) edge [-] node [auto] {$\scriptstyle{}$} (A3_2);
    \path (A5_1) edge [-] node [auto] {$\scriptstyle{}$} (A3_2);
    \path (A2_3) edge [-] node [auto] {$\scriptstyle{}$} (A1_4);
  \end{tikzpicture}
  \]
  Note that $\Gamma$ is in normal form. We have
  $$
  (b_0\Gamma,b_+\Gamma,b_-\Gamma)=(1,1,7)
  $$
  Moreover this plumbing graph is \emph{selfdual}, meaning that $\Gamma^*=\Gamma$, therefore
  reversing the orientation does not help. Theorem \ref{main} does not say if $\partial P\Gamma$
  bounds a $\mathbb{Q}H-S^1\times D^3$. However in this particular case $\partial P\Gamma$
  does bound a $\mathbb{Q}H-S^1\times D^3$. This can be checked easily using Proposition
  \ref{construction}. By splitting off three building blocks of the first type and then 
  applying the splitting move we obtain a $0$-weighted single vertex.
It follows that $\partial P\Gamma$ is $\mathbb{Q}H$-cobordant to $S^1\times S^2$.

The reason why we need the condition $I(\widetilde{\Gamma})<0$ can be explained as follows.
In the proof of Theorem \ref{main} we have shown that $\widetilde{\Gamma}$ gives rise to a 
subset $S=\{v_1,\dots,v_n\}\subset\mathbb{Z}^n$ with certain properties.
The starting point of our analysis is that these subsets are well understood provided that
$I(S)<0$. We use the known results on such subsets, as developed in \cite{Lisca:1} and
\cite{Lisca:2}, to show that the possible graphs of $S\cup\{v\}$ where $v$ is the vector that 
corresponds to the extra vertex in $\Gamma$ are obtained by joining together building blocks
along $-1$-vertices.
\subsection{More plumbing graphs}
The family of plumbing graphs described in Theorem \ref{main} is not the largest
family we can think of. As mentioned above we have intentionally avoided indefinite graphs.
Suppose, for istance, that we are given a plumbing graph in normal form $\Gamma$ with $lc(\Gamma)=1$ and that
we want to join this graph with a building block of the second type, denoted by $\Gamma_2$. 
The resulting plumbing graph can be depicted as
    \[
  \begin{tikzpicture}[xscale=1.5,yscale=-0.5]
    \node (A0_0) at (0, 0) {};
    \node (A0_3) at (3, 0) {$\bullet$};
    \node (A0_4) at (4, 0) {$\dots$};
    \node (A0_5) at (5, 0) {$\bullet$};
    \node (A1_1) at (1, 1) {$-a-1$};
    \node (A1_2) at (2, 1) {$-2$};
    \node (A2_0) at (0, 2) {};
    \node (A2_1) at (1, 2) {$\bullet$};
    \node (A2_2) at (2, 2) {$\bullet$};
    \node (A4_0) at (0, 4) {};
    \node (A4_3) at (3, 4) {$\bullet$};
    \node (A4_4) at (4, 4) {$\dots$};
    \node (A4_5) at (5, 4) {$\bullet$};
    \path (A0_3) edge [-] node [auto] {$\scriptstyle{}$} (A0_4);
    \path (A4_4) edge [-] node [auto] {$\scriptstyle{}$} (A4_5);
    \path (A2_1) edge [-] node [auto] {$\scriptstyle{}$} (A2_2);
    \path (A4_0) edge [-] node [auto] {$\scriptstyle{}$} (A2_1);
    \path (A0_4) edge [-] node [auto] {$\scriptstyle{}$} (A0_5);
    \path (A0_0) edge [-] node [auto] {$\scriptstyle{}$} (A2_1);
    \path (A2_2) edge [-] node [auto] {$\scriptstyle{}$} (A0_3);
    \path (A4_3) edge [-] node [auto] {$\scriptstyle{}$} (A4_4);
    \path (A2_0) edge [-] node [auto] {$\scriptstyle{}$} (A2_1);
    \path (A2_2) edge [-] node [auto] {$\scriptstyle{}$} (A4_3);
  \end{tikzpicture}
  \]
If we  reverse the orientation on this plumbed 3-manifold the relevant portion of the 
dual plumbing graph can be depicted as folllows.
\[
  \begin{tikzpicture}[xscale=1.7,yscale=-0.5]
    \node (A0_0) at (0, 0) {};
    \node (A0_3) at (3, 0) {$\bullet$};
    \node (A0_4) at (4, 0) {$\dots$};
    \node (A0_5) at (5, 0) {$\bullet$};
    \node (A1_1) at (1, 1) {$a'+1$};
    \node (A1_2) at (2, 1) {$0$};
    \node (A2_0) at (0, 2) {};
    \node (A2_1) at (1, 2) {$\bullet$};
    \node (A2_2) at (2, 2) {$\bullet$};
    \node (A4_0) at (0, 4) {};
    \node (A4_3) at (3, 4) {$\bullet$};
    \node (A4_4) at (4, 4) {$\dots$};
    \node (A4_5) at (5, 4) {$\bullet$};
    \path (A0_3) edge [-] node [auto] {$\scriptstyle{}$} (A0_4);
    \path (A4_4) edge [-] node [auto] {$\scriptstyle{}$} (A4_5);
    \path (A2_1) edge [-] node [auto] {$\scriptstyle{}$} (A2_2);
    \path (A4_0) edge [-] node [auto] {$\scriptstyle{}$} (A2_1);
    \path (A0_4) edge [-] node [auto] {$\scriptstyle{}$} (A0_5);
    \path (A0_0) edge [-] node [auto] {$\scriptstyle{}$} (A2_1);
    \path (A2_2) edge [-] node [auto] {$\scriptstyle{}$} (A0_3);
    \path (A4_3) edge [-] node [auto] {$\scriptstyle{}$} (A4_4);
    \path (A2_0) edge [-] node [auto] {$\scriptstyle{}$} (A2_1);
    \path (A2_2) edge [-] node [auto] {$\scriptstyle{}$} (A4_3);
  \end{tikzpicture}
  \]
  Here $a'$ is computed as in Theorem \ref{normalform}, ignoring the attached building block.
  Moreover the legs of the building block
  are not altered by this transformation (more precisely, they are turned into each other).
  To summarize the situation, consider the following diagram:
    \[
  \begin{tikzpicture}[xscale=2.5,yscale=-1.5]
    \node (A0_0) at (0, 0) {$\Gamma$};
    \node (A0_1) at (1, 0) {$\Gamma\bigvee\Gamma_2$};
    \node (A1_0) at (0, 1) {$\Gamma^*$};
    \node (A1_1) at (1, 1) {$(\Gamma\bigvee\Gamma_2)^*$};
    \path (A0_0) edge [->] node [auto] {$\scriptstyle{}$} (A0_1);
    \path (A0_0) edge [->] node [auto] {$\scriptstyle{}$} (A1_0);
    \path (A0_1) edge [->] node [auto] {$\scriptstyle{}$} (A1_1);
    \path (A1_0) edge [->] node [auto] {$\scriptstyle{}$} (A1_1);
  \end{tikzpicture}
  \]
This shows that there is a unique graph $\Gamma'$ such that
$(\Gamma\bigvee\Gamma_2)^*=\Gamma^*\bigvee\Gamma'$. We call
this graph a \emph{dual building block} of the second type and,
abusing notation, we indicate it with $\Gamma_2^*$. It can be written as
  \[
  \begin{tikzpicture}[xscale=1.5,yscale=-0.8]
    \node (A0_1) at (1, 0) {$+1$};
    \node (A0_2) at (2, 0) {$0$};
    \node (A0_3) at (3, 0) {$\bullet$};
    \node (A0_4) at (4, 0) {$\dots$};
    \node (A0_5) at (5, 0) {$\bullet$};
    \node (A1_0) at (0, 1) {$\Gamma_2^*:=$};
    \node (A1_1) at (1, 1) {$\bullet$};
    \node (A1_2) at (2, 1) {$\bullet$};
    \node (A2_3) at (3, 2) {$\bullet$};
    \node (A2_4) at (4, 2) {$\dots$};
    \node (A2_5) at (5, 2) {$\bullet$};
    \path (A0_4) edge [-] node [auto] {$\scriptstyle{}$} (A0_5);
    \path (A1_2) edge [-] node [auto] {$\scriptstyle{}$} (A0_3);
    \path (A1_2) edge [-] node [auto] {$\scriptstyle{}$} (A2_3);
    \path (A0_3) edge [-] node [auto] {$\scriptstyle{}$} (A0_4);
    \path (A1_1) edge [-] node [auto] {$\scriptstyle{}$} (A1_2);
    \path (A2_3) edge [-] node [auto] {$\scriptstyle{}$} (A2_4);
    \path (A2_4) edge [-] node [auto] {$\scriptstyle{}$} (A2_5);
  \end{tikzpicture}
  \]

   Note that $\Gamma_2^*$ is not the dual graph of $\Gamma_2$ in the usual sense,
   since $\Gamma_2$ is not in normal form this notion does not make sense.
   Arguing as above we can define dual building blocks of any type, and it is easy to see that 
   a building block of the first type is self dual. 
   A dual building block of the third type can be written as
      \[
  \begin{tikzpicture}[xscale=1.2,yscale=-0.8]
    \node (A0_1) at (1, 0) {$-n$};
    \node (A0_2) at (2, 0) {$0$};
    \node (A0_3) at (3, 0) {$n-1$};
    \node (A0_4) at (4, 0) {$\bullet$};
    \node (A0_5) at (5, 0) {$\dots$};
    \node (A0_6) at (6, 0) {$\bullet$};
    \node (A1_0) at (0, 1) {$\Gamma_3^*:=$};
    \node (A1_1) at (1, 1) {$\bullet$};
    \node (A1_2) at (2, 1) {$\bullet$};
    \node (A1_3) at (3, 1) {$\bullet$};
    \node (A2_4) at (4, 2) {$\bullet$};
    \node (A2_5) at (5, 2) {$\dots$};
    \node (A2_6) at (6, 2) {$\bullet$};
    \path (A2_4) edge [-] node [auto] {$\scriptstyle{}$} (A2_5);
    \path (A1_3) edge [-] node [auto] {$\scriptstyle{}$} (A0_4);
    \path (A2_5) edge [-] node [auto] {$\scriptstyle{}$} (A2_6);
    \path (A1_1) edge [-] node [auto] {$\scriptstyle{}$} (A1_2);
    \path (A0_4) edge [-] node [auto] {$\scriptstyle{}$} (A0_5);
    \path (A1_3) edge [-] node [auto] {$\scriptstyle{}$} (A2_4);
    \path (A1_2) edge [-] node [auto] {$\scriptstyle{}$} (A1_3);
    \path (A0_5) edge [-] node [auto] {$\scriptstyle{}$} (A0_6);
  \end{tikzpicture}
  \]

while a dual building block of the fourth type is a graph of the form
   \[
  \begin{tikzpicture}[xscale=1.2,yscale=-0.8]
    \node (A0_1) at (1, 0) {$\bullet$};
    \node (A0_2) at (2, 0) {$\dots$};
    \node (A0_3) at (3, 0) {$\bullet$};
    \node (A0_4) at (4, 0) {$1$};
    \node (A0_5) at (5, 0) {$1$};
    \node (A0_6) at (6, 0) {$1$};
    \node (A0_7) at (7, 0) {$\bullet$};
    \node (A0_8) at (8, 0) {$\dots$};
    \node (A0_9) at (9, 0) {$\bullet$};
    \node (A1_0) at (0, 1) {$\Gamma_4^*:=$};
    \node (A1_4) at (4, 1) {$\bullet$};
    \node (A1_5) at (5, 1) {$\bullet$};
    \node (A1_6) at (6, 1) {$\bullet$};
    \node (A2_1) at (1, 2) {$\bullet$};
    \node (A2_2) at (2, 2) {$\dots$};
    \node (A2_3) at (3, 2) {$\bullet$};
    \node (A2_7) at (7, 2) {$\bullet$};
    \node (A2_8) at (8, 2) {$\dots$};
    \node (A2_9) at (9, 2) {$\bullet$};
    \path (A2_8) edge [-] node [auto] {$\scriptstyle{}$} (A2_9);
    \path (A1_4) edge [-] node [auto] {$\scriptstyle{}$} (A1_5);
    \path (A0_3) edge [-] node [auto] {$\scriptstyle{}$} (A1_4);
    \path (A0_8) edge [-] node [auto] {$\scriptstyle{}$} (A0_9);
    \path (A0_1) edge [-] node [auto] {$\scriptstyle{}$} (A0_2);
    \path (A2_1) edge [-] node [auto] {$\scriptstyle{}$} (A2_2);
    \path (A1_5) edge [-] node [auto] {$\scriptstyle{}$} (A1_6);
    \path (A1_6) edge [-] node [auto] {$\scriptstyle{}$} (A2_7);
    \path (A2_2) edge [-] node [auto] {$\scriptstyle{}$} (A2_3);
    \path (A0_7) edge [-] node [auto] {$\scriptstyle{}$} (A0_8);
    \path (A2_7) edge [-] node [auto] {$\scriptstyle{}$} (A2_8);
    \path (A1_6) edge [-] node [auto] {$\scriptstyle{}$} (A0_7);
    \path (A0_2) edge [-] node [auto] {$\scriptstyle{}$} (A0_3);
    \path (A2_3) edge [-] node [auto] {$\scriptstyle{}$} (A1_4);
  \end{tikzpicture}
  \]

   Therefore we have seven elementary pieces
   that we can use to build plumbed 3-manifolds with $lc=1$ that bound $\mathbb{Q}H-S^1\times D^3$'s.
   We consider these dual building blocks as rooted plumbing graphs. The distinguished vertex of a dual is the same
   as the distinguished vertex of the old building block but it has a different weight.
   In the above pictures all adjacent legs are complementary and each vertex on a leg has weight $\leq -2$.
   We summarize the construction in the next proposition.
   \begin{Pro}\label{largestfamily}
    Let $\Gamma$ be a plumbing graph obtained by joining together two or more building block and/or their duals.
    Then, $\Gamma$ is a plumbing graph in normal form with $lc(\Gamma)=1$ and $\partial P\Gamma$ bounds a $\mathbb{Q}H-S^1\times D^3$.
   \end{Pro}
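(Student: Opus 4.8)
The plan is to extend the inductive argument that proves Proposition~\ref{constructionbis}. The key point is that, when each of the seven pieces is rooted at its distinguished vertex $v$, it is either \emph{atomic} --- by which I mean $\partial P(B)=S^1\times S^2$ and $\partial P(B\setminus\{v\})$ is a rational homology sphere, which holds for $\Gamma_1,\Gamma_2,\Gamma_3,\Gamma_2^*$ and $\Gamma_3^*$ --- or is itself a join of atomic pieces. For the two remaining pieces one has $\Gamma_4=C\bigvee\Gamma_1\bigvee\Gamma_1$, where $C$ is the three-vertex linear graph with weights $(-2,-1,-2)$ rooted at its $-1$-vertex and the two copies of $\Gamma_1$ are glued at the end-vertices of $C$, and likewise $\Gamma_4^*=D\bigvee\Gamma_1\bigvee\Gamma_1$ with $D$ carrying weights $(2,1,2)$; the normal form of both $C$ and $D$ is the single $0$-vertex.

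The main computation --- and the only genuinely new ingredient I expect --- is the verification that the five atomic pieces, together with $C$ and $D$, enjoy these two properties. To see that $\partial P(B)=S^1\times S^2$ one repeatedly blows down the distinguished $\pm1$-vertex (for $\Gamma_3^*$ one first removes the neighbouring $0$-vertex by a $0$-chain absorption), eventually reaching a linear graph consisting of a single $-1$ inserted between two complementary strings; this represents $S^1\times S^2$ by Proposition~\ref{compeq}(2) (equivalently by Lemma~\ref{zerojoin} and Remark~\ref{zerojoinremark}). To see that $\partial P(B\setminus\{v\})$ is a rational homology sphere, note that deleting $v$ and performing one splitting or $0$-chain move leaves a disjoint union of negative definite linear graphs, together (in the cases $\Gamma_2^*,\Gamma_3^*$) with one extra linear graph whose continued fraction equals $cf(\Gamma_1)+cf(\Gamma_1^*)$ for a complementary pair, by Propositions~\ref{cf} and~\ref{adcf}; the defining relation $\tfrac1{cf(\Gamma_1)}+\tfrac1{cf(\Gamma_1^*)}=-1$ of a complementary pair then forces this continued fraction to be nonzero, hence so is the determinant. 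This is a handful of small computations, but it is the step I expect to be the main obstacle.

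Granting this, part~(3) is proved exactly as Proposition~\ref{constructionbis}: expand every $\Gamma_4$ and $\Gamma_4^*$ so that $\Gamma$ becomes a join of atomic pieces whose pattern of joins is a tree, and induct on the number of pieces, at each step peeling off a leaf $(A,v)$ and invoking Proposition~\ref{construction} with $\Gamma'$ the remaining graph --- which is again a join of atomic pieces, since pieces are glued only at distinguished vertices and the native weight of the gluing vertex is restored upon removing the leaf. Hence $\partial P\Gamma$ is $\mathbb{Q}H$-cobordant to $S^1\times S^2$ and $b_1(\partial P\Gamma)=1$, and gluing the cobordism to $S^1\times D^3$ yields the desired $\mathbb{Q}H-S^1\times D^3$. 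Parts~(1) and~(2) are bookkeeping: in all but a few degenerate cases $\Gamma$ is already in normal form, since every leg-vertex has weight $\leq-2$ by construction, every node is trivalent and thus lies on no linear chain, and the central vertex is either trivalent or has weight $\leq-2$; when it is not (the central vertex can acquire weight $\geq-1$, as in $\Gamma_2\bigvee\Gamma_2^*$), a short sequence of $0$-chain absorptions and blow-downs brings $\Gamma$ into normal form without leaving the family (for instance $\Gamma_2\bigvee\Gamma_2^*$ reduces to $\Gamma_1\bigvee\Gamma_1$). In either case deleting the central vertex leaves a disjoint union of paths, so $lc(\Gamma)\leq1$, while $\Gamma$ is not linear because at least two pieces meet there, whence $lc(\Gamma)=1$. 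The fiddly part is the case check for the degenerate central weights; the rest is a reprise of Propositions~\ref{construction}--\ref{constructionbis}.
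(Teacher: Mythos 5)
Your proof is correct, but it handles the dual building blocks by a genuinely different mechanism than the paper does. The paper's proof is a two-pass argument: it first strips off all the \emph{regular} building blocks using Proposition~\ref{construction}, then reverses orientation so that, via the identity $(\Gamma\bigvee B)^*=\Gamma^*\bigvee B^*$ underlying the definition of the dual blocks, the remaining dual blocks become regular ones, and applies Proposition~\ref{construction} a second time. You instead verify directly that each dual block is ``atomic'' in your sense (satisfies the hypotheses of Proposition~\ref{construction} at its distinguished vertex) or, in the case of $\Gamma_4$ and $\Gamma_4^*$, splits as a join of atomic pieces, and then peel leaves off the resulting tree of pieces in a single pass. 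Your route costs a handful of extra continued-fraction computations (I checked the ones you cite: $cf=1$ for $\Gamma_2^*\setminus\{v\}$, $cf=n$ for the star in $\Gamma_3^*\setminus\{v\}$, and the atomicity of $C=(-2,-1,-2)$ and $D=(2,1,2)$ all come out right), but it buys two things: it avoids relying on the dualization identity, which the paper only asserts via an unproved commutative diagram, and the decomposition $\Gamma_4=C\bigvee\Gamma_1\bigvee\Gamma_1$ fills a gap that the paper leaves implicit even in Proposition~\ref{constructionbis} --- since $\partial P\Gamma_4\neq S^1\times S^2$, a type-four block cannot be removed by a single application of Proposition~\ref{construction}, so some such decomposition is needed in any case. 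One caveat on parts~(1) and~(2), which the paper's proof does not address at all: as you observe, the statement that $\Gamma$ is literally in normal form fails for joins such as $\Gamma_2\bigvee\Gamma_2^*$ or $\Gamma_2^*\bigvee\Gamma_2^*$, where the join vertex is bivalent with weight $\geq 0$; your proposed repair (normalize without leaving the family, e.g.\ $\Gamma_2\bigvee\Gamma_2^*\rightsquigarrow\Gamma_1\bigvee\Gamma_1$) is the right idea and is correct in the example you give, though it is stated rather than checked in general --- this is a defect of the proposition as stated more than of your argument, and it does not affect the substantive conclusions that $lc=1$ and that $\partial P\Gamma$ bounds a $\mathbb{Q}H$-$S^1\times D^3$.
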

   \begin{proof}
    We may apply Proposition \ref{construction} to every building block or argue as follows.
    By Proposition \ref{construction}, up to $\mathbb{Q}H$-cobordism equivalence we can remove
    all the original building blocks. We are left with a graph obtained using
    only dual building blocks. Changing the orientation of the manifold and taking 
    the corresponding dual graph every dual building block turns into a regular one. We conclude aplying
    again Proposition \ref{construction}.
   \end{proof}

\begin{q}
 Does every plumbed 3-manifold with $lc(Y)=1$ that bounds a $\mathbb{Q}H-S^1\times D^3$ arise from
 the construction given in Proposition \ref{largestfamily}?
\end{q}
Trying to answer affermatively the above question would require invariants of $\mathbb{Q}H$-cobordisms
that do not rely on definite (or semidefinite) intersection forms.
\subsection{Seifert manifolds}
As we show in the next theorem, the assumption $I(\widetilde{\Gamma})<0$ in Theorem \ref{main} can be avoided when both
$\Gamma$ and $\Gamma^*$ are negative semidefinite. This fact is not true for every
graph with $lc(\Gamma)=1$ and $b_0(\Gamma)=1$. It is true, however, if we restrict ourselves to
starshaped plumbing graphs.

Before we state our main result on Seifert manifolds we give a necessary condition
for a starshaped plumbing graph to represent a manifold that bounds a $\mathbb{Q}H-S^1\times D^3$.
\begin{Pro}\label{necessary}
 Let $\Gamma$ be a starshaped plumbing graph in normal form, and let $\widetilde{\Gamma}$ 
 be the linear plumbing graph obtained by removing the central vertex from $\Gamma$. 
 If $\partial P\Gamma$ bounds a $\mathbb{Q}H-S^1\times D^3$, then the connected sum of Lens spaces
 $\partial P\widetilde{\Gamma}$ bounds a $\mathbb{Q}H-D^4$.
\end{Pro}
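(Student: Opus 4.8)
The plan is to produce the required rational homology ball by attaching a single $2$-handle to the given $\mathbb{Q}H$-$S^1\times D^3$. Write $v$ for the central vertex of $\Gamma$, let $\Gamma_1,\dots,\Gamma_k$ be the legs, so that $\widetilde\Gamma=\Gamma_1\sqcup\dots\sqcup\Gamma_k$ and each $\Gamma_i$ is a linear graph in normal form, hence negative definite. Set $Y:=\partial P\Gamma$ and let $W$ be a $\mathbb{Q}H$-$S^1\times D^3$ with $\partial W=Y$. By the remark following Proposition \ref{motivation}, $b_1(Y)=1$, so $M_\Gamma$ (a presentation matrix for $H_1(Y;\mathbb Z)$) has rank $N-1$, where $N$ is the number of vertices of $\Gamma$.

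First I would single out the surgery curve. In the surgery presentation of $Y$ coming from $\Gamma$, let $K_v$ be the component of $v$ and let $\mu_v\subset Y$ be a meridian of $K_v$ (equivalently, the belt circle of the $2$-handle of $v$ in $P\Gamma$). Adjoining to $\Gamma$ a new $0$-weighted vertex $u_0$ linked only to $v$ produces, after one splitting move, exactly $\Gamma\setminus\{v\}=\widetilde\Gamma$; since at the level of framed links this amounts to adding a $0$-framed meridian of $K_v$, it follows that $0$-framed surgery on $\mu_v\subset Y$ yields $\partial P\widetilde\Gamma$. Now define
$$
B:=W\cup_{\mu_v}h^2,
$$
the $4$-manifold obtained by attaching a $2$-handle to $W$ along $\mu_v\subset\partial W$ with framing $0$. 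Then $\partial B=\partial P\widetilde\Gamma$, and it remains to show that $B$ is a rational homology ball.

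For this I would run the long exact sequence of $(B,W)$ with $\mathbb Q$ coefficients. As $B$ is obtained from $W$ by a single $2$-handle attachment, $H_i(B,W;\mathbb Q)$ equals $\mathbb Q$ for $i=2$ and vanishes otherwise; together with $H_*(W;\mathbb Q)\cong H_*(S^1;\mathbb Q)$ this gives $H_3(B;\mathbb Q)=0$ and an exact sequence
$$
0\longrightarrow H_2(B;\mathbb Q)\longrightarrow \mathbb Q \xrightarrow{\ \partial\ } H_1(W;\mathbb Q)\longrightarrow H_1(B;\mathbb Q)\longrightarrow 0 ,
$$
where $\partial$ sends the generator to the class $[\mu_v]\in H_1(W;\mathbb Q)$. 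Hence $B$ is a rational homology ball as soon as $[\mu_v]\neq 0$ in $H_1(W;\mathbb Q)$. Since $W$ is a $\mathbb{Q}H$-$S^1\times D^3$, Lefschetz duality applied to the exact sequence of $(W,Y)$ shows that $H_1(Y;\mathbb Q)\to H_1(W;\mathbb Q)$ is an isomorphism, so it is enough to prove $[\mu_v]\neq 0$ in $H_1(Y;\mathbb Q)$.

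The last point is a short computation with $M_\Gamma$. Ordering the vertices as $(v;\Gamma_1;\dots;\Gamma_k)$, the matrix $M_\Gamma$ has diagonal corner $w(v)$, diagonal blocks $M_{\Gamma_i}$, and off-diagonal blocks $e_1^{(i)}$, where $e_1^{(i)}$ records that the first vertex of $\Gamma_i$ is the unique vertex of the leg adjacent to $v$. In $H_1(Y;\mathbb Q)=\operatorname{coker}(M_\Gamma)$ the class $[\mu_v]$ is the image of $e_v$, and it vanishes iff $e_v\perp\ker M_\Gamma$, i.e. iff the (one-dimensional, since $\operatorname{rk}M_\Gamma=N-1$) kernel is spanned by a vector $\kappa$ with $\kappa_v=0$. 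But for such $\kappa$ the leg-$i$ block of $M_\Gamma\kappa=0$ reads $M_{\Gamma_i}\kappa^{(i)}=0$, which forces $\kappa^{(i)}=0$ because $M_{\Gamma_i}$ is negative definite, hence $\kappa=0$, a contradiction. Therefore $[\mu_v]\neq0$ and $B$ is the desired $\mathbb{Q}H$-$D^4$. The only step that requires care is the identification of $0$-surgery on $\mu_v$ with $\partial P\widetilde\Gamma$ (via the splitting move); the rest of the argument is formal.
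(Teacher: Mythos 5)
Your proposal is correct and takes essentially the same route as the paper: attach a $0$-framed $2$-handle to $W$ along a meridian of the central surgery curve and identify the new boundary with $\partial P\widetilde{\Gamma}$ via the splitting move. The only difference is in the final homological step, which is cosmetic: where the paper invokes the general fact that a $2$-handle attachment turning the boundary of a $\mathbb{Q}H$-$S^1\times D^3$ into a rational homology sphere produces a rational homology ball (the boundary being a $\mathbb{Q}HS$ because $\det\Gamma_i\neq 0$ for each leg), you verify directly, via the kernel of $M_\Gamma$ and negative definiteness of the legs, that the attaching circle is rationally essential in $H_1(W;\mathbb{Q})$.
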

\begin{proof}
 Let $W$ be a $\mathbb{Q}H-S^1\times D^3$ such that $\partial W=\partial P\Gamma$. Any 2-handle
 attachment on $W$ that turns its boundary into a rational homology sphere will produce
 a rational homology ball. In particular, we may attach to $W$ a $0$-framed 2-handle linked
 once to the central vertex of $\Gamma$, obtaining a 4-manifold $\widetilde{W}$. Its boundary can 
 be depicted as
 \[
  \begin{tikzpicture}[xscale=1.2,yscale=-0.5]
    \node (A0_2) at (2, 0) {$\Gamma_1$};
    \node (A1_2) at (2, 1) {$\cdot$};
    \node (A2_0) at (0, 2) {$0$};
    \node (A2_1) at (1, 2) {$a$};
    \node (A2_2) at (2, 2) {$\cdot$};
    \node (A3_0) at (0, 3) {$\bullet$};
    \node (A3_1) at (1, 3) {$\bullet$};
    \node (A3_2) at (2, 3) {$\cdot$};
   
    \node (A4_2) at (2, 4) {$\cdot$};
    \node (A5_2) at (2, 5) {$\cdot$};
    \node (A6_2) at (2, 6) {$\Gamma_k$};
    \path (A3_0) edge [-] node [auto] {$\scriptstyle{}$} (A3_1);
    \path (A3_1) edge [-] node [auto] {$\scriptstyle{}$} (A6_2);
    \path (A3_1) edge [-] node [auto] {$\scriptstyle{}$} (A0_2);
  \end{tikzpicture}
  \]
  Using the splitting move we see that 
  $\partial \widetilde{W}=\partial P\Gamma_1\sharp\dots\sharp\partial P\Gamma_1$. Since $\Gamma$
  is in normal form we have $det(\Gamma_i)\neq 0$ for each $i$. Therefore $\partial \widetilde{W}$ 
  is a rational homology sphere.  
\end{proof}
The reason why the above proposition is relevant is that, by \cite{Lisca:2}, we know exactly
which connected sums of Lens spaces bound rational homology balls. Comparing the next theorem
with the results in \cite{Lisca:2} we see that Proposition \ref{necessary} does not give sufficient
conditions. For instance, no starshaped plumbing graph in normal form with an odd number of legs
bounds a $\mathbb{Q}H-S^1\times D^3$. 
\begin{te}\label{mainbis}
 A Seifert fibered manifold 
 $Y=(0;b;(\alpha_1,\beta_1),\dots,(\alpha_h,\beta_h))$ 
 bounds a $\mathbb{Q}H-S^1\times D^3$ if and only if the Seifert invariants occur in complementary pairs
 and $e(Y)=0$.
\end{te}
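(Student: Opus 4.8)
The plan is to prove the two implications separately, deducing the substantive ``only if'' direction from Theorem \ref{main}, applied either to the star-shaped normal-form graph $\Gamma$ of $Y$ or to its dual $\Gamma^{*}$. Write $v$ for the central vertex of $\Gamma$ and $\widetilde{\Gamma}=\Gamma\setminus\{v\}=\Gamma_1\sqcup\dots\sqcup\Gamma_h$ for the disjoint union of its $h$ legs, all of whose weights are $\le -2$ by the normal form; in particular $lc(\Gamma)=1$.

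For the ``if'' direction I would start from the definition of complementary Seifert invariants recalled at the end of Section \ref{lensseifert}: the hypothesis says exactly that the legs of $\Gamma$ occur in complementary pairs $\{L_i,L_i^{*}\}$, $i=1,\dots,k$, with $h=2k$. By Proposition \ref{compeq} we have $\tfrac{1}{cf(L_i)}+\tfrac{1}{cf(L_i^{*})}=-1$, so Proposition \ref{cf} together with \eqref{eulerfraction} gives $e(Y)=b+k$, where $b$ is the central weight; the assumption $e(Y)=0$ then forces $b=-k$, and $\Gamma$ is precisely the graph of Example \ref{seifert}. Hence, by Proposition \ref{construction}, $Y$ is $\mathbb{Q}H$-cobordant to $S^1\times S^2$ and so bounds a $\mathbb{Q}H-S^1\times D^3$.

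For the ``only if'' direction, suppose $Y$ bounds a $\mathbb{Q}H-S^1\times D^3$. Then $b_1(Y)=1$, so $H_1(Y;\mathbb{Z})$ is infinite and $\det\Gamma=0$; since removing $v$ leaves a negative definite linear graph, Proposition \ref{cfdet} and \eqref{eulerfraction} force $e(Y)=0$. To obtain the complementary pairs I would also bring in the dual graph $\Gamma^{*}$, which is again star-shaped in normal form, has legs $\Gamma_1^{*},\dots,\Gamma_h^{*}$, and satisfies $\partial P\Gamma^{*}=-Y$; reversing orientation, $-Y$ also bounds a $\mathbb{Q}H-S^1\times D^3$. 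For a star-shaped graph every leg is final, so $f(\Gamma,v)=h$, $i(\Gamma,v)=0$, and $N(\Gamma)=\varnothing$; thus the node-weight hypothesis of Theorem \ref{main} is vacuous and the right-hand side of \eqref{crucialhyp} equals $-h$, both for $\Gamma$ and for $\Gamma^{*}$. Applying Proposition \ref{I} to each leg,
\[
I(\widetilde{\Gamma})+I(\widetilde{\Gamma^{*}})=\sum_{j=1}^{h}\bigl(I(\Gamma_j)+I(\Gamma_j^{*})\bigr)=-2h,
\]
so at least one of $I(\widetilde{\Gamma})$, $I(\widetilde{\Gamma^{*}})$ is $\le -h$, and that graph satisfies all the hypotheses of Theorem \ref{main}. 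Replacing $Y$ by $-Y$ if necessary, Theorem \ref{main} then shows that $\Gamma$ is obtained by joining building blocks along their $-1$-vertices.

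The remaining step, which I expect to be the main obstacle, is to read the complementary pairs off this description. Inspecting Definition \ref{buildblock}, in every building block of the second, third, or fourth type the $-1$-vertex is adjacent to a vertex that is already $3$-valent inside the block; such a vertex persists, distinct from $v$ and still $3$-valent, in any graph built from those blocks, which contradicts $\Gamma$ being star-shaped. Therefore only building blocks of the first type can occur, so $\Gamma$ is a join of $k$ of them, namely the graph of Example \ref{seifert}, with its legs occurring in complementary pairs; by the description in Section \ref{lensseifert} the Seifert invariants of $Y$ occur in complementary pairs. This property is unaffected by orientation reversal (the dual of a join of first-type blocks is again one, the central weight $-k$ being sent to $k-2k=-k$), so the conclusion holds for $Y$ even when the argument is carried out for $\Gamma^{*}$. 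The genuinely delicate points are the balancing identity $I(\widetilde{\Gamma})+I(\widetilde{\Gamma^{*}})=-2h$, which is precisely what lets one bypass the technical assumption $I(\widetilde{\Gamma})<0$ of Theorem \ref{main} in the star-shaped case, and the verification that star-shapedness excludes building blocks of types two through four; by contrast, Proposition \ref{necessary} combined with the classification in \cite{Lisca:2} of connected sums of lens spaces bounding rational homology balls yields only a strictly weaker necessary condition.
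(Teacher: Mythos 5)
Your proof is correct and follows essentially the same route as the paper: the ``if'' direction via Example \ref{seifert} and Proposition \ref{construction}, and the ``only if'' direction by using Proposition \ref{I} to ensure one of $\Gamma$, $\Gamma^{*}$ satisfies the hypothesis of Theorem \ref{main}, then observing that star-shapedness forces all building blocks to be of the first type. You even spell out two points the paper leaves implicit (why types two through four are excluded, and why the conclusion transfers under orientation reversal), so nothing further is needed.
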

\begin{proof}
Assume that the Seifert invariants occur in complementary pairs and that $e(Y)=0$. By Theorem
\ref{seiferttheorem} we may write $Y=\partial P\Gamma$, where $\Gamma$ is the following plumbing
graph in normal form.
 \[
  \begin{tikzpicture}[xscale=1.5,yscale=-0.5]
    \node (A0_1) at (1, 0) {$a^1_1$};
    \node (A0_3) at (3, 0) {$a^1_{n_1}$};
    \node (A1_1) at (1, 1) {$\bullet$};
    \node (A1_2) at (2, 1) {$\dots$};
    \node (A1_3) at (3, 1) {$\bullet$};
    \node (A2_1) at (1, 2) {$b^1_1$};
    \node (A2_3) at (3, 2) {$b^1_{m_1}$};
    \node (A3_1) at (1, 3) {$\bullet$};
    \node (A3_2) at (2, 3) {$\dots$};
    \node (A3_3) at (3, 3) {$\bullet$};
    \node (A4_0) at (0, 4) {$-b$};
    \node (A4_2) at (2, 4) {$\vdots$};
    \node (A5_0) at (0, 5) {$\bullet$};
    \node (A5_2) at (2, 5) {$\vdots$};
    \node (A6_1) at (1, 6) {$a^k_1$};
    \node (A6_3) at (3, 6) {$a^k_{n_k}$};
    \node (A7_1) at (1, 7) {$\bullet$};
    \node (A7_2) at (2, 7) {$\dots$};
    \node (A7_3) at (3, 7) {$\bullet$};
    \node (A8_1) at (1, 8) {$b^k_1$};
    \node (A8_3) at (3, 8) {$b^k_{m_k}$};
    \node (A9_1) at (1, 9) {$\bullet$};
    \node (A9_2) at (2, 9) {$\dots$};
    \node (A9_3) at (3, 9) {$\bullet$};
    \path (A9_2) edge [-] node [auto] {$\scriptstyle{}$} (A9_3);
    \path (A7_2) edge [-] node [auto] {$\scriptstyle{}$} (A7_3);
    \path (A3_1) edge [-] node [auto] {$\scriptstyle{}$} (A3_2);
    \path (A1_1) edge [-] node [auto] {$\scriptstyle{}$} (A1_2);
    \path (A7_1) edge [-] node [auto] {$\scriptstyle{}$} (A7_2);
    \path (A5_0) edge [-] node [auto] {$\scriptstyle{}$} (A7_1);
    \path (A5_0) edge [-] node [auto] {$\scriptstyle{}$} (A9_1);
    \path (A5_0) edge [-] node [auto] {$\scriptstyle{}$} (A1_1);
    \path (A5_0) edge [-] node [auto] {$\scriptstyle{}$} (A3_1);
    \path (A1_2) edge [-] node [auto] {$\scriptstyle{}$} (A1_3);
    \path (A3_2) edge [-] node [auto] {$\scriptstyle{}$} (A3_3);
    \path (A9_1) edge [-] node [auto] {$\scriptstyle{}$} (A9_2);
  \end{tikzpicture}
  \]
Here the legs are pairwise complementary. 
Call $\Gamma^a_1,\Gamma^b_1,\dots,\Gamma^a_k,\Gamma^b_k$
the legs of $\Gamma$. The condition $e(Y)=0$ implies that $b=-k$.
Indeed
$$
0=e(Y)=cf(\Gamma)=b-\sum_{i=1}^{k}\left(\frac{1}{cf(\Gamma^a_i)}+\frac{1}{cf(\Gamma^b_i)}\right)=
b+\sum_{i=1}^{k}1.
$$
The conclusion follows from Proposition \ref{construction}, as explained in Example \ref{seifert}.

Now assume that $Y$ bounds a $\mathbb{Q}H-S^1\times D^3$. Then, so does $-Y$. Let $\Gamma$ and 
$\Gamma^*$ be their plumbing graphs in normal form, and let $\widetilde{\Gamma}$ and $\widetilde{\Gamma}^*$
be the graphs obtained from $\Gamma$ and $\Gamma^*$ by removing the central vertices. Note that
$\widetilde{\Gamma}^*$ is in fact the dual of $\widetilde{\Gamma}$, so there is no ambiguity with this 
notation. By Proposition \ref{I} we have
$$
I(\widetilde{\Gamma})+I(\widetilde{\Gamma}^*)=-2k
$$
where $k$ is the number of legs of $\Gamma$. In particular we may assume, for instance, that
$I(\widetilde{\Gamma})\leq k$ and apply Theorem \ref{main}. $\Gamma$ is obtained by joining 
together building blocks along their $-1$-vertices. Since $\Gamma$ is starshaped,
only building blocks of the first type may occur, which means that $Y$ belongs to the family
described in Example \ref{seifert}.
\end{proof}
\begin{re}{The case $b_1>1$}
The class of plumbed 3-manifolds admitting a plumbing graph in normal form with $lc(\Gamma)=1$
contains manifolds with arbitrary high first Betti number. For example, consider 
 the following plumbing graph in normal form
   \[
  \begin{tikzpicture}[xscale=1.2,yscale=-0.5]
    \node (A0_0) at (0, 0) {$-2$};
    \node (A0_4) at (4, 0) {$-2$};
    \node (A1_0) at (0, 1) {$\bullet$};
    \node (A1_1) at (1, 1) {$-1$};
    \node (A1_3) at (3, 1) {$-1$};
    \node (A1_4) at (4, 1) {$\bullet$};
    \node (A2_1) at (1, 2) {$\bullet$};
    \node (A2_3) at (3, 2) {$\bullet$};
    \node (A3_0) at (0, 3) {$-2$};
    \node (A3_2) at (2, 3) {$-1$};
    \node (A3_4) at (4, 3) {$-2$};
    \node (A4_0) at (0, 4) {$\bullet$};
    \node (A4_2) at (2, 4) {$\bullet$};
    \node (A4_4) at (4, 4) {$\bullet$};
    \node (A5_2) at (2, 5) {$\ \ \ \ \ \ -1$};
    \node (A6_2) at (2, 6) {$\bullet$};
    \node (A7_1) at (1, 7) {$-2$};
    \node (A7_3) at (3, 7) {$-2$};
    \node (A8_1) at (1, 8) {$\bullet$};
    \node (A8_3) at (3, 8) {$\bullet$};
    \path (A1_0) edge [-] node [auto] {$\scriptstyle{}$} (A2_1);
    \path (A8_3) edge [-] node [auto] {$\scriptstyle{}$} (A6_2);
    \path (A2_1) edge [-] node [auto] {$\scriptstyle{}$} (A4_2);
    \path (A8_1) edge [-] node [auto] {$\scriptstyle{}$} (A6_2);
    \path (A6_2) edge [-] node [auto] {$\scriptstyle{}$} (A4_2);
    \path (A4_4) edge [-] node [auto] {$\scriptstyle{}$} (A2_3);
    \path (A1_4) edge [-] node [auto] {$\scriptstyle{}$} (A2_3);
    \path (A2_3) edge [-] node [auto] {$\scriptstyle{}$} (A4_2);
    \path (A4_0) edge [-] node [auto] {$\scriptstyle{}$} (A2_1);
  \end{tikzpicture}
  \]
Its signature is $(b_+,b_-,b_0)=(1,7,2)$. This graph is obtained by joining
three blocks of the first type to the graph
  \[
  \begin{tikzpicture}[xscale=1.2,yscale=-0.5]
    \node (A0_1) at (1, 0) {$0$};
    \node (A1_1) at (1, 1) {$\bullet$};
    \node (A2_0) at (0, 2) {$-1$};
    \node (A2_1) at (1, 2) {$0$};
    \node (A3_0) at (0, 3) {$\bullet$};
    \node (A3_1) at (1, 3) {$\bullet$};
    \node (A4_1) at (1, 4) {$0$};
    \node (A5_1) at (1, 5) {$\bullet$};
    \path (A3_0) edge [-] node [auto] {$\scriptstyle{}$} (A5_1);
    \path (A3_0) edge [-] node [auto] {$\scriptstyle{}$} (A3_1);
    \path (A3_0) edge [-] node [auto] {$\scriptstyle{}$} (A1_1);
  \end{tikzpicture}
  \]
Since this last graph represents $S^1\times S^2\sharp S^1\times S^2$ we conclude,
by Proposition \ref{construction},
that $\partial P\Gamma$ bounds a $\mathbb{Q}H-(S^1\times D^3\natural S^1\times D^3)$. This example
can be easily generalized to produce infinitely many plumbed 3-manifolds $\partial P\Gamma$
where
\begin{itemize}
 \item $lc(\Gamma)=1$
 \item $b_0(\Gamma)$ is arbitrarily large
 \item $\partial P\Gamma$ bounds a $\mathbb{Q}H-\natural_{i=1}^{b_0(\Gamma)-1}(S^1\times D^3)$
\end{itemize} 
\end{re}

\begin{section}{The language of linear subsets}\label{3basic}
In this section we start our technical analysis needed to complete the proof of Theorem 
\ref{main}. We begin providing a brief introduction to the language of good subsets
and we prove Lemma \ref{dis}, which will be used extensively throughout later on. In Section
\ref{3main} we state the main technical results, Theorems \ref{technical} and \ref{irreducible}, and
explain the strategy of the proofs. In Section \ref{3irr} we carry out a detailed analysis of certain good subsets
and we conclude by proving Theorem \ref{irreducible}. 
In Section \ref{3ortho} we prove what we need to fill the gap between Theorem \ref{technical} and Theorem
\ref{irreducible}. Finally in Section \ref{3conclusion} we give the proof of Theorem \ref{technical}.

 An \emph{intersection lattice} is a pair $(G,Q_G)$ of a free abelian group $G$ together with a $\mathbb{Z}$-valued 
 symmetric bilinear form on it. We indicate with $(\mathbb{Z}^N,-Id)$ the intersection lattice 
 with the standard negative definite form defined by
 $$
 e_i\cdot e_j=-\delta_{ij}.
 $$
 We will always work with $\mathbb{Z}^N$ with the above form on it, so in most cases
 we will omit the form and indicate the intersection lattice simply by $\mathbb{Z}^N$. 
 Let  $S=\{v_1,\dots,v_N\}\subset\mathbb{Z}^N$ be such that
 \begin{itemize}
  \item $v_i\cdot v_i\leq -2$
  \item $v_i\cdot v_{j}\in\{0,1\}$ if $i\neq j$ 
 \end{itemize}
Define the \emph{intersection graph} of $S$ as the graph having a vertex for each element of $S$
and an edge for every pair $(v_i,v_j)$ such that $v_i\cdot v_j=1$. We indicate this graph with
$\Gamma_S$. The graph $\Gamma_S$ can be given integral weights on its vertices: 
the weight of the vertex corresponding to $v_i$ is $v_i\cdot v_i$.
\begin{de}
 A subset $S\subset\mathbb{Z}^N$ satisfying the above properties is said to be a \emph{linear subset}
 whenever $\Gamma_S$ is a linear graph. We will also say that $S$ is \emph{treelike} whenever 
 its graph is a tree. In this case we require that $v_i\cdot v_i\leq -2$ only when
 $v_i$ corresponds to a vertex on a linear chain.
\end{de}
Note that the graph of a treelike subset is a plumbing graph in normal form. We will use all 
the terminology we have introduced for plumbing graphs and intersection forms in this new context
without stating the obvious definitions. For example, given a linear subset $S$, a vector $v\in S$ 
can be isolated, internal or final just like the vertex of a plumbing graph. 

Given $v\in\mathbb{Z}^N$ and some basis vector $e_i$ we say that $e_i$ \emph{hits} $v$ (or that
$v$ hits $e_i$) if $v\cdot e_i\neq 0$. Two vectors $v,w\in\mathbb{Z}^N$ are \emph{linked} if there
exists a basis vector that hits both of them. A subset $S\subset\mathbb{Z}^N$ is \emph{irreducible}
if for every pair of vectors $v,w\in S$ there exists a sequence of vectors in $S$
$$
v_0=v,v_1,\dots,v_n=w
$$
such that $v_i$ and $v_{i+1}$ are linked for $i=0,\dots,n-1$. A subset which is not irreducible
is said to be \emph{reducible}. A linear irreducible subset is called a \emph{good subset}.
A good subset whose graph is connected is a \emph{standard subset}. We indicate with $c(S)$ the
number of connected components of $\Gamma_S$. This should not be confused with
the number of irreducible components, for which we do not introduce any simbol. In general
an irreducible component may have a graph consisting of several connected components.

There are some elementary operations that, under certain assumptions, can be performed
on a linear subset in order to obtain a smaller linear subset. Here we restrict
ourselves to \emph{$-2$-final expansions} and \emph{$-2$-final contractions} because
these are the only operations that we need. In \cite{Lisca:1} a more general notion 
of expansions and contraction is used.
We indicate with $\pi_h:\mathbb{Z}^N\rightarrow \mathbb{Z}^{N-1}$ the projection
onto the subgroup $<e_1,\dots,e_{h-1},e_{h+1},\dots,e_N>$.
\begin{de}\label{expansionscontractions}
 Let $S=\{v_1,\dots,v_n\}\subset\mathbb{Z}^n$ be a linear subset. 
 Suppose that there exists $e_i$ such that
 \begin{itemize}
  \item $e_i$ only hits two vectors $v_h$ and $v_k$
  \item one of these vectors, say $v_h$, is final 
  \item $v_h\cdot v_h=-2$ and $v_k\cdot v_k<-2$
 \end{itemize}
We say that the subset $S':=\pi_{h}(S\setminus\{v_h\})$ is obtained from $S$ by 
\emph{$-2$-final contraction} and we write $S\searrow S'$. We also say that $S$ is obtained from
$S'$ by \emph{$-2$-final expansion} and we write $S'\nearrow S$.
\end{de}
If we think of a subset $S\subset\mathbb{Z}^{n-1}$ as a square matrix whose columns are the vectors
$v_1,\dots,v_n$, then a $-2$-final contraction consists in removing one column and one row
provided that the above conditions are satisfied. Note that a $-2$-final contraction (or expansion)
of a linear subset $S$ is again a linear subset $S'$ whose graph $\Gamma_{S'}$ has the same number of components
as $\Gamma_{S}$.
\begin{de}\label{badcomponent}
 Let $S'=\{v_1,\dots,v_N\}\subset\mathbb{Z}^N$, $N\geq 3$ be a good subset. Let
 $C'=\{v_{s-1},v_s,v_{s+1}\}\subset S'$ be such that $\Gamma_{C'}$ is a connected component of
 $\Gamma_{S'}$ with $v_{s-1}\cdot v_{s-1}=v_{s+1}\cdot v_{s+1}=-2$ and $v_s\cdot v_s<-2$. 
 Suppose that there exists $e_j$ which hits all the vectors in $C'$ and no other vector
 of $S'$. Let $S$ be a subset obtained from $S'$ via a sequence of $-2$-final expansions
 performed on $C'$. The component $C\subset S$ corresponding to $C'\subset S'$
 is called a \emph{bad component} of the good subset $S$.
\end{de}
We indicate the number of bad components of a good subset with $b(S)$.
Given $v_1,\dots,v_j$ elements of a linear subset we also define
$$
E(v_1,\dots,v_j):=|\{ \ k \ |\ e_k\cdot v_1\neq 0,\dots,e_k\cdot v_j\neq 0\}|
$$
The situation we need to study is that of a linear subset together with an extra 
vector $v$ which is orthogonal to all but one vector, say $w_i$, of each connected component
$S_i$ of $S$ and $v\cdot w_i=1$. 
This last condition is expressed by saying that \emph{$v$ is linked once to $w$}.

The following lemmas will be used several times in the next sections. 
\begin{lem}\label{basic}
 Let $\Gamma$ be a linear plumbing graph in normal form with connected components
 $\Gamma_1,\dots,\Gamma_k$. Choose vertices $v_i\in\Gamma_i$ where $1\leq i\leq k$.
 Let $\Gamma'$ be the graph obtained from $\Gamma$ by adding a new vertex $v$ 
 with weight $w(v)\leq -1$ and new edges for the pairs $(v,v_i)$.
 If $det\Gamma'=0$, then one of the following holds:
 \begin{itemize}
  \item $w(v)>-k$
  \item $w(v_j)=-2$ \quad for some $j\in\{1,\dots,k\}$.
 \end{itemize}
\end{lem}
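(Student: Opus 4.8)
The plan is to compute $\det\Gamma'$ by expanding along the new vertex $v$ using Proposition \ref{cf}. Since $v$ has valency $k$ in $\Gamma'$ and removing $v$ gives back $\Gamma = \Gamma_1 \sqcup \dots \sqcup \Gamma_k$, with $v_i \in \Gamma_i$ the distinguished vertex in each component, Proposition \ref{cf} yields
$$
\det\Gamma' = w(v)\cdot\det\Gamma - \sum_{i=1}^{k}\Bigl(\det(\Gamma_i)_{v_i}\prod_{j\neq i}\det\Gamma_j\Bigr).
$$
Dividing by $\det\Gamma = \prod_i \det\Gamma_i$, which is nonzero because each $\Gamma_i$ is a linear graph in normal form (hence negative definite, so $\det\Gamma_i \neq 0$ — indeed $(-1)^{|\Gamma_i|}\det\Gamma_i > 0$), gives
$$
\frac{\det\Gamma'}{\det\Gamma} = w(v) - \sum_{i=1}^{k}\frac{1}{cf(\Gamma_i)}.
$$
So $\det\Gamma' = 0$ is equivalent to $w(v) = \sum_{i=1}^k \frac{1}{cf(\Gamma_i)}$.

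The key estimate is then on the continued fractions $cf(\Gamma_i)$. Each $\Gamma_i$ is linear in normal form, so $cf(\Gamma_i) = [a^i_1,\dots,a^i_{n_i}]^-$ with all $a^i_\ell \leq -2$, and as noted in the discussion after Theorem \ref{seiferttheorem} (the lens space paragraph in Section \ref{lensseifert}) one has $cf(\Gamma_i) \leq -1$. I will show the sharper dichotomy: either $cf(\Gamma_i) = -1$ (which happens precisely when $n_i = 1$ and $a^i_1 = -2$, i.e. $v_i$ sits on a $\Gamma_i$ that is a single $-2$-vertex, so $w(v_i) = -2$), or $cf(\Gamma_i) < -1$ strictly. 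This is a short induction on $n_i$ using the recursion $[a_1,\dots,a_{n}]^- = a_1 - 1/[a_2,\dots,a_n]^-$: if all later entries give a value $\leq -1$ then $-1/[a_2,\dots,a_n]^- \in (0,1]$, so $[a_1,\dots,a_n]^- \leq a_1 + 1 \leq -1$, with equality forcing $a_1 = -2$ and $[a_2,\dots,a_n]^- = -1$, which by induction forces the chain to be a single $-2$; if $n_i=1$ and $a^i_1 \leq -3$ then already $cf(\Gamma_i) \leq -3 < -1$. Consequently $0 > 1/cf(\Gamma_i) \geq -1$, with $1/cf(\Gamma_i) = -1$ iff $w(v_i) = -2$.

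Now suppose $\det\Gamma' = 0$, so $w(v) = \sum_{i=1}^k 1/cf(\Gamma_i)$, and suppose the second alternative fails, i.e. $w(v_j) \neq -2$ for every $j$. Then each term satisfies $-1 < 1/cf(\Gamma_i) < 0$ strictly, hence
$$
w(v) = \sum_{i=1}^{k}\frac{1}{cf(\Gamma_i)} > \sum_{i=1}^{k}(-1) = -k,
$$
which is the first alternative. (Here I use that $w(v)$ is an integer, but in fact the strict inequality already suffices.) This proves the lemma. The only mild subtlety — the part I would write most carefully — is the base/equality analysis of the continued-fraction dichotomy, making sure the equality case $cf(\Gamma_i) = -1$ is correctly identified with $\Gamma_i$ being a single $-2$-weighted vertex and nothing else; everything else is a direct application of Proposition \ref{cf} together with the sign of determinants of negative definite linear graphs.
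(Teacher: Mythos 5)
Your reduction to the identity $w(v)=\sum_{i=1}^k 1/cf(\Gamma_i)$ is fine (it amounts to Proposition \ref{cfdet} done by hand, using $\det\Gamma=\prod_i\det\Gamma_i\neq 0$), but the key estimate is carried out for the wrong continued fractions. In that identity each $cf(\Gamma_i)$ is rooted at the chosen vertex $v_i$, and the lemma allows $v_i$ to be an \emph{internal} vertex of the chain $\Gamma_i$; you instead treat $cf(\Gamma_i)$ as the string continued fraction $[a^i_1,\dots,a^i_{n_i}]^-$ rooted at an endpoint. For an internal root the relevant quantity is $w(v_i)-1/\alpha_i-1/\beta_i$, where $\alpha_i,\beta_i<-1$ are the continued fractions of the two sides of $v_i$, and your claimed bound $1/cf(\Gamma_i)\geq -1$, with equality exactly when $w(v_i)=-2$, is false: for the chain $(-2,-2,-2,-2,-2)$ rooted at its middle vertex one gets $cf=-2+\tfrac{2}{3}+\tfrac{2}{3}=-\tfrac{2}{3}$, hence $1/cf=-\tfrac{3}{2}<-1$; and for a single $-2$-vertex one gets $cf=-2$, hence $1/cf=-\tfrac{1}{2}\neq -1$. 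Another symptom that something is off: under your implicit assumption that every $v_i$ is final, one always has $cf(\Gamma_i)<-1$ and so $1/cf(\Gamma_i)\in(-1,0)$, so your argument would yield $w(v)>-k$ unconditionally and the alternative ``$w(v_j)=-2$ for some $j$'' would never be needed --- yet that alternative is exactly the content of the lemma, and it is invoked in the paper precisely in situations where the extra vertex is linked to internal vertices.

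The gap is repaired by doing the estimate at the correct root, which is what the paper's proof does: assume $w(v_j)\le -3$ for all $j$; then for each $i$, whether $v_i$ is final or internal, $cf(\Gamma_i)=w(v_i)-1/\alpha_i-1/\beta_i$ with $\alpha_i,\beta_i<-1$ (with the convention $1/\beta_i=0$ when $v_i$ is final), so $cf(\Gamma_i)<w(v_i)+2\le -1$, hence $1/cf(\Gamma_i)>-1$, and summing in $w(v)=\sum_i 1/cf(\Gamma_i)$ gives $w(v)>-k$. The terms with $1/cf(\Gamma_i)\le -1$ can only occur when $w(v_i)=-2$ (and $v_i$ is internal), which is exactly why the dichotomy in the statement is phrased in terms of the weight of $v_i$ rather than via a uniform bound on the rooted continued fractions.
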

\begin{proof}
Since $det\Gamma'=0$, by Proposition \ref{cfdet} we must have $cf\Gamma=0$. Computing 
$cf\Gamma$ with respect to the vertex $v$, using Proposition \ref{cf}, we obtain
$$
w(v)-\sum_{i=1}^k\frac{1}{w(v_i)-\frac{1}{\alpha_i}-\frac{1}{\beta_i}}=0,
$$
where $\alpha_i$ and $\beta_i$ are the continued fractions of the two components
of $\Gamma_i\setminus\{v_i\}$, rooted at the vertices adjacent to $v_i$. Note that
if $v_i$ is final there is only one component. In this case we set $1/\beta_i=0$.
Suppose that for each $1\leq j\leq k$ we have $w(v_j)\leq -3$. We need to prove that $w(v)> -k$.
Each $\alpha_i$ (and $\beta_i$ if $v_i$ is internal) is the continued fraction of a linear
connected plumbing graph in normal form rooted at a final vertex. Therefore $\alpha_i,\beta_i<-1$ and,
since $w(v_j)\leq -3$ we have
$$
w(v_i)-\frac{1}{\alpha_i}-\frac{1}{\beta_i}<-1\Rightarrow \sum_{i=1}^k\frac{1}{w(v_i)-\frac{1}{\alpha_i}-\frac{1}{\beta_i}}>-k
$$
Combining this fact with the expression for $cf\Gamma$ we obtain $w(v)>-k$ and we are done.
\end{proof}
\begin{lem}\label{dis}
Let $S\subset\mathbb{Z}^N$ be a linear subset. Let $S_1,\dots,S_n$ be the connected components
of $S$. Suppose there is a vector $v\in\mathbb{Z}^N$ which is \emph{linked once} to a vector of each
$S_i$, say $v_i$,(i.e. $v\cdot v_i=1$) and is orthogonal to every other vector of $S_i\setminus{v_i}$. Then
$$
v\cdot v>\sum_{i=1}^{n}v_i\cdot v_i.
$$
\end{lem}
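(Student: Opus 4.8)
The plan is to reduce the inequality to an identity about continued fractions, then to split into a generic case, settled at once by Lemma \ref{basic}, and a residual case handled by induction on the size of $S$.

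First I would note that $\hat S:=S\cup\{v\}$ is a set of $N+1$ vectors in $\mathbb{Z}^N$, hence linearly dependent, so the Gram matrix is singular, i.e. $\det\Gamma=0$ for the weighted graph $\Gamma$ obtained from $\Gamma_S$ by adjoining one vertex $v$ of weight $v\cdot v\le -1$, joined by a single edge to the vertex $v_i$ of each connected component $S_i$. Since $\Gamma_S$ is linear with every weight $\le -2$, it is negative definite, so $\det\Gamma_S\ne 0$ and hence $cf(\Gamma)=\det\Gamma/\det\Gamma_S=0$; expanding $cf(\Gamma)$ along the new vertex by Proposition \ref{cf} gives the identity
\[
v\cdot v\;=\;\sum_{i=1}^{n}\frac{1}{cf(S_i,v_i)},
\]
where $cf(S_i,v_i)$ denotes the continued fraction of $S_i$ rooted at $v_i$. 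Thus the conclusion of the lemma is equivalent to $\sum_{i=1}^{n}\bigl(\tfrac{1}{cf(S_i,v_i)}-v_i\cdot v_i\bigr)>0$, a form convenient for what follows.

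Next I would apply Lemma \ref{basic} to this same graph $\Gamma$ (with $w(v)=v\cdot v$ and $\det\Gamma=0$): it yields that either $v\cdot v>-n$, or $v_j\cdot v_j=-2$ for some $j$. In the first case we are done immediately, since each $v_i\cdot v_i\le -2$ forces
\[
\sum_{i=1}^{n}v_i\cdot v_i\;\le\;-2n\;<\;-n\;<\;v\cdot v .
\]

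There remains the case that some attaching vertex satisfies $v_j\cdot v_j=-2$, and here I would argue by induction on $N=|S|$. In the inductive step one performs a $-2$-final contraction (Definition \ref{expansionscontractions}) removing a vertex of the component $S_j$ — ideally $v_j$ itself, after which $v$ is linked once to the neighbour of $v_j$ in $S_j$ or is no longer linked to that component at all, in either case leaving a valid configuration of strictly smaller size — while arranging that the basis vector projected away is orthogonal to $v$, so that $v\cdot v$ is unchanged; the inductive inequality together with $v_j\cdot v_j<0$ then gives $v\cdot v>\sum_{i\ne j}v_i\cdot v_i\ge\sum_{i=1}^{n}v_i\cdot v_i$. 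The base case consists of configurations admitting no such contraction; these are rigid enough to be dispatched by a direct evaluation of the $cf(S_i,v_i)$ using that $\prod_i|\det S_i|$ is a perfect square (because $\langle S\rangle\subset\mathbb{Z}^N$ has finite index). The main obstacle is exactly this reduction step: proving that a $-2$-final contraction which both preserves the hypotheses of the lemma and does not increase $v\cdot v$ can always be found — this is where the integrality of $v$ and the combinatorics of $\Gamma_S$ near $v_j$ must be combined. Everything else is routine bookkeeping with Proposition \ref{cf}.
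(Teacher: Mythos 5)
Your opening reduction is sound as far as it goes: $S\cup\{v\}$ consists of $N+1$ vectors of $\mathbb{Z}^N$, hence is dependent, its Gram matrix (which is exactly $M_\Gamma$ for the augmented graph) is singular, $cf(\Gamma)=0$, and Proposition \ref{cf} yields $v\cdot v=\sum_i 1/cf(S_i,v_i)$; the first branch of Lemma \ref{basic} then disposes of the case $v\cdot v>-n$ since $\sum_i v_i\cdot v_i\le -2n\le -n$. Note, though, that this is already a different route from the paper's, which never mentions continued fractions: there one encodes the linking conditions as ${}^tMv=\sum_i e_{k_i}$ for the matrix $M$ whose columns are the elements of $S$, multiplies by $M$ to get $M\,{}^tMv=\sum_i v_i$, and compares the Euclidean norms of $v$ and of $M\,{}^tMv$ using spectral information about ${}^tMM=-M_{\Gamma_S}$, exploiting the integrality of $v$ only through the fact that $v\ne 0$.

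The second branch of your argument has a genuine gap, not a bookkeeping issue. First, a $-2$-final contraction requires (Definition \ref{expansionscontractions}) a coordinate hitting exactly two vectors, one of which is a \emph{final} vector of square $-2$ and the other of square \emph{strictly less} than $-2$; if the component singled out by Lemma \ref{basic} consists entirely of $-2$-vectors (a $-2$-chain, which is precisely the configuration that lemma cannot exclude), no such contraction exists and the induction never starts. Second, and more seriously, the "base case" is where the entire content of the lemma lives, and the continued-fraction identity cannot settle it: the termwise inequality $1/cf(S_i,v_i)>v_i\cdot v_i$ is simply false for some negative definite rooted linear graphs — for a $-2$-chain of length $m$ rooted at its middle vertex one computes $1/cf=-(m+1)/4$, which is $\le -2=v_i\cdot v_i$ once $m\ge 7$. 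For such components the conclusion of Lemma \ref{dis} can only come from showing that the (unique) rational solution $v=({}^t M)^{-1}\sum_i e_{k_i}$ is not an integral vector, i.e., from the arithmetic of the particular embedding $S\subset\mathbb{Z}^N$; neither the identity $v\cdot v=\sum_i 1/cf(S_i,v_i)$ nor the observation that $\prod_i|\det S_i|$ is a perfect square (which only constrains which abstract lattices embed at all) supplies this. Until you exhibit the claimed contraction preserving the hypotheses and $v\cdot v$, and actually dispatch the residual configurations, the proof is incomplete.
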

\begin{proof}
 Let $M$ be the $N\times N$ matrix whose columns are the elements of $S$. The conditions 
 on the extra vector $v$ can be expressed as a linear system of equations, namely
 \begin{equation}\label{matrixeq}
  ^tMv=\sum_{i=1}^n e_{k_i}
 \end{equation}
where the $k_i$-th column of $M$ is $v_i$. Multiplying both sides of \eqref{matrixeq} by $M$
we get
\begin{equation}
 M ^tMv=M\sum_{i=1}^n e_{k_i}=\sum_{i=1}^n v_i.
\end{equation}
The matrix $M ^tM$ is conjugated to $^tMM$, in particular they have the same eigenvalues.
The matrix $-^tMM$ represents the intersection form of $P\Gamma_S$. It consists of $n$ blocks,
one for each connected component of $S$.
Each block can be diagonalized as shown in chapter V of \cite{Eisenbud} , the $k$-th eigenvalue is given by the
negative continued fractions corresponding to the first $k$ diagonal entries. In particular, 
it is easy to prove by induction that, for each eigenvalue $\lambda$, we have $\lambda<-1$. 
It follows that
$$
||v||^2<||M ^tMv||^2=||\sum_{i=1}^n v_i||^2=\sum_{i=1}^n ||v_i||^2.
$$
Where $||\cdot||$ denotes the usual Euclidean norm. Rewriting the above inequality using
the standard negative definite product in $\mathbb{Z}^N$ we obtain 
$$
v\cdot v>(M ^tMv)\cdot (M ^tMv)=(\sum_{i=1}^n v_i)\cdot(\sum_{i=1}^n v_i)=\sum_{i=1}^n v_i\cdot v_i.
$$
\end{proof}
\end{section}
\begin{section}{Main results and strategy of the proof}\label{3main}
The key technical result that will complete the proof of Theorem \ref{main} is the
following.

\begin{te}\label{technical}
 Let $S\subset\mathbb{Z}^N$ be a linear subset. 
 Suppose that there exists $v\in\mathbb{Z}^N$ which is linked once
 to a vector of each connected component of $S$ and is orthogonal to any other vector 
 of $S$. Assume also that, with the notation introduced in Section \ref{mainchapter} we have
 \begin{equation}\label{crucialhyptec}
 I(\Gamma_S)\leq -f(\Gamma_{S\cup\{v\}},v)-2i(\Gamma_{S\cup\{v\}},v)-
   \sum_{u\in N(\Gamma_{S\cup\{v\}})} max\{0,u\cdot u+3\}.  
 \end{equation}
 Then, $\Gamma_{S\cup\{v\}}$ can be obtained by joining together two or more building
 blocks along their $-1$-vertices.  
 \end{te}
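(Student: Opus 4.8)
The plan is to run a careful case analysis on the structure of $\Gamma_{S\cup\{v\}}$, driven by Lemma \ref{dis} and by the known classification of standard linear subsets from \cite{Lisca:1} and \cite{Lisca:2}. First I would record the basic numerology: write $S_1,\dots,S_n$ for the connected components of $S$, with $v$ linked once to $v_i\in S_i$. Each $S_i$ is a linear subset of $\mathbb{Z}^N$, and the $-1$-weight constraint on each $v_i$ after adding $v$ is exactly what governs whether $v_i$ becomes an internal or a final vertex of $\Gamma_{S\cup\{v\}}$: if $v_i$ is an interior vertex of the chain $\Gamma_{S_i}$ then in $\Gamma_{S\cup\{v\}}$ the component $S_i$ contributes an internal leg (with a node at $v_i$), and if $v_i$ is an endpoint it contributes a final leg. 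Lemma \ref{dis} gives $v\cdot v > \sum_{i=1}^n v_i\cdot v_i$; since $v\cdot v\le -1$ (it is a vector in the ambient lattice being added, and by normal-form considerations we may assume $v\cdot v\le -1$, else $\Gamma_{S\cup\{v\}}$ is not reduced) this is a strong constraint forcing the $v_i\cdot v_i$ to be close to $-1$ in aggregate, which in turn limits how many legs can be "heavy."

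Next I would translate hypothesis \eqref{crucialhyptec} into a budget. Recall $I(\Gamma_S)=\sum_i I(\Gamma_{S_i})$ and $I(\Gamma_{S_i})=\sum_{v_j\in S_i}(-3-w(v_j))$. By Proposition \ref{I}, for a single linear chain $I+I^*=-2$, and the building blocks of the first, second, third and fourth type are precisely the configurations where the pair of complementary strings hanging off the $-1$-vertex realizes the extremal value of $I$ compatible with $\det=0$. So the inequality \eqref{crucialhyptec} says the total "deficiency" $I(\Gamma_S)$ is at most the sum of the minimal deficiencies contributed by the $f(\Gamma_{S\cup\{v\}},v)$ final legs (each costs $-1$), the $i(\Gamma_{S\cup\{v\}},v)$ internal legs (each costs $-2$, reflecting the two chains meeting at the node), plus the correction $\sum_{u\in N}\max\{0,u\cdot u+3\}$ accounting for nodes of weight $>-3$. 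The strategy is to show that the reverse inequality always holds — i.e. $I(\Gamma_S)\ge$ that same right-hand side — for \emph{any} linear subset $S$ admitting such an orthogonal vector $v$, using the structure of good subsets (via $-2$-final contractions as in Definition \ref{expansionscontractions}, reducing each $S_i$ to an irreducible core whose graph is classified). Then equality is forced, and equality in each constituent estimate pins down each $S_i$ together with the attachment at $v_i$ to be exactly one of the seven (four plus their three duals) elementary pieces; summing these pieces along the shared $-1$-vertex $v$ yields the desired decomposition.

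Concretely, the induction is on $N$ via $-2$-final contractions: if some $S_i$ admits a $-2$-final contraction $S\searrow S'$ that does not disturb the component carrying the node-or-endpoint structure at $v_i$, I reduce to a smaller instance, check that both sides of \eqref{crucialhyptec} change compatibly (a $-2$-final contraction drops a $-2$-final vertex, changing $I$ by $+1$ and removing one leg-segment), and invoke the inductive hypothesis, then expand back. The base case is a union of irreducible good subsets with no available $-2$-final contraction; here one uses the explicit list of such "minimal" standard subsets — essentially the lens-space strings that bound rational balls, classified in \cite{Lisca:2} — together with Lemma \ref{basic} to control how $v$ attaches (the determinant-zero condition $\partial P(\Gamma_{S\cup\{v\}})$ being a $\mathbb{Q}H$-$S^1\times S^2$ after one more $0$-weighted vertex forces $\det\Gamma_{S\cup\{v\}}=0$, and Lemma \ref{basic} then says either $v\cdot v>-n$ or some $v_i\cdot v_i=-2$).

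The main obstacle I expect is the bad-component bookkeeping (Definition \ref{badcomponent}): a component $C=\{v_{s-1},v_s,v_{s+1}\}$ that arises from $-2$-final expansions of a length-three configuration hit by a single basis vector behaves differently from an ordinary chain, and these are exactly the pieces that, when attached to $v$, can masquerade as something other than a building block unless one is careful. Ruling out the "fake" configurations — showing that equality in \eqref{crucialhyptec} genuinely forces a building block rather than some near-miss with a bad component in the wrong place — is where the delicate part of the argument lies, and is presumably why Sections \ref{3irr} and \ref{3ortho} are needed before \ref{3conclusion}. I would organize that step around Theorem \ref{irreducible} (the irreducible case) and then bridge to the general linear subset by an orthogonality/splitting argument handling how the ambient $\mathbb{Z}^N$ decomposes relative to the irreducible components of $S\cup\{v\}$.
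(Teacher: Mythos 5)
Your proposal is a plan rather than a proof, and the plan's central move is never executed and is essentially circular. You propose to show that the reverse of \eqref{crucialhyptec} holds for \emph{every} linear subset admitting such a vector $v$, force equality, and then read off the building blocks from the equality case; but establishing that reverse inequality and identifying its equality configurations requires exactly the classification that constitutes the theorem, so nothing has been reduced. The paper's actual argument is different and more concrete: decompose $S$ into its \emph{irreducible} components $S_1\cup\dots\cup S_k$ (not the connected components of $\Gamma_S$), write $v=v_1+\dots+v_k$ where $v_j$ is the projection of $v$ to the coordinates used by $S_j$, observe that \eqref{crucialhyptec} gives $I(S)+c(S)\le 0$ (since each internal leg contributes $-2$ and each final leg $-1$ to the right-hand side), pick an irreducible component with $I(S_j)+c(S_j)\le 0$, apply Proposition \ref{orthotris} to get $c(S_j)\le 2$, upgrade to the strict inequality $I(S_j)+b(S_j)<0$, apply Theorem \ref{irreducible} to conclude $\Gamma_{S_j\cup\{v_j\}}$ is a building block, check that \eqref{crucialhyptec} persists for $S\setminus S_j$, and iterate. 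Your closing sentence gestures at this ("organize around Theorem \ref{irreducible} and bridge by an orthogonality/splitting argument"), but none of these steps is actually carried out in your text.

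The decisive missing step is precisely the one you flag as "bad-component bookkeeping" and then leave unresolved: Theorem \ref{irreducible} requires $I(S_j)+b(S_j)<0$, and this strict inequality is \emph{not} a formal consequence of \eqref{crucialhyptec}, which only yields $I(S_j)+b(S_j)\le I(S_j)+c(S_j)\le 0$. The paper closes this gap by ruling out the boundary case $I(S_j)=-b(S_j)=-c(S_j)$ through the orthogonal-subset analysis of Section \ref{3ortho} (contracting the bad components and excluding the resulting orthogonal configurations with strings $(-3,-3)$ and $(-2,-4)$); your proposal contains no mechanism for this. Two further inaccuracies signal that the plan has not been tested: a $-2$-final contraction with unit pairing leaves $I$ \emph{unchanged} (the removed $-2$ vertex contributes $+1$, but the neighbouring weight's contribution drops by $1$), so the claimed compatibility of both sides of \eqref{crucialhyptec} under your contraction-induction is not what you state; and the "seven elementary pieces (four plus their three duals)" cannot be right here, since dual building blocks contain vertices of weight $\ge 0$ and hence cannot arise from a subset of $(\mathbb{Z}^N,-\mathrm{Id})$ — the conclusion of the theorem involves only the four building blocks, with the $-1$-vertices of the pieces summing to the single vertex $v$ of weight $v\cdot v=-k$.
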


The main ingredient for the proof of Theorem \ref{technical} is the following result
 which explains that the irreducible components of the 
 given subset together with the corresponding extra vector give rise to building blocks.
 
 \begin{te}\label{irreducible}
 Let $S\subset\mathbb{Z}^N$ be a good subset such that $I(S)+c(S)\leq0$ and $I(S)+b(S)<0$. 
 Suppose there exists $v\in\mathbb{Z}^N$ which is linked once
 to a vector of each connected component of $S$ and is orthogonal to all the other vectors 
 of $S$. Then, $v\cdot v=-1$ and $\Gamma_{S\cup\{v\}}$ is a building block.
\end{te}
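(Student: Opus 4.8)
The plan is to proceed in three stages: first show that $v\cdot v=-1$, then run an induction on $N$ that deletes $-2$-vectors, and finally resolve the resulting base cases against the list of four building blocks. \emph{Stage one: the extra vector is a $(-1)$-vector.} Each connected component of $\Gamma_S$ is a linear chain with all weights $\le -2$, hence negative definite, so $\Gamma_S$ is nondegenerate and the $N$ vectors of $S$ form a $\mathbb{Q}$-basis of $\mathbb{Z}^N$. Adjoining $v$ gives $N+1$ vectors in a rank-$N$ lattice, so $\det\Gamma_{S\cup\{v\}}=0$; and $v\ne 0$ (it is linked once to some $w_i$), so $v\cdot v\le -1$. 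Since $\Gamma_S$ is a linear plumbing graph in normal form and $\Gamma_{S\cup\{v\}}$ is obtained from it by adding one vertex of weight $v\cdot v\le -1$ joined to a single vertex $w_i$ of each component, with vanishing determinant, Lemma~\ref{basic} gives $v\cdot v>-c(S)$ or $w_j\cdot w_j=-2$ for some $j$. Combining this with Lemma~\ref{dis} ($v\cdot v>\sum_i w_i\cdot w_i$) and with the hypotheses $I(S)+c(S)\le 0$ and $I(S)+b(S)<0$, a short bookkeeping argument yields $v\cdot v=-1$; hence $v=\pm e_k$ for a single basis vector $e_k$, which must then hit the distinguished vertex $w_i$ of every component of $S$.

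\emph{Stage two: reduction by $-2$-final contractions.} We argue by induction on $N$. If some vector of $S$ adjacent to the local configuration at $v$ is not a $-2$-vector, a $-2$-final contraction $S\searrow S'$ is available (Definition~\ref{expansionscontractions}). One checks that it is compatible with $v$: it either leaves $v$ alone or replaces it by $\pi_h(v)$, which is still linked once to each component of $S'$ and orthogonal to everything else; and that the numerical hypotheses persist, i.e. $I(S')+c(S')\le 0$ and $I(S')+b(S')<0$. This is exactly where the bad-component count $b(S)$ of Definition~\ref{badcomponent} enters: a contraction can fold a piece of $\Gamma_S$ into a block of the form $(-2,q,-2)$ with $q\le -3$, and the slack in $I+b<0$ is what keeps the inequalities consistent. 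By the inductive hypothesis $\Gamma_{S'\cup\{v\}}$ is a building block, and one verifies that the inverse $-2$-final expansion carries a building block to a building block --- the four families being stable under lengthening a complementary leg (types one, three, four) or the interior chain (types two, three) --- which closes the induction.

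\emph{Stage three: base cases.} When no $-2$-final contraction is possible, minimality together with $I(S)+c(S)\le 0$ and $I(S)+b(S)<0$ forces $S$ into a short list of shapes: one first deduces $c(S)\le 2$, and then that each component is a single vector or a bad component, with $v$ meeting it at a final or at an internal vertex. Running through these, and using Proposition~\ref{compeq} to detect exactly when a pair of legs is complementary --- equivalently, when the ambient linear graph through $v$ represents $S^1\times S^2$ --- one matches each configuration to a building block: two complementary final chains give type one; a single chain met at an internal $-2$-vertex gives type two; a $-2$-chain together with an internal node carrying two complementary legs gives type three; two internal $-3$-nodes each carrying complementary legs give type four.

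\emph{Main obstacle.} The delicate part is the inductive step of Stage two: one must track simultaneously how a $-2$-final contraction interacts with $v$, with the gap between connectedness of $\Gamma_S$ and irreducibility of $S$ (two graph components may be glued through shared basis vectors), and with the possible birth of bad components, so that \emph{both} inequalities $I+c\le 0$ and $I+b<0$ survive and no contraction disturbs the building-block shape upon expansion. Pinning down this $b(S)$-bookkeeping, and organising the base-case enumeration, is the technical core, and is what occupies Sections~\ref{3irr}--\ref{3conclusion}.
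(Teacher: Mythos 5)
Your Stage one is where the argument breaks. Lemma~\ref{basic} and Lemma~\ref{dis} together do \emph{not} pin down $v\cdot v=-1$ by any short bookkeeping: for instance, if $c(S)=2$ and both vectors $w_1,w_2$ hit by $v$ have square $-2$, Lemma~\ref{basic} is already satisfied and Lemma~\ref{dis} only gives $v\cdot v>-4$, so $v\cdot v\in\{-1,-2,-3\}$ and the hypotheses $I(S)+c(S)\le 0$, $I(S)+b(S)<0$ add nothing at this level of generality. In the paper the equality $v\cdot v=-1$ is a \emph{conclusion} of the case analysis, not a first step: one must first rule out every non-obvious way $v$ can attach (to internal vertices, to vertices of large square, to non-central vertices of bad components), and this is done coordinate by coordinate, using the explicit description of the subsets as iterated $-2$-final expansions coming from Lisca's classification (Lemma~4.7 and the proof of the main theorem of \cite{Lisca:2}), together with repeated contradictions against Lemma~\ref{dis}. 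That analysis is exactly the content of Propositions~\ref{c1}, \ref{b0}, \ref{b1a}, \ref{b1b}, \ref{b2}, and your proposal replaces it with an unsupported assertion.

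Stage two has a second genuine gap: your induction assumes a $-2$-final contraction can always be chosen ``compatibly with $v$'', but this fails precisely in the hard cases. In the proof of Proposition~\ref{b0} one may only contract along basis vectors $e_s$ with $e_s\cdot v=0$, and the argument must confront the terminal situation in which \emph{no} further contraction leaves $v$ untouched; there the condition \eqref{colpodiscena} ($v$ hits all four coordinates of the two $-2$-final vectors) is played off against Lemma~\ref{dis} to get a contradiction. Nothing in your sketch handles this. Moreover, the facts you invoke in Stage three --- that $c(S)\le 2$ and that the minimal configurations have the shapes you list --- are not consequences of ``minimality''; they are imported wholesale from \cite{Lisca:2} (Proposition~4.10 there, reproduced as Proposition~\ref{char} here), and one of the classified families with $b(S)=1$ (Proposition~\ref{b1a}) admits \emph{no} extra vector $v$ at all, a possibility your enumeration never meets. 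So while the broad skeleton (classify $S$, then match attachments of $v$ to building blocks) resembles the paper's, the two steps that carry all the difficulty --- excluding non-obvious attachments of $v$ and controlling contractions that interact with $v$ --- are missing.
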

 
 The idea of the proof of Theorem \ref{irreducible} is the following. The assumtpions on $S$
 are chosen so that, by the results of \cite{Lisca:2} the subset $S$ falls in one of the following
 classes:
 \begin{enumerate}
  \item $c(S)=1$, so that the graph of $S$ is a single linear component 
     \[
  \begin{tikzpicture}[xscale=1.5,yscale=-0.5]
    \node (A0_0) at (0, 0) {$\bullet$};
    \node (A0_1) at (1, 0) {$\bullet$};
    \node (A0_2) at (2, 0) {$\dots$};
    \node (A0_3) at (3, 0) {$\bullet$};
    \path (A0_0) edge [-] node [auto] {$\scriptstyle{}$} (A0_1);
    \path (A0_2) edge [-] node [auto] {$\scriptstyle{}$} (A0_3);
    \path (A0_1) edge [-] node [auto] {$\scriptstyle{}$} (A0_2);
  \end{tikzpicture}
  \]
In this case we will prove that the extra vector $v$ is linked to a internal vector
of $S$ and that the graph of $S\cup\{v\}$, which is of the form
   \[
  \begin{tikzpicture}[xscale=1.5,yscale=-1.0]
    \node (A0_0) at (0, 0) {$\bullet$};
    \node (A0_1) at (1, 0) {$\dots$};
    \node (A0_2) at (2, 0) {$\bullet$};
    \node (A0_3) at (3, 0) {$\dots$};
    \node (A0_4) at (4, 0) {$\bullet$};
    \node (A1_2) at (2, 1) {$\circ$};
    \path (A0_0) edge [-] node [auto] {$\scriptstyle{}$} (A0_1);
    \path (A0_2) edge [-] node [auto] {$\scriptstyle{}$} (A0_3);
    \path (A0_3) edge [-] node [auto] {$\scriptstyle{}$} (A0_4);
    \path (A1_2) edge [-,dashed] node [auto] {$\scriptstyle{}$} (A0_2);
    \path (A0_1) edge [-] node [auto] {$\scriptstyle{}$} (A0_2);
  \end{tikzpicture}
  \]

is a building block of the second type.
Here the extra vector $v$ has been depicted with a white dot and the edges coming out of it 
are dashed.
\item $c(S)=2$. In this case the graph of $S$ consists of two linear components. There are
three possible graphs for $S\cup\{v\}$ according to wether $v$ is linked to a pair of final vectors,
to a final vector and an internal one or to two internal vectors. We will prove that:
\begin{itemize}
 \item in the first case $b(S)=0$ and $\Gamma_{S\cup\{v\}}$ is a building block of the first type
 \item in the second case $b(S)=1$ and $\Gamma_{S\cup\{v\}}$ is a building block of the second type
 \item in the third case $b(S)=2$ and $\Gamma_{S\cup\{v\}}$ is a building block of the fourth type
\end{itemize}
the graphs corresponding to these three possibilities are the following
  \[
  \begin{tikzpicture}[xscale=1.5,yscale=-1.0]
    \node (A0_0) at (0, 0) {$\bullet$};
    \node (A0_1) at (1, 0) {$\dots$};
    \node (A0_2) at (2, 0) {$\bullet$};
    \node (A0_3) at (3, 0) {$\circ$};
    \node (A0_4) at (4, 0) {$\bullet$};
    \node (A0_5) at (5, 0) {$\dots$};
    \node (A0_6) at (6, 0) {$\bullet$};
    \path (A0_5) edge [-] node [auto] {$\scriptstyle{}$} (A0_6);
    \path (A0_0) edge [-] node [auto] {$\scriptstyle{}$} (A0_1);
    \path (A0_1) edge [-] node [auto] {$\scriptstyle{}$} (A0_2);
    \path (A0_4) edge [-] node [auto] {$\scriptstyle{}$} (A0_5);
    \path (A0_3) edge [-,dashed] node [auto] {$\scriptstyle{}$} (A0_4);
    \path (A0_2) edge [-,dashed] node [auto] {$\scriptstyle{}$} (A0_3);
  \end{tikzpicture}
  \]
    \[
  \begin{tikzpicture}[xscale=1.5,yscale=-1.0]
    \node (A0_0) at (0, 0) {$\bullet$};
    \node (A0_1) at (1, 0) {$\dots$};
    \node (A0_2) at (2, 0) {$\bullet$};
    \node (A1_3) at (3, 1) {$\bullet$};
    \node (A1_4) at (4, 1) {$\circ$};
    \node (A1_5) at (5, 1) {$\bullet$};
    \node (A1_6) at (6, 1) {$\dots$};
    \node (A1_7) at (7, 1) {$\bullet$};
    \node (A2_0) at (0, 2) {$\bullet$};
    \node (A2_1) at (1, 2) {$\dots$};
    \node (A2_2) at (2, 2) {$\bullet$};
    \path (A1_6) edge [-] node [auto] {$\scriptstyle{}$} (A1_7);
    \path (A0_0) edge [-] node [auto] {$\scriptstyle{}$} (A0_1);
    \path (A0_1) edge [-] node [auto] {$\scriptstyle{}$} (A0_2);
    \path (A2_1) edge [-] node [auto] {$\scriptstyle{}$} (A2_2);
    \path (A1_4) edge [-,dashed] node [auto] {$\scriptstyle{}$} (A1_5);
    \path (A2_2) edge [-] node [auto] {$\scriptstyle{}$} (A1_3);
    \path (A1_5) edge [-] node [auto] {$\scriptstyle{}$} (A1_6);
    \path (A2_0) edge [-] node [auto] {$\scriptstyle{}$} (A2_1);
    \path (A0_2) edge [-] node [auto] {$\scriptstyle{}$} (A1_3);
    \path (A1_3) edge [-,dashed] node [auto] {$\scriptstyle{}$} (A1_4);
  \end{tikzpicture}
  \]
 
    \[
  \begin{tikzpicture}[xscale=1.3,yscale=-1.0]
    \node (A0_0) at (0, 0) {$\bullet$};
    \node (A0_1) at (1, 0) {$\dots$};
    \node (A0_2) at (2, 0) {$\bullet$};
    \node (A0_6) at (6, 0) {$\bullet$};
    \node (A0_7) at (7, 0) {$\dots$};
    \node (A0_8) at (8, 0) {$\bullet$};
    \node (A1_3) at (3, 1) {$\bullet$};
    \node (A1_4) at (4, 1) {$\circ$};
    \node (A1_5) at (5, 1) {$\bullet$};
    \node (A2_0) at (0, 2) {$\bullet$};
    \node (A2_1) at (1, 2) {$\dots$};
    \node (A2_2) at (2, 2) {$\bullet$};
    \node (A2_6) at (6, 2) {$\bullet$};
    \node (A2_7) at (7, 2) {$\dots$};
    \node (A2_8) at (8, 2) {$\bullet$};
    \path (A2_0) edge [-] node [auto] {$\scriptstyle{}$} (A2_1);
    \path (A0_2) edge [-] node [auto] {$\scriptstyle{}$} (A1_3);
    \path (A2_1) edge [-] node [auto] {$\scriptstyle{}$} (A2_2);
    \path (A1_4) edge [-,dashed] node [auto] {$\scriptstyle{}$} (A1_5);
    \path (A0_6) edge [-] node [auto] {$\scriptstyle{}$} (A0_7);
    \path (A2_2) edge [-] node [auto] {$\scriptstyle{}$} (A1_3);
    \path (A2_7) edge [-] node [auto] {$\scriptstyle{}$} (A2_8);
    \path (A1_3) edge [-,dashed] node [auto] {$\scriptstyle{}$} (A1_4);
    \path (A0_0) edge [-] node [auto] {$\scriptstyle{}$} (A0_1);
    \path (A0_7) edge [-] node [auto] {$\scriptstyle{}$} (A0_8);
    \path (A2_6) edge [-] node [auto] {$\scriptstyle{}$} (A2_7);
    \path (A1_5) edge [-] node [auto] {$\scriptstyle{}$} (A2_6);
    \path (A0_1) edge [-] node [auto] {$\scriptstyle{}$} (A0_2);
    \path (A1_5) edge [-] node [auto] {$\scriptstyle{}$} (A0_6);
  \end{tikzpicture}
  \]
 The analysis required by the above four cases may be sketched as follows. We may think of
$S$ as a square matrix whose columns are its elements. The condition on the extra vector $v$ may be
translated into a matrix equation, namely
$$
^tSv=e_i \ \ \textrm{for some} \ \ i\leq N
$$
for the first case, and
$$
^tSv=e_i+e_j \ \ \textrm{for some} \ \ i,j\leq N
$$
for the other cases. In each case there is an obvious solution to the above equations,
which gives rise to a subset whose graph is a buillding block. Using this language,
the content of Theorem \ref{irreducible} amounts to saying that the only integral solutions to the above
systems of equations are the obvious ones. This fact will be proved by assuming that there is
a nonobvious solution and then finding a contradiction with the constraints provided by Lemma \ref{dis}.
\end{enumerate}

\end{section}
 \section{Irreducible subsets}\label{3irr}
 In this section collect all the results we need to prove Theorem \ref{irreducible}. As explained at the end of the previous
 section, we will need to examine several cases.
 
 \begin{Pro}\label{c1}
 Let $S=\{v_1,\dots,v_n\}\subset\mathbb{Z}^n$ be a standard subset.
 Suppose there exists $v\in\mathbb{Z}^n$ which is linked once to a
 vector, say $v_k$, of $S$ and is orthogonal to every other vector of $S$. Then, 
 \begin{itemize}
 \item $v_k$ is internal and $v_k\cdot v_k=-2$
  \item $v\cdot v=-1$
  \item $\Gamma_{S\cup\{v\}}$ is a building block of second type
  \item $I(S)=-3$
 \end{itemize} 
\end{Pro}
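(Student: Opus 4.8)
The plan is to exploit the vanishing of $\det\Gamma_{S\cup\{v\}}$ forced by a dimension count, and to combine it with the continued fraction calculus of Proposition \ref{cf} and the inequality of Lemma \ref{dis}. Since $S$ is standard, $\Gamma_S$ is a single linear chain in normal form, all of whose weights are $\le -2$; in particular $M_{\Gamma_S}$ is negative definite, $\det\Gamma_S\neq 0$, and the continued fraction of $\Gamma_S$ computed at a final vertex is $<-1$. Also $v\neq 0$ (as $v\cdot v_k=1$), so $v\cdot v\le -1$. Now $S\cup\{v\}$ is a set of $n+1$ vectors in $\mathbb{Z}^n$, hence linearly dependent, so its Gram matrix $M_{\Gamma_{S\cup\{v\}}}$ is singular, i.e.\ $\det\Gamma_{S\cup\{v\}}=0$. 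Rooting $\Gamma_{S\cup\{v\}}$ at $v$, whose only neighbour is $v_k$, Proposition \ref{cf} gives
$$
0=\det\Gamma_{S\cup\{v\}}=(v\cdot v)\,\det\Gamma_S-\det\big((\Gamma_S)_{v_k}\big),
$$
and therefore $v\cdot v=1/cf(\Gamma_S)$, the continued fraction being taken with respect to $v_k$.

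I first rule out $v_k$ being isolated or final. If $v_k$ is isolated ($n=1$) then $cf(\Gamma_S)=v_k\cdot v_k\le -2$; if $v_k$ is final, then $(\Gamma_S)_{v_k}$ is a nonempty connected chain in normal form, so $cf(\Gamma_S)<-1$. In either case $1/cf(\Gamma_S)$ is not an integer, contradicting $v\cdot v\in\mathbb{Z}$. Hence $v_k$ is internal, $(\Gamma_S)_{v_k}=L'\sqcup L''$ is the disjoint union of the two nonempty legs of $\Gamma_S$ issuing from $v_k$, and expanding $cf(\Gamma_S)$ about $v_k$ (again by Proposition \ref{cf}) gives
$$
cf(\Gamma_S)=v_k\cdot v_k-\frac{1}{cf(L')}-\frac{1}{cf(L'')}
$$
with $cf(L')<-1$ and $cf(L'')<-1$, so that $v_k\cdot v_k<cf(\Gamma_S)<v_k\cdot v_k+2$. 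Since $v\cdot v\le -1$ forces $cf(\Gamma_S)=1/(v\cdot v)\ge -1$, the upper inequality yields $v_k\cdot v_k+2>-1$, i.e.\ $v_k\cdot v_k=-2$. Applying Lemma \ref{dis} to the single component $S$, with $v$ linked once to $v_k$, we then get $v\cdot v>v_k\cdot v_k=-2$, so $v\cdot v=-1$.

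It remains to read off the structure. From $v\cdot v=-1$ we obtain $cf(\Gamma_S)=-1$, hence $1/cf(L')+1/cf(L'')=-1$, so $L'$ and $L''$ are complementary by Proposition \ref{compeq}. Therefore $\Gamma_{S\cup\{v\}}$ is the linear chain with central vertex $v_k$ of weight $-2$ and complementary arms $L'$, $L''$, carrying the extra $-1$-weighted vertex $v$ attached to $v_k$; this is exactly a building block of the second type (Definition \ref{buildblock}). Finally
$$
I(S)=I(L')+\big(-3-(v_k\cdot v_k)\big)+I(L'')=I(L')+I(L'')-1=-3,
$$
using $L''={L'}^{*}$ and Proposition \ref{I}.

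The computation is quite direct; the only delicate points are the sign bookkeeping that converts $\det\Gamma_{S\cup\{v\}}=0$ into the identity $v\cdot v=1/cf(\Gamma_S)$, and the verification that the graph just produced fits the definition of $\Gamma_2$ verbatim. Lemma \ref{dis} is precisely what excludes the nonobvious integral vectors $v$ with $v\cdot v\le -2$, which is where the analysis would otherwise fail.
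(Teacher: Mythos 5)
Your proof is correct and follows essentially the same route as the paper: force $\det\Gamma_{S\cup\{v\}}=0$, use the continued fraction calculus of Proposition \ref{cf} to pin down that $v_k$ is internal with $v_k\cdot v_k=-2$, apply Lemma \ref{dis} to get $v\cdot v=-1$, and then identify the complementary legs via Proposition \ref{compeq} and compute $I(S)=-3$ via Proposition \ref{I}. The only differences are cosmetic: you read off complementarity directly from $cf(L')$, $cf(L'')$ instead of blowing down $v$ as the paper does, and you explicitly treat the $n=1$ (isolated vertex) case, which the paper leaves implicit.
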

\begin{proof}
Assume by contradiction that $v_k$ is final. Then, $\Gamma_{S\cup\{v\}}$ is a linear plumbing graph consisting of $n+1$ linearly dependent vectors
and, by Proposition \ref{cfdet} we have $cf(\Gamma_{S\cup\{v\}})=0$, which means that
$$
cf(\Gamma_{S\cup\{v\}})=v\cdot v-\frac{1}{cf(\Gamma_S)}=0.
$$
This is impossible because $cf(\Gamma_S)<-1$. It follows that $v_k$ is internal.
 By Proposition \ref{cfdet} the continued fraction associated to
 $S\cup\{v\}$ must vanish and it can be written as
 $$
 v_k\cdot v_k - \frac{q_1}{p_1} - \frac{q_2}{p_2} - \frac{1}{v\cdot v}=0,
 $$
 where the $\frac{p_i}{q_i}$'s are the continued fractions associated to the linear graphs obtained from
 $S$ by deleting $v_k$. Since $0<-\frac{q_i}{p_i}<1$ it follows that $v_k\cdot v_k\in\{-1,-2\}$.
 The case $v_k\cdot v_k=-1$ cannot occur because $S$ is standard, therefore $v_k\cdot v_k=-2$.
 
 By Lemma \ref{dis} we have $v\cdot v> v_k\cdot v_k=-2$, therefore $v\cdot v=-1$. We may write 
 $v=e_s$ for some $s\in\{1,\dots,n\}$. Since $v$ is orthogonal to every vector of $S\setminus\{v_k\}$, 
 we can perform the transformation
 $$
 S\cup\{v\}\mapsto S':=\pi_s(S).
 $$
At the level of graphs this is just a blowdown move. Since $n=|S'|\subset\mathbb{Z}^{n-1}$ 
we see that $det\Gamma_{S'}=0$. By Proposition \ref{cfdet} we have $cf(\Gamma_{S'})=0$, which means
that the condition 3 of Proposition \ref{compeq} holds, where $\Gamma_1$ and $\Gamma_2$ are the connected components of 
$S'\setminus\{\pi_s(v)\}$. This shows that $\Gamma_{S'}$ is a building
block of the first type and $\Gamma_{S\cup\{v\}}$ is a building block of the second type.

Since $S\setminus\{v_k\}$ consists of two complementary legs, we have $I(S\setminus\{v_k\})=-2$
and so $I(S)=-3$.

\end{proof}

 In the next proposition we make explicit a characterization of certain good subsets which is
contained in \cite{Lisca:2} (see the proof of the main theorem). 

 \begin{Pro}\label{char}
 Let $S$ be a good subset such that $I(S)+c(S)\leq0$, $I(S)+b(S)<0$.
 Then $c(S)\leq2$. Assume $c(S)=2$.
 \begin{enumerate}
  \item if $b(S)=0$ then $\Gamma_S$ consists of two complementary legs
  \item if $b(S)=1$ then one of the following holds
  \begin{itemize}
   \item $\Gamma_S$ is obtained from the following graph 
     \[
  \begin{tikzpicture}[xscale=1.5,yscale=-0.5]
    \node (A0_0) at (0, 0) {$-2$};
    \node (A0_1) at (1, 0) {$ -(n+1)$};
    \node (A0_2) at (2, 0) {$-2$};
    \node (A0_3) at (3, 0) {$-2$};
    \node (A0_4) at (4, 0) {$-2$};
    \node (A0_6) at (6, 0) {$-2$};
    \node (A1_0) at (0, 1) {$\bullet$};
    \node (A1_1) at (1, 1) {$\bullet$};
    \node (A1_2) at (2, 1) {$\bullet$};
    \node (A1_3) at (3, 1) {$\bullet$};
    \node (A1_4) at (4, 1) {$\bullet$};
    \node (A1_5) at (5, 1) {$\dots$};
    \node (A1_6) at (6, 1) {$\bullet$};
    \path (A1_0) edge [-] node [auto] {$\scriptstyle{}$} (A1_1);
    \path (A1_4) edge [-] node [auto] {$\scriptstyle{}$} (A1_5);
    \path (A1_3) edge [-] node [auto] {$\scriptstyle{}$} (A1_4);
    \path (A1_1) edge [-] node [auto] {$\scriptstyle{}$} (A1_2);
    \path (A1_5) edge [-] node [auto] {$\scriptstyle{}$} (A1_6);
  \end{tikzpicture}
  \]
  
(the $-2$-chain has length $n-1$ and $n\geq 2$)
    via a finite number of $-2$-final expansions performed on the leftmost component.
    
   \item $\Gamma_S=\Gamma_1\sqcup\Gamma_2$, where $\Gamma_1$ is obtained
   from the graph
      \[
  \begin{tikzpicture}[xscale=1.5,yscale=-0.5]
    \node (A0_0) at (0, 0) {$-2$};
    \node (A0_1) at (1, 0) {$ -a$};
    \node (A0_2) at (2, 0) {$-2$};
    \node (A1_0) at (0, 1) {$\bullet$};
    \node (A1_1) at (1, 1) {$\bullet$};
    \node (A1_2) at (2, 1) {$\bullet$};
    \node (A1_3) at (3, 1) {$;$};
    \node (A1_4) at (4, 1) {$a\geq 3$};
    \path (A1_0) edge [-] node [auto] {$\scriptstyle{}$} (A1_1);
    \path (A1_1) edge [-] node [auto] {$\scriptstyle{}$} (A1_2);
  \end{tikzpicture}
  \]

   via a finite number of $-2$-final expansions and $\Gamma_2$ is dual to
   a graph obtained from the one above via a finite number of $-2$-final expansions.  
  \end{itemize}

  \item If $b(S)=2$ then $\Gamma_S=\Gamma_1\sqcup\Gamma_2$ where 
  each $\Gamma_i$ is obtained from 
    \[
  \begin{tikzpicture}[xscale=1.5,yscale=-0.5]
    \node (A0_0) at (0, 0) {$-2$};
    \node (A0_1) at (1, 0) {$ -3$};
    \node (A0_2) at (2, 0) {$-2$};
    \node (A1_0) at (0, 1) {$\bullet$};
    \node (A1_1) at (1, 1) {$\bullet$};
    \node (A1_2) at (2, 1) {$\bullet$};
    \path (A1_0) edge [-] node [auto] {$\scriptstyle{}$} (A1_1);
    \path (A1_1) edge [-] node [auto] {$\scriptstyle{}$} (A1_2);
  \end{tikzpicture}
  \]

  via a finite sequence of $-2$-final expansions.
 \end{enumerate}

\end{Pro}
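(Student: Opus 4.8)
The statement is essentially a repackaging of the classification of good subsets with $I(S)+c(S)\le 0$ carried out in \cite{Lisca:2}, so the strategy is to extract exactly what is needed from that classification rather than to reprove it. First I would recall the key structural fact from \cite{Lisca:2}: up to $-2$-final contractions, every good subset $S$ with $I(S)+c(S)\le 0$ reduces to one of a short explicit list of ``minimal'' good subsets, and the contraction process does not change $c(S)$ nor the quantity $I(S)+c(S)$ (contracting a $-2$-final vector decreases both $|S|$ and, as one checks directly from the definition of $I$, leaves $I(S)$ unchanged since a $-2$ vertex contributes $-3-(-2)=-1$ and removing it while the neighbour's weight stays as recorded balances out — more precisely $I$ is contraction-invariant because the contracted vertex has weight $-2$, contributing $-1$, and this is absorbed into the bookkeeping; the precise accounting is the routine computation I will not spell out). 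The bound $c(S)\le 2$ is then immediate: a minimal good subset with $I(S)+c(S)\le 0$ and more than two components does not occur in the Lisca list.

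Next, for $c(S)=2$ I would organise the argument by the value of $b(S)$, where $b(S)$ counts bad components in the sense of Definition \ref{badcomponent}; a bad component is precisely one that was produced by $-2$-final expansions from a three-vector block $\{v_{s-1},v_s,v_{s+1}\}$ with outer weights $-2$ and a basis vector $e_j$ hitting exactly those three. Since expansions preserve the graph up to lengthening $-2$-chains, the number $b(S)$ is determined by how many of the (at most two) components of $S$ arise this way. For $b(S)=0$: both components contract all the way to the minimal case, and the constraint $I(S)<0$ (here $b(S)=0$ so $I(S)+b(S)=I(S)<0$) forces, via the Lisca classification, that the pair is a pair of complementary legs — equivalently, using Proposition \ref{compeq} and Proposition \ref{I}, that $I(\Gamma_1)+I(\Gamma_2)=-2$ with each $I(\Gamma_i)\ge -2$ and the determinant condition. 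For $b(S)=1$: exactly one component is ``bad'', i.e.\ built by $-2$-final expansions from a three-vector block; inspecting the Lisca list one sees the minimal configurations are precisely $(-2,-(n+1),-2,-2^{[n-1]})$ as a single component with one bad block at the left end, or a disjoint pair $\Gamma_1\sqcup\Gamma_2$ with $\Gamma_1$ built from $(-2,-a,-2)$, $a\ge 3$, by expansions and $\Gamma_2$ dual (in the sense of Theorem \ref{normalform}) to such a graph. For $b(S)=2$: both components are bad, and minimality forces each to contract to $(-2,-3,-2)$, so each $\Gamma_i$ is obtained from $(-2,-3,-2)$ by $-2$-final expansions.

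The two technical points to handle carefully are: (i) verifying that $b(S)$ is well-defined and behaves correctly under contraction — this is where one must be precise about Definition \ref{badcomponent}, checking that a component is bad iff upon contracting it fully one lands in the three-vector configuration with the distinguished basis vector, and that this property is stable; and (ii) matching the abstract Lisca output (which is phrased in terms of the embedding data $^tSS$ and its ``standard'' building pieces) with the concrete weighted-graph normal forms displayed in the statement. For (ii) I would simply compute the weights of the graphs in the Lisca list after performing the contractions and read off that they agree with $(-2,-(n+1),-2^{[n-1]})$, $(-2,-a,-2)$, and $(-2,-3,-2)$ respectively; this is elementary continued-fraction bookkeeping using Proposition \ref{cf}.

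The main obstacle is point (i): the invariance and well-definedness of $b(S)$, together with the claim $c(S)\le 2$, are really the heart of the matter and depend on a faithful reading of the proof of the main theorem of \cite{Lisca:2}. Once those are in hand, the case split on $b(S)\in\{0,1,2\}$ and the identification of the resulting graphs is mechanical. I would therefore devote most of the write-up to carefully quoting and adapting the relevant lemmas of \cite{Lisca:2}, and keep the graph-matching terse.
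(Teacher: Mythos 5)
Your plan coincides with what the paper actually does: the paper gives no proof of Proposition \ref{char} at all, stating only that the characterization ``is contained in \cite{Lisca:2} (see the proof of the main theorem)'', and your strategy of quoting Lisca's classification (with $c(S)\le 2$ coming from his Proposition 4.10 under $I(S)<-b(S)$, $I$ and $c$ being invariant under $-2$-final contractions, and the graph-matching done by direct inspection) is exactly that citation made explicit. One small correction: in the first $b(S)=1$ subcase the displayed graph is a \emph{disjoint union} of the bad component $(-2,-(n+1),-2)$ and a separate $-2$-chain of length $n-1$ (two connected components, as it must be since $c(S)=2$), not a single linear component as your prose describes.
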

 \begin{re}
  It maybe useful to explain how the graph of a linear subset changes via $-2$-final expansions.
  Suppose that $S$ is a linear subset and that, for some index $i$, $e_i$ hits only two final vectors $v_1$ and $v_2$.
  If $v_1$ and $v_2$ belong to the same connected component of $\Gamma_S$ then, a $-2$-final expansion changes the graph as follows
    \[
  \begin{tikzpicture}[xscale=1.5,yscale=-0.5]
    \node (A0_0) at (0, 0) {$v_1$};
    \node (A0_3) at (3, 0) {$v_2$};
    \node (A0_5) at (5, 0) {$e_i+e_j$};
    \node (A0_6) at (6, 0) {$v_1$};
    \node (A0_9) at (9, 0) {$v_2-e_j$};
    \node (A1_0) at (0, 1) {$\bullet$};
    \node (A1_1) at (1, 1) {$\bullet$};
    \node (A1_2) at (2, 1) {$\dots$};
    \node (A1_3) at (3, 1) {$\bullet$};
    \node (A1_4) at (4, 1) {$\longrightarrow$};
    \node (A1_5) at (5, 1) {$\bullet$};
    \node (A1_6) at (6, 1) {$\bullet$};
    \node (A1_7) at (7, 1) {$\bullet$};
    \node (A1_8) at (8, 1) {$\dots$};
    \node (A1_9) at (9, 1) {$\bullet$};
    \path (A1_6) edge [-] node [auto] {$\scriptstyle{}$} (A1_7);
    \path (A1_0) edge [-] node [auto] {$\scriptstyle{}$} (A1_1);
    \path (A1_1) edge [-] node [auto] {$\scriptstyle{}$} (A1_2);
    \path (A1_5) edge [-] node [auto] {$\scriptstyle{}$} (A1_6);
    \path (A1_2) edge [-] node [auto] {$\scriptstyle{}$} (A1_3);
    \path (A1_8) edge [-] node [auto] {$\scriptstyle{}$} (A1_9);
    \path (A1_7) edge [-] node [auto] {$\scriptstyle{}$} (A1_8);
  \end{tikzpicture}
  \]
where we are assuming that $v_1=-e_j+...$ and $v_2=e_j+...$. 
An analogous operation can be performed when $v_1$ and $v_2$ belong to different connected components.
 \end{re}

 \begin{Pro}\label{nobad}
 Let $S=S_1\cup S_2$ be a good subset with no bad components such that $I(S)<0$ and $c(S)=2$. Let $v$ be an element of, say, $S_1$.\\  
 \begin{enumerate}
  \item if $v$ is internal and $v\cdot v\geq -3$ there exists a vector $v'\in S_2$ such that $E(v,v')=2$;
  \item if $v$ is internal and $v\cdot v=-k<-3$ there exists a $-2$-chain in $S_2$ of the form
  $$
  (\dots,e_1-e_2,e_2-e_3,\dots,e_{k-3}-e_{k-2},\dots).
  $$
  and $|e_i\cdot v|=1$ for each $i\leq k-2$;
  \item if $v$ is final and $v\cdot v=-k<-2$ there exists a $-2$-chain in $S_2$ of the form
  $$
  (e_1-e_2,e_2-e_3,\dots,e_{k-2}-e_{k-1},\dots).
  $$
  and $|e_i\cdot v|=1$ for each $1\leq i\leq k-2$;
 \end{enumerate}
 \end{Pro}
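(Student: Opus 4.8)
The plan is to read off the three statements from the structural classification of good subsets in \cite{Lisca:2}, recalled in Proposition \ref{char}, using Lemma \ref{dis} to pin down the linking numbers and an induction on $|S|$ by $-2$-final contractions to reach that classification from the case of large self-intersection. Two observations organize the whole argument. First, since $v\cdot v=-k$ the vector $v$ has exactly $k$ nonzero coordinates, each equal to $\pm1$, so $|e_i\cdot v|\le 1$ for all $i$ and the clauses ``$|e_i\cdot v|=1$'' merely assert that the listed basis vectors hit $v$. Second, because $c(S)=2$ the vector $v$ is orthogonal to every element of $S_2$, and inside $S_1$ it has exactly one neighbour $w$ when final and exactly two neighbours $w_1,w_2$ when internal; a short computation with the matrix whose columns are the elements of $S$ (using that $Q_{\Gamma_S}$ is negative definite, hence the matrix is invertible over $\mathbb Q$, together with irreducibility of $S$) shows that all but one of the coordinates of $v$ in the final case, and all but two in the internal case, must be hit by some vector of $S_2$. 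This produces a set $A$ of $k-1$ (resp.\ $k-2$) coordinates of $v$ that ``point into'' $S_2$, and the whole proposition says that these are organized inside $S_2$ as a $-2$-chain through which $v$ passes.

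For the base of the induction, a $-2$-final contraction preserves $c(S)$, preserves $I(S)$ (it removes a weight-$(-2)$ vector — a contribution $-1$ — and raises a neighbour's weight by one — a further $-1$), and preserves $b(S)=0$; so after finitely many contractions we reach a good subset with $c=2$, $b=0$, $I<0$ and no further $-2$-final contraction. By Proposition \ref{char}, together with a direct inspection of the handful of configurations with $I(S)=-1$ — these are severely constrained by the requirement $S\subset\mathbb Z^{|S|}$, the typical one being $[-3,-3]\sqcup[-2]$ — such base configurations are, up to the obvious symmetries, the complementary pairs $\{[-2^{[n]}],[-(n+1)]\}$, realized by $S_1=\{e_1-e_2,\dots,e_n-e_{n+1}\}$ and $S_2=\{e_1+\dots+e_{n+1}\}$, plus a short list of small graphs. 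In each the three statements are verified by hand: in the displayed pair every vertex of $S_1$ has weight $-2$, so only statement $(1)$ is non-vacuous and an internal $v=e_j-e_{j+1}$ plainly satisfies $E(v,e_1+\dots+e_{n+1})=2$; the $[-3,-3]\sqcup[-2]$-type cases match statement $(3)$ with $k=3$; and Theorem \ref{normalform} is what makes the weights, hence the relevant $-2$-chain, of the complementary leg explicit.

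For the inductive step, let $S'\nearrow S$ with $S'$ a smaller good subset still satisfying the hypotheses, and assume the proposition for $S'$. If the $-2$-final expansion does not involve $v$, the coordinates and neighbours of $v$ survive and the new $-2$ vector either extends the $-2$-chain of $S_2$ in question by exactly one or is disjoint from $A$, so the conclusion transfers from $S'$ to $S$. If instead the expansion peels a $-2$ vector off $v$, then $v$ is the image of a vector $\bar v\in S'$ with $\bar v\cdot\bar v=v\cdot v+1$, $v$ keeps its type, the $-2$-chain in $S_2$ gains exactly one new vector $e_p-e_q$ with $\{p,q\}\subset A$, and Lemma \ref{dis} applied to the connected components of $S_2$ (with the auxiliary vector recording the new $-2$ vector) forces $e_p\cdot v=\pm1$ rather than $0$; this is what increments the length ``$k-2$'' (resp.\ ``$k-3$'') in statements $(2)$ and $(3)$. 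The three cases ($v$ internal with $v\cdot v\in\{-2,-3\}$, $v$ internal with $v\cdot v<-3$, $v$ final with $v\cdot v<-2$) are then separated by the size bounds already obtained and involve nothing further.

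The main obstacle is the dichotomy behind the set $A$ and its propagation through the induction. A priori the $k-2$ outward coordinates of $v$ could be realized inside $S_2$ either as the genuine $-2$-chain we want, or as the coordinate set of a single vector of $S_2$ of self-intersection $<-2$. It is exactly the hypothesis $b(S)=0$ that rules out the second possibility, since such a vector, flanked by its two neighbouring $-2$ vectors, would after contractions form a bad component in the sense of Definition \ref{badcomponent}. Carrying this impossibility along with the chain-length bookkeeping through every expansion, and enumerating the admissible $I(S)=-1$ base cases, is where the care is needed; no new conceptual ingredient beyond Lemma \ref{dis} and Proposition \ref{char} is required.
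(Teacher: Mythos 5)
Your overall plan (reduce to a structural classification of the subsets and induct on $-2$-final expansions) is the same flavour as the paper's argument, but as written it has genuine gaps. The most basic one is the classification you lean on: Proposition \ref{char} requires $I(S)+c(S)\leq 0$, i.e. $I(S)\leq -2$ when $c(S)=2$, whereas Proposition \ref{nobad} only assumes $I(S)<0$. The case $I(S)=-1$ is not a ``handful of configurations'' that can be inspected by hand: $-2$-final expansions preserve $I$, $c$ and the absence of bad components, so the $I(S)=-1$ subsets form an infinite family, and some of them (e.g. a subset with graph $[-3,-3]\sqcup[-2]$, realized by $\{e_1+e_2+e_3,\,e_1-e_2-e_3,\,e_2-e_3\}\subset\mathbb{Z}^3$) admit no $-2$-final contraction at all, so your ``contract to a base case and apply \ref{char}'' scheme never reaches the hypotheses of \ref{char} for them. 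The paper avoids this by quoting the stronger structural statement from the proof of Theorem 1.1 and Lemma 4.7 of \cite{Lisca:2} (every such $S$ arises by expansions from $\{e_1-e_2,e_1+e_2\}$, every coordinate of every element is $0$ or $\pm1$, and every basis vector hitting an internal vector hits exactly three elements of $S$); those are precisely the facts that make the three statements provable by short arguments, and they are not available from \ref{char} alone. Relatedly, your claimed list of base configurations (the pairs $\{[-2^{[n]}],[-(n+1)]\}$ plus ``a short list'') is not what \ref{char}(1) asserts — it gives arbitrary complementary legs — and no reduction of general complementary legs to your list by $-2$-final contractions is proved.

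The two ``organizing observations'' also cannot be taken as starting points. That $v$ has exactly $k$ coordinates equal to $\pm1$ does not follow from $v\cdot v=-k$; it is a consequence of the Lisca structure you have not yet established. More seriously, the claim that all but one (final case) resp. all but two (internal case) coordinates of $v$ are hit by some vector of $S_2$ does not follow from invertibility of the matrix plus irreducibility: by Lemma \ref{ijbis} a coordinate of $v$ may perfectly well be shared only with a non-adjacent vector of $S_1$ ($E=2$ with an orthogonal vector of the same component), so this ``observation'' is essentially the content of statements (2)--(3) and has to be proved, not assumed. Finally, the inductive step is only sketched, and the one tool you invoke there, Lemma \ref{dis}, does not apply to the configuration you describe: it concerns an auxiliary vector linked once to a vector of \emph{each} connected component and orthogonal to everything else, which is not the situation of ``the new $-2$ vector versus the components of $S_2$''. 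The paper's proof of parts (2) and (3) is instead a direct induction on the number of $-2$-final expansions from the minimal configurations ($(2,2)\cup(3)$, resp. a final $-2$-vector), tracking how each expansion creates one new $-2$-vector in $S_2$ linked to the previous one; to repair your write-up you would need to either import the Lisca structure result as the paper does, or supply proofs of your two observations and of the expansion bookkeeping, including the $I(S)=-1$ cases.
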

 \begin{proof}
  It is shown in \cite{Lisca:2} (in the proof of theorem 1.1) that a subset $S$ satisfying our
  hypothesis is obtained via a sequence of $-2$-final expansions as described in Lemma 4.7 in \cite{Lisca:2} from
  a subset of the form $\{e_1-e_2,e_1+e_2\}$.
  In particular, $|e_i\cdot v|\in\{0,1\}$ for each $i$ every $v\in S$. This means that we can always write
  $$
  v=\sum_{i=1}^{|v\cdot v|}\varepsilon_ie_i \ \ \ \textrm{where} \ \ \ \varepsilon_i\in\{\pm 1\}.
  $$
  If $v\cdot v=-2$ write $v=e_1+e_2$. Again by Lemma 4.7 in \cite{Lisca:2} every basis vector
  that hits an internal vector hits exactly three vectors of $S$. It follows that $e_1$ hits
  two more vectors, say $v'$ and $v''$. Suppose that $e_2$ does not hit any of these vectors. 
  Then we must have $v'\cdot v=v''\cdot v=1$. Now $e_2$ must hit some vector, say $v'''$. Since
  $e_1$ does not hit $v'''$, we would have $v'''\cdot v=1$. But then $v$ would be adiacent to three
  vectors, which is impossible. The same argument works if $v\cdot v=-3$, we omit the details.
  
  If $v\cdot v=k\leq-4$ write $v=\sum_{i=1}^{k}e_i$. It is clear from the proof of the main theorem
  in \cite{Lisca:2} that the subset $S$ is obtained by $-2$-final expansions  from a subset $S'$
  whose associated string is
  $$
  (2,2)\cup (3).
  $$
  Then the assertion is easily proved by induction on the number of expansions needed to obtain 
  $S$ from $S'$, we omit the details.
  
  The third assertion is proved similarly. If $v\cdot v=-k<-2$ then $S$ originates from a subset $S'$ via $k-2$ $-2$-final expansions.
  Similarly $v$ originates from a final vector $v'\in S'$, with $v'\cdot v'=-2$.
  Each $-2$-final expansion creates a new $-2$-final vector in $S_2$ linked to the one resulting from the previous expansion.
 \end{proof}

 \subsection{First case: $b(S)=0$} \label{b=0}
 In this subsection we examine the subset in Proposition \ref{char} with no bad components.
  We will need the following lemma.

 \begin{lem}\label{ijbis}
  Let $S$ be a good subset such that $c(S)=2$ and $b(S)=0$. 
  Let $v_i,v_j$ be two vectors in $S$. 
  We have
  \begin{itemize}
   \item if $v_i\cdot v_j=1$ then $E(v_i,v_j)=1$
   \item if $v_i\cdot v_j=0$ then $E(v_i,v_j)\in\{0,2\}$
  \end{itemize}
 \end{lem}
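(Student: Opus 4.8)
The statement describes the structure of basis vectors hitting pairs of elements in a good subset with two connected components and no bad components. The plan is to exploit the explicit description of such subsets provided by Proposition~\ref{char}: when $c(S)=2$ and $b(S)=0$, the graph $\Gamma_S$ consists of two complementary legs, and (as recalled in the proof of Proposition~\ref{nobad}, following \cite{Lisca:2}) such an $S$ is obtained from the seed subset $\{e_1-e_2,e_1+e_2\}$ by a sequence of $-2$-final expansions. I would therefore argue by induction on the number of $-2$-final expansions. The base case $S=\{e_1-e_2,e_1+e_2\}$ is immediate: the two vectors satisfy $v_1\cdot v_2=0$ (they are orthogonal, since $(e_1-e_2)\cdot(e_1+e_2)=-1+1=0$ with the negative definite form — one checks the sign convention, but in any case $E(v_1,v_2)=2$ because both $e_1$ and $e_2$ hit both vectors), so the claim holds.

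For the inductive step, suppose $S'\nearrow S$ via a single $-2$-final expansion, so that $S$ is obtained from $S'$ by replacing a final $-2$-vector $v_h$ of $S'$ (hit by $e_i$, which also hits an adjacent vector $v_k$ with $v_k\cdot v_k<-2$) with two vectors: a modified $v_k$ and a new $-2$-final vector, using a fresh basis vector $e_j$. Concretely, writing the old $v_k = e_i + \dots$, the expansion produces $v_h' = e_i - e_j$ (the new final vertex) and $v_k' = e_j + \dots$ (with $e_i$ no longer hitting $v_k'$), together with all other vectors of $S'$ unchanged. I would then check the two assertions for each pair $(v_a, v_b)$ in $S$ by cases: (i) if neither $v_a$ nor $v_b$ is one of the two new/modified vectors, then $E(v_a,v_b)$ is unchanged from $S'$ and we invoke the inductive hypothesis; (ii) if exactly one of them is $v_h'$ or $v_k'$, one traces which basis vectors ($e_i$, $e_j$, or older ones) hit both, using that $e_j$ is brand new and hits only $v_h'$ and $v_k'$; (iii) if the pair is $\{v_h', v_k'\}$ itself, then $v_h'\cdot v_k' = (e_i-e_j)\cdot(e_j+\dots) = -(e_j\cdot e_j) = 1$ (adjacency, consistent with the graph having an edge there) and the only common basis vector is $e_j$, so $E(v_h',v_k')=1$, as required. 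The key bookkeeping fact, which must be extracted from the structure in \cite{Lisca:2} (and is the analogue of what is used in Proposition~\ref{nobad}), is that in these subsets every basis vector hits at most three vectors, and a basis vector hitting an internal vector hits exactly three; this controls how $E(v_a,v_b)$ can change and rules out the value $E=1$ for orthogonal pairs and values $\geq 2$ for adjacent pairs.

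The main obstacle I anticipate is handling the case where the expansion merges or splits across the two connected components, or more precisely verifying that the ``$\{0,2\}$'' dichotomy is never violated — one must rule out an orthogonal pair ending up with $E=1$. This is where the absence of bad components ($b(S)=0$) is essential: a bad component is precisely the configuration where a basis vector hits three vectors of a single $\{-2, <\!-2, -2\}$ component, which would create a vector hit by a shared basis vector in a way that breaks the clean pairing. Since $b(S)=0$, every basis vector that hits three vectors spreads them out so that each resulting coincidence of hits on a pair is either a genuine adjacency (contributing $E=1$) or comes in the complementary-leg pattern (contributing $E=2$). I would make this precise by carrying along, as part of the inductive hypothesis, the stronger statement that $S$ has the explicit normal form of Proposition~\ref{char}(1) after expansions, and reading off $E(v_a,v_b)$ directly from that normal form rather than re-deriving it at each step; this reduces the whole lemma to a finite check on the seed plus a transparent ``expansion preserves the property'' observation.
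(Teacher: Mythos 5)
Your overall strategy, induction on the number of $-2$-final expansions starting from the minimal configuration, with the structure result from \cite{Lisca:2} (as recalled in Proposition~\ref{char} and in the proof of Proposition~\ref{nobad}) guaranteeing that every good subset with $c(S)=2$, $b(S)=0$ arises this way, is exactly the route the paper takes; the paper's proof is this induction with the inductive step left to the reader. The gap is that your concrete model of what a single $-2$-final expansion does is wrong, and since the base case is trivial, the inductive bookkeeping is the entire content of the lemma. In the move of Definition~\ref{expansionscontractions} (see the remark following Proposition~\ref{char}) one picks a basis vector $e_i$ hitting exactly two final vectors $v_1=-e_i+\dots$ and $v_2=e_i+\dots$ of $S'$, and the expanded set is $S=(S'\setminus\{v_2\})\cup\{e_i+e_j,\;v_2-e_j\}$ with $e_j$ a fresh coordinate. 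In particular the modified vector keeps its $e_i$-coefficient, so after the expansion $e_i$ hits three vectors; the new $(-2)$-vector $e_i+e_j$ is adjacent to $v_1$ and shares with it only $e_i$ (so $E=1$), while it is orthogonal to the modified vector $v_2-e_j$ and shares with it the two coordinates $e_i,e_j$ (so $E=2$). Your formulas $v_h'=e_i-e_j$, $v_k'=e_j+\dots$ with ``$e_i$ no longer hitting $v_k'$'' describe a different operation: its contraction does not return $S'$ (projecting away $e_j$ turns $v_k'$ into $v_k-e_i$, not $v_k$); it cannot even be applied to your own two-vector seed, since you require $v_k\cdot v_k<-2$ in $S'$ and no vector of $\{e_1-e_2,e_1+e_2\}$ has square below $-2$; and applied literally it produces pairings equal to $-1$ or a triangle, hence not a linear subset at all.

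Consequently your case analysis verifies the statement for the wrong family. Most importantly, your case (iii) concludes that the new vertex and the modified vector are adjacent with $E=1$, whereas for the true expansion this pair is orthogonal with $E(e_i+e_j,\,v_2-e_j)=2$: this is precisely the new instance of the second bullet that has to be checked, and it is invisible in your model; the adjacency with $E=1$ occurs instead between the new vertex and the other final vector $v_1$ hit by $e_i$. Once the move is stated correctly the induction does close exactly as you intend, and without needing the ``hits at most three vectors'' input from Lemma 4.7 of \cite{Lisca:2}: pairs of untouched vectors are unaffected; $v_2-e_j$ pairs with untouched vectors exactly as $v_2$ did, since $e_j$ is fresh; the new vertex can meet an untouched vector only in the coordinate $e_i$, which among them hits only $v_1$, giving $E=1$ together with intersection $1$; and the pair $\{e_i+e_j,\,v_2-e_j\}$ gives $E=2$ with intersection $0$. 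That corrected computation is the omitted ``easy'' step of the paper's proof; as written, your proposal does not supply it.
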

 \begin{proof}
 The Lemma clearly holds for the subset $S_3$.
 Let $S_n$ be a subset obtained from $S_3$ via a sequence of $-2$-final expansions
 $$
 S_3\nearrow S_4\nearrow\dots\nearrow S_n
 $$
 Suppose the lemma holds for $S_{n-1}$. The conclusion follows easily from the fact that
 the new vector which has been introduced has square $-2$. We omit the details.
 \end{proof}
 
\begin{Pro}\label{b0}
 Let $S\subset\mathbb{Z}^{N}$ be a good subset such that $c(S)=2$ and $b(S)=0$. Suppose that there exists
 a vector $v\in\mathbb{Z}^{N}$ that is linked once to a vector of each connected component of $S$ and is
 orthogonal to all the remaining vectors of $S$. Then
 \begin{itemize}
  \item $v$ is linked to a pair of final vectors
  \item $v\cdot v=-1$
  \item the graph of $S\cup\{v\}$ is a building block of the first type
  \item $I(S)=-2$
 \end{itemize}
\end{Pro}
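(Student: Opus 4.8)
The statement to prove concerns a good subset $S=S_1\cup S_2$ with $c(S)=2$, $b(S)=0$, together with an extra vector $v$ linked once to one vector in each component and orthogonal to everything else. By Proposition \ref{char}(1) we already know $\Gamma_S$ consists of two complementary legs. The plan is to first establish that $v$ must be linked to \emph{final} vectors of $S_1$ and $S_2$, then pin down $v\cdot v=-1$ via Lemma \ref{dis}, then identify $\Gamma_{S\cup\{v\}}$ with a building block of the first type using the continued fraction characterization (Proposition \ref{compeq}), and finally read off $I(S)=-2$ from Proposition \ref{I}.

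\textbf{Step 1: $v$ is linked to a pair of final vectors.} Write $v\cdot v_i=1$ where $v_i\in S_i$. Suppose for contradiction that, say, $v_1$ is internal in $S_1$. Then $v_1$ sits on the linear chain with two adjacent neighbors, and $\Gamma_{S\cup\{v\}}$ has a node at $v_1$. The idea is to use the hypothesis $b(S)=0$ together with Proposition \ref{nobad}: since $v_1$ is internal, there is a matching structure relating $v_1$ to a vector (or a $-2$-chain) in $S_2$. I would combine this with the linear-algebra constraint: writing $S$ as a matrix and translating the conditions on $v$ into $^tS v = e_k + e_\ell$, and then applying Lemma \ref{dis}, which gives $v\cdot v > v_1\cdot v_1 + v_2\cdot v_2$. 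The plan is to show that if $v_1$ (or $v_2$) is internal, the resulting graph $\Gamma_{S\cup\{v\}}$ forces $v\cdot v$ to be too negative — concretely, an internal $v_i$ contributes a node with $v_i\cdot v_i\le -2$ and the node equation $\mathrm{cf}(\Gamma_{S\cup\{v\}})=0$ (valid since removing $v$ leaves a definite linear graph, cf.\ Proposition \ref{cfdet}) combined with $v\cdot v>-3$ leaves no room. Alternatively, and perhaps more cleanly, use Lemma \ref{ijbis}: $E(v_i,v_j)\in\{0,1,2\}$ for all pairs in $S$, so every basis vector hits at most two vectors of $S$, hence writing $v_i=\sum\varepsilon_h e_h$ each $e_h$ either hits only $v_i$ inside $S$ or hits exactly one other; an internal $v_i$ of square $-k$ then needs $k$ basis vectors shared with neighbors, which is incompatible with $b(S)=0$ when we also have to accommodate $v$. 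The cleanest route is to mimic the $c(S)=1$ argument in Proposition \ref{c1}: there the internal/final dichotomy was resolved by the continued fraction; here I expect the complementary-leg structure plus $b(S)=0$ to force finality.

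\textbf{Steps 2--4.} Once $v$ is linked to final vectors $v_1,v_2$, the graph $\Gamma_{S\cup\{v\}}$ is linear. By Proposition \ref{cfdet} applied to the linear graph $\Gamma_{S\cup\{v\}}$ (which has $N+1$ linearly dependent vectors in $\mathbb{Z}^N$, so determinant zero), its continued fraction vanishes; computing it with respect to $v$,
\[
v\cdot v-\frac{1}{\mathrm{cf}(\Gamma_{S_1})}-\frac{1}{\mathrm{cf}(\Gamma_{S_2})}=0.
\]
Since each $\Gamma_{S_i}$ is negative definite linear, $\mathrm{cf}(\Gamma_{S_i})<-1$, so each reciprocal lies in $(-1,0)$, forcing $v\cdot v\in\{-1\}$ once we combine with Lemma \ref{dis} (which gives $v\cdot v>v_1\cdot v_1+v_2\cdot v_2\ge -4$, but the vanishing equation with $\mathrm{cf}(\Gamma_{S_i})<-1$ directly yields $0<v\cdot v+2$, i.e.\ $v\cdot v\ge -1$, and $v\cdot v\le -1$ is automatic since $v\ne 0$). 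Hence $v\cdot v=-1$ and $\frac{1}{\mathrm{cf}(\Gamma_{S_1})}+\frac{1}{\mathrm{cf}(\Gamma_{S_2})}=-1$, which by Proposition \ref{compeq} says $\Gamma_{S_1}$ and $\Gamma_{S_2}$ are complementary. After a blow-down of the $-1$-vertex $v$, the graph $\Gamma_{S\cup\{v\}}$ is exactly the building block $\Gamma_1$ of the first type (two complementary legs joined through a $-1$-vertex, cf.\ Definition \ref{buildblock}). Finally, two complementary legs satisfy $I(\Gamma_{S_1})+I(\Gamma_{S_2})=-2$ by Proposition \ref{I}, so $I(S)=I(\Gamma_{S_1})+I(\Gamma_{S_2})=-2$.

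\textbf{Main obstacle.} The delicate part is Step 1 — ruling out that $v$ attaches to an internal vector. The other three steps are essentially bookkeeping with continued fractions and Proposition \ref{compeq}. For Step 1 one has to use the full force of the structure theorem behind Proposition \ref{char} (i.e.\ that with $b(S)=0$ the subset is built from $\{e_1-e_2,e_1+e_2\}$ by $-2$-final expansions), so that every basis vector hits at most two elements of $S$ (Lemma \ref{ijbis}); an internal $v_i$ of square $-k$ already consumes basis vectors shared with both neighbors, and adding the constraint $v\cdot v_i=1$ with $v$ orthogonal to the neighbors of $v_i$ would force a basis vector hitting three elements of $S\cup\{v\}$ in a way the complementary-leg + no-bad-component structure cannot support. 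Making this counting argument airtight — tracking exactly which $e_h$'s hit which vectors through the expansion history — is where the real work lies.
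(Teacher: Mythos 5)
Your Steps 2--4 are fine and agree with the paper: once both attaching vectors are final, $\Gamma_{S\cup\{v\}}$ is linear with vanishing determinant, the continued fraction computed at $v$ gives $v\cdot v=1/cf(\Gamma_{S_1})+1/cf(\Gamma_{S_2})\in(-2,0)$, hence $v\cdot v=-1$, complementarity via Proposition \ref{compeq}, and $I(S)=-2$ via Proposition \ref{I}. But Step 1 --- ruling out that $v$ is linked to an internal vector --- is the entire substance of the proposition, and you have not proved it; you offer two candidate strategies and explicitly defer ``the real work.'' In the paper this step occupies the bulk of the proof: one first shows $v\cdot v<-1$ (a square $-1$ vector could only hit final vectors, by Lisca's Lemma 4.7), then Lemma \ref{basic} forces one of the two attached vectors, say $w_1$, to have square $-2$, and then one runs a case analysis on whether the other vector $w_2$ is final or internal, sub-cased on $w_2\cdot w_2\in\{-2,-3,\le-4\}$, each case being killed by an explicit coordinate computation against the inequality of Lemma \ref{dis}, using $E(w_1,w_2)=0$, Proposition \ref{nobad}, and sequences of $-2$-final contractions chosen so as not to disturb $v$. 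None of this is replaced by your sketch.

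Moreover, both shortcuts you propose for Step 1 are flawed. The reading of Lemma \ref{ijbis} as ``every basis vector hits at most two vectors of $S$'' is false: $E(v_i,v_j)$ only counts coordinates shared by a fixed pair, and for these subsets a basis vector hitting an internal vector hits exactly three elements of $S$ (this is precisely Lisca's Lemma 4.7, which the paper uses repeatedly, e.g.\ in Proposition \ref{nobad} and inside this very proof). And the continued-fraction ``no room'' argument cannot work at the level of graphs: if $w_1$ is internal and $w_2$ final, $\Gamma_{S\cup\{v\}}$ is a three-legged star whose central weight is forced to be $-2$ with $cf=0$ --- exactly the shape of a second-type building block, which genuinely occurs when $c(S)=1$ (Proposition \ref{c1}); so the obstruction is lattice-theoretic, not combinatorial, and the asserted bound $v\cdot v>-3$ does not follow from Lemma \ref{dis} when only $w_1$ is known to have square $-2$ (one only gets $v\cdot v>-2+w_2\cdot w_2$). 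Finally, note that invoking Proposition \ref{char}(1) to assume the two legs are complementary presupposes the $I(S)$ hypotheses not stated in Proposition \ref{b0}; in the paper complementarity is a conclusion, obtained from Proposition \ref{compeq} after finality and $v\cdot v=-1$ are established.
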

\begin{proof}
 Write $S=S_1\cup S_2$ and $w_1,w_2$ for the two vectors linked once with $v$. First note that if 
 both $w_1$ and $w_2$ are final vectors then the graph associated to $S\cup\{v\}$ is linear and since
 $det\Gamma_{S\cup\{v\}}=0$ the corresponding plumbed 3-manifold is diffeomorphic to $S^1\times S^2$. This means
 that $\Gamma_{S\cup\{v\}}$ cannot be in normal form which is only possible if $v\cdot v=-1$. By Proposition
 \ref{compeq} the graph $\Gamma_{S\cup\{v\}}$ is building block of the first type. Also by Proposition
 \ref{compeq} the two components of $S$ are complementary and so $I(S)=-2$.
 Therefore it is enough to show that both $w_1$ and $w_2$ are final.
 
 Assume by contradiction that $w_1$ is an internal vector. Then we have $v\cdot v<-1$. To see this 
 note that if $v\cdot v=-1$ then, by lemma 4.7 in \cite{Lisca:2}, the vector $v$ 
 can only hit final vectors. 
 By Lemma \ref{basic} at least one vector among $w_1$ and $w_2$ has $-2$ square.
 
 We have two possibilities.
 
 \textbf{First case:} The vector $w_2$ is final.\\
 The graph $\Gamma_{S\cup\{v\}}$ has the following form
   \[
  \begin{tikzpicture}[xscale=1.5,yscale=-0.5]
    \node (A0_5) at (5, 0) {$\bullet$};
    \node (A0_6) at (6, 0) {$\dots$};
    \node (A0_7) at (7, 0) {$\bullet$};
    \node (A1_0) at (0, 1) {$\bullet$};
    \node (A1_1) at (1, 1) {$\dots$};
    \node (A1_2) at (2, 1) {$\bullet$};
    \node (A1_3) at (3, 1) {$\bullet$};
    \node (A1_4) at (4, 1) {$\bullet$};
    \node (A2_5) at (5, 2) {$\bullet$};
    \node (A2_6) at (6, 2) {$\dots$};
    \node (A2_7) at (7, 2) {$\bullet$};
    \path (A2_6) edge [-] node [auto] {$\scriptstyle{}$} (A2_7);
    \path (A0_6) edge [-] node [auto] {$\scriptstyle{}$} (A0_7);
    \path (A1_0) edge [-] node [auto] {$\scriptstyle{}$} (A1_1);
    \path (A2_5) edge [-] node [auto] {$\scriptstyle{}$} (A2_6);
    \path (A1_1) edge [-] node [auto] {$\scriptstyle{}$} (A1_2);
    \path (A1_4) edge [-] node [auto] {$\scriptstyle{}$} (A2_5);
    \path (A1_4) edge [-] node [auto] {$\scriptstyle{}$} (A0_5);
    \path (A1_2) edge [-] node [auto] {$\scriptstyle{}$} (A1_3);
    \path (A0_5) edge [-] node [auto] {$\scriptstyle{}$} (A0_6);
    \path (A1_3) edge [-] node [auto] {$\scriptstyle{}$} (A1_4);
  \end{tikzpicture}
  \]

 It is a star-shaped
 plumbing graph in normal form with three legs. Since $det\Gamma_{S\cup\{v\}}=0$ 
 the weight of the central vertex, which is $w_1$, can only be $-1$ or $-2$, since $S$ is a good
 subset we have $w_1\cdot w_1=-2$. We may write $w_1=e_1+e_2$. 
 Recall that by Lemma \ref{dis} we must have
 \begin{equation}\label{disnobad}
  \|v\|^2< 2+\|w_2\|^2.
 \end{equation}
 Moreover we claim that
 \begin{equation}\label{Ew}
  E(w_1,w_2)=0.
 \end{equation}
To see this note that since $w_1\cdot w_2=0$ and $w_1\cdot w_1=-2$ we have $E(w_1,w_2)\in\{0,2\}$. 
If both $e_1$ and $e_2$ hit $w_2$ then, by Lemma 4.7(3) in \cite{Lisca:2}, at least one of them
hits exactly two vectors in $S$. But then, again by Lemma 4.7(2) in \cite{Lisca:2}, these two vectors
are not internal. This contradicts the fact that $w_1$ is internal.

 Now we proceed by distinguishing several cases according to the weight of $w_2$.\\
 \emph{First subcase}: $w_2\cdot w_2=-2$.\\
 By \eqref{Ew} we may write
 $$
 w_1=e_1+e_2 \ \ \ ; \ \ \ w_2=e_3+e_4.
 $$
 Note that \eqref{disnobad} tells us that $\|v\|^2<4$, in particular $|v\cdot e_i|\leq 1$ for each $e_i$.
 Therefore, since $1=v\cdot w_1=v\cdot e_1+v\cdot e_2$, either $v\cdot e_1=0$ or $v\cdot e_2=0$. Similarly
 either $v\cdot e_3=0$ or $v\cdot e_4=0$. Without loss of generality we may write $v=-e_1-e_3+v'$, where $v'\cdot e_i=0$ for $i\leq 4$.
 By \eqref{disnobad}, we have $\|v'\|^2\leq 1$. Since $w_1$ is internal, by Lemma 4.7 in \cite{Lisca:2} we know that $e_1$ hits exactly 
 three vectors in $S$, say $w_1$, $u_1$ and $u_2$. The condition $v\cdot u_1=v\cdot u_2=0$ shows that $v'\neq 0$, say $v'=e_5$. 
 We obtain the expression $v=-e_1-e_3+e_5$.
 We have $v\cdot u_i=-e_1\cdot u_i+e_5\cdot u_i=0$ for $i=1,2$. Therefore we may write
 $u_i=\varepsilon_i(e_1+e_5)+u_i'$ with $u_i'\cdot e_1=u_i'\cdot e_5=0$ and $\varepsilon_i=\pm 1$ for $i=1,2$. 
 This fact together with $|u_1\cdot u_2|\leq 1$ implies that $E(u_1,u_2)>2$ which contradits Lemma \ref{ijbis}.\\
 \emph{Second subcase}: $w_2\cdot w_2=-3$.\\
 By \eqref{Ew} we may write
 $$
 w_1=e_1+e_2 \ \ \ ; \ \ \ w_2=e_3+e_4+e_5.
 $$
 By lemma 4.7 in \cite{Lisca:2}, there exists a final vector $w_3$ which, without loss of generality, we can write
 as $w_3=e_3-e_4$. Now let us write
 $$
 v=v'+\alpha_1e_1+\alpha_2e_2+\alpha_3e_3+\alpha_4e_4+\alpha_5e_5
 $$
 where $v'\cdot e_i=0$ for each $i\leq 5$. Since at least two $\alpha_i$'s are non zero it follows by \eqref{disnobad} that $|\alpha_i|\leq 1$
 for each $i\leq 5$ and that $\sum_{i=1}^5 |\alpha_i|<5$. In particular at least one coefficient is zero.
 The conditions $v\cdot w_1=v\cdot w_2=1$ and $v\cdot w_3=0$ quickly imply the following 
 \begin{itemize}
  \item $(\alpha_1,\alpha_2)\in\{(-1,0),(0,-1)\}$;
  \item $(\alpha_3,\alpha_4)\in\{(0,0),(1,1)\}$;
  \item $\alpha_5=-1$;
 \end{itemize}
 If $(\alpha_3,\alpha_4)= (1,1)$ then $\|v\|^2=4+\|v'\|^2$ and therefore $v'=0$. We can write
 $$
 v=\alpha_2e_2+\alpha_3e_3+\alpha_4e_4+\alpha_5e_5
 $$
 
 Let $w_4$ be the vector of $S_1$ such that $w_3\cdot w_4=1$. We may write this vector as 
 $w_4=w_4'+e_4$, and since $w_4\cdot w_2=0$ we may write $w_4=w_4''+e_4-e_5$. Clearly $e_i\cdot w_4''=0$ for $i\leq 5$.
 But then, since $v\cdot w_4=0$, we would have $\alpha_3=\alpha_4=\alpha_5$ which does not match with 
 the previous conditions we obtained for these coefficients.
 
 Therefore we may assume that $(\alpha_3,\alpha_4)=(0,0)$.
 In this situation we may perform a $-2$-final contraction on $S$ that has the effect of deleting the vector $w_3$ and decreasing
 the norm of $w_2$ by one. The extra vector $v$ is not affected by this operation and all the hypothesis that we need remain valid.
 In this situation $v$ is linked to a final vector whose weight is $-2$ and therefore we may repeat the argument given in the first subcase.\\
 \emph{Third subcase}: $w_2\cdot w_2< -3$. \\
 We may write
$w_2=\sum_{i=1}^{k}e_i$, with $k\geq 3$. By Proposition \ref{nobad} there is a $-2$-chain of the form
$$
  (e_1-e_2,e_2-e_3,\dots,e_{k-2}-e_{k-1},\dots).
  $$
  By \eqref{Ew} we know that $w_1$ does not belong to this chain.
  Therefore $v$ must be orthogonal to every vector in this chain.
  It follows that either $v$ hits all of the vectors in the set $\{e_1,\dots,e_{k-1}\}$ or it does not hit any 
  of them. 
  
  If $v$ hits all of the vectors in the set $\{e_1,\dots,e_{k-1}\}$ we can write, without loss of generality,
  $$
  v=v'+\alpha\sum_{i=1}^{k-1}e_i
  $$
  where $v'\cdot e_i=0$ for $i\leq k-1$ and $\alpha\in\mathbb{Z}\setminus\{0\}$. But then the condition $v\cdot w_2=1$ implies
  $v\cdot e_k=\alpha(k-1)+1$ and therefore
  $$
  \|v\|^2\geq \alpha^2(k-1)+(\alpha(k-1)+1)^2\geq k-1+k^2\geq k+2
  $$
  and this contradicts \eqref{disnobad}.
  
  If $v$ does not hit any of the vectors in the set $\{e_1,\dots,e_{k-2}\}$ we can perform a series
  of $-2$-final contractions that will eliminate these vectors. These contractions do not alter
  the vector $v$. Let $w'_2$ be the image of $w_2$ after these contractions are performed. 
  Since $w'_2\cdot w'_2=-2$ we can apply the argument given in the first subcase.

\textbf{Second case:} 
The vector $w_2$ is internal. The graph $\Gamma_{S\cup\{v\}}$ has the following form
  \[
  \begin{tikzpicture}[xscale=1.5,yscale=-0.5]
    \node (A0_0) at (0, 0) {$\bullet$};
    \node (A0_1) at (1, 0) {$\dots$};
    \node (A0_2) at (2, 0) {$\bullet$};
    \node (A0_6) at (6, 0) {$\bullet$};
    \node (A0_7) at (7, 0) {$\dots$};
    \node (A0_8) at (8, 0) {$\bullet$};
    \node (A1_3) at (3, 1) {$\bullet$};
    \node (A1_4) at (4, 1) {$\bullet$};
    \node (A1_5) at (5, 1) {$\bullet$};
    \node (A2_0) at (0, 2) {$\bullet$};
    \node (A2_1) at (1, 2) {$\dots$};
    \node (A2_2) at (2, 2) {$\bullet$};
    \node (A2_6) at (6, 2) {$\bullet$};
    \node (A2_7) at (7, 2) {$\dots$};
    \node (A2_8) at (8, 2) {$\bullet$};
    \path (A1_4) edge [-] node [auto] {$\scriptstyle{}$} (A1_5);
    \path (A2_2) edge [-] node [auto] {$\scriptstyle{}$} (A1_3);
    \path (A2_6) edge [-] node [auto] {$\scriptstyle{}$} (A2_7);
    \path (A0_1) edge [-] node [auto] {$\scriptstyle{}$} (A0_2);
    \path (A2_1) edge [-] node [auto] {$\scriptstyle{}$} (A2_2);
    \path (A0_7) edge [-] node [auto] {$\scriptstyle{}$} (A0_8);
    \path (A0_0) edge [-] node [auto] {$\scriptstyle{}$} (A0_1);
    \path (A1_5) edge [-] node [auto] {$\scriptstyle{}$} (A2_6);
    \path (A2_0) edge [-] node [auto] {$\scriptstyle{}$} (A2_1);
    \path (A0_2) edge [-] node [auto] {$\scriptstyle{}$} (A1_3);
    \path (A2_7) edge [-] node [auto] {$\scriptstyle{}$} (A2_8);
    \path (A0_6) edge [-] node [auto] {$\scriptstyle{}$} (A0_7);
    \path (A1_5) edge [-] node [auto] {$\scriptstyle{}$} (A0_6);
    \path (A1_3) edge [-] node [auto] {$\scriptstyle{}$} (A1_4);
  \end{tikzpicture}
  \]
Recall that we have shown that $v\cdot v<-1$. By Lemma \ref{basic}, we may assume, as in the first case, that 
one of the vectors $w_1$ and $w_2$, say $w_1$, has $-2$-square. As a consequence Equation \eqref{disnobad} holds.
Note that if $w_2\cdot w_2=-2$ the argument given in the first case works as well in this situation. Therefore 
we may assume that $w_2\cdot w_2\leq -3$. 

Let $e_s$ be a base vector that hits two final vectors of $S$. It is easy to see that if $e_s\cdot v=0$
then the $-2$-final contraction $S\searrow S'$ associated to $e_s$ does not affect the vector $v$. In this situation
the subset $S'$ satisfies all the hypotheses in the statement and the conlcusions hold for $S'$ if and only if they hold for $S$.
This process may be iterated, via a sequence of $-2$-final contractions $S\searrow\dots\searrow \overline{S}$,
until one of the following hold:
\begin{enumerate}
 \item the image in $\overline{S}$ of one vector among $w_1$ and $w_2$ is a final vector;
 \item no more contractions can be performed on $S$ without affecting the vector $v$.
\end{enumerate}
If the first condition holds we may apply the argument given in the first case. Assume the second condition holds.
The subset $\overline{S}$ has two $-2$-final vectors of the form $e_{j_1}-e_{j_2}$ and $e_{j_3}-e_{j_4}$. By our assumption
\begin{equation}\label{colpodiscena}
v\cdot e_{j_i}\neq 0 \ \ \ \textrm{for each} \ \ \ 1\leq i\leq 4. 
\end{equation}
 Now we distinguish two cases.
 
 \emph{First subcase}:$w_2\cdot w_2=-3$.\\	
 In this case Equation \eqref{colpodiscena} contradicts Equation \eqref{disnobad}.\\
 \emph{Second subcase}: $w_2\cdot w_2<-3$.\\
By Proposition \ref{nobad} there is a $-2$-chain of the form
$$
  (\dots,-e_1+\dots,e_1-e_2,e_2-e_3,\dots,e_{k-3}-e_{k-2},e_{k-2}+\dots,\dots).
  $$
and $w_2=\sum_{i=1}^ke_i$. Since $v$ is orthogonal to every vector in the $-2$-chain, either $v\cdot e_i\neq 0$
for each $i\leq k-2$ or $v\cdot e_i= 0$ for each $i\leq k-2$. In the first case we quickly obtain a contradiction with 
Equation \eqref{disnobad} (by taking into account \eqref{colpodiscena}). In the second case we may remove the whole 
$-2$-chain performing the transformation
$$
(\dots,-e_1+\dots,e_1-e_2,\dots,e_{k-3}-e_{k-2},e_{k-2}+\dots,\dots) \longrightarrow 
(\dots,-e_1+\dots,e_1+\dots,\dots).
$$
The image of the vector $w_2$ under this transformation is $\overline{w}_2=e_1+e_{k-1}+e_k$. Since 
$\overline{w}_2\cdot \overline{w}_2=-3$ we may argue as in the first subcase, and we are done.
\end{proof}
 \subsection{Second case: $b(S)=1$}
 In this section we deal with the subsets of Theorem \ref{irreducible} having a single
 bad component. As stated in Proposition \ref{char}, there are two different classes
 of such subsets. First we show that for one of these classes it is not possible to find an
 extra vector $v$ satisfying the hypothesis of Theorem \ref{irreducible}. Then we deal
 with the other class of subsets which will give rise to building block of the third type.

 \begin{Pro}\label{b1a}
  Let $S'$ be a good subset such that $b(S)=1$ and its graph $\Gamma_S$ is of the form
     \[
  \begin{tikzpicture}[xscale=1.5,yscale=-0.4]
    \node (A0_0) at (0, 0) {$-2$};
    \node (A0_1) at (1, 0) {$-a$};
    \node (A0_2) at (2, 0) {$-2$};
    \node (A0_4) at (4, 0) {$-3$};
    \node (A0_5) at (5, 0) {$-2$};
    \node (A0_7) at (7, 0) {$-2$};
    \node (A0_8) at (8, 0) {$-3$};
    \node (A1_0) at (0, 1) {$\bullet$};
    \node (A1_1) at (1, 1) {$\bullet$};
    \node (A1_2) at (2, 1) {$\bullet$};
    \node (A1_4) at (4, 1) {$\bullet$};
    \node (A1_5) at (5, 1) {$\bullet$};
    \node (A1_6) at (6, 1) {$\cdots$};
    \node (A1_7) at (7, 1) {$\bullet$};
    \node (A1_8) at (8, 1) {$\bullet$};
    \path (A1_6) edge [-] node [auto] {$\scriptstyle{}$} (A1_7);
    \path (A1_0) edge [-] node [auto] {$\scriptstyle{}$} (A1_1);
    \path (A1_4) edge [-] node [auto] {$\scriptstyle{}$} (A1_5);
    \path (A1_1) edge [-] node [auto] {$\scriptstyle{}$} (A1_2);
    \path (A1_5) edge [-] node [auto] {$\scriptstyle{}$} (A1_6);
    \path (A1_7) edge [-] node [auto] {$\scriptstyle{}$} (A1_8);
  \end{tikzpicture}
  \]
where $a\geq 3$ and the $-2$-chain has length $a-3$.
  Let $S$ be a good subset which is obtained via $-2$-final expansions from $S'$ as explained in Proposition \ref{char}. 
  Then, there exists no vector $v\in\mathbb{Z}^N$ 
  linked once to a vector of each connected component of $S$ and orthogonal to all the other vectors of $S$.
 \end{Pro}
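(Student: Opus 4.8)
The plan is to assume, for contradiction, that a vector $v\in\mathbb{Z}^N$ with the stated properties exists and to extract a half‑integrality (equivalently, a divisibility) contradiction. Write $S=S_1\sqcup S_2$ and let $w_i\in S_i$ be the unique vector with $v\cdot w_i=1$. Since $S$ is negative definite of full rank $N$, it spans a finite‑index sublattice of $\mathbb{Z}^N$, so $v$ is a rational combination of the elements of $S$ and the Gram matrix of $S\cup\{v\}$ is singular; hence $\det\Gamma_{S\cup\{v\}}=0$. By Lemma \ref{dis}, $v\cdot v>w_1\cdot w_1+w_2\cdot w_2$. Applying Lemma \ref{basic} to $\Gamma_{S\cup\{v\}}$ (a linear graph $\Gamma_S$ with one extra vertex $v$ of weight $\le-1$, and with $\det=0$) gives that $v\cdot v=-1$ or that one of $w_1,w_2$ has square $-2$; together with the reductions below this pins $v\cdot v$ to a short list. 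The key preliminary step is to fix once and for all the explicit realisation of $S$ produced by \cite{Lisca:2} and recorded in Proposition \ref{char}: the bad component $\Gamma_1$ (an iterated $-2$-final expansion of $(-2,-a,-2)$) is realised so that its two $-2$-final vectors form a \emph{symmetric pair} $e_j+e_{j'}$, $e_j-e_{j'}$ whose shared basis vectors hit no other vector of $S$, and every $-2$-final expansion creates a further such pair; and $\Gamma_2$ is the dual of such a graph, carrying the analogous structure.

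The heart of the argument is the following elementary observation, used repeatedly: if $v$ is linked once to a $-2$-vector $w=e_i+e_k$ forming a symmetric pair with $w'=e_i-e_k$, and $v$ is orthogonal to $w'$, then $v\cdot w=1$ and $v\cdot w'=0$ read $v_i+v_k=-1$ and $v_i-v_k=0$, so $2v_i=-1$, impossible. Since in the realisation every $-2$-endpoint of $\Gamma_1$ and of $\Gamma_2$ lies in such a pair, this rules out $w_1$ or $w_2$ being one of those endpoints, and it also shows that whenever a $-2$-final contraction along a base vector $e_s$ is \emph{blocked} because $v$ meets $e_s$ one already has a contradiction; hence I may freely perform $-2$-final contractions until I reach the base subset $S'$ of the statement. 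In $S'$ the only remaining possibilities are that $w_i$ is the internal vertex of $\Gamma_1$ or a vertex of $\Gamma_2$; for these I combine Proposition \ref{nobad} (to produce the long difference $-2$-chain $e_1-e_2,\dots,e_{k-2}-e_{k-1}$ to which $v$ must be orthogonal, forcing $v$ to hit all of $e_1,\dots,e_{k-1}$ — whence $\|v\|^2\ge (k-1)+k^2$, contradicting Lemma \ref{dis} — or none of them, allowing a further reduction) with a direct solution of the linear system $^tMv=e_i+e_j$ in the explicit coordinates, which in each configuration again collapses to an equation of the form $2x=\text{odd}$; alternatively, since $lc(\Gamma_{S'\cup\{v\}})\le 1$ and $\det=0$, Proposition \ref{cfdet} turns this into a continued‑fraction identity that one checks cannot hold.

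Finally I would treat the sub‑case $v\cdot v=-1$ separately: then $v=\pm e_s$, so being linked once to $w_1$ and to $w_2$ forces $e_s$ to occur, with coefficient $-1$, in exactly one vector of each component and in no other vector of $S$; but in the bad realisation the shared basis vectors of a symmetric pair hit only the component containing that pair, and every basis vector on a leg hits either two vectors of the \emph{same} component or the shared vector of the opposite pair — so no basis vector is a legal slot and this case is empty. (Equivalently, if $\det=0$ with $v\cdot v=-1$ the two pieces would have to be complementary legs in the sense of Proposition \ref{compeq}, and the bad‑component marking of $(-2,-a,-2)$ is precisely the one incompatible with $(-2,-a,-2)^{*}$ under that gluing.) The main obstacle I anticipate is the bookkeeping in the reduction step: making precise that an arbitrary sequence of $-2$-final expansions only \emph{adds} symmetric pairs of $-2$-final vectors and never produces an ``unpaired'' $-2$-endpoint, and then tracking the signs of the coordinates of $v$ against Lisca's rigid realisation of the bad component in the cases where $w_i$ is internal with square $<-3$. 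This is exactly where the hypothesis that $\Gamma_1$ is a \emph{bad} component — rather than one of a pair of complementary legs, as in Proposition \ref{b0}, where a legitimate $v$ does exist — enters essentially.
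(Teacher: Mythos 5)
There is a genuine gap, and it sits at the heart of your argument. Your whole reduction scheme rests on the structural claim that after an arbitrary sequence of $-2$-final expansions the two $-2$-final vectors of the (expanded) bad component still form a ``symmetric pair'' $e_j+e_{j'}$, $e_j-e_{j'}$, and that every expansion creates a further such pair. This is false. In the minimal configuration the two ends of the bad component are indeed of the form $-e_1-e_2$, $-e_1+e_2$ (this is forced by orthogonality, as in the realization used in the proof of Proposition \ref{b1b}), but a single $-2$-final expansion replaces one end by a new vector $e_2+f$ (with $f$ a new coordinate) and deepens the other end to $-e_1+e_2-f$, which has square $-3$ and is not supported on the same two coordinates as its partner; further expansions only worsen this. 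Consequently your parity trick ($2v_i=-1$) does not apply to the vectors $v$ can actually be linked to after expansions, and your claim that any ``blocked'' contraction already yields a contradiction is unjustified, so you cannot freely contract back to $S'$. Two of your auxiliary tools are also misused: Proposition \ref{nobad} is stated only for subsets with $b(S)=0$, which fails here, and the fallback via Proposition \ref{cfdet} cannot work even in principle, because the determinant/continued-fraction condition \emph{does} admit solutions at the graph level. For instance, for $a=3$ the graph obtained by joining $(-2,-3,-2)$ and $(-3,-3)$ through a $-1$-vertex is a genuine building block of the first type (the two strings are complementary), so no $cf$ identity or ``complementary legs'' argument as in Proposition \ref{compeq} can rule it out; what the proposition asserts is precisely that the \emph{integral realization in which $(-2,-3,-2)$ is a bad component} admits no such $v$, and that can only be detected by lattice-level analysis.

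For comparison, the paper's proof splits on the value of $v\cdot w\in\{0,1\}$, where $w$ is the central vector of the bad component, and exploits the coordinate decomposition of $\mathbb{Z}^N$ into the span of the bad component's support and its complement. When $v\cdot w=0$ the projection $v_1$ interacts only with the two complementary legs inside the bad component, and the determinant condition forces $v_1$ to be linked to a $-2$-vector, contradicting Lemma \ref{dis}; when $v\cdot w=1$ one shows $v_1=0$, replaces the bad component by the single vector $\pi_{k+1}(w)$ to get a smaller good subset with an explicit model, and again derives contradictions with Lemma \ref{dis} after suitable contractions. If you want to repair your approach, you would need to replace the symmetric-pair persistence claim by an honest description of the expanded components (as in Proposition \ref{char} and Lemma 4.7 of \cite{Lisca:2}) and carry out the case analysis against Lemma \ref{dis}, which is essentially what the paper does.
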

 \begin{proof}
  Assume by contradiction that there exists
  $v\in\mathbb{Z}^N$ linked once to a vector of each connected component of $S$ and orthogonal to all the other vectors of $S$.
  We write $S=S_1\cup S_2$ where $S_1$ is obtained from the bad component
  of $S'$ via $-2$-final expansions and $S_2$ is obtained from the non bad component of $S'$ in a similar way.
  Note that the only vector of $S_1$ which is linked to a vector of $S_2$ is the central one. Call this vector $w$. More precisely,
  we may choose base vectors of $\mathbb{Z}^{N}$ $\{e_1,\dots,e_k,e_{k+1},\dots,e_N\}$ so that
  \begin{itemize}
   \item if $i\leq k+1$ we have $e_i\cdot u=0$ for each $u\in S_2$
   \item if $i\geq k+2$ we have $e_i\cdot u=0$ for each $u\in S_1\setminus\{w\}$
   \item $e_{k+1}\cdot w\neq 0$ and for some $j\geq k+2$ we have $e_{j}\cdot w\neq 0$.
  \end{itemize}
Note that $|S_1|=k+2$ and $|S_2|=N-k-2$. Now we proceed by distinguishing several cases.\\
\emph{First case}: $w\cdot v=0$. We can write $v=v_1+v_2$ so that $v_1$ is spanned
by $\{e_1,\dots,e_{k+1}\}$ and $v_2$ by $\{e_{k+2},\dots,e_N\}$. In particular, $v_1$ (resp. $v_2$)
is orthogonal to every element of $S_2$ (resp. $S_1$), and moreover both $v_1$ and $v_2$ are nonzero. The subset
$\widetilde{S}_1:=(S_1\setminus\{w\})\subset\mathbb{Z}^{k+1}$ consists of two complementary linear components, $T_1$ and $T_2$.
Since $w\cdot v=0$, the vector $v_1$ is linked once to a vector of, say, $T_1$ and is orthogonal to the other vectors of $\widetilde{S}_1$. The graph
$\Gamma_{\widetilde{S}_1\cup\{v_1\}}$ is given by the disjoint union $\Gamma_{T_1\cup\{v\}}\sqcup\Gamma_{T_2}$ where 
$\Gamma_{T_1\cup\{v\}}$ is starshaped with three legs and $\Gamma_{T_2}$ is linear. Since $det\Gamma_{\widetilde{S}_1\cup\{v_1\}}=0$, we have
$$
0=det\Gamma_{\widetilde{S}_1\cup\{v_1\}}=det(\Gamma_{T_1\cup\{v\}}\sqcup\Gamma_{T_2})=det\Gamma_{T_1\cup\{v\}}det\Gamma_{T_2}. 
$$
Since $det\Gamma_{T_2}\neq 0$, we must have $det\Gamma_{T_1\cup\{v\}}=0$. It follows that, as in the proof of Proposition
\ref{c1}, $v$ is linked once to a vector of $T_1$ with $-2$ square. This quickly leads to a contradiction with Lemma \ref{dis}.\\
\emph{Second case}: $w\cdot v=1$. We may write $v=v_1+v_2$ as in the first case. Since $v_1$ is orthogonal to the vectors
of $S_1\setminus \{w\}$ we must have $v_1=0$ (because $v_1$ is orthogonal $n$ linearly indipendent vectors in $\mathbb{Z}^n$). 
Consider the good subset 
$$
\widetilde{S}:=(S\setminus S_1)\cup\{\pi_{k+1}(w)\}.
$$ 
The vector $v=v_2$
is linked once to a vector of each connected component of $\widetilde{S}$ and is orthogonal to the other vectors of 
$\widetilde{S}$. The graph $\Gamma_{\widetilde{S}\cup\{v\}}$ is either starshaped with three legs (if $v$ is linked once to an internal
vector of $S_2$) or linear (if $v$ is linked once to a final vector of $S_2$). The latter possibility cannot occur. To see this
suppose that $\Gamma_{\widetilde{S}\cup\{v\}}$ is linear. Since $det\Gamma_{\widetilde{S}\cup\{v\}}=0$ we must have $v\cdot v=-1$. Moreover,
by Proposition \ref{compeq} the two component of $\widetilde{S}$ are complementary. Since one of these components consists of
a single vertex, the other one must be a $-2$-chain which is not the case. 
Therefore we may assume that the graph $\Gamma_{\widetilde{S}\cup\{v\}}$ is starshaped with three legs. The subset 
$\widetilde{S}$ is obtained via $-2$-final expansions (performed on the rightmost component) from a subset whose graph is
    \[
  \begin{tikzpicture}[xscale=1.5,yscale=-0.5]
    \node (A0_0) at (0, 0) {$-a+1$};
    \node (A0_2) at (2, 0) {$-3$};
    \node (A0_3) at (3, 0) {$-2$};
    \node (A0_5) at (5, 0) {$-2$};
    \node (A0_6) at (6, 0) {$-3$};
    \node (A1_0) at (0, 1) {$\bullet$};
    \node (A1_2) at (2, 1) {$\bullet$};
    \node (A1_3) at (3, 1) {$\bullet$};
    \node (A1_4) at (4, 1) {$\dots$};
    \node (A1_5) at (5, 1) {$\bullet$};
    \node (A1_6) at (6, 1) {$\bullet$};
    \path (A1_2) edge [-] node [auto] {$\scriptstyle{}$} (A1_3);
    \path (A1_4) edge [-] node [auto] {$\scriptstyle{}$} (A1_5);
    \path (A1_3) edge [-] node [auto] {$\scriptstyle{}$} (A1_4);
    \path (A1_5) edge [-] node [auto] {$\scriptstyle{}$} (A1_6);
  \end{tikzpicture}
  \]
where $a\geq 3$ and the $-2$-chain has legth $a-3$. Up to automorphisms of the integral lattice
$\mathbb{Z}^a$ this subset may be written as
\begin{equation}\label{tildetilde}
\widetilde{\widetilde{S}}:=\left\{\sum_{i=1}^{a-1} e_i\right\}\cup\{e_1-e_2+e_a,e_2-e_3,\dots,e_{a-2}-e_{a-1},e_{a-1}+e_a-e_1\} 
\end{equation}
Note that, as in the proof of Proposition \ref{c1}, the vector $v$ must be linked to a $-2$-vector, say $u$, of $\widetilde{S}\setminus\{\pi_{k+1}(w)\}$. 
We have two possibilities which we examine separately.\\
\emph{First subcase}: The vector $u$ is not affected by the series of $-2$-final contractions from $\widetilde{S}$ to 
$\widetilde{\widetilde{S}}$.
In this case the vector $u$ belongs to the $-2$-chain that appears in \eqref{tildetilde}. By Lemma \ref{dis} we must have
$v\cdot v<-a-2$. Write $u=e_k-e_{k+1}$ with $2\leq k\leq a-1$. It is easy to see that $v$ can be written as follows
$$
v=v'+\alpha\sum_{i=2}^{k}e_i+(1+\alpha)\sum_{i=k+1}^{a-1}e_i
$$
where $\alpha\in\mathbb{Z}\setminus\{0,-1\}$. This expression quickly leads to a contradiction with the inequality
$v\cdot v<-a-2$.\\
\emph{Second subcase}: The vector $u$ is the result of one of the $-2$-final expansions from $\widetilde{\widetilde{S}}$
to $\widetilde{S}$. Write $u=e_s+e_t$. We have either $e_s\cdot v\neq 0$ or $e_t\cdot v\neq 0$, and it is easy to see that $v$ mast hit
at least another base vector which is not in $\{e_1,\dots,e_{a-1}\}$. Moreover
since $w\cdot v=1$ the vector $v$ hits at least one vector among $\{e_1,\dots,e_{a-1}\}$. Since $v$ is orthogonal to all the vectors
in the $-2$-chain in \eqref{tildetilde} we see that $e_2\cdot v=\dots=e_{a-1}\cdot v$. If $e_2\cdot v\neq 0$ then we quickly obtain
a contradiction with Lemma \ref{dis} by computing $e_1\cdot v$. If $e_2\cdot v=0$ we may write
$v=v'-e_1+e_a$ where $e_j	\cdot v'=0$ for each $j\leq a$. In this situation we can change the subset $\widetilde{S}$ by removing
the coordinate vectors appearing in the $-2$-chain of $\widetilde{\widetilde{S}}$ and the vector $w$. 
We call this new subset $T$, it is obtained from the subset
$$
\{e_1-e_a,e_1+e_a\}
$$
via $-2$-final expansions.
The vector $v$ is not affected by this transformation. Note that $T$ is a good subset with two complementary
connected components and that $v$ is linked once to a vector of one conncted component and is
orthogonal to any other vector. The graph $\Gamma_{T\cup\{v\}}$ is the disjoint union of a three legged starshaped graph and
a linear one. Now we can argue as in the first case. Since $det\Gamma_{T\cup\{v\}}=0$ the vector $v$ must be linked to a $-2$-weighted
vertex which quickly leads to a contradiction with Lemma \ref{dis}.
\end{proof}

\begin{Pro}\label{b1b}
 Let $S=S_1\cup S_2$ be a good subset such that $c(S)=2$, $b(S)=1$ and $I(S)<0$. 
 Suppose that $\Gamma_S$ is obtained from 
     \[
  \begin{tikzpicture}[xscale=1.5,yscale=-0.5]
    \node (A0_0) at (0, 0) {$-2$};
    \node (A0_1) at (1, 0) {$ -(n+1)$};
    \node (A0_2) at (2, 0) {$-2$};
    \node (A0_3) at (3, 0) {$-2$};
    \node (A0_4) at (4, 0) {$-2$};
    \node (A0_6) at (6, 0) {$-2$};
    \node (A1_0) at (0, 1) {$\bullet$};
    \node (A1_1) at (1, 1) {$\bullet$};
    \node (A1_2) at (2, 1) {$\bullet$};
    \node (A1_3) at (3, 1) {$\bullet$};
    \node (A1_4) at (4, 1) {$\bullet$};
    \node (A1_5) at (5, 1) {$\dots$};
    \node (A1_6) at (6, 1) {$\bullet$};
    \path (A1_0) edge [-] node [auto] {$\scriptstyle{}$} (A1_1);
    \path (A1_4) edge [-] node [auto] {$\scriptstyle{}$} (A1_5);
    \path (A1_3) edge [-] node [auto] {$\scriptstyle{}$} (A1_4);
    \path (A1_1) edge [-] node [auto] {$\scriptstyle{}$} (A1_2);
    \path (A1_5) edge [-] node [auto] {$\scriptstyle{}$} (A1_6);
  \end{tikzpicture}
  \]
  
( where the $-2$-chain has length $n-1$ and $n\geq 2$)
    via a finite number of $-2$-final expansions performed on the leftmost component.
 Assume that there exists $v\in\mathbb{Z}^N$ linked once to a vector of each connected component of
 $S$ and orthogonal to any other vector of $S$. Then
 \begin{itemize}
  \item $v$ is linked to the central vector of the bad component of $S$ and to a final vector
  of the $-2$-chain
  \item $v\cdot v=-1$
  \item the graph $\Gamma_{S\cup\{v\}}$ is a building block of the third type
  \item $I(S)=-3$
 \end{itemize}

\end{Pro}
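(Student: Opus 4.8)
The plan follows the template of the proofs of Propositions \ref{c1} and \ref{b0}. Write $S=S_1\cup S_2$, where $S_1$ is the bad component (an iterated $-2$-final expansion of the configuration with squares $-2,-(n+1),-2$) and $S_2$ is the $-2$-chain of length $n-1$, and let $w_1\in S_1$, $w_2\in S_2$ be the two vectors with $v\cdot w_1=v\cdot w_2=1$. Then $v$ is linked only to $w_1$ and $w_2$, hence is exactly $2$-valent in $\Gamma_{S\cup\{v\}}$; moreover $\Gamma_S$ is negative definite, so the elements of $S$ are independent and the $N{+}1$ vectors of $S\cup\{v\}$ are dependent, i.e. $\det\Gamma_{S\cup\{v\}}=0$. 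Since every vector of $S_2$ has square $-2$, Lemma \ref{dis} reads $v\cdot v>w_1\cdot w_1-2$. In the language of Section \ref{3main}, $v$ solves $^tSv=e_i+e_j$; the obvious solution, with $v\cdot v=-1$, $w_1$ the central vector of $S_1$ and $w_2$ an endpoint of $S_2$, produces exactly a building block $\Gamma_3$ of the third type, and the content of the proposition is that it is the only one. Also $I(S)=-3$ is automatic: $I$ is unchanged by $-2$-final expansions, and $I(-2,-(n+1),-2)+I(-2^{[n-1]})=(n-4)+(1-n)=-3$.

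\textbf{Step 1 (locating $w_1$ and $w_2$).} Put $S_2$ in the standard form $f_k=\varepsilon_k-\varepsilon_{k+1}$. Orthogonality of $v$ to $S_2\setminus\{w_2\}$ forces $v\cdot\varepsilon_k$ to be constant on each side of the index of $w_2$, the two constants differing by $1$. If $w_2$ were internal, both sides would have length $\ge 2$, producing a lower bound on $-v\cdot v$ which, together with $v\cdot w_1=1$ and Lemma \ref{dis}, makes $w_1\cdot w_1$ very negative; I would rule this out — and likewise rule out $w_1$ being final in $S_1$ — by the contraction technique used in the proof of Proposition \ref{b0}: perform $-2$-final contractions on $S_1$ supported away from the coordinates met by $v$ until either the image of $w_1$ becomes a $-2$-final vector adjacent to the image of the centre of $S_1$ (reducing to the analysis of Propositions \ref{c1} and \ref{b0}), or the process stalls, leaving $-2$-final vectors of $S_1$ whose coordinate vectors $v$ is forced to meet, overrunning the bound of Lemma \ref{dis}. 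Hence $w_1$ is the centre $w$ of $S_1$ (so $w\cdot w=-(n+1)$, the centre being untouched by expansions, which only lengthen the legs) and $w_2$ is an endpoint of $S_2$.

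\textbf{Step 2 ($v\cdot v=-1$ and identification).} With $w_1=w$ the graph $\Gamma_{S\cup\{v\}}$ is starshaped at $w$, with three legs: the two halves $\Gamma_a,\Gamma_b$ of $S_1$ and the leg formed by $v$ followed by $S_2$, i.e. the string $(v\cdot v,-2^{[n-1]})$. Removing $w$ leaves three linear graphs of nonzero determinant ($\Gamma_a,\Gamma_b$ are negative definite, and the third leg has $cf=v\cdot v+(n-1)/n\neq 0$), so $\det\Gamma_{S\cup\{v\}}=0$ forces $cf(\Gamma_{S\cup\{v\}})=0$ by the division argument of Proposition \ref{cf}. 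Expanding this continued fraction at $w$, using $w\cdot w=-(n+1)$ and $[-2^{[n-1]}]^-=-n/(n-1)$, gives $\tfrac1{cf(\Gamma_a)}+\tfrac1{cf(\Gamma_b)}=-(n+1)-\tfrac{n}{n(v\cdot v)+n-1}$; a direct check shows this equals $-1$ when $v\cdot v=-1$ and is $<-2$ for every $v\cdot v\le -2$, while the left-hand side is a sum of two reciprocals each in $(-1,0)$ and so must lie in $(-2,0)$. Therefore $v\cdot v=-1$, $\tfrac1{cf(\Gamma_a)}+\tfrac1{cf(\Gamma_b)}=-1$, so by Proposition \ref{compeq} $\Gamma_a$ and $\Gamma_b$ are complementary; thus $\Gamma_{S\cup\{v\}}$ is a building block of the third type, and $I(S)=I(S_1)=-3$, completing the proof.

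The main obstacle is Step 1: exactly as in Proposition \ref{b0}, turning the inequality of Lemma \ref{dis} into a genuine contradiction requires a careful inventory of which coordinate vectors $v$ meets inside $S_1$, together with an organisation of the $-2$-final contractions that never disturbs $v$ while still driving $S_1$ down to a base case.
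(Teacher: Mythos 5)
Your Step 2 is sound: once you know that $w_1$ is the central vector of the bad component and $w_2$ is an endpoint of the $-2$-chain, the continued-fraction computation rooted at $w_1$ (via Proposition \ref{cfdet} and Proposition \ref{compeq}) does force $v\cdot v=-1$ and the complementarity of the two legs, and your observation that $I$ is invariant under $-2$-final expansions correctly gives $I(S)=-3$. This is even a slightly different route from the paper, which instead notes that a vector linked once to the central vector and once to the end of the chain is the unique solution of a nonsingular system, hence equals $-e_1$, and reads off the building block from the already-known structure of bad components (complementary legs).

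The problem is that Step 1 is not a proof but a plan, and Step 1 is the entire content of the proposition: the paper's proof is devoted almost exclusively to it. Concretely, two separate arguments are needed and are missing from your proposal. First, ruling out $w_2$ internal: writing $v=v'+\alpha\sum_{j\le i}e_j+(1+\alpha)\sum_{j>i}e_j$, one must contradict Lemma \ref{dis} both when $w_1$ is the central vector (where the bound $\|v\|^2<n+3$ does the job) and when $w_1$ is a non-central vector $w$ of the bad component, and the latter requires analysing the $-2$-chain inside the bad component associated to $w$, distinguishing whether $v$ meets all or none of its coordinates and contracting it in the second case. Second, ruling out $w_1$ non-central once $w_2$ is final: the paper shows $v=-e_1+v'$ and proves $v'=0$ via the inequality $\|v'\|^2\le\|w\|^2$ from Lemma \ref{dis}, with a case analysis on $\|w\|^2=2,3,\ge 4$ and another chain contraction. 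Your proposed shortcut, ``perform contractions away from $v$ and reduce to the analysis of Propositions \ref{c1} and \ref{b0}'', is not available as stated: those propositions and the structural facts behind them (Lemma \ref{ijbis}, Proposition \ref{nobad}, and Lisca's Lemma 4.7) are proved for good subsets with \emph{no} bad components, whereas here $S_1$ is a bad component, so its basis vectors behave differently (e.g.\ the coordinate supporting the whole of $C'$ hits three vectors of a single component); this is exactly why the paper runs a bespoke argument rather than quoting the $b(S)=0$ case. As it stands, the proposal establishes the easy half and defers the hard half, so it does not constitute a proof of Proposition \ref{b1b}.
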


 \begin{proof}
 The 
   vectors corresponding to the $-2$-chain can be written as
   $$
   (e_1-e_2,e_2-e_3,\dots,e_{n-1}-e_n).
   $$
   The vectors corresponding to the bad component
   (before the $-2$-final expansions are performed) can be written as
   $$
   S_3=\{-e_{n+1}-e_{n+2},\sum_{j=1}^{n+1}e_j,-e_{n+1}+e_{n+2}\}.
   $$
   Note that the central vector is not altered by $-2$-final expansions and the same holds 
   for one of the two final vectors.
   
   \textbf{Claim}: the extra vector $v$ is linked to a final vector of the $-2$-chain.\\
   To see this 
   suppose $v$ is linked to an internal vector, say $e_i-e_{i+1}$, where $1<i<n-1$. Then we can write
   \begin{equation}\label{alfa}
    v=v'+\alpha\sum_{j=1}^{i}e_j+(1+\alpha)\sum_{j=i+1}^{n}e_j \ \ \ \textrm{where} \ \ 
   \alpha\in\mathbb{Z}\setminus\{0,-1\}.
   \end{equation}
   and $v'\cdot e_i=0$ for $1\leq i\leq n$. Now $v$ must be linked to some vector of the bad 
   component, first assume $v$ is linked to the central vector whose weight is $n+1$. In this case
   Lemma \ref{dis} implies that $\|v\|^2<n+3$. Using the expression for $v$ in \eqref{alfa} we obtain
   $$
   \|v'\|^2+i\alpha^2+(n-i)(1+\alpha)^2< n+3
   $$
   which is impossible when $\alpha\notin \{0,-1\}$. Now assume $v$ is linked to some vector, say $w$, 
   of the bad component other than the central one. If  $n\leq3$ the claim is trivial so we may assume 
   that $n>3$. It follows by Lemma \ref{dis} that
   $$
   \|v'\|^2+i\alpha^2+(n-i)(1+\alpha)^2<2+\|w\|^2.
   $$
   In particular 
   $$
   \|w\|^2>3 \ \ \ \textrm{and} \ \ \ \|w\|^2-\|v'\|^2> 2.
   $$
   We can write $w=\sum_{h=1}^{k}e_{j_h}$, where $k\geq 4$. The relevant portion of the bad component
   can be written as
   $$
   (\dots,u+e_{j_1}-e_{j_2},e_{j_2}-e_{j_3},\dots,e_{j_{k-2}}-e_{j_{k-1}}+u',
   \dots,\sum_{h=1}^{k}e_{j_h},\dots).
   $$
   In particular there is a $-2$-chain of lenght $k-3$. If $v'$ hits one of the basis vectors in this
   chain then it hits them all, and this would contradict the inequality $\|w\|^2-\|v'\|^2> 2$. 
   Therefore we may assume that $e_{j_2}\cdot v=\dots=e_{j_{k-2}}\cdot v=0$. In this situation we can
   change the bad component by removing the vectors $e_{j_2},\dots,e_{j_{k-2}}$. The relevant portion
   of this new component can be written as
   $$
   (\dots,u+e_{j_1}-e_{j_2},e_{j_2}-e_{j_{k-1}}+u',
   \dots,e_{j_1}+e_{j_2}+e_{j_{k-1}}+e_{j_k},\dots)   
   $$
   Everything we said so far holds for this new component, in particular the inequality
   $\|w\|^2-\|v'\|^2> 2$ now implies $\|v'\|^2=1$ which is easily seen to be impossible and the claim
   is proved.
   
   We can write $v=-e_1+v'$, where $v'$ does not hit any vector in the $-2$-chain. Note that if
   $v$ is linked to the central vector of the bad component then we must have $v'=0$. This is because
   $-e_1$ is linked once to a final vector of the $-2$-chain and once to the central vector 
   of the bad component and there is at most one vector in $\mathbb{Z}^N$ with this property (the conditions on $v$ can be expressed as a nonsingular
   $n\times n$ system of equations).\\
   In this case the plumbing graph corresponding to 
   $S\cup\{v\}$ is a building block of the third type. 

   Therefore in order to conclude we need to show that $v'=0$.\
   Assume $v'\neq 0$, then $v$ must be linked to some vector of the bad component, say $w$, other than
   the central one. By Lemma \ref{dis} we have 
   $\|v\|^2=1+\|v'\|^2<2+\|w\|^2$, therefore
   \begin{equation}\label{contr}
\|v'\|^2\leq\|w\|^2    
   \end{equation}
   We can write $w=\sum_{h=1}^{k}e_{j_h}$, again the relevant portion of the bad component
   can be written as
   $$
   (\dots,u+e_{j_1}-e_{j_2},e_{j_2}-e_{j_3},\dots,e_{j_{k-2}}-e_{j_{k-1}}+u',\dots,
   \sum_{h=1}^{k}e_{j_h}).
   $$
   If $k=2$ then $w=e_1-e_2$ and $v'$ can be written as 
   $$
   v'=\alpha e_{j_1}+(1+\alpha)e_{j_2} \ \ \ \ \textrm{with} 
   \ \ \ \alpha\in\mathbb{Z}\setminus\{0,-1\}
   $$
   but then $\|v'\|^2\geq 5$, which contradicts \eqref{contr}. If $k=3$, write 
   $w=e_{j_1}+e_{j_2}+e_{j_3}$. It is easy to show that again the possible expressions for $v'$
   contradicts \eqref{contr} (one needs to distinguish the three possibilities where
   $v'$ hits one, two or all of the vectors among $\{e_{j_1},e_{j_2},e_{j_3}\}$). 
   If $k\geq4$ there is
   a $-2$-chain associated to $w$ whose length is $k-3$ and either $v'$ hits every vector in this chain or it does not
   hit any of them. If $v'$ hits every vector in the $-2$-chain it is easy to see that
   this would contradict again \eqref{contr}. If $v'$ does not hit any vector in the $-2$-chain, 
   the chain can be contracted as we did before, and we are back to the case $k=3$. 
   %Write 
   %$w=e_{j_1}+e_{j_2}+e_{j_3}+e_{j_4}$. If $v'$ hits an even number of vectors in the set
   %$\{e_{j_1},e_{j_2},e_{j_3},e_{j_4}\}$ then $\|v'\|^2\geq 5$ which contradicts Lemma \ref{dis}.
   %If $v'$ hits three of the above vectors at least one of these, say $e_{j_1}$ hits a vector
   %adjacent to $w$ which again leads to a contradiction. We are left with the case
   %$v'=-e_{j_k}+v''$, where $v''$ does not hit any of the vectors in 
   %$\{e_{j_1},e_{j_2},e_{j_3},e_{j_4}\}$. By Lemma \ref{bad} a noncentral internal vector of a bad 
   %component whose weight is less than $-2$ is adjacent to a pair of $-2$-vectors
   
   \end{proof}

   \subsection{Third case: $b(S)=2$}
    In this subsection we examine the good subsets with two bad components
    satisfying the hypothesis of Theorem \ref{irreducible} and we show that they give rise to 
    building block of the fourth type.
    \begin{Pro}\label{b2}
     Let $S$ be a good subset such that $c(S)=b(S)=2$ and $I(S)<0$. Suppose that there exists
     $v\in\mathbb{Z}^N$ which is linked once to a vector of each connected component of $S$
     and is orthogonal to the other vectors of $S$. Then,
     \begin{itemize}
      \item $v$ is linked to the central vectors of each bad component of $S$
      \item $v\cdot v=-1$
      \item the graph $\Gamma_{S\cup\{v\}}$ is a building block of the fourth type
      \item $I(S)=-4$
     \end{itemize}

    \end{Pro}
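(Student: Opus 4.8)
The plan is to follow the same template used in Propositions \ref{b0}, \ref{b1b} and \ref{b1a}: translate the hypotheses on the extra vector $v$ into a matrix equation, exhibit the ``obvious'' solution which yields a building block of the fourth type, and then use Lemma \ref{dis} together with the classification in Proposition \ref{char}(3) to rule out every other solution. Concretely, write $S=S_1\cup S_2$ where, by Proposition \ref{char}(3), each $S_i$ is obtained from the standard three-vertex subset with weights $(-2,-3,-2)$ via a sequence of $-2$-final expansions. In each $S_i$ there is a distinguished central vector $w_i$ (unaffected by the expansions), and the key structural fact I would record first is that $w_i$ is the only vector of $S_i$ that can be linked to $v$ to a vector of the \emph{other} component, because the expansions only touch the two ``arms'' of the bad component. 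So the two vectors linked once to $v$ must be $w_1$ and $w_2$, or else $v$ is linked to a vector living ``inside'' one of the arms — and that second possibility is what the bulk of the proof must eliminate.

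First I would dispose of the case where $v$ is linked to the central vectors $w_1,w_2$. Using the explicit coordinates for a bad component coming from the $(-2,-3,-2)$ subset — namely the central vector has the form $e_1+e_2+e_3$ with final arms $e_1-e_2$ and $e_2-e_3$ after expansions — one sees that requiring $v\cdot w_1=v\cdot w_2=1$ and orthogonality to everything else gives a nonsingular $N\times N$ linear system, hence a unique solution, which one checks directly is $v\cdot v=-1$ and $\Gamma_{S\cup\{v\}}$ the fourth-type building block (two complementary legs attached at each side of the central $-3,-1,-3$ portion). The $I(S)=-4$ statement then follows: each component $S_i$ of the ``minimal'' form has $I(S_i)=(-3-(-2))+(-3-(-3))+(-3-(-2))=2$... wait, more carefully $I$ of the $(-2,-3,-2)$ string is $(-3+2)+(-3+3)+(-3+2)=-1-0-1=-2$, and $-2$-final expansions add a $-2$ vertex contributing $-3-(-2)=-1$ while... actually the cleaner route is: $\Gamma_{S\cup\{v\}}$ being a fourth-type block means, removing the distinguished $-1$ vertex, that $\widetilde\Gamma$ consists of two copies of a $(-3,\dots,-2)$-type configuration each of which is ``half'' a complementary pair together with a central $-3$; invoking Proposition \ref{compeq}/Proposition \ref{I} on each half and summing yields $I(S)=-4$. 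I would verify this arithmetic at the end rather than now.

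The main work — and the main obstacle — is ruling out the non-obvious solutions, i.e.\ the possibility that $v$ is linked to some vector $u$ lying strictly inside one of the two arms of a bad component, with $u\cdot u=-2$ or smaller. Here I would argue exactly as in the proof of Proposition \ref{b0}, third subcase, and Proposition \ref{b1b}: by Lemma \ref{dis} we have $v\cdot v > w_1\cdot w_1 + w_2\cdot w_2$ (or the analogous inequality with one $w_i$ replaced by the arm vector $u$), which forces $\|v\|^2$ to be small; combined with the $-2$-chain structure guaranteed by Proposition \ref{nobad} (every long arm contains a standard $-2$-chain $e_k-e_{k+1},\dots$), the orthogonality of $v$ to that whole chain forces $v$ either to hit every $e_i$ in the chain (immediately contradicting the norm bound) or none of them (allowing a sequence of $-2$-final contractions that shrink the arm without disturbing $v$, reducing to the base case). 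Iterating the contractions on both components brings us to the minimal $(-2,-3,-2)\sqcup(-2,-3,-2)$ configuration, at which point a short finite check — distinguishing whether $v$ hits the central vector or a $-2$-final vector, and using $\|v\|^2<2+\|u\|^2$ to bound coefficients by $\pm1$ — eliminates every candidate except the obvious one. The delicate point to get right is bookkeeping the coordinates through the expansions/contractions so that ``$v$ is unaffected'' is genuinely justified each time; this is where I expect to spend most of the effort, but it is entirely parallel to the arguments already carried out for the $b(S)=0$ and $b(S)=1$ cases.
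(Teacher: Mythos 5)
Your architecture is the same as the paper's: split according to whether $v$ is linked to the two central vectors or to an arm vector, verify the first case directly, and kill the second with Lemma \ref{dis}. Your treatment of the central case is an acceptable variant: since the Gram matrix of $S$ is negative definite, the system $^tMv=e_{k_1}+e_{k_2}$ has a unique rational solution, so exhibiting the obvious one (the basis vector shared by the two central vectors, which exists because the two bad components interact only through their centres) identifies $v$ and gives the fourth-type block. The paper instead avoids coordinates altogether: $\det\Gamma_{S\cup\{v\}}=0$, Proposition \ref{cfdet}, and the fact that each bad component minus its centre is a pair of complementary legs give $cf(\Gamma_{S_i})=-2$ and hence $v\cdot v=-1$ at once. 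Be aware, though, that your explicit model is wrong: with the definite form, $e_1+e_2+e_3$ is orthogonal to both $e_1-e_2$ and $e_2-e_3$, so that triple is not a $(-2,-3,-2)$ string; a correct model is $\{e_1+e_2,\,-e_2+w,\,e_2-e_1\}$. Also, the postponed $I(S)=-4$ is immediate once you note the expansions never touch the central weight $-3$, so each component contributes $0$ from the centre and $-2$ from its complementary arms by Proposition \ref{I}.

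The genuine gap is in the exclusion of the noncentral case. First, Proposition \ref{nobad} assumes $b(S)=0$ and cannot be quoted here; the $-2$-chains attached to large arm vectors of a bad component must be read off from the expansion structure itself (the paper writes the relevant portion $(\dots,u+e_1-e_2,e_2-e_3,\dots,e_{k-1}-e_{k}+u',\dots,\sum_{i=1}^k e_i,\dots)$ by hand). More seriously, your plan ``contract down to $(-2,-3,-2)\sqcup(-2,-3,-2)$ and then do a finite check there'' is not available in general: when the vector $w_1$ hit by $v$ has square $-2$ or $-3$ there is no associated chain, the all-or-none dichotomy says nothing, and whenever $v$ hits an expansion coordinate the corresponding contraction is blocked; so you typically never reach the minimal configuration, and the norm bound is not ``immediately'' violated either, since Lemma \ref{dis} only gives $\|v\|^2<\|w_1\|^2+\|w_2\|^2$, which leaves room. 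The paper fills exactly this hole with its Claim: possibly after contractions not touching $v$, one has $\|v\|^2\ge\|w_1\|^2$ for any noncentral $w_1$, proved by a separate coordinate argument in the cases $w_1\cdot w_1=-2,-3$ (each coordinate of $w_1$ is shared with another vector of $S_1$, to which $v$ is orthogonal, so $v$ is forced to hit further coordinates) and by the chain/contraction argument only when $w_1\cdot w_1\le-4$; summing the contributions of the two components, whose arm coordinates are disjoint, then contradicts Lemma \ref{dis}. Without an argument equivalent to that claim — in particular for the small-square arm vectors, which your sketch never addresses — the noncentral case is not closed.
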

\begin{proof}
   Write $S=S_1\cup S_2$. The string associated to $S$ is of the form $s_1\cup s_2$ where each 
   $s_i$ is obtained from $(2,3,2)$ via $-2$-final expansions. 
   
   First let us assume that
   the extra vector $v$ is linked
   to both the central vectors of the two bad components. Then 
   note that $det(\Gamma_{S\cup\{v\}})=0$. By Proposition \ref{cfdet} this is equivalent to
   $cf(\Gamma_{S\cup\{v\}})=0$, therefore
   $$
   0=cf(\Gamma_{S\cup\{v\}})=v\cdot v-\frac{1}{cf(\Gamma_{S_1})}-\frac{1}{cf(\Gamma_{S_2})}
   $$
   where each $\Gamma_{S_i}$ is rooted at its central vector. The graph obtained from 
   $\Gamma_{S_i}$ by removing the central vector consists of two components which are dual of 
   each other. Therefore $cf(\Gamma_{S_i})=-2$ which implies $v\cdot v=-1$.\\
   It is clear that the graph $\Gamma_{S\cup\{v\}}$ is a building block of hte fourth type. 
   To see this first blow down the extra vector 
   and split the graph along one of its trivalent vertices. In other words $\Gamma_{S\cup\{v\}}$
   is a building block of the fourth type. The fact that $I(S)=-4$ is a straightforward computation.
   
   In order to conclude we need to rule out the possibility of $v$ being linked to a noncentral 
   vector. Let $w_1\in S_1$ and $w_2\in S_2$ be the two vectors of $S$ which are linked to $v$. Suppose $w_1$
   is non central.\\ 
   \textbf{Claim:} Possibly after a sequence of contractions which do not alter the extra vector
   $v$ we may assume that $\|v\|^2\geq\|w_1\|^2$.\\
   We prove the claim in three steps which correspond to the three cases $w_1\cdot w_1=-2$, 
   $w_1\cdot w_1=-3$ and
   $w_1\cdot w_1\leq-4$.
   If $w_1\cdot w_1=-2$ we can write $w_1=e_1+e_2$ and assume $e_1\cdot v\neq 0$. 
   If $e_2\cdot v\neq0$ we are done. If $e_2\cdot v=0$ note that $e_1$ must hit some other vector 
   $u\in S_1$. Since $u\cdot v=0$ we see that $v$ must hit some basis vector other than $e_1$
   and therefore $\|v\|^2\geq 2=\|w_1\|^2$. If $w_1\cdot w_1=-3$ we may write $w_1=e_1+e_2+e_3$
   and assume $e_1\cdot v\neq 0$. If $e_2\cdot v\neq 0$ and $e_3\cdot v\neq 0$ we are done. If
   this is not the case it is easy to find two more basis vectors that hit $v$ arguing just like 
   above. If $w_1\cdot w_1\leq -4$ we may write $w_1=\sum_{i=1}^{k}e_i$, in this case there is a 
   $-2$-chain associated to $w_1$. The relevant portion of $S$ can be written as follows
   $$
   (\dots,u+e_1-e_2,e_2-e_3,e_3-e_4,\dots,e_{k-2}-e_{k-1},e_{k-1}-e_{k}+u',\dots,\sum_{i=1}^{k}e_i,\dots).
   $$
   Note that either $v$ hits every vector in the $-2$-chain or it does not hit any of them. 
   If $v$ hits every vector in the $-2$-chain the inequality $\|v\|^2\geq\|w_1\|^2$ follows easily.
   If $v$ does not hit any vector in the $-2$-chain we remove from $S$ the $-2$-chain. 
   We obtain a new subset $\widetilde{S}\subset\mathbb{Z}^{N-k+3}$. The relevant portion of $\widetilde{S}$ 
   can be written as
   $$
   (\dots,u+e_1-e_{k-1},e_{k-1}-e_{k}+u',\dots,e_1+e_{k-1}+e_{k},\dots).
   $$
   Now we can repeat the argument we used for the case $w_1\cdot w_1=-3$ and the claim is proved.
   There are two possibilities according to whether $w_2$ is central or not. If $w_2$ is not 
   central we may repeat the argument used in the claim, we obtain the inequality
   $\|v\|^2\geq \|w_1\|^2+\|w_2\|^2$ which contradicts Lemma \ref{dis}. If $w_2$ is central it is
   easy to contradict again Lemma \ref{dis}.
   \end{proof}
   \subsection{Conclusion}
   Now we are ready to prove Theorem \ref{irreducible}
   \begin{proof}(\emph{Theorem \ref{irreducible}})
    By proposition 4.10 in \cite{Lisca:2} we have $c(S)\leq2$. If $c(S)=1$ then $S$ is standard
    and the conclusion follows from Proposition \ref{c1}. If $c(S)=2$ there are four possibilities
    as explained in Proposition \ref{char}. If $b(S)=0$ the conclusion follows from Proposition
    \ref{b0}. If $b(S)=1$ the two different cases are settled by Proposition \ref{b1a} and Proposition
    \ref{b1b}. When $b(S)=2$ we can apply Proposition \ref{b2}.
   \end{proof}

\section{Orthogonal subsets}\label{3ortho}
In this section we basically fill the gap between Theorem \ref{irreducible} and Theorem \ref{technical}.
Roughly speaking, we need to remove the technical assuption $I(S)+b(S)<0$, since this is not
a property of the plumbing graph. The main result of this section is Proposition \ref{orthotris},
which shows that the subsets that are of interest for us have at most two components.
Given a linear subset $S=\{v_1,\dots,v_n\}\subset\mathbb{Z}^n$ we define, following
\cite{Lisca:2}, $p_k(S)$ as the number of $e_i$'s which hits exactly $k$ vectors in $S$.
Thinking of $S$ as a matrix $p_k(S)$ is the number of rows with $k$ nonzero entries.
Note that
\begin{eqnarray}\label{pdis}
 \sum_{i=1}^{n}p_i(S)&=&n\\
 \sum_{i=1}^{n}ip_i(S)&\leq&-\sum_{i=1}^n v_i\cdot v_i
\end{eqnarray}

 A linear subset $S=\{v_1,\dots,v_N\}\subset\mathbb{Z}^N$ is said to be \emph{orthogonal} if 
 $v_i\cdot v_j=0$ whenever $i\neq j$. 
 \begin{lem}\label{ortho}
  Let $S=\{v_1,\dots,v_n\}$ be a good orthogonal subset such that $n\geq 3$ and $I(S)=0$. 
  The following conditions are satisfied:
  \begin{enumerate}
   \item $p_3(S)=n$ and $p_i(S)=0$ for each $i\neq 3$
   \item there exists $v\in S$ such that $v\cdot v=-2$
  \end{enumerate}
 \end{lem}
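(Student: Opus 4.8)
The plan is to combine a double‑counting argument for the nonzero entries of the matrix of $S$ with the structure theory of good subsets from \cite{Lisca:2}.

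First I would unwind the hypothesis: $I(S)=0$ says $\sum_{i=1}^{n}v_i\cdot v_i=-3n$, i.e.\ $\sum_{i=1}^{n}\|v_i\|^{2}=3n$, where $\|\cdot\|$ is the Euclidean norm on $\mathbb{Z}^{n}$. Let $M$ be the $n\times n$ integer matrix whose columns are $v_1,\dots,v_n$. Orthogonality of $S$ is exactly the statement that $^tMM=\mathrm{diag}(\|v_1\|^{2},\dots,\|v_n\|^{2})$, a diagonal matrix whose entries are all $\geq 2$; in particular $\det M\neq 0$, so $M$ has no zero row and $p_0(S)=0$. Together with \eqref{pdis} this gives $\sum_{k\geq 1}p_k(S)=n$. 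Now count the nonzero entries of $M$ in two ways: by rows this number equals $\sum_{k\geq 1}k\,p_k(S)$, while by columns it is at most $\sum_{i}\|v_i\|^{2}=3n$, with equality precisely when every nonzero entry of $M$ is $\pm 1$. Combining,
\begin{equation*}
\sum_{k\geq 1}(k-3)\,p_k(S)\ \leq\ 0 .
\end{equation*}

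The heart of the argument is to rule out $p_1(S)>0$ and $p_2(S)>0$. Here I would use the description in \cite{Lisca:2} of how a good subset is assembled from elementary pieces by $-2$‑final expansions: a coordinate vector hitting only one element $v_i$ of $S$, or hitting exactly two elements $v_i,v_\ell$ with $v_i\cdot v_\ell=0$, forces — through the vanishing relations and the way expansions propagate coordinates — either a non‑orthogonal pair or a decomposition of $S$ into two pieces sharing no coordinate, contradicting either orthogonality or irreducibility since $n\geq 3$. Granting $p_1(S)=p_2(S)=0$, the displayed inequality reads $\sum_{k\geq 4}(k-3)p_k(S)\leq 0$ with nonnegative summands, so $p_k(S)=0$ for $k\geq 4$ and $p_3(S)=n$, proving (1); equality must then hold in the column count, so all nonzero entries of $M$ are $\pm 1$.

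For (2), knowing $p_3(S)=n$, that all entries of $M$ lie in $\{0,\pm 1\}$, and that $\sum_i\|v_i\|^{2}=3n$ with each $\|v_i\|^{2}\geq 2$, it remains only to exclude the case $\|v_i\|^{2}=3$ for every $i$; this I would do by feeding the resulting $3$‑regular incidence pattern back into the classification of good orthogonal subsets in \cite{Lisca:2} (equivalently, observing that such a subset admits no $-2$‑final contraction yet is not one of the basic reduced forms), which yields a contradiction, so some $v_i\cdot v_i=-2$. I expect the step $p_1(S)=p_2(S)=0$, together with the exclusion of the all‑$(-3)$ configuration in (2), to be the main obstacle, since that is exactly the point where the bookkeeping identities \eqref{pdis} have to be supplemented by the finer structure theory of good subsets rather than pure counting.
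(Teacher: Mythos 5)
Your counting skeleton is exactly the paper's starting point: the inequality $\sum_k(k-3)p_k(S)\le 0$ from \eqref{pdis}, the remark that equality in the column count forces all entries of $M$ to lie in $\{0,\pm1\}$, and the reduction of (1) to showing $p_1(S)=p_2(S)=0$. The step $p_1(S)=0$ can indeed be closed along the lines you gesture at, and by elementary linear algebra rather than structure theory: if $e_1$ hits only $v_j$, then $\pi_1(v_j)$ is orthogonal to the $n-1$ independent vectors of $S\setminus\{v_j\}$ (which live in $\mathbb{Z}^{n-1}$), hence vanishes, so $v_j$ is a multiple of $e_1$ and $\{v_j\}$ is an irreducible component, contradicting irreducibility with $n\ge 3$.

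The genuine gap lies in the two steps you yourself flag as the main obstacle, because the structure theory you invoke does not cover them. For $p_2(S)=0$, your claimed dichotomy (``a non-orthogonal pair or a decomposition sharing no coordinate'') only materializes when one of the two vectors hit by $e_i$ has square $-2$; if both squares are $\le -3$ there is no direct contradiction, and the paper must \emph{contract}: it replaces $\{v_j,v_h\}$ by $\pi_i(v_j)$, checks that the result $S'$ is again a good orthogonal subset with $I(S')<0$, and only then can it cite \cite{Lisca:2} (whose bound $c(S')\le 2$ requires a negativity hypothesis on $I$), forcing $n=3$ and a short final check that no good orthogonal subset has $(c,I)=(3,0)$. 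That reduction to the $I<0$ regime is the missing idea; none of the results of \cite{Lisca:2} apply directly at $I(S)=0$. Likewise, for (2) you appeal to ``the classification of good orthogonal subsets in \cite{Lisca:2}'', but no such classification exists there --- describing good orthogonal subsets with $I=0$ is precisely what this lemma and Proposition \ref{orthobis} are establishing, so the appeal is circular. The paper excludes the all-$(-3)$ configuration by a bespoke combinatorial argument: in that case (using $p_1=0$ and a separate, easier argument that $p_2=0$ when all squares are $-3$) every row and every column of $M$ has exactly three $\pm1$ entries, and one can order the vectors so that each new vector introduces at least one basis vector not hit by its predecessors while the first introduces three; this would require $n+2$ basis vectors inside $\mathbb{Z}^n$, a contradiction. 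Some concrete argument of this kind, not a citation, is needed to complete your proof of (2).
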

 \begin{proof}
  First we prove that $p_1(S)=0$. Assume by contradiction that 
  $v_j=\alpha e_1+\pi_1(v_j)$ for some $v_j\in S$ and that no other vector in $S$
hits $e_1$. Since $S$ is irreducible we have $\pi_1(v_j)\neq 0$. 
Moreover $\pi_1(v_j)\cdot v_i=0$ for each $i\neq j$ and since the vectors
$v_1,\dots,v_{j-1},v_{j+1},\dots,v_n$ are indipendent in $\mathbb{Z}^{n-1}$ 
we must have $\pi_1(v_j)=0$ which is a contradiction, therefore $p_1(S)=0$.

Now we show that there exists $v\in S$ such that $v\cdot v=-2$.
Assume, by contradiction, that $v_i\cdot v_i\leq -3$ for each $1\leq i\leq n$.
Since $\sum_{i=1}^{n}v_i\cdot v_i=-I(S)-3n$, we see that $v_i\cdot v_i=-3$ for each $1\leq i\leq n$.
We claim that $p_2(S)=0$.  Suppose
$p_2(S)\neq 0$. We may write
$$
v_j=e_1+e_2+e_3 \ \ \ \ ; \ \ \ \ v_h=e_1-e_2+e_4
$$
where $e_1\cdot v_i=e_2\cdot v_i=0$ for each $i\notin\{j,h\}$. This is impossible because, since 
$p_1(S)=0$ both $e_3$ and $e_4$ must hit some other element of $S$. Therefore $p_2(S)=0$.
Using Equations \eqref{pdis} we obtain
$$
\sum_{i=1}^{n}ip_i(S)-3\sum_{i=1}^{n}p_i(S)\leq 0 \Rightarrow \sum_{i=3}^{k}(i-3)p_i(S)\leq 0
$$
We conclude that $p_j(S)=0$ for each $j\neq 3$ and $p_3(S)=n$. So far we have shown 
that the matrix whose columns are the $v_i$'s has exactly three non zero entries in each 
row and in each column. Note that for each $v_i$ there exists $v_j$ and $v_h$ such that 
\begin{equation}\label{Ew1w2}
 E(v_i,v_j)=E(v_i,v_h)=2 \ \ \textrm{and} \ \ E(v_i,v_j,v_h)=1
\end{equation}
Consider the following reordering on the elements of $S$ defined inductively
\begin{itemize}
 \item choose any element in $S$ and call it $v_1$
 \item choose $v_2$ so that $E(v_1,v_2)=2$
 \item choose $v_3$ so that $E(v_2,v_3)=2$ and $E(v_1,v_2,v_3)=1$
 \item choose $v_4$ so that $E(v_3,v_4)=2$ and $E(v_2,v_3,v_4)=1$
 \item $\dots$
\end{itemize}
By \eqref{Ew1w2} we may order the whole $S$ following the above procedure. It is easy to check that
for each $v_h$ there exists $e_j$ such that $e_j\cdot v_1=\dots=e_j\cdot v_{h-1}=0$ and 
$e_j\cdot v_h\neq 0$.          
In other words at each step we introduce a new basis vector. Moreover, at the first step
we introduce three basis vectors. Therefore, we would need $k+2$ basis vectors, which is impossible.

Now we show that $p_2(S)=0$. Assume by contradiction that $p_2(S)\neq 0$. Let $e_i$,$v_j,v_h$ 
be such that $e_i$ only hits $v_j$ and $v_h$ among the elements of $S$. We may assume that,
say $v_h$, is such that $v_h\cdot v_h\leq -3$ (otherwise, the set $\{v_h,v_j\}$ would be an 
irreducible component of $S$ which is impossible because $S$ is irreducible and $|S|\geq 3$).
Either $v_j\cdot v_j\leq -3$ or $v_j\cdot v_j=-2$. 
If $v_j\cdot v_j=-2$ then we may write $v_j=e_i+e_s$ and, since $e_i$ only hits $v_h$ and $v_j$,
the same conlcusion holds for $e_s$. Write
$v_h=e_i-e_s+v_h'$. Since $v_h'$ is orthogonal to any vector in $S\setminus\{v_j,v_h\}$ 
it must vanish. Therefore the subset $\{v_j,v_h\}$ is an irreducible component of $S$.
But this is impossible because $S$ is irreducible and $|S|\geq3$.
Therefore we may assume that
$v_j\cdot v_j\leq -3$.
Consider the subset
$$
S'=S\setminus \{v_h,v_j\}\cup \{\pi_i(v_j)\}
$$
It is easy to check that $S'$ is a good orthogonal subset, moreover
\begin{eqnarray*}
I(S')&=&I(S)+v_h\cdot v_h+3+v_j\cdot v_j+3-\pi_i(v_j)\cdot \pi_i(v_j)-3=\\
&=&I(S)+v_h\cdot v_h+3+v_j\cdot v_j-\pi_i(v_j)\cdot \pi_i(v_j)\leq\\
&\leq &I(S)+v_j\cdot v_j-\pi_i(v_j)\cdot \pi_i(v_j)\\
&<&I(S).
\end{eqnarray*}
In particular $I(S')<0$. By lemma 4.9 in \cite{Lisca:2} we must have $c(S')\leq 2$. Since
$|S|=c(S)\geq 3$ we have $c(S')=2$. It is easy to check that $S'$ must be of the form
$$
S'=\{e_1+e_2,e_1-e_2\}.
$$
Now it is easy to see that $S'$ cannot be expanded to a good orthogonal subset $S$ such that
$I(S)=0$. In fact there are no good orthogonal subset such that 
$(c(S),I(S))=(3,0)$. This is a contradiction and we conclude that $p_2(S)=0$. 

Finally, note that by \eqref{pdis} we have
$$
\sum_{i=1}^{k}(i-3)p_i(S)\leq 0
$$
which means that $p_i(S)=0$ for each $i\geq 4$. 
 \end{proof}
 \begin{Pro}\label{orthobis}
  Let $S$ be a good orthogonal subset such that $I(S)=0$. Then $c(S)=4$ and, up to automorphisms
  of the integral lattice $\mathbb{Z}^4$, $S$ has the following Gram matrix:
$$\left(
  \begin{array}{rrrr}
   1&1&1&0\\
   1&-1&-1&0\\
   0&1&-1&1\\
   0&-1&1&1
  \end{array}
  \right)
$$
 \end{Pro}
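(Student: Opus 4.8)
The plan is to reduce by hand to the range $n:=|S|\ge 3$, extract from Lemma \ref{ortho} a rigid $\{0,\pm1\}$-matrix, and then pin down $n=4$ and the Gram matrix by a sign-normalisation together with a dimension count on an auxiliary orthogonal family. Since $S$ is orthogonal its intersection graph has no edges, so $c(S)=|S|=n$, and $I(S)=0$ means $\sum_i|v_i|^2=3n$, where $|v|^2:=-v\cdot v$. If $n=1$ we would need a square-$(-3)$ vector in $\mathbb Z^1$, which is impossible; if $n=2$ the square sequence is $\{-2,-4\}$ or $\{-3,-3\}$, and inspecting the few integral vectors of norm $\le 4$ in $\mathbb Z^2$ one sees that neither admits an orthogonal pair of the right norms. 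So $n\ge 3$, and Lemma \ref{ortho} applies: each $e_k$ hits exactly three of the $v_i$, so the matrix $M$ with columns $v_1,\dots,v_n$ has exactly three nonzero entries in every row; comparing $3n=\sum_k(\#\text{nonzero entries of row }k)\le\sum_i|v_i|^2=3n$ forces every nonzero entry of $M$ to be $\pm1$, and Lemma \ref{ortho} also yields a column, say $v_1$, with $|v_1|^2=2$.

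Next I would exploit orthogonality. For columns $v_i\ne v_j$ the relation $v_i\cdot v_j=0$ forces the number $m_{ij}$ of rows in which both are nonzero to be even, hence never $1$; also $m_{ij}\le\min(|v_i|^2,|v_j|^2)$. Counting incidences, $\sum_{j\ne 1}m_{1j}=2|v_1|^2=4$, while each nonzero $m_{1j}$ satisfies $2\le m_{1j}\le|v_1|^2=2$; hence $v_1$ shares rows with exactly two columns $v_2,v_3$, and the two rows $k_1,k_2$ supporting $v_1$ both have support $\{v_1,v_2,v_3\}$. After negating $e_{k_1},e_{k_2},v_2,v_3$ I may assume rows $k_1,k_2$ read $(1,1,1)$ and $(1,-1,-1)$ on $(v_1,v_2,v_3)$; writing $e_1,e_2$ for $e_{k_1},e_{k_2}$ this gives $v_1=e_1+e_2$, $v_2=e_1-e_2+v_2''$, $v_3=e_1-e_2+v_3''$ with $v_2'',v_3''$ supported on $e_3,\dots,e_n$. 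Since $e_1,e_2$ are hit only by $v_1,v_2,v_3$, each $v_i$ with $i\ge 4$ is supported on $e_3,\dots,e_n$; hence $\{v_2'',v_3'',v_4,\dots,v_n\}$ is a family of $n-1$ vectors in $\mathbb Z^{n-2}$, pairwise orthogonal except that $v_2\cdot v_3=0$ forces $v_2''\cdot v_3''=2$. Being too numerous for the ambient rank, they are linearly dependent, so their block-diagonal Gram matrix is singular; as the $\{v_4,\dots,v_n\}$-part is diagonal and nonsingular, the remaining $2\times2$ block (with diagonal entries $-|v_2''|^2,-|v_3''|^2$ and off-diagonal entry $2$) must be singular, i.e. $|v_2''|^2\,|v_3''|^2=4$.

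To finish, note that a $\{0,\pm1\}$-vector of norm $1$ cannot have inner product $\pm2$ with any $\{0,\pm1\}$-vector, so $|v_2''|^2=|v_3''|^2=2$ (in particular $n\ge 4$), and then $v_3''=-v_2''$; renaming coordinates, $v_2''=e_3+e_4$ and $v_3''=-e_3-e_4$. Now $e_3$ is hit by $v_2,v_3$ and by exactly one further column $v_4$, and $v_4\cdot v_2=0$ forces $v_4=e_3-e_4+v_4''$ with $v_4''$ supported on $e_5,\dots,e_n$. If $n\ge 5$, the same reasoning as before makes $\{v_4'',v_5,\dots,v_n\}$ a pairwise orthogonal family of $n-3$ vectors in $\mathbb Z^{n-4}$, hence dependent with diagonal Gram matrix, which forces $v_4''=0$; but then $v_1,v_2,v_3,v_4$ involve only $e_1,\dots,e_4$, so $\{v_1,\dots,v_4\}$ and $\{v_5,\dots,v_n\}$ are unlinked, contradicting that $S$ is irreducible. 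Therefore $n=4$, $v_4=e_3-e_4$, and $M$ has columns $e_1+e_2,\ e_1-e_2+e_3+e_4,\ e_1-e_2-e_3-e_4,\ e_3-e_4$, which after negating $e_4$ and reordering columns is the matrix in the statement.

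The genuinely delicate step is the last one, ruling out $n\ge 5$: the payoff of peeling off the two repeated rows is precisely that the projected configuration always carries one vector too many for its ambient lattice, and one must iterate this observation while keeping track of which basis vectors remain available and invoke irreducibility at the right moment. The sign bookkeeping used to normalise rows $k_1,k_2$ (and later the row through $e_3,e_4$) also needs to be carried out with some care so that the normalisations do not interfere with one another.
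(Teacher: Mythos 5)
Your argument is correct, and after the common opening it takes a genuinely different route from the paper's. Both proofs start identically: rule out $|S|\le 2$, invoke Lemma \ref{ortho} to get $p_3(S)=n$ and a square $-2$ vector, deduce from $I(S)=0$ that every coordinate of every element of $S$ is $0$ or $\pm 1$, and observe that the two basis vectors supporting the $(-2)$-vector are hit by exactly two further elements, so that those two rows have support $\{v_1,v_2,v_3\}$. At that point the paper replaces $\{v_1,v_2,v_3\}$ by the two projections $\pi_1(v_2),\pi_1(v_3)$, checks that the resulting subset $S'$ is good with $(c(S'),I(S'))=(N-2,-1)$ and $b(S')=0$, and imports Proposition 4.10 of \cite{Lisca:2} to force $c(S')\le 2$, hence $N\le 4$, after which the matrix is filled in. You instead stay entirely inside the orthogonal configuration: the singularity of the block Gram matrix of $\{v_2'',v_3'',v_4,\dots,v_n\}$ (one vector too many for its ambient rank) pins down $|v_2''|^2=|v_3''|^2=4/|v_3''|^2=2$, hence $v_3''=-v_2''$, a second application of the same rank count forces $v_4''=0$ when $n\ge 5$, and irreducibility then excludes $n\ge 5$. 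What your route buys is self-containedness: no appeal to the structure theory of good subsets beyond Lemma \ref{ortho} itself, at the cost of a longer hands-on analysis; the paper's route is shorter but leans on \cite{Lisca:2}. One small caveat, which applies equally to the paper's own write-up: normalising signs by negating elements of $S$ (your ``negating $v_2,v_3$'', and later the sign of $v_4$) is not an automorphism of the lattice, so strictly one should either treat the case where $v_2$ and $v_3$ carry opposite signs on $e_1$ separately or note that the set it produces is carried to the stated one by an explicit signed permutation; this is routine and does not affect the conclusion.
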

 \begin{proof}
It is easy to check that $|S|>2$. By Lemma \ref{ortho} we may choose $v\in S\subset\mathbb{Z}^N$ and write $v=e_1+e_2$. Moreover, since $p_3(S)=n$,
$e_1$ hits two more vectors, say $v'$ and $v''$. Since $v'\cdot v=v''\cdot v=0$ we see that
$e_2$ hits $v'$ and $v''$ as well. Writing $S$ as a matrix whose first three columns are
$v,v'v''$ we have
$$
\left(
\begin{array}{rrrrrr}
 1&1&1&0&\cdots&0\\
 1&-1&-1&0&\cdots&0\\
 0&*&*&&&\\
 \vdots&\vdots&\vdots&&&\\
 0&*&*&&&
\end{array}
\right)
$$
Where the fact that $|v'\cdot e_i|=|v''\cdot e_i|=1$ for $i=1,2$ follows from the fact that each row of the matrix above has exactly three
non zero entries and therefore $0=I(S)=\sum_{i,j}a_{i,j}^2-3n\geq 0$ and equality holds if and only if $|a_{i,j}|\leq 1$. 
Consider the subset
$$
S'=S\setminus\{v,v',v''\}\cup \{\pi_1(v'),\pi_1(v'')\}\subset\mathbb{Z}^{N-1}
$$
Note that $\pi_1(v')\cdot\pi_1(v'')=1$.
It is easy to see that $S'$ is a good subset. Moreover $(c(S'),I(S'))=(N-2,-1)$ and $b(S')=0$.
By Proposition 4.10 in \cite{Lisca:2} we have $c(S')\leq 2$, which implies $N\leq 4$. It is easy
to verify that $N\geq 4$. We conclude that $N=4$. The matrix description for $S$ follows
easily by filling the remaining entries in the above matrix.
\end{proof}
\begin{lem}\label{lemma4}
 Let $S=\{v_1,v_2,v_3,v_4\}\subset\mathbb{Z}^4$ be the subset of Proposition \ref{orthobis}.
 Let $v\in\mathbb{Z}^4\setminus\{0\}$ be such that for each $i=1,\dots,4$, we have $v\cdot v_i\in\{0,1\}$.
 Then the graph of $S\cup\{v\}$ is the following
    \[
  \begin{tikzpicture}[xscale=1.5,yscale=-0.7]
    \node (A0_0) at (0, 0) {$-2$};
    \node (A0_2) at (2, 0) {$-4$};
    \node (A1_0) at (0, 1) {$\bullet$};
    \node (A1_1) at (1, 1) {$-1$};
    \node (A1_2) at (2, 1) {$\bullet$};
    \node (A2_0) at (0, 2) {$-2$};
    \node (A2_1) at (1, 2) {$\bullet$};
    \node (A2_2) at (2, 2) {$-4$};
    \node (A3_0) at (0, 3) {$\bullet$};
    \node (A3_2) at (2, 3) {$\bullet$};
    \path (A1_0) edge [-] node [auto] {$\scriptstyle{}$} (A2_1);
    \path (A2_1) edge [-] node [auto] {$\scriptstyle{}$} (A3_2);
    \path (A2_1) edge [-] node [auto] {$\scriptstyle{}$} (A1_2);
  \end{tikzpicture}
  \]
\end{lem}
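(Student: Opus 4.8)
The plan is to solve the system $v\cdot v_i\in\{0,1\}$ directly in $\mathbb{Z}^4$ using the explicit Gram data from Proposition \ref{orthobis}. First I would fix the coordinates in which the four vectors have the stated Gram matrix; concretely one can take $v_1=e_1+e_2$, $v_2=e_1-e_2+e_3$ (up to sign), $v_3=e_3-e_4$... but rather than guessing a realization, the cleaner route is to work intrinsically: since $S$ is a basis of $\mathbb{Z}^4$ (its Gram matrix has determinant $\pm$ a nonzero value, and in fact $S\subset\mathbb{Z}^4$ with $|S|=4$ generates because the intersection graph analysis forces it), any $v\in\mathbb{Z}^4$ is an integer combination $v=\sum_i a_i v_i$ with $a_i\in\mathbb{Z}$. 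Then $v\cdot v_j=\sum_i a_i (v_i\cdot v_j)$, so the constraint $v\cdot v_j\in\{0,1\}$ becomes a system of four linear conditions on $(a_1,a_2,a_3,a_4)$ with the given Gram matrix $G$ as coefficient matrix. Explicitly, writing $\mathbf{a}=(a_1,\dots,a_4)^t$, we need $G\mathbf{a}=\mathbf{b}$ with $\mathbf{b}\in\{0,1\}^4$, and $\mathbf{a}=G^{-1}\mathbf{b}$ must be integral and nonzero.

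Next I would enumerate the $16$ possibilities for $\mathbf{b}\in\{0,1\}^4$, compute $G^{-1}$ once, and check which $\mathbf{b}$ yield integral $\mathbf{a}$. Since the statement only claims a specific \emph{graph} for $S\cup\{v\}$, and the graph records which $v\cdot v_i$ equal $1$ (the edges) together with $v\cdot v$, I expect that the integrality constraint singles out exactly the $\mathbf{b}$'s that produce $v\cdot v=-1$ and $v$ adjacent to the two $(-4)$-vectors among $v_1,v_2,v_3,v_4$ — matching the picture (the $-1$-vertex is the central vertex $v$, linked to the two vertices of weight $-4$, each of which sits between a $-2$ and carries the central node of its own bad triple). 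There will be a sign symmetry ($\mathbf{b}=0$ forces $v=0$, excluded; $v\mapsto -v$ pairs up solutions but only the stated $\mathbf{b}\in\{0,1\}^4$ side is admissible), so the genuinely distinct solution up to the lattice automorphisms preserving $S$ is unique, giving the claimed graph. Along the way I would also verify $v\cdot v=-1$ by computing $\mathbf{a}^t G\mathbf{a}=\mathbf{a}^t\mathbf{b}$ for the surviving $\mathbf{b}$.

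The main obstacle will be bookkeeping: getting the inverse of the $4\times 4$ Gram matrix right and correctly translating "the graph of $S\cup\{v\}$ is [picture]" into the combinatorial data "$\{i: v\cdot v_i=1\}$ and $v\cdot v$", since the picture also encodes that $v_1$ and $v_2$ (the weight $-4$ vectors, internal in the picture) remain linked to the weight $-2$ vectors as before — but that part is already contained in the Gram matrix of $S$ and is unchanged by adjoining $v$. So the only new content is the pattern of edges from $v$ and the self-intersection $v\cdot v$, and these fall out of the finite check. A small additional point to address is that the picture's two legs look like two copies of the $(-2,-4,-2)$-shaped bad components with the $(-4)$'s linked by $v$; I would note that the Gram matrix of Proposition \ref{orthobis} is precisely (two copies of) the $(2,4,2)$-string data glued, so this is automatic, and conclude.

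\bigskip

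\noindent\emph{Remark on why this is the right lemma to have.} Lemma \ref{lemma4} handles the "residual" orthogonal subset of size $4$ with $I(S)=0$ that arises after one has stripped off the generic pieces in the proof of Theorem \ref{technical}; combined with Proposition \ref{orthobis} it shows that even in this exceptional case $v\cdot v=-1$ and $\Gamma_{S\cup\{v\}}$ is (a piece of) a building block of the fourth type, which is exactly what is needed to remove the hypothesis $I(S)+b(S)<0$ from Theorem \ref{irreducible} when passing to Theorem \ref{technical}.
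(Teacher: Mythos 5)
Your plan breaks down at the step where you declare $S$ to be a basis of $\mathbb{Z}^4$ and write $v=\sum_i a_iv_i$ with $a_i\in\mathbb{Z}$. The matrix displayed in Proposition \ref{orthobis} is not actually a Gram matrix (it is not even symmetric); it is the coordinate matrix of the four vectors in the standard basis, e.g. $v_1=e_1+e_2$, $v_2=e_1-e_2+e_3-e_4$, $v_3=e_1-e_2-e_3+e_4$, $v_4=e_3+e_4$. Its determinant is $\pm 8$, so $\mathbb{Z}S$ is an index-$8$ sublattice of $\mathbb{Z}^4$, and the true Gram matrix of $S$ is $\mathrm{diag}(-2,-4,-4,-2)$ (the subset is orthogonal). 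With that $G$, your test ``$G^{-1}\mathbf b$ integral for $\mathbf b\in\{0,1\}^4$'' succeeds only for $\mathbf b=0$, so your enumeration would conclude that no nonzero admissible $v$ exists. That conclusion is false: $v=-e_1$ satisfies $v\cdot v_1=v\cdot v_2=v\cdot v_3=1$, $v\cdot v_4=0$, $v\cdot v=-1$, and is precisely the vector realizing the displayed graph; it lies outside $\mathbb{Z}S$, since $-e_1=-\tfrac12v_1-\tfrac14v_2-\tfrac14v_3$. So the intrinsic reformulation does not merely shift the bookkeeping, it changes the problem and misses the unique solution. The computation has to be done in the ambient coordinates, which is what the paper does: solve ${}^tMv=-\sum_{j\in J}e_j$ over $\mathbb{Z}$ for each of the $16$ subsets $J$. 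A quick way to carry that out: writing $v=(x_1,\dots,x_4)$, one has $v\cdot v_2+v\cdot v_3=-2(x_1-x_2)$ and $v\cdot v_2-v\cdot v_3=-2(x_3-x_4)$, forcing $v\cdot v_2=v\cdot v_3$; if both vanish the parity of $v\cdot v_1=-(x_1+x_2)$ and $v\cdot v_4=-(x_3+x_4)$ forces $v=0$, while if both equal $1$ one gets $x_1=-1$, $x_2=x_3=x_4=0$, i.e. the unique nonzero solution above.

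A secondary but related point: you have misread the target graph. Since $S$ is orthogonal there are no edges among the $v_i$; every edge of $\Gamma_{S\cup\{v\}}$ emanates from $v$. The picture is a star whose $-1$-vertex $v$ is joined to one $-2$-vertex and to both $-4$-vertices, with the remaining $-2$-vertex isolated. Your description (the $-4$'s each ``sitting between'' $-2$'s, and $v$ joined only to the two $-4$'s) contradicts $v_i\cdot v_j=0$, and the pattern ``$v\cdot v_i=1$ exactly for the two $-4$'s'' is in fact not integrally realizable, nor is ``$v\cdot v_i=1$ for all four $i$'' --- the latter being exactly the nonrealizability that Lemma \ref{extrabad} extracts from Lemma \ref{lemma4}.
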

\begin{proof}
 Let $M$ be the matrix of $S$. For each $J\subset\{1,2,3,4\}$ consider the following linear system
 of equations
 $$
 ^tMv=-\sum_{j\in J}e_j.
 $$
 The lemma is equivalent to the fact that among these 16 linear systems the only ones which 
 are solvable in $\mathbb{Z}^4$ correspond to the above graph. We omit the details.  
\end{proof}
\begin{lem}\label{extrabad}
 Let $S\subset\mathbb{Z}^N$ be a good subset such that $-I(S)=b(S)=c(S)=4$. 
 There exists no vector $v\in\mathbb{Z}^N$ linked once to a vector of each 
 connected component of $S$ and orthogonal to the vectors of $S$. 
\end{lem}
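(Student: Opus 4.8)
The plan is to suppose such a $v$ exists and reach a contradiction. Write $S_1,\dots,S_4$ for the connected components of $S$ and, for each $i$, let $w_i\in S_i$ be the vector with $v\cdot w_i=1$ (so $v\cdot u=0$ for all $u\in S\setminus\{w_1,\dots,w_4\}$). Two facts will be used throughout. First, $S\cup\{v\}$ is a linearly dependent set of $N+1$ vectors in $\mathbb{Z}^N$, so $\det\Gamma_{S\cup\{v\}}=0$; since $\Gamma_{S\cup\{v\}}$ is in normal form, has $lc=1$, and $\Gamma_{S\cup\{v\}}\setminus\{v\}=S$ is linear and negative definite, Proposition \ref{cfdet} gives $cf(\Gamma_{S\cup\{v\}})=0$, which by Proposition \ref{cf} (rooting at $v$) reads
\begin{equation}\label{eq:extrabadcf}
 v\cdot v=\sum_{i=1}^{4}\frac{1}{cf(S_i,w_i)}
\end{equation}
with each continued fraction computed at root $w_i$. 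Second, Lemma \ref{dis} gives $v\cdot v>\sum_{i=1}^{4}w_i\cdot w_i$.

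Since $b(S)=c(S)=4$, every component is a bad component, so by the structure theory behind Proposition \ref{char} (see \cite{Lisca:2}) each $S_i$ arises from a core $\{v^{(i)}_{s-1},v^{(i)}_s,v^{(i)}_{s+1}\}$ of weights $(-2,-k_i,-2)$, $k_i\ge3$, by $-2$-final expansions. As $I$ is additive over components and unaffected by $-2$-final expansions, $-4=I(S)=\sum_{i=1}^4(k_i-5)$, i.e. $\sum_{i=1}^4 k_i=16$.

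Next I would reduce to minimal form. Arguing exactly as in the proofs of Propositions \ref{b0}, \ref{b1b} and \ref{b2}, one performs $-2$-final contractions on the $-2$-subchains of the $S_i$ that avoid the coordinates hit by $v$; if a contraction would involve a coordinate hit by $v$, then writing the relevant vectors in coordinates and contracting further $v$-free $-2$-chains contradicts Lemma \ref{dis}. After this each $S_i$ equals its core, which up to an automorphism of $\mathbb{Z}^N$ we may write as $\{e_1+e_2,\ -e_1+e_3+\dots+e_{k_i+1},\ e_1-e_2\}$, with $e_1$ hitting exactly these three vectors. Now $v$ must be linked to the central vector of each $S_i$: were $w_i=e_1\pm e_2$, the orthogonality $v\cdot(e_1\mp e_2)=0$ together with $v\cdot(e_1\pm e_2)=1$ would force $2(v\cdot e_1)=1$, which is impossible over $\mathbb{Z}$.

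Finally, with $w_i$ the central vector of $S_i$ one has $cf(S_i,w_i)=-k_i-\tfrac1{-2}-\tfrac1{-2}=1-k_i$, so \eqref{eq:extrabadcf} becomes $v\cdot v=-\sum_{i=1}^4\frac1{k_i-1}$. Since $k_i\ge3$ the right side lies in $(0,2]$, so $v\cdot v\in\{-1,-2\}$. If $v\cdot v=-2$ then $\sum_{i=1}^4\frac1{k_i-1}=2$, which forces every $k_i=3$ and hence $\sum k_i=12\ne16$. If $v\cdot v=-1$ then $\sum_{i=1}^4\frac1{k_i-1}=1$, but Cauchy--Schwarz gives $\big(\sum_{i=1}^4\frac1{k_i-1}\big)\big(\sum_{i=1}^4(k_i-1)\big)\ge16$, so $\sum_{i=1}^4\frac1{k_i-1}\ge16/12>1$. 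Both outcomes are absurd, completing the proof. The main obstacle I anticipate is the reduction step: showing that the $-2$-final contractions can always be arranged to avoid $v$ (the contrary case yielding a contradiction via Lemma \ref{dis}) requires the same careful coordinate bookkeeping as in Section \ref{3irr}; everything after the reduction is a two-line computation.
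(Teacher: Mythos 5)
Your endgame is correct and pleasant: once each component has been reduced to its three--vector core $(-2,-k_i,-2)$ with $v$ linked once to each core and orthogonal to the rest, the parity argument forces $v$ to hit the central vectors, the identity $v\cdot v=\sum_i 1/cf(S_i)= -\sum_i\frac{1}{k_i-1}$ follows from $\det\Gamma_{S\cup\{v\}}=0$ via Propositions \ref{cf} and \ref{cfdet}, and the two cases $v\cdot v\in\{-1,-2\}$ are killed by $\sum_i k_i=16$ together with Cauchy--Schwarz. This would be an acceptable substitute for the finite check the paper performs in Lemma \ref{lemma4}.

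The genuine gap is the reduction step, which you yourself flag and then dispose of in one sentence. The assertion that the $-2$-final contractions can always be arranged to miss the support of $v$, ``or else Lemma \ref{dis} is contradicted,'' is not a routine transfer of the arguments of Propositions \ref{b0}, \ref{b1b} and \ref{b2}: in those proofs the contradiction with Lemma \ref{dis} works because there are only two components and one of the linked vectors has square $-2$, so the inequality has the sharp form $\|v\|^2<2+\|w\|^2$; here, with four bad components, Lemma \ref{dis} only gives $\|v\|^2<\sum_{i=1}^4\|w_i\|^2$, which can be as large as roughly $15$, leaving $v$ ample room to meet coordinates of the expansion chains without any immediate contradiction. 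Moreover, even when a contraction avoids the coordinates of $v$, you must check that it preserves the hypotheses (that $v$ is still linked once to a vector of each component and orthogonal to the others, e.g.\ that the vector $w_i$ you link to is not the one being deleted), and this bookkeeping is exactly what is missing. The paper's proof avoids contracting in the presence of $v$ altogether: it first proves the claim that $v$ must be linked to the central vector $u_i$ of each bad component by projecting $v$ onto the coordinates supporting $S_i\setminus\{u_i\}$ (a pair of complementary legs) and invoking the non-existence statement already established in the proof of Proposition \ref{b1b}; it then projects the central vectors to obtain the orthogonal subset of Proposition \ref{orthobis} in $\mathbb{Z}^4$ and concludes with the explicit classification of Lemma \ref{lemma4}. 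To make your route work you would either have to carry out the contraction analysis in detail for four components (with the weaker form of Lemma \ref{dis}), or simply adopt the paper's projection argument and then, if you wish, replace Lemma \ref{lemma4} by your continued-fraction computation, suitably adapted to the projected configuration.
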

\begin{proof}
Let us write $S=B_1\cup\dots\cup B_4$ where each $B_i$ is a bad component.
By definition of bad component there is a sequence of $-2$-final contractions
$$
S\searrow\dots\searrow\widetilde{S}
$$ 
such that 
$\widetilde{S}=\widetilde{B}_1\cup\dots\cup\widetilde{B}_4$ and each $\widetilde{B}_i$ is a 
bad component whose graph is of the form
  \[
  \begin{tikzpicture}[xscale=1.5,yscale=-0.5]
    \node (A0_0) at (0, 0) {$-2$};
    \node (A0_1) at (1, 0) {$a_i$};
    \node (A0_2) at (2, 0) {$-2$};
    \node (A1_0) at (0, 1) {$\bullet$};
    \node (A1_1) at (1, 1) {$\bullet$};
    \node (A1_2) at (2, 1) {$\bullet$};
    \path (A1_0) edge [-] node [auto] {$\scriptstyle{}$} (A1_1);
    \path (A1_1) edge [-] node [auto] {$\scriptstyle{}$} (A1_2);
  \end{tikzpicture}
  \]
for some $a_i\leq -3$. For each $i=1,\dots,4$, let $v_i\in B_i$ 
be the only vector of $B_i$ that is linked
once to $v$, and let $u_i$ be the central vector of $B_i$. \\
\textbf{Claim:} $v_i= u_i$ for each $i\leq 4$.
To see this we may argue exactly as in the proof of Proposition \ref{b1b}. Indeed, assume by contradiction
that $v_i\neq u_i$. Let $v'$ be the projection of $v$ onto the subspace generated by the basis vectors
that span the subset $S_i':=S_i\setminus{u_i}$. Note that $S_i'$ is a good subset consisting of two complementary components.
The vector $v'$ is linked once to a vector of a connected component and is orthogonal to all the other vectors of $S'$. We have already observed
in the proof of Proposition \ref{b1b} that such a vector does not exist. This proves the claim.

It is easy to see that $E(v,w)=0$ for each $w\in S\setminus\{u_1,\dots,u_4\}$. It follows
that $v\cdot u_i=v\cdot \overline{u}_i$ for each $i=1,\dots,4$. Let 
$\{\overline{u}_1,\dots,\overline{u}_4\}$ be the obtained by projecting each $u_i$ 
onto the subspace orthogonal to the one generated by the basis vectors
that span the subset $S_i':=S_i\setminus{u_i}$.
 Clearly $\{\overline{u}_1,\dots,\overline{u}_4\}$ is of the form described in Proposition \ref{orthobis}.
The fact that $v\cdot\overline{u}_i=1$ for each $i\leq 4$ contradicts Lemma \ref{lemma4}.
\end{proof}

\begin{Pro}\label{orthotris}
  Let $S\subset\mathbb{Z}^N$ be a good subset such that $I(S)+c(S)\leq 0$. Suppose that there exists
  $v\in\mathbb{Z}^N$ which is linked once to a vector of each connected component of $S$ and is
  orthogonal to all the vectors. Then $c(S)\leq2$.
 \end{Pro}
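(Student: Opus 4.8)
The plan is to argue by contradiction: assume $c(S)\ge 3$, and reduce to the special configurations already analysed in Lemmas \ref{ortho}--\ref{extrabad} and Proposition \ref{orthobis}.

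The first step is to pin down the numerical invariants. Since every bad component is in particular a connected component, $b(S)\le c(S)$ always holds. If we had $I(S)+b(S)<0$, then Proposition \ref{char} would give $c(S)\le 2$, contrary to our assumption; hence $I(S)+b(S)\ge 0$. Combining this with the hypothesis $I(S)+c(S)\le 0$ and with $b(S)\le c(S)$ yields
$$
c(S)\ \le\ -I(S)\ \le\ b(S)\ \le\ c(S),
$$
so $-I(S)=b(S)=c(S)=:m\ge 3$, and every connected component $B_1,\dots,B_m$ of $\Gamma_S$ is a bad component.

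Next I would locate $v$ relative to $S$. Each bad component $B_i$ carries a distinguished ``central'' vector $u_i$ --- the vector of square $<-2$ in its $(-2,a_i,-2)$ skeleton --- and $B_i\setminus\{u_i\}$ is a pair of complementary legs, so $I(B_i)=-2+(-3-u_i\cdot u_i)=-5-u_i\cdot u_i$ by Proposition \ref{I}. Let $v_i\in B_i$ be the vector with $v\cdot v_i=1$. I claim $v_i=u_i$ for all $i$: otherwise, projecting $v$ onto the coordinate subspace spanned by $B_i\setminus\{u_i\}$ produces a vector linked once to one of the two complementary legs and orthogonal to the other, which is impossible --- this is precisely the obstruction used in the proof of Proposition \ref{b1b} and reproduced in Lemma \ref{extrabad}. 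A similar bookkeeping shows $E(v,w)=0$ for every $w\in S\setminus\{u_1,\dots,u_m\}$.

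Now I would project out the legs. Let $\overline u_i$ be the image of $u_i$ under the projection orthogonal to the coordinate subspace spanned by $B_i\setminus\{u_i\}$, and $\overline v$ the corresponding image of $v$. Feeding $I(B_i)=-5-u_i\cdot u_i$ into $\sum_i I(B_i)=I(S)=-m$ gives $\sum_i u_i\cdot u_i=-4m$; since the ``leg part'' of $u_i$ has square $-1$, one gets $\overline u_i\cdot\overline u_i=u_i\cdot u_i+1$, hence $I(\{\overline u_1,\dots,\overline u_m\})=0$. The set $\{\overline u_1,\dots,\overline u_m\}$ is orthogonal by construction and, using the irreducibility of $S$, is still a good subset; moreover $\overline v\cdot\overline u_i=1$ for each $i$ and $\overline v\ne 0$. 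By Proposition \ref{orthobis} a good orthogonal subset with $I=0$ has exactly four elements, so $m=4$ and $\{\overline u_i\}$ is, up to a lattice automorphism, the explicit subset of that proposition. But $-I(S)=b(S)=c(S)=4$ is exactly the hypothesis of Lemma \ref{extrabad} (equivalently, $\overline v\cdot\overline u_i=1$ for all four $i$ contradicts Lemma \ref{lemma4}), so no vector $v$ with the stated properties exists. This contradiction proves $c(S)\le 2$.

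The main obstacle is the penultimate step: checking that the projected subset $\{\overline u_1,\dots,\overline u_m\}$ is genuinely \emph{good}, i.e.\ irreducible, and that the numerical identity $I(\{\overline u_i\})=0$ holds exactly. Both require a precise description --- obtained via $-2$-final contractions --- of how the coordinate vectors are shared among the $B_i$ and of how $v$ meets the central vectors; this is where the structure theory imported from \cite{Lisca:2} is indispensable. By contrast, the reduction $v_i=u_i$ and the vanishing of $E(v,w)$ off the central vectors are routine once that description is in hand.
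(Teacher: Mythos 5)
Your proposal is correct and follows essentially the same route as the paper: the identity $-I(S)=b(S)=c(S)$ forced by Lisca's bound (Proposition~\ref{char}), the extraction from the central vectors of the bad components of a good orthogonal subset with $I=0$, the application of Proposition~\ref{orthobis} to force four components, and the final contradiction via Lemma~\ref{extrabad} (equivalently Lemma~\ref{lemma4}). The only differences are cosmetic: you project out the legs directly and compute $I$ via Proposition~\ref{I} instead of first performing $-2$-final contractions to the minimal $(-2,a_i,-2)$ configurations, and your in-line verification that $v$ meets only the central vectors merely reproduces part of the proof of Lemma~\ref{extrabad}.
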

\begin{proof}
 By Proposition $4.10$ in \cite{Lisca:2} if $I(S)<-b(S)$ then $c(S)\leq2$. Assume by contradiction 
 that $c(S)\geq 3$.
 Then, $I(S)\geq -b(S)$ and we have
 $$
 -b(S)\leq I(S)\leq -c(S)\leq -b(S)
 $$
 therefore $I(S)=-c(S)=-b(S)$. Write 
 $S=B_1\cup\dots\cup B_k$ where each $B_i$ is a bad component. Let $S'$
 be the subset obtained from $S$ via a sequence of $-2$-final contractions so that
 each bad component has been reduced to its minimal configuration consisting of three vectors
 as in Definition \ref{badcomponent}. The graph of $S'$ has the following form
   \[
  \begin{tikzpicture}[xscale=1.2,yscale=-0.5]
    \node (A0_0) at (0, 0) {$-2$};
    \node (A0_1) at (1, 0) {$a_1$};
    \node (A0_2) at (2, 0) {$-2$};
    \node (A0_3) at (3, 0) {$-2$};
    \node (A0_4) at (4, 0) {$a_2$};
    \node (A0_5) at (5, 0) {$-2$};
    \node (A0_7) at (7, 0) {$-2$};
    \node (A0_8) at (8, 0) {$a_k$};
    \node (A0_9) at (9, 0) {$-2$};
    \node (A1_0) at (0, 1) {$\bullet$};
    \node (A1_1) at (1, 1) {$\bullet$};
    \node (A1_2) at (2, 1) {$\bullet$};
    \node (A1_3) at (3, 1) {$\bullet$};
    \node (A1_4) at (4, 1) {$\bullet$};
    \node (A1_5) at (5, 1) {$\bullet$};
    \node (A1_6) at (6, 1) {$\dots$};
    \node (A1_7) at (7, 1) {$\bullet$};
    \node (A1_8) at (8, 1) {$\bullet$};
    \node (A1_9) at (9, 1) {$\bullet$};
    \path (A1_4) edge [-] node [auto] {$\scriptstyle{}$} (A1_5);
    \path (A1_0) edge [-] node [auto] {$\scriptstyle{}$} (A1_1);
    \path (A1_1) edge [-] node [auto] {$\scriptstyle{}$} (A1_2);
    \path (A1_3) edge [-] node [auto] {$\scriptstyle{}$} (A1_4);
    \path (A1_8) edge [-] node [auto] {$\scriptstyle{}$} (A1_9);
    \path (A1_7) edge [-] node [auto] {$\scriptstyle{}$} (A1_8);
  \end{tikzpicture}
  \]
  where $a_i\leq -3$ for each $1\leq i\leq k$.
Note that $S'$ is a good subset and $(c(S'),I(S'))=(c(S),I(S))$. Since $I(S')=-k$ 
we have
\begin{equation}\label{Ibad}
\sum_{i=1}^{k}a_i=-4k. 
\end{equation}
Each bad component can be written as
  \[
  \begin{tikzpicture}[xscale=1.5,yscale=-0.5]
    \node (A0_0) at (0, 0) {$e_1+e_2$};
    \node (A0_1) at (1, 0) {$-e_2+w_i$};
    \node (A0_2) at (2, 0) {$e_2-e_1$};
    \node (A1_0) at (0, 1) {$\bullet$};
    \node (A1_1) at (1, 1) {$\bullet$};
    \node (A1_2) at (2, 1) {$\bullet$};
    \path (A1_0) edge [-] node [auto] {$\scriptstyle{}$} (A1_1);
    \path (A1_1) edge [-] node [auto] {$\scriptstyle{}$} (A1_2);
  \end{tikzpicture}
  \]
where $w\cdot e_1=w_i\cdot e_2=0$ and $w_i\cdot w_i\leq -2$.
Consider the subset $S''=\{w_1,\dots,w_k\}$. Its graph is
  \[
  \begin{tikzpicture}[xscale=1.5,yscale=-0.5]
    \node (A0_0) at (0, 0) {$a_1+1$};
    \node (A0_1) at (1, 0) {$a_2+1$};
    \node (A0_3) at (3, 0) {$a_k+1$};
    \node (A1_0) at (0, 1) {$\bullet$};
    \node (A1_1) at (1, 1) {$\bullet$};
    \node (A1_2) at (2, 1) {$\dots$};
    \node (A1_3) at (3, 1) {$\bullet$};
  \end{tikzpicture}
  \]

Note that this is a good orthogonal subset
and by Equation \eqref{Ibad} we have
$$
\sum_{i=1}^{k}w_i\cdot w_i =\sum_{i=1}^{k}(a_i+1)=-3k.
$$
Therefore the subset $S''$ satisfies the hypothesis of Lemma \ref{ortho} and Proposition 
\ref{orthobis}. In particular $k=4$. 

The proof is concluded by using Lemma \ref{extrabad}, which shows that there exist
no subset $S$ and a vector $v$ with the above properties.
\end{proof}
\section{Conclusion of the proof}\label{3conclusion}
Putting together Theorem \ref{irreducible} and Proposition \ref{orthotris} we can finally
prove Theorem \ref{technical}.
\begin{proof}(\emph{Theorem \ref{technical}})\\
 Let $S=S_1\cup\dots\cup S_k$
 be the decomposition of $S$ into its irreducible components. We may write
 $v=v_1+\dots+v_k$ so that each $v_i$ is the projection of $v$ onto the subspace 
 that corresponds to $S_i$. From \eqref{crucialhyptec} we obtain 
 $$
 I(S)+c(S)=\sum_{i=1}^{k}I(S_i)+c(S_i)\leq 0.
 $$
 We may choose an irreducible component $S_j$ such that $I(S_j)+c(S_j)\leq 0$.
 By Proposition \ref{orthotris} we have $c(S_j)\leq 2$. Moreover $I(S_j)+b(S_j)\leq I(S_j)+c(S_j)\leq 0$
 
 We claim that $I(S_j)+b(S_j)<0$. Assume by contradiction that
 $I(S_j)=-b(S_j)=-c(S)=-2$ and write $S_j=B_1\cup B_2$. Since it is 
 easy to check that for every bad component $B$ we have $I(B)\geq -2$, we may assume that
 one of the following holds
 \begin{itemize}
  \item $I(B_1)=I(B_2)=-1$
  \item $I(B_1)=-2$ and $I(B_2)=0$
 \end{itemize}
Arguing as in the proof of Proposition \ref{orthotris} we would get orthogonal subsets
whose associated graph is either 
  \[
  \begin{tikzpicture}[xscale=1.0,yscale=-0.5]
    \node (A0_0) at (0, 0) {$-3$};
    \node (A0_1) at (1, 0) {$-3$};
    \node (A1_0) at (0, 1) {$\bullet$};
    \node (A1_1) at (1, 1) {$\bullet$};
  \end{tikzpicture}
  \]
or
  \[
  \begin{tikzpicture}[xscale=1.0,yscale=-0.5]
    \node (A0_0) at (0, 0) {$-2$};
    \node (A0_1) at (1, 0) {$-4$};
    \node (A1_0) at (0, 1) {$\bullet$};
    \node (A1_1) at (1, 1) {$\bullet$};
  \end{tikzpicture}
  \]
It is easy to check that none of these configurations are realizable, and the claim is proved.

 We can now apply Theorem \ref{irreducible}. The graph $\Gamma_{S_j\cup\{v_j\}}$ is a
 building block. Moreover it is easy to check that \eqref{crucialhyptec} holds for the subset
 $S\setminus S_j$ so that we may iterate the argument above with all the irreducible components of
 $S$, and we are done.
 \end{proof}


\begin{thebibliography}{}
  \addcontentsline{toc}{section}{Bibliography}
    \providecommand\bibmarginpar{\leavevmode\marginpar}
\def\urlstyle#1{{\tt #1}}

\bibitem{Aceto}
\textbf{P\, Aceto}, 
\emph{Arborescent link concordance}
	In preparation  
	
\bibitem{Bonahon}
\textbf{F\, Bonahon , L\, C\, Siebenmann}, 
\emph{New geometric splittings of classical knots and the classification and symmetries of arborescent knots}
	Version June 12, 2010  

	\bibitem{Wu}
\textbf{M\, Brittenham, Y\, Wu}, \emph{The classification of exceptional Dehn surgeries on 2-bridge knots}
Comm. Anal. Geom. 9, (1995) 97-113

\bibitem{Casson}
\textbf{A\, J\, Casson ,J\, L\, Harer}, \emph{Some homology lens spaces which bound rational homology balls}
Pac. J. Math. 96, 1, (1981)

	\bibitem{Donald:1}
\textbf{A\, Donald}, \emph{Embedding Seifert manifolds in $S^4$}, Trans. Amer. Math. Soc. 367 (2015), 559-595

\bibitem{Eisenbud}
\textbf{D\, Eisenbud, W\, D\, Neumann}, \emph{Three-dimensional Link Theory and Invariants of Plane Curve Singularities}
Annals of mathematics studies 110, Princeton University Press, 1985

\bibitem{Stern}
\textbf{R\, Fintushel, R\, Stern}, \emph{Pseudofree orbifolds}
Ann. Math. 122 (1985) 335-364

\bibitem{GompfStip}
\textbf{R\, E\, Gompf, A\, Stipsicz}, \emph{4-manifolds and Kirby calculus}
Graduate studies in mathematics, American Mathematical Soc.

\bibitem{Kauffman}
\textbf{L\, H\, Kauffman}, \emph{On knots}
	Annals of Mathematics studies, Princeton University Press (1987) 
	
	\bibitem{Kauffman:2}
\textbf{L\, H\, Kauffman, L\, R\, Taylor}, \emph{Signature of links}
	Trans. Amer. Math. Soc. 216  (1976) 351-366 
	
\bibitem{Kim}
\textbf{P\, Kim, J\, Tollefson}, \emph{Splitting the PL involutions of nonprime 3-manifolds}
	Michigan Math. J. 27 (1980), 259-274 

\bibitem{Lecuona:1}
\textbf{A\, Lecuona}, \emph{On the slice-ribbon conjecture for Montesinos knots}
	Trans. Amer. Math. Soc. 364 (2012), 233-285 
	
\bibitem{Lee:1}
\textbf{R\, Lee\, S H Weintraub}, \emph{On the homology of double branched covers}
	Proc. Amer. Math. Soc. 123 (1995), 1263-1266 
	
\bibitem{Lisca:1}
\textbf{P\, Lisca}, \emph{Lens spaces, rational balls and the ribbon conjecture}, 
	Geom. Topol. 11 (2007) 429-472
	
\bibitem{Lisca:2}
\textbf{P\, Lisca}, \emph{Sums of lens spaces bounding rational balls}, 
	Algebr. Geom. Topol. 7 (2007) 2141-2164
	
\bibitem{Moser}
\textbf{L\, Moser}, \emph{Elementary surgery along a torus knot}
Pac. J. Math. 38, 3, (1971)
	
\bibitem{Neumann:1}
\textbf{W\, Neumann}, \emph{A calculus for plumbing applied to the topology of complex 
surface singularities and degenerating complex curves}
	Trans. Amer. Math. Soc. 268 (1981), 299-342 
	
\bibitem{Neumann:2}
\textbf{W\, Neumann}, \emph{On bilinear forms represented by trees}
	Bull. Austral. Math. Soc. 40 (1989), 303-321 
	
\bibitem{Neumann:3}
\textbf{W\, Neumann, F Raymond}, \emph{Seifert manifolds, plumbing, $\mu$-invariants
and orientation reversing map}
	 Algebr. Geom. Topol. Lecture Notes in Mathematics 664 (1978) 163-196

\bibitem{Saveliev}
\textbf{N\, Saveliev}, \emph{Dehn surgery along torus knots}
Topology and its applications 83 (1998), 193-202

\end{thebibliography}
\end{document}